\newcommand{\R}{\mathbb R}
\newcommand{\C}{\mathbb C}
\newtheorem{teore}{Theorem}[chapter]
\newtheorem{lemma}{Lemma}[chapter]
\newtheorem{defin}{Definition}[chapter]
\newtheorem{conse}{Corollary}[chapter]
\newtheorem{remarka}{Remark}[chapter]
\numberwithin{equation}{chapter}
\newsavebox{\vla}
\newsavebox{\vlb}
\newsavebox{\vlc}
\newsavebox{\vld}
\newsavebox{\vle}
\newsavebox{\vlf}
\newsavebox{\vlg}
\newsavebox{\vlh}
\begin{document}
\frontmatter

\title{Symplectic Cobordism in Small Dimensions and 
a Series of Elements of  Order Four}
\author{Aleksandr L. Anisimov \and 
Vladimir V.~Vershinin}
\date{}
\maketitle
{\bf Abstract.} 
We present the structure of symplectic cobordism ring $MSp_{*}$ 
in dimensions up to 51 and give a construction  of an infinite
series of elements $\Gamma_i$, $ i=1, 3,4, ...$, of order four
in this ring, where 
$\operatorname{dim} \, \Gamma_i=8i+95$. The key
element of the series is $\Gamma_1$ in dimension 103.

\vfill

\footnoterule

{\footnotesize{ 2000 
{\it Mathematics Subject Classification}.  Primary 55N22, 55T15, 57R90.
\hfill\break}}

{\footnotesize{{\it Key words and phrases}.
Symplectic cobordism ring, Adams-Novikov
spectral sequence, Massey product.\hfill\break }}

\tableofcontents

\mainmatter

\chapter*{Introduction}
\addcontentsline{toc}{chapter}{Introduction}
\markboth{Introduction}{Introduction}

Traditionally it is considered that roots of cobordism are contained in the work
of H.~Poincar\'e \cite{Poi} where he introduced homology based on the notion of
(what we would say now) smooth manifold. Only some time later
as an answer to the critics of Heegaar
Poincar\'e turned to definition of homology using simplicial subdivision.
On the contemporary level cobordism theory started in the works of 
L.~S.~Pontriagin \cite{Pon1}, \cite{Pon2} and  V.~A.~Rohlin \cite{Ro}.

The principal possibility to consider cobordism of manifolds
with a stable normal quaternionic bundle appeared in the work of
Rene Thom \cite{T} when he first defined cobordism for manifolds
with an action of a subgroup  of the orthogonal group $G$ in the
stable normal bundle and proved the variant of 
Pontriagin-Thom theorem which establishes connections between cobordism 
classes of such manifolds 
and homotopy groups of Thom  complexes $MG$ which he also constructed. 
The introduction of cobordism theory associated  with such group $G$
gave a possibility to unify the known cases of non-orientable ($O(n)$),
orientable ($SO(n)$) and framed (trivial group $e$) cobordisms.
It also raised a question about the other classical groups.
The theory associated to the unitary group was first studied by 
John Milnor  \cite{Mi} and S.~P.~Novikov \cite{N1}, \cite{N2}. It actually became 
one of the main objets of Algebraic Topology. Cobordism theories associated 
with other groups, for example $Spin(n)$, also found their applications 
in Mathematics \cite{St2}. The question about the symplectic cobordism,
which corresponds to the group $G=Sp(n)$ is very natural, in the same sense
that we have real numbers $\R$, complex numbers $\C$ and quaternions $\mathbb H$,
 so what constructions and facts that are valid for the first two objects
have any meaning for the third one. 
Unfortunately the case of symplectic cobordism is very difficult.
The present work is one of the illustrations to this fact.
Formally it was John Milnor who asked first the question 
about symplectic cobordism in \cite{Mi} and the initial results on this
cobordism were obtained in the work of S.~P.~Novikov \cite{N1},
\cite{N2}. The main method in the works of Milnor and Novikov was the
Adams spectral sequence. S.~P.~Novikov proved that for $p>2$ this
spectral sequence is trivial, so the symplectic cobordism 
ring $MSp_*$
is such that $MSp_*\otimes Z[\frac12]$ is the polynomial  algebra
over $Z[\frac12]$  with one $4k$-dimensional generator for any
natural number $k$.

The next step after Novikov in the study of symplectic cobordism was made by 
A.~Liulevichius \cite{Liu}. He also applied the Adams spectral sequence
and calculated the symplectic cobordism ring in dimensions up to 6. 
Then R.~Stong \cite{St1} gave a construction of symplectic manifolds, in particular,
 $\text{mod}\,p$ generators for odd primes $p$. Later R.~Nadiradze
constructed free involutions on Stong manifolds, whose orbit spaces provide 
examples of symplectic manifolds \cite{Na1}.  
 
The subsequent calculations in the Adams spectral sequence  were continued by
D.~M.~Segal \cite{Se}. This spectral sequence is very complicated, even the
calculation of its initial term is a nontrivial algebraic problem, it is
studied in the work of L.~N.~Ivanovskii \cite{Iv}. 

In the beginning of 1970-ties Nigel Ray \cite{R3} applied the Atiah-Hirzebruch
spectral sequence to the study of symplectic cobordism. He also 
constructed in \cite{R1} an infinite  series of elements of order two in the 
symplectic
cobordism ring $MSp_{*}$. These elements $\theta_{1} \in MSp_{1}$,
$\Phi_{i} \in MSp_{8i-3}$, $i= 1,  2, \dots, $ are multiplicatively 
indecomposable and close
under the action of operations $S_{\omega }$ from the Landweber-Novikov algebra
${\mathbb A}^{MSp}$ \cite{N3}, \cite{La}. They will play the key role in our 
investigations.

The fundamental works of S.~O.~Kochman \cite{KI}, \cite{KII}, \cite{KIII}
summarize the application of 
the Adams spectral sequence to the study of symplectic cobordism. 
In his works Kochman computed $E_2$ and $E_3$-terms, calculated the image
of $MSp_*$ in the unoriented cobordism ring $MO_*$, found an element of order four
in degree 111  and also computed the first 100 stems.

The other methods of Algebraic Topology were also applied to the study of 
symplectic cobordisms. After the work of D.~Quillen \cite{Qui} the formal 
group techniques entered into the Algebraic Topology. Later V.~M.~Buchstaber 
constructed the theory of multi-valued formal groups and applied it to the study 
of symplectic cobordism \cite{Bu1}, \cite{Bu2}. This direction was developed 
further in the works of R.~Nadiradze and M.~Bakuradze \cite{BN}, \cite{Bak1},
\cite{Bak2}, \cite{Bak3}, \cite{Bak4}.

Having in mind the very complicated work in the classical Adams spectral sequence
the second author turned to the Adams-Novikov spectral sequence \cite{V1}, 
\cite{V2}. The arguments are 
evident: this spectral sequence is based on the complex cobordism, which is
``closer" to the symplectic cobordism than the ordinary homology theory 
used in the classical Adams spectral sequence.
This is not justified completely: still the the Adams-Novikov spectral sequence
for the spectrum $MSp$ is very complicated. May be this depicts the
nature of symplectic cobordism itself.
Anyway, the Adams-Novikov spectral sequence is the principal tool of 
calculations of the present work.
In \cite{V1} the second author studied the algebraic Novikov spectral sequence
that converges to the initial term of the Adams-Novikov spectral sequence 
and in \cite{V2} calculated the symplectic cobordism ring up to dimension
31 where he found a counter-example to Ray's conjecture that ``$KO$-theory
decides symplectic cobordism" \cite{R2}.

The techniques of cobordism with singularities developed by D.~Sullivan 
\cite{Su} and N.~A.~Baas \cite{Ba} was applied by the second author to the 
study of the symplectic cobordism \cite{V3}.
The main result is that if we take as a sequence of permitted 
``singularities types" the following subsequence of Ray's elements
\begin{equation*}
\Sigma= (\theta_1, \Phi_1, \dots, \Phi_{2^i}, \dots ),
\label{eq:ray_sin}
\end{equation*}
then the corresponding cobordism theory will be without torsion: that is
similar to the complex cobordism. This result shows that the elements of
the  sequence
$(\Sigma)$ serve as building blocks for construction of the 
whole torsion of symplectic cobordism. 

The classical cobordism graded rings  consist  of
finitely generated  abelian  groups  in  each  dimension.  The
complex cobordism ring have no elements of finite
order  and in the rings of the unoriented,  oriented, special
unitary and $Spin$ cobordism all the elements of finite order
have order two \cite{St2}. So the natural question about the existence 
of elements of order four in symplectic cobordism arises.
It is known \cite{V2} that in small dimensions 
the ideal of the elements  of  finite
order $\operatorname{Tors} MSp_*$ contains
only elements of order two. 

The main purpose of this work is to present the structure of  $MSp_{*}$ 
in dimensions up to 51 (see Table~18 at the end of the work) and
a construction  of an infinite
series of elements $\Gamma_i,  i=1, 3,4, ...$, of order four
in the symplectic cobordism ring, where 
$\operatorname{dim} \, \Gamma_i=8i+95$. The key
element of the series is $\Gamma_1$ in dimension 103.
 So, we are proving (in Chapter 4)  the following fact.

\noindent {\bf Main Theorem } {\it (i) There exists an indecomposable element
$\Omega_1 \in MSp_{49}$ of order two in the symplectic cobordism ring, such that
the product  $\theta_1 \Phi_{6+i} \Omega_1 \neq 0$.

(ii) Let $\Gamma_i \in <\Phi_{6+i},2,\Omega_1>$, for $i= 1, 3, 4, ... $.
Then the elements $\Gamma_i$ have order four and
$2\Gamma_i=\theta_1 \Phi_{6+i} \Omega_1 \neq 0$.
}

Relations between the Ray's elements and the free generators in low dimensions 
are also given in Table~18. These relations were studied from the point 
of view of characteristic classes by M.~Bakuradze, M.~Jibladze and the 
second author in \cite{BJV}.

The present work was finished in the middle of 90-ties and the main result
was announced in \cite{VA}. 
About the same time there appeared the works of B.~I.~Botvinnik and 
S.~O.~Kochman \cite{BK1}, \cite{BK2} asserting the existence of higher 
torsion in $MSp_*$.
There is no intersection between these papers and the present work.
Several 
years were spent for verification of our calculations. At that time the 
interest to symplectic cobordisms largely diminished, even the term
``symplectic cobordism" is using now in a different sense: as cobordism
of  manifolds with symplectic structure, i.e. closed non-degenerate 
differential 2-form \cite{Gi}. 
On the other hand there exists an opinion that our claim of the series of
order four is not justified by calculations. So finally we decided to submit this
text of our work. Otherwise a lot of labor would be lost.
  
The work is organized as follows. In the first Chapter we prove the 
necessary facts about the action of Landweber-Novikov operations on 
the Ray's elements. This action is the essential tool in our calculations.
Results are places in Table~9 at section Tables of the work. 

In Chapter~2 we study the so-called modified algebraic spectral sequence
(MASS) which converges to the initial term of the Adams-Novikov spectral sequence. 
First of all we precise the projections of Ray's elements in terms of the
generators of MASS. Together with the results of the previous chapter this 
gives the possibility to fix the action of Landweber-Novikov operations on 
the generators of MASS. This action is presented in Table~10. Then we study matrix Massey 
products in MASS and describe the following cells of the MASS: 
$E_2^{0,1,t}$, $E_2^{2,0,t}$ and $E_2^{1,1,t}$ for $t<108$, results are 
given in 
Tables 11, 12 and 13. In the last section
of Chapter~2 we prove a theorem that in these dimensions there is no higher 
differentials and $E_2=E_\infty$; relations for $E_\infty$ are presented in
Table~14.

 In Chapter~3 we start with algebraic 
description of the initial term of the Adams-Novikov spectral sequence for 
$MSp$ for the topological dimensions up to 56. The graphical description of 
this term takes a lot of volume in Table~15. In Table~16 multiplicative
relations are given. The second part of this Chapter is devoted to the
calculations of the differential $d_3$. The main tool is the action of 
Landweber-Novikov operations. Results are presented in Table~17. After that we 
reconstruct the term $E_4$ and see that it coincides with $E_\infty$ what
gives the possibility to determine symplectic cobordism ring in dimensions
up to 51, Table~18 depicts the result. 

In Chapter~4 we prove the Main Theorem based on the given calculations.

The authors are very grateful  to Malkhaz Bakuradze for his valuable remarks.

\chapter[Landweber-Novikov operations]{Action of Landweber-Novikov
operations on the Ray's elements}

As we had already mentioned in the Introduction N.~Ray constructed in \cite{R1} 
a series of elements in the symplectic
cobordism ring $\theta_{1} \in MSp_{1}$,
$\Phi_{i} \in MSp_{8i-3}$, $i= 1,  2, \dots, $  which are multiplicatively 
indecomposable and close
under the action of operations $S_{\omega }$ from the
Landweber-Novikov algebra
${\mathbb A}^{MSp}$. For our purposes we need exact values of these operations
on the elements $\Phi _{i}$. Stanly Kochman proved  in \cite{K}
a formula giving a possibility to calculate the action of an arbitrary
Landweber-Novikov operation $S_{\omega }$ on any Ray's element $\Phi _{i}$.
Let us remind Kochman's construction. Denote by
$h :\pi _{*}(MSp) \rightarrow MSp_{*}(MSp)$ the generalized Hurewicz
homomorphism, $\{1,b_{0},\ldots ,b_{n},\ldots \}$ is the canonical
$\pi _{*}(MSp)$-basis for $MSp_{*}(MSp)$; $\deg  b_{n}=4n$, then the
equality $h(x)= \sum^{}_{E} x_{E}b_{E}$ is equivalent to the following
assertion: the action of the Landweber-Novikov operation $S_{E}$ on
$x$ is equal to $S_{E}(x)=x_{E}$. Here we have $ E =(e_{1},\ldots ,e_{n})$,
$e_i\in \mathbb{N}$, $b_{E}=b^{e_{1}}_{1}\ldots b^{e_{n}}_{n}$ and
$x,x_{E} \in \pi _{*}(MSp)$.
The following formula \cite{K} is valid for $n=2m$:

\begin{multline}
h(\Phi_{m})=\sum^{m}_{i=1}[b_{2m-2i}+\sum^{m-i-1}_
{h=0}{b_{2h}\chi(B)^{2h+2i}_{2m-2h-2i}}]\Phi_{i} \\
+ \sum^{m-1}_{k=0}b_{2k}\chi (B)^{2k+1}_{2m-2k-1} \Phi _{0} .
\label{fkoch}\end{multline}

Here $B=1+b_{1}+\ldots+b_{t}+\ldots$; $ B^{k}_{n-k}$ denotes the component
of $B^{k}$ of degree $4n-4k$;
$\chi $ is the conjugation in the Hopf algebra $MSp_{*}(MSp)$:
\[
\chi (B)^{t}_{n-t} =\sum^{}_{r\ge 0} \sum^{}_{n>q_{r}>\ldots
>q_{1}>t}{(-1)^{r+1}B^{q_{r}}_{n-q_{r}}B^{q_{r-1}}_{q_{r}-q_{r-1}}\ldots
B^{q_{1}}_{q_{2}-q_{1}}B^{t}_{q_{1}-t}}
\]
Let us calculate the value of some operations $S_{\omega}$ on the
elements $\Phi _{n}$, using the formula  (\ref{fkoch}).
\begin{lemma} a) Let $mk$ be even and $mk<2n-1$. Then the coefficient
$\alpha ^{n}_{m;k}$ before $b^{m}_{k}$ in $\chi (B)^{2n-km}_{km}$
is equal to the expression:
\begin{multline*}
\alpha ^{n}_{m;k} =\sum^{}_{r\ge 0} \sum^{}_{m>i_{r}>\ldots
>i_{1}>0} (-1)^{r+1} C^{m-i_{r}}_{2n-(m-i_{r})k}
C^{i_{r}-i_{r-1}}_{2n-(m-i_{r-1})k} \\
\ldots C^{i_{2}-i_{1}}_{2n-(m-i_{1})k} C^{i_{1}}_{2n-mk}.
\end{multline*}
(Here $C^{m}_{n}$ denote as usual the binomial coefficient, if
$m>n$, then $C^{m}_{n}=0$).

$b)$ If $mk=2n-1$, then the coefficient $\gamma ^{n}_{m;k}$ before
$b^{m}_{k}$ in $\chi (B)_{2k-1}$ is given by the formula:

\[
\gamma ^{n}_{m;k} =\sum^{}_{r\ge 0} \sum^{}_{m>i_{r}>\ldots
>i_{2}>1}(-1)^{r+1} C^{m-i_{r}}_{ki_{r}+1}
C^{i_{r}-i_{r-1}}_{ki_{r-1}+1}\ldots
C^{i_{3}-i_{2}}_{ki_{2}+1} C^{i_{2}-1}_{k+1}.
\]
\end{lemma}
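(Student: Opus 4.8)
The plan is to extract the two coefficients $\alpha^n_{m;k}$ and $\gamma^n_{m;k}$ directly from the closed formula for $\chi(B)^t_{n-t}$ quoted above, by tracking which summands can possibly contribute a monomial of the form $b_k^m$ (a pure power of a single $b_k$). First I would recall that $B^q_{n-q}$, the degree-$4(n-q)$ component of $B^q=(1+b_1+b_2+\cdots)^q$, is a sum of monomials $b_{j_1}\cdots b_{j_q}$ with $\sum(j_\ell)=n-q$ (interpreting a factor $b_0=1$), so the coefficient of $b_k^m$ inside $B^q_{n-q}$ is nonzero only when $n-q$ and the admissible exponent patterns are compatible; in fact the coefficient of $b_k^m$ in $B^q_{mk}$ is the binomial coefficient $C^m_q$ (choose which $m$ of the $q$ factors are $b_k$, the rest being $b_0=1$), which is exactly the shape of the symbols $C^\bullet_\bullet$ appearing in the claimed formulas.

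For part (a), where $mk$ is even and $mk<2n-1$, I would substitute $t=2n-km$ and $n\rightsquigarrow$ (total degree index) $=2n$ into the displayed expansion of $\chi(B)^t_{n-t}$, so that a typical summand is a product $B^{q_r}_{2n-q_r}B^{q_{r-1}}_{q_r-q_{r-1}}\cdots B^{q_1}_{q_2-q_1}B^{2n-km}_{q_1-(2n-km)}$ with $2n>q_r>\cdots>q_1>2n-km$. Writing $q_\ell = 2n-(m-i_\ell)k$ reparametrizes the inner sum by integers $m>i_r>\cdots>i_1>0$, and then the coefficient of $b_k^m$ in each factor $B^{q_{\ell}}_{q_{\ell+1}-q_\ell}$ is forced to be $C^{\,i_{\ell+1}-i_\ell}_{\,q_\ell}=C^{\,i_{\ell+1}-i_\ell}_{\,2n-(m-i_\ell)k}$ — here one must check that the degree constraint $q_{\ell+1}-q_\ell=k(i_{\ell+1}-i_\ell)$ indeed matches "so many copies of $b_k$ and the rest $b_0$" and that no other monomial of the right total degree sneaks in as a power of $b_k$ — and multiplying these binomials across the chain, together with the sign $(-1)^{r+1}$, reproduces $\alpha^n_{m;k}$ exactly.

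For part (b), where $mk=2n-1$ (so $t=2n-km=1$ and we are looking at $\chi(B)_{2k-1}$, i.e. the $B^1$-indexed conjugate of degree $4(2n-1)-4=4(2k-1)$ after the shift), the bottom factor $B^1_{q_1-1}$ can contribute at most one $b$-factor, so to build $b_k^m$ the bottom index must satisfy $q_1-1=k$, i.e. the running parameter $i_1$ is pinned to $i_1=1$; this is precisely why the summation range in (b) starts at $i_2>1$ rather than $i_1>0$, and why the innermost binomial degenerates to $C^{\,i_2-1}_{\,k+1}$. Carrying out the same reparametrization $q_\ell=2n-(m-i_\ell)k=k i_\ell+1$ on the remaining factors gives the binomials $C^{\,i_{\ell+1}-i_\ell}_{\,k i_\ell+1}$ and the stated $\gamma^n_{m;k}$.

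I expect the main obstacle to be purely bookkeeping rather than conceptual: one has to be careful that (i) the degree of each factor $B^{q_\ell}_{q_{\ell+1}-q_\ell}$ is a multiple of $k$ so that a pure power of $b_k$ is even possible — when it is not, that chain of indices contributes $0$, which is consistent with the convention $C^m_n=0$ for $m>n$ — and (ii) the substitution $q_\ell\mapsto i_\ell$ is an order-preserving bijection between the two index sets with the correct endpoint conditions ($q_r<2n\iff i_r<m$, and $q_1>t\iff i_1>0$ in case (a), $i_1=1$ forced in case (b)). Once those two points are verified, both formulas drop out by expanding the product and collecting the coefficient of $b_k^m$ term by term.
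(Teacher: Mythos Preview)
Your proposal is correct and follows essentially the same approach as the paper: the paper's proof simply observes that $b_k^m$ can only arise from the summands in the $\chi(B)$ formula in which every factor $B^{q_\ell}_{\bullet}$ has its degree subscript a multiple of $k$, writes down that restricted sum with the reparametrization $q_\ell = 2n - k(m-i_\ell)$, and leaves the extraction of the binomial coefficients implicit. Your write-up is considerably more detailed than the paper's one-line argument, and your handling of part (b) --- pinning $i_1=1$ because the bottom factor $B^1_{q_1-1}$ is a single generator --- is exactly right; the only hiccup is your parenthetical ``$4(2n-1)-4=4(2k-1)$,'' which does not hold in general and appears to be an attempt to reconcile what is likely a typo in the statement (the subscript $2k-1$ should presumably read $2n-1$, since $mk=2n-1$).
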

\begin{proof}  Consider the case a). It is evident that the monomial 
$b^{m}_{k}$  appears in the decomposition of
 $\chi (B)^{2n-km}_{km}$ only in the
following expression:
\[
\sum^{}_{r\ge 0} \sum^{}_{m>i_{r}>\ldots
>i_{1}>0} (-1)^{r+1} B^{2n-k(m-i_{r})}_{k(m-i_{r})}
B^{2n-k(m-i_{r-1})}_{k(i_{r}-i_{r-1})}
\ldots
 B^{2n-k(m-i_{1})}_{k(i_{2}-i_{1})} B^{2n-km}_{ki_{1}}.
\]
Analogous expression appears in the case b). \end{proof}

\begin{lemma} The result of the action of the operation $ S_{k,\ldots ,k}$
($k$ is taken $m$ times) on the element $\Phi _{n}$ is equal to:

$a) \alpha ^{n}_{m;k} \Phi _{n-mk/2}$, if $k$ is even, $m$ is even and
$mk<2n-1$;

$b) (\alpha ^{n}_{m;k}+ \alpha ^{n}_{m-1;k})\Phi _{n-mk/2}$, if
$k$ is even and $mk<2n-1$;

$c) \gamma ^{n}_{m;k} \theta _{1}$, if $mk=2n-1.$
\end{lemma}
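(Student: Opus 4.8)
The plan is to read the statement directly off Kochman's formula~\eqref{fkoch}. Recall from the discussion preceding~\eqref{fkoch} that $S_E(x)=x_E$, where $h(x)=\sum_E x_E b_E$; if $E$ is the exponent sequence with $e_k=m$ and all other entries zero then $b_E=b_k^m$, so $S_{k,\ldots,k}(\Phi_n)$ ($k$ taken $m$ times) is precisely the coefficient of the monomial $b_k^m$ in $h(\Phi_n)$. Thus the task is to substitute $\Phi_n$ into~\eqref{fkoch}, expand each summand $b_{2h}\chi(B)^{2h+2i}_{2n-2h-2i}$ in the canonical basis, and collect all contributions to $b_k^m$. The shape of the answer is forced by a degree count: $h$ preserves degree, $\deg\Phi_n=8n-3$, $\deg b_k^m=4km$, so the coefficient lies in $\pi_{8n-3-4km}(MSp)$; moreover~\eqref{fkoch} exhibits $h(\Phi_n)$ as an explicit $\Z$-linear combination of the $b_E\Phi_i$ ($1\le i\le n$) and the $b_E\Phi_0$, so that coefficient is an integral combination of $\Phi_i$'s and of $\Phi_0$ (here $\Phi_0$ is the degree-$1$ element in~\eqref{fkoch}, necessarily $\theta_1$). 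Hence only the generator whose degree equals $8n-3-4km$ can survive: $\Phi_{n-km/2}$ when $km$ is even, and $\Phi_0=\theta_1$ when $km=2n-1$.

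Next I would enumerate which terms of~\eqref{fkoch} actually produce $b_k^m$. A pure power $b_k^m$ with $k\ge1$ can emerge from a summand $b_{2h}\chi(B)^{2h+2i}_{2n-2h-2i}\Phi_i$ in only two ways. Either $h=0$, so that $b_k^m$ must be produced entirely inside $\chi(B)^{2i}_{2n-2i}$; this forces $2n-2i=km$, i.e.\ $i=n-km/2$, and the contribution is the coefficient of $b_k^m$ in $\chi(B)^{2n-km}_{km}$, which by part~a) of the preceding lemma equals $\alpha^n_{m;k}$. Or (possible only for $k$ even) $2h=k$, so that $\chi(B)^{k+2i}_{2n-k-2i}$ must supply $b_k^{m-1}$; this again forces $i=n-km/2$, and by part~a) of the preceding lemma with $m-1$ in place of $m$ the contribution is $\alpha^n_{m-1;k}$, present exactly when the index $h=k/2$ lies in the summation range $0\le h\le n-i-1$ (for $m=1$ this companion contribution enters instead through the leading summand $b_{2n-2i}$ with $2n-2i=k$). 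In the boundary case $km=2n-1$ the power $b_k^m$ comes from the $\Phi_0$-part $\sum_j b_{2j}\chi(B)^{2j+1}_{2n-2j-1}\Phi_0$, and part~b) of the preceding lemma identifies its coefficient as $\gamma^n_{m;k}$. Adding the surviving contributions then gives $\alpha^n_{m;k}\Phi_{n-km/2}$ when only the $h=0$ term survives, $(\alpha^n_{m;k}+\alpha^n_{m-1;k})\Phi_{n-km/2}$ when both survive, and $\gamma^n_{m;k}\theta_1$ in the boundary case --- the three alternatives of the statement.

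The substitution and the appeal to the preceding lemma are routine; the real obstacle is the combinatorial bookkeeping around $\chi(B)$. One must check that $b_k^m$ appears in $\chi(B)^{2n-km}_{km}$ only through the single chain-type sum isolated in the proof of that lemma, with no other products of monomials collapsing to a pure power; and, above all, one must push through the exact analysis of the summation ranges $0\le h\le n-i-1$, $1\le i\le n$, and of the $j=0$ versus $2j=k$ alternatives in the $\Phi_0$-sum, since it is precisely this that decides --- according to the parities of $m$ and $k$, and according to whether $mk<2n-1$ or $mk=2n-1$ --- which of $\alpha^n_{m;k}$, $\alpha^n_{m-1;k}$, $\gamma^n_{m;k}$ actually contributes. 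Once that list of contributing terms is pinned down, assembling the formula is mechanical.
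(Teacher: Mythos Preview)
Your approach is exactly the one the paper has in mind: the paper's entire proof of this lemma is the sentence ``Proof is evident,'' meaning one reads off the coefficient of $b_k^m$ in Kochman's formula~\eqref{fkoch} and invokes the preceding lemma to identify that coefficient as $\alpha^n_{m;k}$, $\alpha^n_{m-1;k}$, or $\gamma^n_{m;k}$. Your outline already contains more detail than the paper supplies; in particular your observation that the two contributions come from $h=0$ and $h=k/2$ in the inner sum (with the leading $b_{2n-2i}$ playing the role of the companion term when $m=1$) is precisely the mechanism, and the ``real obstacle'' you flag --- the parity and range bookkeeping that decides which of the three cases one lands in --- is exactly what the paper sweeps into the word ``evident.''

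One small clarification worth having in mind when you finish the case analysis: parts a) and b) are not mutually exclusive alternatives but rather b) is the general $k$-even formula and a) is its simplification once one knows (Remark~\ref{C2n-1}, proved immediately after) that $\alpha^n_{m-1;k}\equiv 0\pmod 2$ when $m$ is even and $k$ is even. The corollaries that follow all work modulo~$2$, so this is the intended reading.
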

Proof is evident.

\begin{remarka} \label{C2n-1}It follows from the equality
$C^{2n-1}_{2m} = \frac{2m}{2(m-n)-1} C^{2n-1}_{2m-1}$  that
$$C^{2n-1}_{2m}\equiv  0 \mod{2}.$$
Consider an arbitrary summand in the
decomposition $\alpha ^{n}_{m;k}$, when $m=2s-1, k=2q$ :
\[
(-1)^{r+1} C^{m-i_{r}}_{2n-k(m-i_{r})}
C^{i_{r}-i_{r-1}}_{2n-k(m-i_{r-1})}\ldots
C^{i_{2}-i_{1}}_{2n-k(m-i_{1})} C^{i_{1}}_{2n-km} .
\]
For this summand to be non-equal to zero $\operatorname{mod}$ 2 
the fulfillment of the following conditions is necessary:
\[
i_{1}\equiv 0 \mod{2},
\ \ i_{2} \equiv 0 \mod{2}, \ \ldots \ , \ \
m \equiv 0 \mod{2}.
\]
But it is not true, so $\alpha ^{n}_{2s-1;2q}\equiv 0 \mod{2}$.
\end{remarka}

\begin{conse} If $n>k$ then we have:
\[
S_{k,k}\Phi_{n} \equiv (n-k)\Phi_{n-k} \mod{2}.
\]
\end{conse}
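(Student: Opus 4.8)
The plan is to derive the Corollary from Lemma~1.2 case~b) with $m=2$, specialising to the operation $S_{k,k}$ (i.e. $k$ repeated $m=2$ times). By that lemma, for $k$ even with $2k < 2n-1$, we have $S_{k,k}\Phi_n = (\alpha^n_{2;k} + \alpha^n_{1;k})\Phi_{n-k}$, so the entire task reduces to computing the two coefficients $\alpha^n_{2;k}$ and $\alpha^n_{1;k}$ modulo~$2$ using the explicit binomial sum from Lemma~1.1~a). First I would treat $\alpha^n_{1;k}$: here $m=1$ so the only term is $r=0$, the empty inner product, giving $\alpha^n_{1;k} = C^{1}_{2n-k}$, which equals $2n-k$; modulo~$2$ this is $n-k$ (since $2n$ is even and $-k\equiv k$, but more to the point $2n-k\equiv k\equiv n-k$ only if... — actually $2n-k \equiv k \pmod 2$ and we want to show this matches; note $(n-k)\Phi_{n-k}$ is the stated answer, and $2n-k \equiv k \equiv n+k \equiv n-k \pmod 2$, so this is consistent). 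Then I would show $\alpha^n_{2;k} \equiv 0 \pmod 2$ for $k$ even.

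For the vanishing of $\alpha^n_{2;k}$ with $k=2q$ even, the cleanest route is to invoke Remark~\ref{C2n-1} directly: that remark establishes $\alpha^n_{2s-1;2q}\equiv 0 \pmod 2$ for all odd $m=2s-1$ and even $k=2q$, and in particular $m=1$ is the case $s=1$ — wait, that would contradict the above. So instead I must handle $m=2$ separately: in $\alpha^n_{2;2q}$ the outer sum ranges over $r\ge 0$ and $2 > i_r > \cdots > i_1 > 0$, which forces each $i_j = 1$, hence only $r\le 1$ contributes. The $r=0$ term is $C^{2}_{2n-2k}$ and the $r=1$ term is $-C^{1}_{2n-k}C^{1}_{2n-2k} = -(2n-k)(2n-2k)$. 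Modulo~$2$ the second term vanishes since $2n-2k$ is even, and the first term $C^2_{2n-2k} = \binom{2n-2k}{2} = (2n-2k)(2n-2k-1)/2 = (n-k)(2n-2k-1)$, which modulo~$2$ equals $(n-k)$. Hmm — this gives $\alpha^n_{2;k}\equiv n-k$, not $0$; so then $S_{k,k}\Phi_n \equiv (\alpha^n_{2;k}+\alpha^n_{1;k})\Phi_{n-k} \equiv ((n-k)+(n-k))\Phi_{n-k}$... which is $0$, not $(n-k)\Phi_{n-k}$. I evidently have the combinatorial bookkeeping slightly off, so the real first step is to pin down precisely which of Lemma~1.1's index conventions (whether $\binom{i_1}{2n-km}$ versus $\binom{2n-mk}{i_1}$, and which argument is "upper") is in force, then recompute.

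The main obstacle will therefore be the purely combinatorial identity: verifying that, with the paper's (somewhat nonstandard, upside-down) binomial notation $C^m_n = \binom{n}{m}$ and the sign conventions of Lemma~1.1, the sum $\alpha^n_{2;k} + \alpha^n_{1;k}$ collapses mod~$2$ to exactly $n-k$. My plan is: (1) expand $\alpha^n_{2;k}$ as the two-term sum forced by the constraint $i_1 = 1$; (2) reduce each binomial coefficient mod~$2$, using the elementary fact $\binom{2a}{2}\equiv a \pmod 2$ and that any binomial with an even "top" and odd "bottom", or vice versa in the relevant slot, can be read off by Lucas' theorem; (3) add $\alpha^n_{1;k}=2n-k$; (4) check the total is $\equiv n-k$. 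If the naive reading gives $0$ rather than $n-k$, the resolution is almost certainly that the $r=0$ (no intermediate indices) contribution to $\alpha^n_{m;k}$ carries an overall sign $(-1)^{0+1}=-1$ exactly as written, whereas the term actually producing $\Phi_{n-k}$ in Lemma~1.2~b) also picks up the "$+\sum b_{2h}\chi(B)$" correction from Kochman's formula~(\ref{fkoch}) — i.e. the $\alpha^n_{m-1;k}$ summand is there precisely to fix the sign — so I would trace through formula~(\ref{fkoch}) once more to confirm the coefficient of $\Phi_{n-k}$ is genuinely $\alpha^n_{2;k}+\alpha^n_{1;k}$ and not, say, $\alpha^n_{2;k}-\alpha^n_{1;k}$ or with a shifted index. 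Everything else — the reduction to $m=2$, the evenness arguments killing cross terms, and the final congruence — is routine once that one sign/index is nailed down.
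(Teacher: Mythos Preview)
Your approach is exactly the paper's: apply Lemma~1.2 with $m=2$, then evaluate $\alpha^n_{2;k}$ and $\alpha^n_{1;k}$ modulo~$2$ via Lemma~1.1. The confusion is not a sign or index convention in Kochman's formula---it is a plain arithmetic slip in your treatment of $\alpha^n_{1;k}$.

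You correctly compute $\alpha^n_{2;k}$: the $r=0$ term is $-C^2_{2(n-k)}=-\binom{2(n-k)}{2}=-(n-k)(2n-2k-1)\equiv n-k\pmod 2$, and the $r=1$ term $C^1_{2n-k}C^1_{2(n-k)}=(2n-k)(2n-2k)$ vanishes mod~$2$ since $2n-2k$ is even. So $\alpha^n_{2;k}\equiv n-k$, exactly as the paper obtains.

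The error is in $\alpha^n_{1;k}$. With $m=1$ only the $r=0$ term survives, giving $\alpha^n_{1;k}=-C^1_{2n-k}=-(2n-k)$. Modulo~$2$ this is $k$, \emph{not} $n-k$ (your claim $2n-k\equiv n-k$ would force $n\equiv 0$). Now recall that Lemma~1.2~b) is stated only for $k$ even; writing $k=2s$ gives $\alpha^n_{1;k}=-2(n-s)\equiv 0\pmod 2$. Hence
\[
\alpha^n_{2;k}+\alpha^n_{1;k}\equiv (n-k)+0 = n-k,
\]
and there is nothing further to chase through formula~(\ref{fkoch}). This is precisely the paper's two-line proof. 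The roles you assigned to the two $\alpha$'s were simply swapped: it is $\alpha^n_{1;k}$ that vanishes (for $k$ even), while $\alpha^n_{2;k}$ carries the answer $n-k$.
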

\begin{proof} For any $k$ we have
$\alpha^{n}_{2;k}=-C^{2}_{2(n-k)}+C^{1}_{2n-k} C^{1}_{2(n-k)} \\
\equiv (n-k) \mod{2}$.  If $k=2s$, then
$\alpha ^{n}_{1;k} =-C^{1}_{2(n-s)} \equiv 0 \mod{2}$. \end{proof}

\begin{conse} The is the formula :
\[
S_{k,k,k}\Phi_{n} {\equiv } \left\{
\begin{array}{rl}
(n-k)\Phi_{n-3s} \mod{2}, & \text{if } k=2s, 3s<n, \\
(n-k)\theta_{1} \mod{2},  & \text{if } 2n+1=3k ,   \\
0,                 & \text{in the other cases}.
\end{array} \right.
\]
\end{conse}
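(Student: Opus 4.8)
The plan is to specialise the second of the two Lemmas above to $m=3$ and then reduce the resulting binomial coefficients modulo $2$, in the spirit of the proof of the preceding Corollary.

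Assume first that $k=2s$ is even. Since $m=3$ is odd, part a) of that Lemma does not apply, but part b) gives
\[
S_{k,k,k}\Phi_{n}=(\alpha^{n}_{3;k}+\alpha^{n}_{2;k})\Phi_{n-3s}
\]
provided $3k<2n-1$, an inequality equivalent to $3s<n$. By Remark~\ref{C2n-1}, applied with $m=3=2\cdot2-1$ and $k=2q$ (so $q=s$), $\alpha^{n}_{3;k}\equiv0\pmod{2}$; and the proof of the previous Corollary shows $\alpha^{n}_{2;k}\equiv n-k\pmod{2}$. Adding these two congruences gives $S_{k,k,k}\Phi_{n}\equiv(n-k)\Phi_{n-3s}\pmod{2}$, which is the first case.

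Assume next that $3k=2n-1$, so that $k$ is necessarily odd. Part c) of the same Lemma gives $S_{k,k,k}\Phi_{n}=\gamma^{n}_{3;k}\theta_{1}$, so it suffices to compute $\gamma^{n}_{3;k}$ modulo $2$. For $m=3$ the sum in part b) of the first Lemma has at most two nonvanishing terms, and a short computation using $3k=2n-1$ gives $\gamma^{n}_{3;k}=C^{2}_{k+1}-C^{1}_{2k+1}C^{1}_{k+1}$. Since $k$ is odd, $C^{1}_{k+1}=k+1\equiv0\pmod{2}$, so modulo $2$ only $C^{2}_{k+1}=\tfrac{(k+1)k}{2}\equiv\tfrac{k+1}{2}\pmod{2}$ survives; and $3k=2n-1$ gives $n-k=\tfrac{3k+1}{2}-k=\tfrac{k+1}{2}$. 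Hence $\gamma^{n}_{3;k}\equiv n-k\pmod{2}$, i.e.\ $S_{k,k,k}\Phi_{n}\equiv(n-k)\theta_{1}\pmod{2}$, which is the second case.

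It remains to show that in every other situation $S_{k,k,k}\Phi_{n}=0$; here I would argue by dimension. The operation $S_{k,k,k}$ lowers degree by $\deg b_{k}^{3}=12k$, so $S_{k,k,k}\Phi_{n}\in MSp_{8n-3-12k}$, while by formula~(\ref{fkoch}) (in which $\Phi_{0}$ denotes $\theta_{1}\in MSp_{1}$) this element is an integral combination of the Ray elements $\Phi_{i}$, $i\ge1$, which sit in dimensions $\equiv5\pmod{8}$, and of $\theta_{1}$ in dimension $1$. If $k=2s$ is even and $3s\ge n$, then $8n-3-12k\le-3<0$, so the element vanishes; if $k$ is odd and $3k\ne2n-1$, then $8n-3-12k\equiv1\pmod{8}$ but is not equal to $1$, so no such Ray element is available and again the element vanishes. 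Together with the two cases already treated this exhausts all $k$ and $n$. The only parts of the argument demanding any care are the two modulo-$2$ binomial-coefficient evaluations — that of $\alpha^{n}_{2;k}$ (done already in the previous Corollary) and that of $\gamma^{n}_{3;k}$ above — and both are short.
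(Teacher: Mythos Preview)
Your proof is correct and follows essentially the same route as the paper. For the case $k=2s$, $3s<n$, both you and the paper invoke Lemma~1.2(b), then use Remark~\ref{C2n-1} for $\alpha^{n}_{3;2s}\equiv 0$ and the computation of $\alpha^{n}_{2;k}\equiv n-k$ from Corollary~1.1. For the $\theta_1$ case, your expression $\gamma^{n}_{3;k}=C^{2}_{k+1}-C^{1}_{2k+1}C^{1}_{k+1}$ and the paper's $C^{2}_{2(n-k)}-C^{1}_{2n-k}C^{1}_{2(n-k)}$ are the same thing once one substitutes $3k=2n-1$, and both reduce to $n-k$ modulo~$2$ in the same way. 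Your added dimension argument for the ``other cases'' is correct and simply makes explicit what the paper leaves to the reader; note also that you have correctly used the condition $3k=2n-1$ from Lemma~1.2(c), whereas the paper's displayed statement contains the misprint $2n+1=3k$.
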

\begin{proof} Let $k=2s$, $3s<n$ then result of the action of the operation
$S_{k,k,k}$ on the element $\Phi _{n}$ is equal to
$(\alpha ^{n}_{3;k}+\alpha ^{n}_{2;k})\Phi _{n-3s}$ . From the
Corollary~1.1 we have
$\alpha ^{n}_{2;k} \equiv (n-k)$ $\operatorname{mod}~2$, but
 $\alpha ^{n}_{3,2s}\equiv  0$ $\operatorname{mod}~2$ according to the
Remark~\ref{C2n-1}. Let $3k=2n+1$, then
$\gamma ^{3}_{k} = C^{2}_{2(n-k)}- C^{1}_{2n-k}C^{1}_{2(n-k)}
\equiv (n-k)$ $\operatorname{mod}~2$. \end{proof}

\begin{defin} Let us define the following function of the  integer
arguments $\mu(n;k)$ by the formula:
\[
\mu(n;k) = \left\{
\begin{array}{rl}
1,  & \mbox{if } n \mbox{ is odd}, \ k   \mbox{ is even},\\
0,  & \mbox{in the other cases}.
\end{array} \right.
\]
\end{defin}

\begin{conse} \label{2k<n} Let $2k<n$, then we have the formula:
\[
S_{k,k,k,k}\Phi_{n}\equiv (C^{4}_{2(n-2k)}+\mu(n,k))\Phi_{n-2k} \mod{2}.
\]
\end{conse}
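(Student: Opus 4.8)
The plan is to follow the pattern of the previous corollaries and reduce the assertion to a congruence for the single coefficient $\alpha^{n}_{4;k}$ modulo~$2$. Since $2k<n$ forces $4k\le 2n-2<2n-1$, and $4k$ is even so the exceptional case $4k=2n-1$ cannot occur, Lemma~1.2 applies and gives $S_{k,k,k,k}\Phi_n=\alpha^{n}_{4;k}\Phi_{n-2k}$ when $k$ is odd, and $S_{k,k,k,k}\Phi_n=(\alpha^{n}_{4;k}+\alpha^{n}_{3;k})\Phi_{n-2k}$ when $k$ is even. In the latter case $\alpha^{n}_{3;k}\equiv 0\pmod{2}$ by Remark~\ref{C2n-1}, so in every case $S_{k,k,k,k}\Phi_n\equiv\alpha^{n}_{4;k}\Phi_{n-2k}\pmod{2}$, and it remains to prove $\alpha^{n}_{4;k}\equiv C^{4}_{2(n-2k)}+\mu(n,k)\pmod{2}$.

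For this I would set $m=4$ in the formula for $\alpha^{n}_{m;k}$ of Lemma~1.1, write out the eight summands (one with $r=0$, three with $r=1$, three with $r=2$, one with $r=3$), and reduce them one at a time modulo~$2$. The main simplification is that $C^{1}_{2n-jk}=2n-jk\equiv jk\pmod{2}$, so $C^{1}_{2n-2k}\equiv C^{1}_{2n-4k}\equiv 0\pmod{2}$; this annihilates every summand carrying a factor $C^{1}_{2n-2k}$ or $C^{1}_{2n-4k}$, and the only terms that can survive are $-C^{4}_{2n-4k}$, $C^{2}_{2n-2k}C^{2}_{2n-4k}$, and $C^{1}_{2n-k}C^{3}_{2n-4k}$. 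The last is in fact also $\equiv0\pmod{2}$, because $C^{3}_{2n-4k}=C^{3}_{2(n-2k)}$ and $C^{3}_{2a}=\tfrac{2a(a-1)(2a-1)}{3}$ is always even (indeed a multiple of~$4$, since $a(a-1)$ is even). Hence $\alpha^{n}_{4;k}\equiv C^{4}_{2(n-2k)}+C^{2}_{2(n-k)}C^{2}_{2(n-2k)}\pmod{2}$.

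Finally I would use $C^{2}_{2a}=a(2a-1)\equiv a\pmod{2}$, which gives $C^{2}_{2(n-k)}C^{2}_{2(n-2k)}\equiv (n-k)(n-2k)\equiv (n-k)n\pmod{2}$; and $(n-k)n$ is odd precisely when $n$ and $n-k$ are both odd, i.e.\ when $n$ is odd and $k$ is even, which is exactly the condition $\mu(n,k)=1$. Combining these congruences yields $\alpha^{n}_{4;k}\equiv C^{4}_{2(n-2k)}+\mu(n,k)\pmod{2}$, and the formula of the corollary follows. The main obstacle I expect is purely organisational: expanding $\alpha^{n}_{4;k}$ correctly and keeping the modulo-$2$ bookkeeping of the eight terms straight, together with the one non-automatic point that $C^{3}_{2a}\equiv0\pmod{2}$; everything else is the same chain of binomial-coefficient reductions already carried out for $S_{k,k}$ and $S_{k,k,k}$.
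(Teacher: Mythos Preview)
Your proof is correct and follows essentially the same route as the paper's: reduce to $\alpha^{n}_{4;k}\pmod 2$, observe that only $C^{4}_{2(n-2k)}$ and $C^{2}_{2(n-k)}C^{2}_{2(n-2k)}$ survive, and identify the latter mod~$2$ with $\mu(n,k)$. You supply more detail than the paper (which simply asserts ``all the summands except \dots\ are even''), in particular the explicit elimination via $C^{1}_{2(n-k)}\equiv C^{1}_{2(n-2k)}\equiv 0$ and the verification that $C^{3}_{2(n-2k)}$ is even; this is a welcome clarification rather than a different argument.
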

\begin{proof} According to Remark~\ref{C2n-1}
$\alpha ^{n}_{3;k} \equiv  0 \mod{2}$, if $k$ is even. In the decomposition
of the number $\alpha ^{n}_{4;k}$ all the summands except $C^{4}_{2(n-2k)}$
and $C^{2}_{2(n-k)}C^{2}_{2(n-2k)}=(n-k)(n-2k)$ are even. We have
 $(n-k)(n-2k) \equiv 1 \mod{2}$ only if $n$ is odd and $k$ is even.
\end{proof}

\begin{conse} a) If $k=2s$ and $5k<2n-1$, then we have the formula:

\[
S_{k,k,k,k,k}\Phi_{n} \equiv (C^{4}_{2(n-2k)}+ n)\Phi_{n-5s} \mod{2}.
\]
b) If $5k=2n-1,$ then we have the identity:
\[
S_{k,k,k,k,k}\Phi_{n} \equiv  C^{4}_{k+1}\theta_{1} \mod{2}.
\]
\end{conse}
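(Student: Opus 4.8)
The plan is to follow the same two–step scheme already used for $S_{k,k}$, $S_{k,k,k}$ and $S_{k,k,k,k}$: reduce $S_{k,k,k,k,k}\Phi_n$ (which is $S_\omega\Phi_n$ for $\omega=(k,k,k,k,k)$) by Lemma~1.2 to the coefficients $\alpha^n_{m;k}$ and $\gamma^n_{m;k}$ of Lemma~1.1 with $m=5$, and then evaluate those coefficients modulo~$2$.

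For part~a), the hypotheses $k=2s$ and $5k<2n-1$ place us in case~b) of Lemma~1.2 with $m=5$, so
\[
S_{k,k,k,k,k}\Phi_n=(\alpha^n_{5;k}+\alpha^n_{4;k})\,\Phi_{n-5s}.
\]
Since $5=2\cdot 3-1$ is odd and $k$ is even, Remark~\ref{C2n-1} gives $\alpha^n_{5;k}\equiv 0$, and for the same reason $\alpha^n_{3;k}\equiv 0 \pmod 2$. The inequality $5k<2n-1$ forces $2k<n$, so Corollary~\ref{2k<n} applies, and combined with $\alpha^n_{3;k}\equiv 0$ it yields $\alpha^n_{4;k}\equiv C^4_{2(n-2k)}+\mu(n,k)\pmod 2$. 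Finally, for even $k$ one has $\mu(n,k)\equiv n\pmod 2$ directly from the definition of $\mu$. Adding the three congruences gives $\alpha^n_{5;k}+\alpha^n_{4;k}\equiv C^4_{2(n-2k)}+n\pmod 2$, which is the assertion; this half is essentially immediate once Remark~\ref{C2n-1} and Corollary~\ref{2k<n} are in hand.

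For part~b), $5k=2n-1$ is precisely the hypothesis $mk=2n-1$ of case~c) of Lemma~1.2 with $m=5$, so $S_{k,k,k,k,k}\Phi_n=\gamma^n_{5;k}\,\theta_1$, and everything reduces to showing $\gamma^n_{5;k}\equiv C^4_{k+1}\pmod 2$. I would expand $\gamma^n_{5;k}$ by the formula of Lemma~1.1~b): the intermediate indices $i_j$ are then confined to $\{2,3,4\}$, so the sum has only finitely many summands (at most ten), grouped by $r\in\{1,\dots,4\}$; the summand with no intermediate index is the leading binomial $C^{m-1}_{k+1}=C^4_{k+1}$, exactly as $C^2_{k+1}$ occurs in $\gamma^n_{3;k}$ in the proof of Corollary~1.2. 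The key observation is that $5k=2n-1$ forces $k$ to be odd, whence $k+1$ and $3k+1$ are even; thus every summand other than $C^4_{k+1}$ carries a factor $C^1_{k+1}$ or $C^1_{3k+1}$, with the two exceptional summands $-C^1_{4k+1}C^3_{k+1}$ and $-C^2_{3k+1}C^2_{k+1}$ disposed of by the elementary congruences $C^3_{2j}\equiv 0$ and $C^2_{2j}\equiv j\pmod 2$ (the two values of $j$ in the last product having opposite parity).

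This finite bookkeeping over the handful of summands of $\gamma^n_{5;k}$ is the only real work in the statement — no new ideas beyond those already deployed for $S_{k,k},\dots,S_{k,k,k,k}$ are needed — and it produces $\gamma^n_{5;k}\equiv C^4_{k+1}\pmod 2$, completing the proof of b).
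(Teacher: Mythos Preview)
Your proof is correct and follows essentially the same route as the paper: part~a) reduces via Lemma~1.2(b) and Remark~\ref{C2n-1} to $\alpha^n_{4;k}$, which is read off from Corollary~\ref{2k<n} (noting $\mu(n,k)\equiv n$ for even $k$); part~b) expands $\gamma^n_{5;k}$, kills all summands carrying $C^1_{k+1}$, $C^1_{3k+1}$ or $C^3_{k+1}$ (even because $k$ is odd), and disposes of the surviving $C^2_{3k+1}C^2_{k+1}\equiv (3s-1)s\equiv 0$ exactly as the paper does. You have in fact silently corrected a typo in the paper, which writes ``because $k$ is even'' where ``odd'' is meant.
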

\begin{proof} a) According to Remark~\ref{C2n-1}
$\alpha ^{n}_{5,2s}\equiv  0 \mod{2}$.
We have,
$S_{k,k,k,k,k}\Phi _{n} \equiv  \alpha ^{n}_{4;k} \Phi _{n-5s} \mod{2}$.
According to Corollary~\ref{2k<n} $\alpha^{n}_{4,k}\equiv (C^{4}_{2(n-2k)}
+ (n-k)(n-2k)) \mod{2}$. However for $k=2s$ we have
$(n-2s)(n-4s) \equiv n \mod{2}$.

b) In the decomposition of the number $\gamma^{n}_{5;k}$ the majority of
summands contain factors $C^{1}_{k+1}$ or $C^{1}_{3k+1}$, $C^{3}_{k+1}$,
and because $k$ is even, these factors are even.
So we have $\gamma^{n}_{5;k}$ $\equiv$ $(C^{4}_{k+1}$
$- C^{2}_{3k+1}C^{2}_{k+1})$ $\mod{2}$.
Let $k=2s-1$, then
$C^{2}_{3k+1}C^{2}_{k+1}$ $ = $ $C^{2}_{2(3s-1)}C^{2}_{2s}$
$\equiv$ $(3s-1)s $
$\equiv  0 \mod{2}$.
\end{proof}

\begin{conse}\label{3k<n} Let $3k < n$ then we have the formula:
\[
S_{k,k,k,k,k,k} \Phi_{n} \equiv (\frac{(n+1)}{2} \mu(n;k) +
\frac{(n-2)}{2} \mu(k;n) )\Phi_{n-3k} \mod{2}.
\]
\end{conse}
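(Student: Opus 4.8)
The plan is to compute $S_{k,k,k,k,k,k}\Phi_n$ by applying part~$b)$ of Lemma~1.2 with $m=6$: since $3k<n$ forces $6k<2n-1$, we have
\[
S_{k,k,k,k,k,k}\Phi_n \equiv (\alpha^n_{6;k}+\alpha^n_{5;k})\Phi_{n-3k}\mod 2.
\]
The first step is to kill the odd-indexed term. When $k$ is odd, neither $6k$ nor $5k$ has the shape needed for the residue reduction to land on $\Phi_{n-3k}$ in the stated way, and in fact $\mu(n;k)=\mu(k;n)=0$ for $k$ odd, so the claimed formula reads $S_{k,k,k,k,k,k}\Phi_n\equiv 0$; this will follow once we show both $\alpha^n_{6;k}$ and $\alpha^n_{5;k}$ are even. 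When $k=2q$ is even, Remark~\ref{C2n-1} gives $\alpha^n_{5;2q}\equiv\alpha^n_{2s-1;2q}\equiv 0\mod 2$, so it remains only to evaluate $\alpha^n_{6;k}\mod 2$.

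The core computation is therefore the reduction of $\alpha^n_{6;k}\mod 2$ using the explicit sum of products of binomial coefficients from Lemma~1.1~a). Following exactly the bookkeeping used in the proofs of Corollaries~\ref{2k<n} and the preceding one, I would argue that in the expansion
\[
\alpha^n_{6;k}=\sum_{r\ge 0}\sum_{6>i_r>\cdots>i_1>0}(-1)^{r+1}
C^{6-i_r}_{2n-(6-i_r)k}\cdots C^{i_1}_{2n-6k},
\]
every composition of $6$ into parts other than the two surviving ones contributes an even term. By Remark~\ref{C2n-1}, any factor $C^{\text{odd}}_{\text{even}}$ (which is what $C^{j}_{2n-jk}$ becomes when $j$ is odd and $k$ is even, provided $2n-jk$ is even) vanishes mod~$2$; hence a composition survives only if all its parts are even, i.e. the compositions $(6)$, $(4,2)$, $(2,4)$, $(2,2,2)$. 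This leaves
\[
\alpha^n_{6;k}\equiv C^{6}_{2(n-3k)}
-\bigl(C^{4}_{2(n-3k)}C^{2}_{2(n-k)}+C^{2}_{2(n-2k)}C^{4}_{2(n-3k)}\bigr)
+C^{2}_{2(n-k)}C^{2}_{2(n-2k)}C^{2}_{2(n-3k)}\mod 2,
\]
and now one reduces each surviving binomial coefficient with the two standard facts already in play: $C^{2}_{2a}\equiv a\mod 2$, and Lucas/Kummer for $C^{6}_{2b}$ and $C^{4}_{2b}$ (the latter is exactly $C^4_{2(n-2k)}$-type as handled in Corollary~\ref{2k<n}, and $C^6_{2b}\equiv\binom{3}{1}\equiv 1$ when the relevant binary digit condition holds, else $0$). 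Setting $k=2q$ and tracking parities of $n-q$, $n-2q$, $n-3q$ will collapse the whole expression to the asserted value $\tfrac{n+1}{2}\mu(n;k)+\tfrac{n-2}{2}\mu(k;n)$; note $\mu(n;k)$ picks out $n$ odd, $k$ even, while $\mu(k;n)$ picks out $k$ odd, $n$ even, so for $k$ even only the first term can be nonzero and one must verify the coefficient $\tfrac{n+1}{2}$ is the correct mod-$2$ value of the binomial combination in that case, with everything vanishing when $n$ is even.

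The main obstacle I anticipate is purely the combinatorial reduction of $C^{6}_{2(n-3k)}$ and the cross terms $C^{4}_{2(n-3k)}C^{2}_{2(n-k)}$ etc. mod~$2$: unlike the $C^2$ and $C^4$ cases, $C^6_{2b}\bmod 2$ depends on two binary digits of $b$, so the argument cannot stay at the level of "$n-k$ is odd/even" but must distinguish residues of $n-3k$ modulo $4$. Organizing these cases so that the four surviving compositions recombine cleanly into the compact $\mu$-expression — rather than into an unenlightening case table — is where the real work lies; everything else is a direct application of Lemmas~1.1 and~1.2 and Remark~\ref{C2n-1}.
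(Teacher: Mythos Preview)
Your overall strategy—reduce to computing $\alpha^n_{6;k}\bmod 2$ via the even-composition survivors $(6),(4,2),(2,4),(2,2,2)$—is exactly the paper's route. But there is a genuine error in your treatment of the odd-$k$ case.

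You write that ``$\mu(n;k)=\mu(k;n)=0$ for $k$ odd''. This is false: by definition $\mu(k;n)=1$ precisely when $k$ is odd and $n$ is even. So for $k$ odd and $n$ even the asserted formula reads $S_{k,\ldots,k}\Phi_n\equiv\frac{n-2}{2}\Phi_{n-3k}$, which is \emph{not} identically zero. Your plan to dispose of odd $k$ by showing $\alpha^n_{6;k}$ and $\alpha^n_{5;k}$ are both even therefore cannot succeed. What actually happens for $k$ odd is that the $\alpha^n_{5;k}$ term never enters (only $h=0$ contributes in Kochman's formula, since $b_{2h}=b_k$ is impossible for odd $k$), and the same four-term expansion of $\alpha^n_{6;k}$ remains valid because the inductive argument forcing all $i_j$ even uses only that $2n-6k$ is even, which holds for every $k$. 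You then have to carry the case $n$ even, $k$ odd through the binomial reduction and see the $\frac{n-2}{2}$ emerge—this is where the $\mu(k;n)$ term comes from.

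Two smaller points. First, your displayed four-term expansion has the lower indices on the cross terms scrambled: the surviving compositions give $C^{4}_{2(n-2k)}C^{2}_{2(n-3k)}$ and $C^{2}_{2(n-k)}C^{4}_{2(n-3k)}$, not what you wrote. Second, the paper does not use Lucas/Kummer but instead expands $C^6_{2a},C^4_{2a},C^2_{2a}$ as explicit polynomials in $a$ and simplifies the resulting cubic in $n,k$ directly; this avoids the mod-$4$ case split you anticipate. The paper also checks the borderline values $3k=n-1$ and $3k=n-2$ separately, where some binomial coefficients degenerate; your general argument tacitly assumes $3k\le n-3$.
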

\begin{proof} If $k$ is even, then according to Remark~\ref{C2n-1}
we have $\alpha ^{n}_{5;k} \equiv 0 \mod{2}$, and there is 
the following comparison
$\operatorname{mod}~2$:
\begin{multline*}
\alpha ^{n}_{6;k} \equiv - C^{6}_{2(n-3k)}+
C^{4}_{2(n-2k)}C^{2}_{2(n-3k)}+C^{2}_{2(n-k)}C^{4}_{2(n-3k)}\\
-C^{2}_{2(n-k)}C^{2}_{2(n-2k)}C^{2}_{2(n-3k)}
\equiv -\frac{(n-3k)(n-3k-1)(n-3k-2)}{ 2} \\
+ \frac{(n-2k)(n-2k-1)(n-3k)}{ 2} + \frac{(n-3k)(n-3k-1)(n-k)}{ 2} \\
-(n-k)(n-2k)(n-3k) \equiv \frac{n-3k}{ 2}(-(n-3k-1)(n-3k-2)\\
+(n-2k)(n-2k-1) +(n-3k-1)(n-k))-\mu(n;k) \\
\equiv \frac{n-3k}{ 2} ((n-3k-1)(k+1)2+(n-2k-1)(n-2))-\mu (n;k) \\
\equiv  1/2(n-3k)(n-2k-1)(n-2k) - \mu(n;k) \mod{2}.
\end{multline*}
There is also such a comparison
$\operatorname{mod}~2$:
\begin{multline*}
(1/2)(n-3k)(n-2k)(n-2k-1) \\
\equiv \left\{
\begin{array}{rl}
(n-2k)/2, & \mbox{if $n$ is even and $k$ is odd,} \\
(n-2k-1)/2, & \mbox{if $k$ is even and $n$ is odd,} \\
0, & \mbox{in the other cases.}
\end{array} \right.
\end{multline*}
Hence we can write
$\alpha^{n}_{6;k} \equiv  \mu(n;k)((n-2k-1)/2)
+\mu(k;n)((n-2k)/2)-\mu(n;k) \equiv  \mu(n;k)((n+1)/2)-\mu(k;n)((n-2)/2)
\mod{2}$.
In all our calculations we supposed that
 $3k\le n-3$. Let now $3k=n-2$, then we have
$\alpha^{n}_{6;k} \equiv  2(k+2)-3(k+1)(k+2) \equiv  0 \mod{2}$.
It follows from the expression $3k=n-2$ that $k \equiv  n$
$\operatorname{mod}$ 2. Let $3k=n-1$, then:
$\alpha ^{n}_{6;k} \equiv  -C^{4}_{2(k+1)}+(k+1)$ $\operatorname{mod}$ 2.
Consider separately various cases. Let $k=1$, then:
$n=4, \alpha ^{4}_{6;1} \equiv  1$ $\operatorname{mod}$ 2. This coincides
with the value given by the formula. Let now $k>1$. We have:
$\alpha^{k}_{6;k} \equiv - (k+1)k/2 +k+1 \equiv
(k+1)(k-2) \mod{2}$. If $k$ is even (then $n$ is odd), this coincides
with $(k-2)/2 \equiv  (n-3)/2 \equiv (n+1)/2 \mod{2}$. If $k$ is odd
($n$ is even), this is comparable with
$(k+1)/2 \equiv  3(k+1)/2 \equiv (n-2)/2 \mod{2}$.\end{proof}

\begin{conse} Let $k=2s$, $7k < 2n-1$, then there is the formula:
$$
S_{k,k,k,k,k,k,k}\Phi_{n} \equiv n[(n+1)/2]\Phi_{n-7s} \mod{2},
$$
($[x]$ denotes the integer part of the number $x$).
\end{conse}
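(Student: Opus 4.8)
The plan is to express $S_{k,k,k,k,k,k,k}\Phi_n$ through the coefficients $\alpha^n_{m;k}$ and then to reuse the mod~$2$ evaluations already obtained in the preceding corollaries. First I would apply Lemma~1.2(b) with $m=7$: since $k=2s$ is even and $7k<2n-1$, that lemma gives $S_{k,k,k,k,k,k,k}\Phi_n=(\alpha^n_{7;k}+\alpha^n_{6;k})\Phi_{n-7s}$, so everything comes down to understanding the two coefficients $\alpha^n_{7;k}$ and $\alpha^n_{6;k}$ modulo~$2$.

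Next I would discard the first summand. By Remark~\ref{C2n-1} the coefficient $\alpha^n_{2s-1;2q}$ is always even; applying this with the odd index $7$ in place of $2s-1$ and the even step $k=2s$ shows $\alpha^n_{7;k}\equiv 0\pmod 2$. Hence the statement reduces to proving $\alpha^n_{6;k}\equiv n[(n+1)/2]\pmod 2$ for $k$ even.

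For this I would invoke the computation carried out inside the proof of Corollary~\ref{3k<n}: the hypothesis $7k<2n-1$ forces $3k<n$, so that computation applies here. For $k$ even one has $\mu(k;n)=0$, and $\mu(n;k)=1$ precisely when $n$ is odd; the conclusion of that proof then reads $\alpha^n_{6;k}\equiv (n+1)/2\pmod 2$ when $n$ is odd and $\alpha^n_{6;k}\equiv 0\pmod 2$ when $n$ is even. It remains to match this with $n[(n+1)/2]$ modulo~$2$: if $n$ is even the product is even, and if $n$ is odd then $[(n+1)/2]=(n+1)/2$ and $n(n+1)/2\equiv (n+1)/2\pmod 2$. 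Comparing the two cases yields the asserted formula.

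The step I expect to be the main (though still modest) obstacle is the bookkeeping of side conditions: one must check that $7k<2n-1$ genuinely subsumes everything needed, in particular the two borderline regimes $3k=n-1$ and $3k=n-2$ that are treated separately inside the proof of Corollary~\ref{3k<n} (the first cannot occur under $7k<2n-1$, while the second forces $n=8$, $k=2$, where both sides of the claimed congruence are even). Beyond that, the argument is just the mod~$2$ arithmetic of the function $\mu$ and of binomial coefficients, all of which has already been carried out in the earlier corollaries.
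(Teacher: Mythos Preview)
Your proposal is correct and follows essentially the same route as the paper: apply Lemma~1.2(b) with $m=7$, kill $\alpha^n_{7;k}$ via Remark~\ref{C2n-1}, and read off $\alpha^n_{6;k}$ from the formula established in Corollary~\ref{3k<n}, specialised to even~$k$. The paper is terser---it simply records $\mu(n;2s)\equiv n$ and $\mu(2s;n)\equiv 0$ and concludes---while you spell out the case split on the parity of~$n$ and the verification that the hypothesis $7k<2n-1$ subsumes $3k<n$ together with the borderline cases; this extra bookkeeping is sound but not something the paper bothers to make explicit.
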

\begin{proof} We have $\alpha ^{n}_{7;2s}\equiv  0$ $\operatorname{mod}$~2,
and the formula for $\alpha ^{n}_{6;2s}$ from Corollary~\ref{3k<n} becomes
(because of relations $\mu (n;2s)\equiv n \mod{2},
\mu (2s;n) \equiv  0 \mod{2}$), the following comparison
$\alpha ^{n}_{6;2s} \equiv  n[(n+1)/2] \mod{2}$. \end{proof}

\begin{conse} Let $4k <n$, then we have the formula:
\[
S_{k,k,k,k,k,k,k,k}\Phi_{n}\equiv
(\mu(n;k)+(n/2)\mu(n+1;k)+C^{8}_{2(n-4k)}) \Phi _{n-4k}\mod{2}.
\]
\end{conse}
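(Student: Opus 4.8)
\medskip
\noindent\emph{Sketch of the proof.}\quad
The plan is to reduce the assertion, via Lemma~1.2, to a single congruence modulo~$2$ for the coefficient $\alpha^{n}_{8;k}$, and then to compute $\alpha^{n}_{8;k}$ exactly as in the proof of Corollary~\ref{3k<n}. Since $4k<n$ we have $8k\le 2n-2<2n-1$, so for $k$ even Lemma~1.2(b) applies and gives
\[
S_{k,k,k,k,k,k,k,k}\Phi_{n}=\bigl(\alpha^{n}_{8;k}+\alpha^{n}_{7;k}\bigr)\Phi_{n-4k}.
\]
By Remark~\ref{C2n-1}, $\alpha^{n}_{7;k}\equiv 0\pmod 2$, so it remains to prove
\[
\alpha^{n}_{8;k}\equiv \mu(n;k)+\tfrac{n}{2}\,\mu(n+1;k)+C^{8}_{2(n-4k)}\pmod 2 .
\]

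To this end, I would expand $\alpha^{n}_{8;k}$ by Lemma~1.1 as a signed sum over the decompositions of $8$, the decomposition $(a_{0},\dots,a_{r})$ contributing $(-1)^{r+1}\prod_{j}C^{a_{j}}_{2n-(a_{0}+\dots+a_{j})k}$. As in the proof of Corollary~\ref{3k<n}, modulo~$2$ any decomposition with an odd part contributes $0$, since it contains a factor $C^{\mathrm{odd}}_{\mathrm{even}}\equiv 0$. Hence only the eight summands attached to decompositions of $8$ into even parts survive modulo~$2$: those of $(8)$, $(6,2)$, $(2,6)$, $(4,4)$, $(4,2,2)$, $(2,4,2)$, $(2,2,4)$ and $(2,2,2,2)$. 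The summand of $(8)$ equals $-C^{8}_{2(n-4k)}$, i.e.\ $C^{8}_{2(n-4k)}$ modulo~$2$.

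In each of the other seven summands one replaces every factor $C^{2b}_{2a}$ by $C^{b}_{a}$ (using $C^{2b}_{2a}\equiv C^{b}_{a}\pmod 2$) and evaluates the resulting binomial coefficients $C^{d}_{n-ck}$, $d\in\{1,2,3\}$, $c\in\{1,2,3,4\}$, by Lucas' theorem: modulo~$2$, $C^{1}_{a}\equiv a$, $C^{2}_{a}\equiv\lfloor a/2\rfloor$, and $C^{3}_{a}\equiv C^{1}_{a}C^{2}_{a}$. For $k$ even one has $n-ck\equiv n\pmod 2$ for every $c$, and $n-ck\equiv n\pmod 4$ for $c\in\{2,4\}$, whereas $n-3k\equiv n\pmod 4$ or $n-3k\equiv n-2\pmod 4$ according as $k/2$ is even or odd; the latter ambiguity enters only the summands of $(6,2)$ and $(2,4,2)$, where it cancels. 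A short computation, separating $n$ even from $n$ odd, then identifies the sum of the seven summands with $\mu(n;k)+\tfrac{n}{2}\mu(n+1;k)$ modulo~$2$, which completes the case of even $k$. For odd $k$ one works from the same expression $\bigl(\alpha^{n}_{8;k}+\alpha^{n}_{7;k}\bigr)\Phi_{n-4k}$, exactly as in the proofs of Corollaries~\ref{2k<n} and~\ref{3k<n}: all seven mixed summands now vanish modulo~$2$, so $S_{k,\dots,k}\Phi_{n}\equiv C^{8}_{2(n-4k)}\Phi_{n-4k}$, in agreement with $\mu(n;k)=\mu(n+1;k)=0$.

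The only genuine work is the last, purely arithmetical step: arranging the seven surviving summands and checking that the carry-dependent contributions cancel. Everything else is mechanical use of the congruence $C^{2b}_{2a}\equiv C^{b}_{a}$, of Lucas' theorem, and of Remark~\ref{C2n-1}. Unlike in Corollary~\ref{3k<n}, no boundary cases occur, because $4k<n$ already forces $n-4k\ge 1$ and $8k\le 2n-2$, so all the reductions above hold for every admissible~$n$.
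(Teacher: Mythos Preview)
Your approach is essentially the paper's: reduce to computing $\alpha^{n}_{8;k}\pmod 2$, discard all compositions of $8$ containing an odd part, and evaluate the eight surviving terms. The paper simplifies the seven ``mixed'' terms to the closed polynomial $\mu(n;k)+(n-2k)(n-2k-1)(n-2k+1)(n-4k)/4$ and then splits on the parity of $n$; you instead halve each $C^{2b}_{2a}$ to $C^{b}_{a}$ via Lucas and evaluate directly. Both routes are equivalent. Your observation that the $k/2$-dependent terms coming from $(6,2)$ and $(2,4,2)$ cancel is correct, and your claim that no boundary analysis is needed is also right: Lucas' identities $C^{2b}_{2a}\equiv C^{b}_{a}$, $C^{1}_{a}\equiv a$, $C^{2}_{a}\equiv\lfloor a/2\rfloor$, $C^{3}_{a}\equiv a\lfloor a/2\rfloor$ hold for every $a\ge 0$, so the computation is uniform in $n-4k\ge 1$. (The paper's separate checks for $n-4k\in\{1,2,3\}$ are in fact redundant---its intermediate polynomial expression already specializes correctly---so you are not skipping anything.)

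One genuine slip: your invocation of Lemma~1.2(b) is wrong. For $m=8$ even the operation is governed by case~(a), which gives $S_{k,\dots,k}\Phi_{n}=\alpha^{n}_{8;k}\,\Phi_{n-4k}$ directly, with no $\alpha^{n}_{7;k}$ term; case~(b) is for $m$ odd. For $k$ even this costs you nothing, since you then kill $\alpha^{n}_{7;k}$ via Remark~\ref{C2n-1}. But for $k$ odd your sentence ``one works from the same expression $(\alpha^{n}_{8;k}+\alpha^{n}_{7;k})\Phi_{n-4k}$'' is not meaningful: when $k$ is odd, $7k$ is odd and $\alpha^{n}_{7;k}$ is not even defined (Lemma~1.1(a) requires $mk$ even). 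Simply cite case~(a) for $m$ even and any $k$, and the rest of your odd-$k$ argument (that all seven mixed summands vanish) goes through unchanged.
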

\begin{proof} Let us calculate the value of $\alpha^{n}_{8;k}$.
Having in mind Remark~\ref{C2n-1} we have the comparison $\mod~2$
(if $4k \le  n-4$):
\begin{multline*}
\alpha ^{n}_{8;k}\equiv -C^{8}_{2(n-4k)}+
C^{6}_{2(n-3k)}C^{2}_{2(n-4k)} + C^{4}_{2(n-2k)}C^{4}_{2(n-4k)} \\
+C^{2}_{2(n-k)}C^{6}_{2(n-4k)}-
C^{4}_{2(n-2k)}C^{2}_{2(n-3k)}C^{2}_{2(n-4k)} \\
- C^{2}_{2(n-k)}C^{4}_{2(n-3k)}C^{2}_{2(n-4k)} -
C^{2}_{2(n-k)}C^{2}_{2(n-2k)}C^{4}_{2(n-4k)} \\
+ C^{2}_{2(n-k)}C^{2}_{2(n-2k)}C^{2}_{2(n-3k)}C^{2}_{2(n-4k)}
\equiv C^{8}_{2(n-4k)} \\
+ \mu(n,k) + (n-2k)(n-2k-1)(n-2k+1)(n-4k)/4.
\end{multline*}
If $n = 2s+1$, then
$\frac{(n-2k)(n-2k-1)(n-2k+1)(n-4k)}{ 4} \equiv  s(s+1) \equiv  0 \mod{2}$.
If $n=2s$, then
\begin{multline*}
\frac{(n-2k)(n-2k-1)(n-2k+1)(n-4k)}{ 4} \equiv (n/2-k)(n/2-2k) \\
\equiv \left\{
\begin{array}{rl}
n/2, & \mbox{if } k \mbox{ is even,} \\
0,  & \mbox{if } k \mbox{ is odd.}
\end{array} \right.
\end{multline*}
So,
${(n-2k)(n-2k-1)(n-2k+1)(n-4k)/4} \equiv  \mu(n+1;k)(n/2).$
Let $n-4k=3$, we have
\begin{multline*}
\alpha ^{4k+3}_{8;k} \equiv  C^{6}_{2(k+3)} + C^{4}_{2(2k+3)} +
C^{2}_{2(3k+3)} \\
- C^{4}_{2(2k+3)}C^{2}_{2(k+3)} -C^{4}_{2(k+3)}C^{2}_{2(3k+3)}
-C^{2}_{2(2k+3)}C^{2}_{2(3k+3)}\\
+ C^{2}_{2(2k+3)}C^{2}_{2(3k+3)}C^{2}_{2(k+3)} \equiv  k+1
\equiv  \mu(4k+3;k)\mod{2}.
\end{multline*}
Let $n-4k=2$, then
\begin{multline*}
\alpha^{4k+2}_{8;k} \equiv  C^{4}_{2(2k+2)} -
C^{2}_{2(2k+2)}C^{2}_{2(3k+2)} \\
\equiv  (k+1)(2k+1) \equiv  \mu (n+1;k)(n/2)\mod{2}.
\end{multline*}
Finally, let $n-4k=1$, then we get
\[
\alpha ^{4k+1}_{8;k} \equiv  k+1 \equiv  \mu (n;k)\mod{2} .
\]
\end{proof}

\begin{conse} If $k=2s$ and $9k<2n$, then
\[
S_{k,k,k,k,k,k,k,k,k} \Phi_{n} \equiv
(n+(n+1)[n/2]+C^{8}_{2(n-4k)})\Phi _{n-9s}\mod{2}.
\]
\end{conse}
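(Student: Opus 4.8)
The plan is to follow the template of all the preceding corollaries: reduce the nine-fold iterate of $S$ to a single coefficient by Lemma~1.2, discard the odd-length coefficient by Remark~\ref{C2n-1}, import the value of the even-length coefficient from the computation already carried out for the eight-fold case, and finish with an elementary parity identity.

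First I would invoke Lemma~1.2 (the formula for $S_{k,\dots,k}\Phi_n$ with $k$ taken $m$ times). With $m=9$ and $k=2s$ even, $9k$ is even, so the exceptional case $9k=2n-1$ of that lemma cannot occur, and the hypothesis $9k<2n$ forces $9k\le 2n-2<2n-1$; hence case (b) applies and
\[
S_{k,k,k,k,k,k,k,k,k}\Phi_n=(\alpha^n_{9;k}+\alpha^n_{8;k})\,\Phi_{n-9s}.
\]
Since $9=2\cdot 5-1$ is odd and $k=2s$ is even, Remark~\ref{C2n-1} gives $\alpha^n_{9;k}\equiv 0\pmod 2$, whence
\[
S_{k,k,k,k,k,k,k,k,k}\Phi_n\equiv \alpha^n_{8;k}\,\Phi_{n-9s}\pmod 2 .
\]

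Next, $9k<2n$ forces $4k<n$, and the proof of the preceding corollary (the eight-fold case) in fact evaluates precisely the coefficient I now need:
\[
\alpha^n_{8;k}\equiv C^8_{2(n-4k)}+\mu(n;k)+(n/2)\,\mu(n+1;k)\pmod 2 .
\]
It then remains to check, for $k$ even, the parity identity $\mu(n;k)+(n/2)\mu(n+1;k)\equiv n+(n+1)[n/2]\pmod 2$. For $k$ even we have $\mu(n;k)\equiv n$ and $\mu(n+1;k)\equiv n+1\pmod 2$; if $n$ is odd then $\mu(n+1;k)=0$, so $(n/2)\mu(n+1;k)=0$ while $(n+1)[n/2]=(n+1)(n-1)/2$ is even, and if $n$ is even then $[n/2]=n/2$ and $(n+1)[n/2]\equiv n/2\equiv(n/2)\mu(n+1;k)\pmod 2$. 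Adding $\mu(n;k)\equiv n$ on both sides yields the asserted coefficient $n+(n+1)[n/2]+C^8_{2(n-4k)}$.

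This corollary is essentially a restatement of the eight-fold one, so I expect no serious obstacle; the one point needing a line of care is the parity bookkeeping in the last step, in particular the reading of the term $(n/2)\mu(n+1;k)$ when $n$ is odd --- there it must be understood as $0$ (because $\mu(n+1;k)=0$), so that no actual halving of an odd integer takes place, exactly as in the eight-fold corollary.
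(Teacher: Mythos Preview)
Your proposal is correct and follows exactly the route the paper intends: the paper's own proof is the single line ``follows directly from the previous formula because of the condition $k=2s$,'' and you have simply spelled out that line in detail---reducing via Lemma~1.2(b), killing $\alpha^n_{9;k}$ by Remark~\ref{C2n-1}, importing $\alpha^n_{8;k}$ from the eight-fold computation, and checking the parity identity for $k$ even.
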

The proof follows directly form the previous formula because of
the condition $k=2s$.

\begin{conse} Let $5k<n$, then we have:
\[
S_{k,k,k,k,k,k,k,k,k,k} \Phi_{n} \equiv
(\mu(n;k) + (n-k)C^{8}_{2(n-4k)})\Phi _{n-5k}\mod{2}.
\]
\end{conse}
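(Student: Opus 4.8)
The plan is to follow the same pattern used in all the preceding corollaries: reduce the left–hand side, via Lemma~1.2, to a binomial–coefficient expression in $\alpha^{n}_{m;k}$ for the relevant values of $m$, and then extract the $\bmod\,2$ value of that expression using Remark~\ref{C2n-1} together with the already-established formulas for the lower $\alpha^{n}_{j;k}$. Since $k=2s$ is \emph{not} assumed here, $S_{k,k,k,k,k,k,k,k,k,k}\Phi_{n}$ equals $(\alpha^{n}_{10;k}+\alpha^{n}_{9;k})\Phi_{n-5k}$ by part (b) of Lemma~1.2 (the product $10k$ is even and $10k<2n-1$ under the hypothesis $5k<n$, so we are in case (b) with $m=10$).

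First I would dispose of the odd term: $\alpha^{n}_{9;k}$ has $m=9$ odd. If $k$ is even, Remark~\ref{C2n-1} gives $\alpha^{n}_{9;k}\equiv 0\bmod 2$ immediately. If $k$ is odd, I would argue as in Remark~\ref{C2n-1} that in any nonzero summand of $\alpha^{n}_{9;k}$ the parities of the indices $i_{1},\dots,i_{r}$ are forced so as to make $m=9$ even, a contradiction; hence $\alpha^{n}_{9;k}\equiv 0\bmod 2$ in all cases, and the answer is governed entirely by $\alpha^{n}_{10;k}$.

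Next I would compute $\alpha^{n}_{10;k}\bmod 2$. By Remark~\ref{C2n-1} every binomial coefficient $C^{i_{j+1}-i_{j}}_{2n-k(m-i_j)}$ with an even lower or odd upper entry whose value is of the form $C^{2a-1}_{2b}$ vanishes, so only the chains built from factors of the shape $C^{2}_{2c}$ (each contributing $c\bmod 2$) and the single top coefficient $C^{10}_{2(n-5k)}$ can survive. The surviving contributions are $C^{10}_{2(n-5k)}$ together with $C^{2}_{2(n-k)}C^{2}_{2(n-2k)}C^{2}_{2(n-3k)}C^{2}_{2(n-4k)}C^{2}_{2(n-5k)}\equiv (n-k)(n-2k)(n-3k)(n-4k)(n-5k)$ and the intermediate mixed chains; a direct $\bmod 2$ bookkeeping — entirely analogous to the computation of $\alpha^{n}_{8;k}$ in the previous corollary, now with one extra factor $C^{2}_{2(n-5k)}\equiv (n-5k)\equiv (n-k)\bmod 2$ tacked on — collapses all of these into $(n-k)\,\alpha^{n}_{8;k}\equiv (n-k)(C^{8}_{2(n-4k)}+\mu(n+1;k)(n/2)+\mu(n;k))$ plus the genuinely new term $C^{10}_{2(n-5k)}$, from which the stated closed form $\mu(n;k)+(n-k)C^{8}_{2(n-4k)}$ should emerge after noting that $(n-k)\mu(n+1;k)(n/2)\equiv 0$ and $(n-k)\mu(n;k)\equiv \mu(n;k)$ modulo $2$ (since $\mu(n;k)\neq 0$ forces $n$ odd, hence $n-k$ odd), and that $C^{10}_{2(n-5k)}$ is absorbed. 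As with the earlier corollaries, the boundary cases $n-5k\in\{1,2,3,4\}$ (where some $C^{2}_{2c}$ or $C^{8}_{2(n-4k)}$ degenerates) have to be checked separately by the same elementary manipulations of binomial coefficients modulo $2$.

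The main obstacle I anticipate is purely combinatorial: keeping track, modulo $2$, of the roughly $2^{9}$ signed chains in the definition of $\alpha^{n}_{10;k}$ and verifying that they really do telescope onto $(n-k)\alpha^{n}_{8;k}+C^{10}_{2(n-5k)}$, rather than any algebraic subtlety. The structural input — Lemma~1.2(b) to set up the reduction and Remark~\ref{C2n-1} to kill the odd-$m$ term and most of the even one — is routine; the real work is the careful mod-$2$ accounting and the handful of small-$n-5k$ edge cases, exactly as in the proof of the preceding corollary.
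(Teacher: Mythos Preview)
Your overall strategy --- reduce to $\alpha^{n}_{10;k}\bmod 2$ --- is exactly the paper's. But your key simplification step is wrong. The claim that ``only chains built from factors $C^{2}_{2c}$ and the single top coefficient $C^{10}_{2(n-5k)}$ can survive'' is false: Remark~\ref{C2n-1} only kills factors of the form $C^{\mathrm{odd}}_{\mathrm{even}}$, and $C^{4}_{2c}$, $C^{6}_{2c}$, $C^{8}_{2c}$ can each be odd. What actually survives in $\alpha^{n}_{10;k}$ is all sixteen compositions of $10$ into even parts, which is precisely what the paper writes out. Your recursive idea --- the eight chains whose last step is $2$ contribute $(n-5k)\,\alpha^{n}_{8;k}\equiv(n-k)C^{8}_{2(n-4k)}+\mu(n;k)$ --- is correct for those eight terms, and your parity remarks on $(n-k)\mu(n;k)$ and $(n-k)\mu(n+1;k)(n/2)$ are fine. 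But the remaining eight chains (last step $4$, $6$, $8$, or $10$) are not just $C^{10}_{2(n-5k)}$, and that term alone is not $\equiv 0$ in general (e.g.\ $C^{10}_{10}=1$), so it cannot simply be ``absorbed''. The paper treats all sixteen terms together and reduces them to $(n-k)C^{8}_{2(n-4k)}-\mu(n;k)+\tfrac{1}{4}(n-5k)(n-2k)(n-2k+1)(n-2k-1)(n-4k)$, then checks by a parity case analysis on $n,k$ that the last piece vanishes $\bmod\,2$. To complete your argument you would still need to show that the eight ``last step $\ge 4$'' terms sum to $0\bmod 2$, which is the same computation.

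A minor side issue: Lemma~1.2(b) requires $k$ even; for $k$ odd with $m=10$ even the operation is $\alpha^{n}_{10;k}\Phi_{n-5k}$ directly and no $\alpha^{n}_{9;k}$ term ever appears. Your attempt to kill $\alpha^{n}_{9;k}$ for odd $k$ ``as in Remark~\ref{C2n-1}'' does not work as stated, since that remark's parity-forcing uses that the lower indices $2n-k(m-i_{j})$ are even, which needs $k$ even. This is harmless for the final answer, since in both parities of $k$ one is left computing $\alpha^{n}_{10;k}$.
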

\begin{proof} If $5k<n-5$, then we have
\begin{multline*}
\alpha ^{n}_{10;k}\equiv -C^{10}_{2(n-5k)} +
C^{8}_{2(n-4k)}C^{2}_{2(n-5k)} +C^{6}_{2(n-3k)}C^{4}_{2(n-5k)} \\
+C^{4}_{2(n-2k)}C^{6}_{2(n-5k)} + C^{2}_{2(n-k)}C^{8}_{2(n-5k)} -
C^{6}_{2(n-3k)}C^{2}_{2(n-4k)}C^{2}_{2(n-5k)} \\
- C^{4}_{2(n-2k)}C^{4}_{2(n-4k)}C^{2}_{2(n-5k)} -
C^{4}_{2(n-2k)}C^{2}_{2(n-3k)}C^{2}_{2(n-4k)}C^{2}_{2(n-5k)} \\
- C^{2}_{2(n-k)}C^{6}_{2(n-4k)}C^{2}_{2(n-5k)} +
C^{2}_{2(n-k)}C^{4}_{2(n-3k)}C^{2}_{2(n-4k)}C^{2}_{2(n-5k)} \\
- C^{4}_{2(n-2k)}C^{2}_{2(n-3k)}C^{4}_{2(n-5k)} +
C^{2}_{2(n-k)}C^{2}_{2(n-2k)}C^{4}_{2(n-4k)}C^{2}_{2(n-5k)}  \\
- C^{2}_{2(n-k)}C^{4}_{2(n-3k)}C^{4}_{2(n-5k)} +
C^{2}_{2(n-k)}C^{2}_{2(n-2k)}C^{2}_{2(n-3k)}C^{4}_{2(n-5k)} \\
- C^{2}_{2(n-k)}C^{2}_{2(n-2k)}C^{2}_{2(n-3k)}
C^{2}_{2(n-4k)}C^{2}_{2(n-5k)} \\
- C^{2}_{2(n-k)}C^{2}_{2(n-2k)}C^{6}_{2(n-5k)}
\equiv (n-k)C^{8}_{2(n-4k)} \\
-\mu(n;k) +(n-5k)(1/4)(n-2k)(n-2k+1)(n-2k-1)(n-4k) \mod{2}
\end{multline*}
We have:
\begin{multline*}
\frac{(n-5k)(n-2k)(n-2k+1)(n-2k-1)(n-4k)}{ 4} \\ \equiv \left\{
\begin{array}{rl}
({n/2} - k){n/2} \equiv  0, & \mbox{if $n$ is even and $k$ is odd,} \\
({(n+1)/2})({(n-1)/2}) \equiv  0, &
\mbox{if $k$ is even and $n$ is odd,} \\
0, & \mbox{in the rest cases.}
\end{array} \right.
\end{multline*}
Cases when $n-5k<5$ are considered by the direct substitution.
 \end{proof}

\begin{conse} Let $k=2s$ and $11s<n$, then we have the formula:
\[
S_{k,k,k,k,k,k,k,k,k,k,k} \Phi_{n} \equiv
(n + nC^{8}_{2(n-4k)})\Phi_{n-11s}\mod{2}.
\]
\end{conse}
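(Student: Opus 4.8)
The plan is to reduce the statement, exactly as was done in the preceding corollaries for odd-length strings of $k$'s, to a value of $\alpha^{n}_{10;k}$ that has already been determined. First I would apply Lemma~1.2, part b), with $m=11$. The hypothesis $11s<n$ is equivalent to $11k=22s<2n-1$ (both inequalities say $22s\le 2n-2$), so the lemma applies and gives
\[
S_{k,k,k,k,k,k,k,k,k,k,k}\Phi_{n}\equiv(\alpha^{n}_{11;k}+\alpha^{n}_{10;k})\,\Phi_{n-11s}\pmod 2,
\]
the eleven copies of $k$ producing the index shift $n\mapsto n-11k/2=n-11s$ (and $11s<n$ guarantees $n-11s\ge 1$, so $\Phi_{n-11s}$ is a genuine Ray element).

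Next I would discard the first summand. Since $11$ is odd and $k=2s$ is even, Remark~\ref{C2n-1} gives $\alpha^{n}_{11;k}\equiv 0\pmod 2$, so the action reduces to $\alpha^{n}_{10;k}\,\Phi_{n-11s}$. The residue of $\alpha^{n}_{10;k}$ modulo $2$ is precisely what was computed in the proof of the preceding Corollary (the action of $S_{k,\dots,k}$ with $k$ taken ten times): for $5k<n$ one has $\alpha^{n}_{10;k}\equiv \mu(n;k)+(n-k)C^{8}_{2(n-4k)}\pmod 2$, together with the direct evaluations there covering the boundary cases $n-5k\in\{1,2,3,4\}$. The needed inequality $5k<n$ follows from the present hypothesis, since $5k=10s<11s<n$.

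Finally I would specialise to $k=2s$. As $2s$ is even, $\mu(n;2s)\equiv n\pmod 2$ and $(n-2s)C^{8}_{2(n-4k)}\equiv nC^{8}_{2(n-4k)}\pmod 2$, whence $\alpha^{n}_{10;2s}\equiv n+nC^{8}_{2(n-4k)}\pmod 2$, and the asserted formula follows. There is no substantial obstacle in this argument: the only steps demanding any care are checking that the single hypothesis $11s<n$ simultaneously secures the inequality $mk<2n-1$ required by Lemma~1.2b) and the inequality $5k<n$ required by the ten-fold corollary, and recalling that the finitely many boundary cases of the latter were already settled there by direct substitution.
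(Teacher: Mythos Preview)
Your argument is correct and follows exactly the same route as the paper: apply Lemma~1.2~b) with $m=11$, kill $\alpha^{n}_{11;k}$ via Remark~\ref{C2n-1}, and then specialise the formula $\alpha^{n}_{10;k}\equiv\mu(n;k)+(n-k)C^{8}_{2(n-4k)}$ from the ten-fold corollary to $k=2s$. Your write-up is in fact more careful than the paper's, since you explicitly verify that the single hypothesis $11s<n$ secures both $mk<2n-1$ and $5k<n$.
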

\begin{proof} If $k=2s$, then $\alpha^{n}_{10;k}\equiv
( nC^{8}_{2(n-4k)} + n) \mod{2}$,
and $\alpha^{n}_{11,k} \equiv  0 \mod{2}$.
\end{proof}

\begin{conse} Let $6k<n$, then we have the formula:
\begin{multline*}
S_{k,k,k,k,k,k,k,k,k,k,k,k} \Phi_{n} \equiv
(\mu(n;k)(n^{2}+3)/4 + [n/2]([n/2]-k) \\
+ \frac{1}{ 2}(n-2k)C^{5}_{n-4k+1} )\Phi_{n-6s}\mod{2}.
\end{multline*}
\end{conse}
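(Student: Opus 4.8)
\noindent The plan is to follow the pattern of the preceding Corollaries. Since $6k<n$ we have $12k<2n-1$, so by part b) of Lemma~1.2 (applied with $m=12$ and $k$ even)
\[
S_{k,k,k,k,k,k,k,k,k,k,k,k}\Phi_{n}=(\alpha^{n}_{12;k}+\alpha^{n}_{11;k})\Phi_{n-6k};
\]
by Remark~\ref{C2n-1}, $\alpha^{n}_{11;k}=\alpha^{n}_{2\cdot6-1;k}\equiv0\pmod2$, so the task reduces to computing $\alpha^{n}_{12;k}\bmod 2$. The case $k$ odd (where the $\mu$-terms drop out) I would treat directly from Kochman's formula~(\ref{fkoch}), exactly as the odd-$k$ cases were treated in the proof of Corollary~\ref{3k<n} and its successors.

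Next I would expand $\alpha^{n}_{12;k}$ by part a) of Lemma~1.1 as the signed sum, over all compositions $(p_{1},\dots,p_{q})$ of $12$, of the products $(-1)^{q}\prod_{j=1}^{q}C^{p_{j}}_{2n-(p_{1}+\cdots+p_{j})k}$. For $k$ even every lower index is even, so $C^{\mathrm{odd}}_{\mathrm{even}}\equiv0\pmod2$ and only compositions of $12$ into even parts survive; writing $p_{j}=2a_{j}$ with $a_{1}+\cdots+a_{q}=6$ and using $C^{2a}_{2b}\equiv C^{a}_{b}\pmod2$, this reduces $\alpha^{n}_{12;k}$ modulo $2$ to the finite alternating sum
\[
\alpha^{n}_{12;k}\equiv\sum_{\substack{a_{1}+\cdots+a_{q}=6\\ a_{j}\ge1}}(-1)^{q}\prod_{j=1}^{q}C^{a_{j}}_{\,n-(a_{1}+\cdots+a_{j})k}\pmod2,
\]
a sum of $32$ terms, one for each composition of $6$, the last factor of every term being $C^{a_{q}}_{n-6k}$.

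Then I would evaluate these $32$ products modulo $2$ by Lucas' theorem, separating the generic range $6k\le n-6$ from the boundary values $n-6k\in\{1,2,3,4,5\}$; the latter I would settle by direct substitution into the $32$-term sum, as the analogous boundary cases were settled in Corollary~\ref{3k<n} and in the eight- and ten-fold Corollaries above. In the generic range I expect the one-part term to contribute $-C^{6}_{n-6k}$, the compositions with a part equal to $8$ or $10$ to assemble into $\tfrac12(n-2k)C^{5}_{n-4k+1}$ (just as the part-$8$ compositions produced $C^{8}_{2(n-4k)}$ in the eight- and ten-fold cases), and the remaining low-order products to collapse, through mod-$4$ polynomial identities of the kind already used for the cubic, quartic and quintic terms in Corollary~\ref{3k<n} and in the ten-fold case, into $\mu(n;k)\tfrac{n^{2}+3}{4}+[n/2]([n/2]-k)$. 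Adding these contributions yields the asserted value of $\alpha^{n}_{12;k}$, hence of $S_{k,\dots,k}\Phi_{n}$.

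The step I expect to be the main obstacle is this last one: tracking all $32$ compositions and recognising, among the resulting alternating products of small binomial coefficients, exactly the combination that closes up into the compact form — in particular pinning down the coefficient $(n^{2}+3)/4$ attached to $\mu(n;k)$ and checking that $\tfrac12(n-2k)C^{5}_{n-4k+1}$ is integral with the claimed parity, which requires the same delicate mod-$4$ bookkeeping needed for the degree-$4$ and degree-$5$ terms in the earlier Corollaries.
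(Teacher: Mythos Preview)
Your proposal follows essentially the same route as the paper: reduce to computing $\alpha^{n}_{12;k}\bmod 2$, keep only the compositions of $12$ into even parts, simplify the resulting alternating sum of products of binomial coefficients, and check the boundary range $n-6k\le 5$ by direct substitution. Your extra step of applying $C^{2a}_{2b}\equiv C^{a}_{b}\pmod 2$ to pass to the $32$ compositions of $6$ is a tidy repackaging, but it is the same computation the paper carries out explicitly with the $C^{2a}_{2(n-jk)}$ still in place.

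Two small points worth correcting. First, for $m=12$ even you want part~a) of Lemma~1.2, not part~b); the outcome is the same since you then kill $\alpha^{n}_{11;k}$ via Remark~\ref{C2n-1}, but the citation is off. Second, you do not need a separate treatment of odd $k$: the reduction to even-part compositions works for \emph{all} $k$, because the last factor $C^{i_{1}}_{2n-12k}$ always has even lower index, forcing $i_{1}$ even, and then induction up the chain forces every $i_{j}$ even. So your main argument already covers the odd-$k$ case once you replace ``for $k$ even every lower index is even'' by this induction.

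Finally, your guess about which compositions assemble into which closed-form pieces (e.g.\ ``parts $8$ or $10$ give $\tfrac12(n-2k)C^{5}_{n-4k+1}$'') does not match how the paper actually groups things: after discarding obvious zeros the paper is left with $\mu(n;k)$, the product $C^{4}_{2(n-2k)}C^{4}_{2(n-4k)}C^{4}_{2(n-6k)}$, a cubic-in-$(n-jk)$ term, and a quintic term, and only then recognises the three pieces in the statement. Your plan is sound, but expect the bookkeeping in that last collapse to look different from what you sketched.
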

\begin{proof} Let us calculate the value $\alpha ^{n}_{12;k}$,
supposing that $6k\le n-6$. Cases with $6k>n-6$ are considered
by the direct substitution.
\begin{multline*}
\alpha ^{n}_{12;k}\equiv
-C^{12}_{2(n-6k)} +
C^{2}_{2(n-k)}C^{2}_{2(n-2k)}C^{2}_{2(n-3k)}
C^{2}_{2(n-4k)}C^{2}_{2(n-5k)}C^{2}_{2(n-6k)} \\
+C^{10}_{2(n-5k)}C^{2}_{2(n-6k)} +
C^{6}_{2(n-3k)}C^{2}_{2(n-4k)}C^{2}_{2(n-5k)}C^{2}_{2(n-6k)} \\
+C^{8}_{2(n-4k)}C^{4}_{2(n-6k)} +
C^{4}_{2(n-2k)}C^{4}_{2(n-4k)}C^{2}_{2(n-5k)}C^{2}_{2(n-6k)} \\
+ C^{6}_{2(n-3k)}C^{6}_{2(n-6k)}+
C^{2}_{2(n-k)}C^{6}_{2(n-4k)}C^{2}_{2(n-5k)}C^{2}_{2(n-6k)} \\
+C^{4}_{2(n-2k)}C^{8}_{2(n-6k)} +
C^{4}_{2(n-2k)}C^{2}_{2(n-3k)}C^{4}_{2(n-5k)}C^{2}_{2(n-6k)} \\
+C^{2}_{2(n-k)}C^{10}_{2(n-6k)} +
C^{2}_{2(n-k)}C^{4}_{2(n-3k)}C^{4}_{2(n-5k)}C^{2}_{2(n-6k)} \\
- C^{8}_{2(n-4k)}C^{2}_{2(n-5k)}C^{2}_{2(n-6k)} -
C^{6}_{2(n-3k)}C^{4}_{2(n-5k)}C^{2}_{2(n-6k)}  \\
- C^{4}_{2(n-2k)}C^{6}_{2(n-5k)}C^{2}_{2(n-6k)} -
C^{2}_{2(n-k)}C^{8}_{2(n-5k)}C^{2}_{2(n-6k)} \\
- C^{6}_{2(n-3k)}C^{2}_{2(n-4k)}C^{4}_{2(n-6k)} -
C^{4}_{2(n-2k)}C^{4}_{2(n-4k)}C^{4}_{2(n-6k)} \\
- C^{2}_{2(n-k)}C^{6}_{2(n-4k)}C^{4}_{2(n-6k)} -
C^{4}_{2(n-2k)}C^{2}_{2(n-3k)}C^{6}_{2(n-6k)} \\
- C^{2}_{2(n-k)}C^{6}_{2(n-6k)}C^{4}_{2(n-3k)} -
C^{2}_{2(n-k)}C^{2}_{2(n-2k)}C^{8}_{2(n-6k)} \\
+ C^{2}_{2(n-k)}C^{2}_{2(n-2k)}C^{6}_{2(n-5k)}C^{2}_{2(n-6k)} \\
+ C^{4}_{2(n-2k)}C^{2}_{2(n-3k)}C^{2}_{2(n-4k)}C^{4}_{2(n-6k)} \\
+ C^{2}_{2(n-k)}C^{4}_{2(n-3k)}C^{2}_{2(n-4k)}C^{4}_{2(n-6k)} \\
+ C^{2}_{2(n-k)}C^{2}_{2(n-2k)}C^{4}_{2(n-4k)}C^{4}_{2(n-6k)} \\
+ C^{2}_{2(n-k)} C^{2}_{2(n-2k)}C^{2}_{2(n-3k)}C^{6}_{2(n-6k)} \\
- C^{4}_{2(n-2k)}C^{2}_{2(n-3k)}C^{2}_{2(n-4k)}
C^{2}_{2(n-5k)}C^{2}_{2(n-6k)} \\
- C^{2}_{2(n-k)}C^{4}_{2(n-3k)}C^{2}_{2(n-4k)}
C^{2}_{2(n-5k)}C^{2}_{2(n-6k)} \\
- C^{2}_{2(n-k)}C^{2}_{2(n-2k)}C^{4}_{2(n-4k)}
C^{2}_{2(n-5k)}C^{2}_{2(n-6k)} \\
- C^{2}_{2(n-k)}C^{2}_{2(n-2k)}C^{2}_{2(n-3k)}
C^{4}_{2(n-5k)}C^{2}_{2(n-6k)} \\
- C^{2}_{2(n-k)}C^{2}_{2(n-2k)}C^{2}_{2(n-3k)}
C^{2}_{2(n-4k)}C^{4}_{2(n-6k)} \\
\equiv  \mu(n;k) + C^{4}_{2(n-2k)}C^{4}_{2(n-4k)}C^{4}_{2(n-6k)} \\
+ \frac{1}{ 4}(n-2k+1)(n-2k)(n-2k+1)(n-4k)(n-5k)(n-6k) \\
- \frac{1}{16}(n-4k)(n-4k-1)(n-4k+1)(n-4k-2)(n-4k-3)(n-6k) \mod{2}.
\end{multline*}
We have the following comparisons $\operatorname{mod}~2$:
\begin{multline*}
\frac{1}{4}(n-2k+1)(n-2k)(n-2k+1)(n-4k)(n-5k)(n-6k) \\
\equiv  (\frac{n+1}{ 2})(\frac{n-1}{2})\mu (n;k),
\end{multline*}
\begin{multline*}
\frac{1}{16}(n-4k)(n-4k-1)(n-4k+1)(n-4k-2)(n-4k-3) \\
\equiv  (\frac{n-2k}{2})C^{5}_{n-4k+1},
\end{multline*}
\[
C^{4}_{2(n-2k)}C^{4}_{2(n-4k)}C^{4}_{2(n-6k)} \equiv
[\frac{n}{2}]([\frac{n}{2}] -k).
\]
\end{proof}

\begin{conse} If $k=2s$ and $13s<n$, then:
\begin{multline*}
S_{k,k,k,k,k,k,k,k,k,k,k,k,k}\Phi_{n} \\
\equiv  (n[(n^{2}+3)/4] +(n/2)C^{5}_{n+1}+[n/2])\Phi _{n-13s}\mod{2}.
\end{multline*}
\end{conse}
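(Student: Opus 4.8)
The plan is to follow verbatim the template of the preceding corollaries. Since $k=2s$ is even and the hypothesis $13s<n$ gives $13k=26s\le 2n-2<2n-1$, Lemma~1.2 applies in its case b) and yields, for the operation $S_{k,\ldots ,k}$ with $k$ taken $13$ times,
\[
S_{k,\ldots ,k}\Phi_{n}=(\alpha^{n}_{13;k}+\alpha^{n}_{12;k})\Phi_{n-13s}.
\]
The first coefficient vanishes modulo $2$: writing $13=2\cdot 7-1$ and $k=2s$, Remark~\ref{C2n-1} shows that $\alpha^{n}_{13;2s}\equiv 0\pmod{2}$. Hence
\[
S_{k,\ldots ,k}\Phi_{n}\equiv \alpha^{n}_{12;2s}\,\Phi_{n-13s}\pmod{2},
\]
and the whole problem reduces to evaluating $\alpha^{n}_{12;k}$ at $k=2s$.

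For this I would quote the congruence for $\alpha^{n}_{12;k}$ obtained in the proof of the preceding corollary (the case $6k<n$), namely
\[
\alpha^{n}_{12;k}\equiv \mu(n;k)\tfrac{n^{2}+3}{4}+[n/2]\,([n/2]-k)+\tfrac12(n-2k)C^{5}_{n-4k+1}\pmod{2},
\]
established under $6k\le n-6$; the finitely many remaining cases (those with $6k>n-6$) are to be settled by direct substitution, exactly as in the earlier corollaries. Substituting $k=2s$ and reducing term by term: since $\mu(n;2s)=1$ exactly when $n$ is odd, one has $\mu(n;2s)(n^{2}+3)/4\equiv n\,[(n^{2}+3)/4]\pmod{2}$ (both sides vanish for $n$ even, and for $n$ odd the quantity $(n^{2}+3)/4$ is already an integer), and since $2s$ is even, $[n/2]([n/2]-2s)\equiv [n/2]^{2}\equiv [n/2]\pmod{2}$.

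The only delicate point, and the step I expect to require genuine care, is the comparison $\tfrac12(n-4s)C^{5}_{n-8s+1}\equiv \tfrac n2\,C^{5}_{n+1}\pmod{2}$, because the factor $\tfrac12$ forbids reducing the binomial coefficient modulo $2$ naively. The argument has to be run modulo $4$: one rewrites each of $C^{5}_{n-8s+1}$ and $C^{5}_{n+1}$ as $\tfrac1{120}$ times a product of five consecutive integers, computes the relevant $2$-adic valuation by Kummer's theorem (equivalently, one observes via Lucas' theorem that for odd $n$ the coefficient $C^{5}_{n+1}$ is already even, while for even $n$ it absorbs the factor $n/2$), and checks that raising the top entry by the multiple $8s$ of $8$ leaves the residue unchanged; this is the same bookkeeping already carried out in the proof of the $6k<n$ corollary. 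Collecting the three simplified summands gives $\alpha^{n}_{12;2s}\equiv n[(n^{2}+3)/4]+(n/2)C^{5}_{n+1}+[n/2]\pmod{2}$, which is precisely the claimed formula for $S_{k,\ldots ,k}\Phi_{n}$.
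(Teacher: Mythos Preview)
Your proposal is correct and follows essentially the same route as the paper: reduce to $\alpha^{n}_{12;2s}$ via Lemma~1.2(b) and Remark~\ref{C2n-1}, then substitute $k=2s$ into the formula for $\alpha^{n}_{12;k}$ established in the preceding corollary. The paper's own proof is extremely terse---it simply records the three congruences $\frac{n-2k}{2}\equiv\frac{n}{2}$, $C^{5}_{n-4k+1}\equiv C^{5}_{n+1}$, and $\mu(n;k)\equiv n$ (all $\bmod\ 2$) and leaves the assembly implicit.

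Your caution about the factor $\tfrac12$ is well placed and is in fact a point the paper does not address explicitly. For $n$ even the two separate $\bmod\ 2$ congruences do multiply cleanly (write $ab-a'b'=a(b-b')+(a-a')b'$ with all factors integral), but for $n$ odd one genuinely needs $C^{5}_{n+1}\equiv C^{5}_{n+1-8s}\pmod{4}$, not just $\pmod{2}$. Your sketch via Kummer/Lucas and the observation that subtracting a multiple of $8$ leaves the three low bits of the top entry unchanged handles the $\bmod\ 2$ part directly; for the $\bmod\ 4$ refinement when $n$ is odd (so $n+1$ is even), one checks that $\binom{m}{5}-\binom{m-8}{5}=\sum_{j=0}^{7}\binom{m-1-j}{4}$ is divisible by $4$ for even $m$, which is an easy direct verification. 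So your more careful treatment actually fills a small gap the paper leaves open.
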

\begin{proof} In the case $k=2s$ we have the following comparisons:
\[
\frac{n-2k}{2} \equiv \frac{n}{2}\mod{2},\mbox{ } C^{5}_{n-4k+1}
\equiv  C^{5}_{n+1}\mod{2},\mbox{ } \mu(n;k) \equiv n\mod{2}.
\]
\end{proof}

We put the obtained results in the Table~9 where only nonzero
values are depicted.

\begin{center}
\chapter{Modified algebraic spectral sequence}
\section{Preliminary information}
\end{center}
\medskip

For the calculation of the initial term  $E^{*,*}_{2}$ of the
Adams-Novikov spectral sequence converging to $MSp_{*}$ the second author
had built in the works \cite{V1, V2} the so called {\it modified
algebraic spectral sequence} (MASS) which converges to the initial term
$E^{*,*}_{2}$ of the Adams-Novikov spectral sequence. He had also proved
that its initial term $E^{*,*,*}_{1}$ in this case
has especially simple structure: it is a three-graded algebra
isomorphic to the polynomial algebra over the field ${\mathbb Z}/{2}$:
\[
E^{q,s,t}_{1}\cong{\mathbb Z}/{2}\bigl[h_{0},h_{1},\ldots ,h_{i},
\ldots ;u_{1},\ldots u_{j},\ldots ;c_{2},c_{4},\ldots ,c_{n},
\ldots \bigr].
\]
where $i = 0,1,2,\ldots $ ; $j = 1,2,\ldots $; $n = 2,4,5,
\ldots$, $n \neq  2^{k}-1$, $\deg  h_{0}= (2,0,0)$,
$\deg  h_{i}= (1,0,2(2^{i}-1))$, $\deg  u_{j}= (0,1,2(2^{j}-1))$,
$\deg  c_{n}= (0,0,4n).$

For the calculation of the symplectic cobordism ring $MSp_{*}$,
the second author in the papers \cite{V1, V2, V3} used the following
scheme. At the beginning the modified algebraic spectral sequence,
MASS, for the spectrum $MSp$ (or algebraic spectral sequence, ASS)
is calculated. The term $E^{*,*,*}_{\infty }$ (as well as the
corresponding term
of ASS) is associated to the initial term of the
Adams-Novikov spectral sequence $E^{*,*}_{2}$. This spectral sequence
in its turn converges to the ring  $MSp_{*}$. Using this scheme
the ring $MSp_{*}$ was calculated up to dimension 31 \cite{V2}.
In this chapter
we make the calculations of the term $E^{q,s,t}_{\infty }$ of MASS
for $t < 108$.

Let $d_{1}$ denote the first differential of the MASS and $\varphi _{n}$
(respectively $\theta _{1}$) denote the projection of the Ray's element
$\Phi _{n}$ (respectively $\Phi _{0}$) in the term $E^{0,1,*}_{1}$ of
MASS. Let us introduce the following notations. If $n=2m-1$ and
$m=2^{i_{1}-2}+\ldots + 2^{i_{q}-2}$ is the digital decomposition
of the number $m$ $( 2\le i_{1}<\ldots <i_{q},q\ge 3)$, then the
generator
$c_{n}$ we denote by $c_{i_{1},\ldots ,i_{q}}$, and the element
$\varphi _{m}$ we denote by $\varphi _{i_{1},\ldots ,i_{q}}$.
The element $c_{2^{i-1}}$ we denote by $c_{1,i}$.

The following facts were proved in the works \cite{V1, V2, V3, V4}.
The analogous facts for the classical Adams spectral sequence were proved 
by S.~Kochman \cite{KI}.

1) On the generators $c_{n}$ the differential $d_{1}$ acts by the
following way:

a) if $n$ is even and not a power of 2, then it is possible to
choose the element $c_{n}$ in such a way that it becomes an
infinite cycle in MASS;

b) \begin{equation}\label{eq:cij}
d_{1}(c_{i,j}) = u_{i}h_{j}+u_{j}h_{i};
\end{equation}

c) \begin{equation}\label{eq:ciq}
d_{1}(c_{i_{1},\ldots ,i_{q}}) = \sum^{}_{1\le s\le t\le q}
( u_{i_{t}}h_{i_{s}}+ u_{i_{s}}h_{i_{t}})c_{1,i_{1}}\cdots
\hat{c}_{1,i_{s}}\cdots \hat{c}_{1,i_{t}}\cdots c_{1,i_{q}};
\end{equation}

2) The generators $h_{0},u_{j}$ are infinite cycles and the following
equalities hold in the algebra $E^{*,*,*}_{1}$:
\[
u_{1}=\theta _{1} , u_{i} = \varphi _{2^{i-2}} , i=2,3,\ldots
\]
\begin{equation}\label{eq:phiiq}
\varphi _{i_{1},\ldots ,i_{q}} = u_{1}c_{i_{1},\ldots ,i_{q}} +
\sum^{q}_{i=1} u_{i_{t}}c_{1,i_{1}}\cdots \hat{c}_{1,i_{t}}\cdots
c_{1,i_{q}} + \sum^{}_{J} \varphi _{q}c_{J_{q}},
\end{equation}
where the elements $c_{J_{q}}\in  E^{0,0,*}_{2}$ are infinite cycles,
$\varphi _{q}$ are the projections of the elements $\Phi _{q}$ for
$q < m$.

3) For all $j = 1,2,\ldots $, there is the formula:
\[
d_{1}(h_{j}) = h_{0}u_{j} .
\]
Let us introduce the following notation according with the formula
(\ref{eq:phiiq}):
\[
\varphi _{i_{1},\ldots ,i_{q}} = \tilde{\varphi }_{i_{1},\ldots ,
i_{q}} + \sum^{}_{J_{q}} \varphi _{q}c_{J_{q}}.
\]
Then we can choose elements $\tilde{\varphi }_{n}$ or elements
$\varphi _{n}$ as the generators in $E^{0,1,t}_{r}$, $r > 1$.
The differentials $d_{r}, r > 1$, conserve the grading $t$, increase
the grading $s$ by 1 and increase the grading $q$ by $r$.

Initial term of the Adams-Novikov spectral sequence has the following
description:
\[
E^{*,*}_{2} \cong  {\rm Ext}_{A^{BP}}( BP^{*}(MSP), BP^{*} ).
\]
There exists a filtration $F^{q}E^{s,t}_{2}$, such that we have an
isomorphism
\[
F^{q}E^{s,t}_{2} / F^{q+1}E^{s,t}_{2} \cong  E^{q,s,t}_{\infty },
\]
connecting the subfactors of term $E^{s,t}_{2}$ of the Adams-Novikov
spectral sequence and the term $E^{q,s,t}_{\infty }$ of MASS.

\medskip
\section[Projections of Ray's elements]{Projections of Ray's
elements into the \\ MASS and action of
Landweber-Novikov \\  operations on the elements ${\bf c}_{{\bf i}}$}

\medskip

It is proved in the work of the second author [6] that the projection
of the Ray's element $\Phi_{3}$ can be chosen in the form:
\begin{equation} \label{eq:phi3}
\varphi_{3} = u_{1}c_{5} + u_{2}c_{4} + u_{3}c_{2} .
\end{equation}

\begin{lemma} It is possible to choose the element $c_{2m-2}$ in
such a way that for the decomposition of the element $\varphi_{m}$ from
(\ref{eq:phiiq}) the following condition holds:
\[
\sum^{}_{J_{q}} \varphi_{q}c_{J_{q}} = u_{2}c_{2m-2} + u_{3}(c_{2m-4}+\text{decomposable }) + \sum^{}_{J_{q^{\prime}}}\varphi_{q^{\prime}}c_{J_{q^{\prime}}}
\]
(where $q^{\prime}\ge 3$).
\begin{proof} From the condition $S_{2m-2}\varphi_{m} = u_{2}$
($S_{2m-4}\varphi_{m} = u_{3}$ respectively) it follows that in the
decomposition $\varphi_{m}$ present the following summands
$u_{2}c_{2m-2}$ ($u_{3}c_{2m-4}$ respectively). If we take
if necessary instead of $c_{2m-2}$ the element
$c^{\prime}_{2m-2} = c_{2m-2} + (\text{decomposable elements of power} \
2m-2)$,
we get the necessary equality.
\end{proof}
\end{lemma}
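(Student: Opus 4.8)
The plan is to pull out of the correction term $\sum_{J_{q}}\varphi_{q}c_{J_{q}}$ in the decomposition (\ref{eq:phiiq}) the parts carrying $\varphi_{1}$ and $\varphi_{2}$, to pin down their leading coefficients by means of Landweber--Novikov operations, and then to spend the remaining freedom in the choice of $c_{2m-2}$ to absorb a decomposable remainder. Observe first that, since (\ref{eq:phiiq}) is in force, the binary expansion of $m$ has at least three $1$'s; hence $2m-2$ and $2m-4$ are even and not powers of $2$, so $c_{2m-2}$ and $c_{2m-4}$ are genuine infinite-cycle polynomial generators in the sense of case~1a), not generators of the form $c_{1,i}$.

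First I would substitute the relations $u_{2}=\varphi_{1}$, $u_{3}=\varphi_{2}$ into (\ref{eq:phiiq}) and collect the summands of $\sum_{J_{q}}\varphi_{q}c_{J_{q}}$ having $q=1$ and $q=2$, obtaining
\[
\sum_{J_{q}}\varphi_{q}c_{J_{q}} \;=\; u_{2}P + u_{3}Q + \sum_{q'\ge 3}\varphi_{q'}c_{J_{q'}},
\]
with $P=\sum_{J_{1}}c_{J_{1}}$ and $Q=\sum_{J_{2}}c_{J_{2}}$ in $E^{0,0,*}_{2}$ (so they are infinite cycles) of internal degrees $4(2m-2)$ and $4(2m-4)$. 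Since the $(0,0,*)$-part of $E^{*,*,*}_{1}$ is a polynomial algebra on the $c_{n}$'s, I may write $P=\lambda\,c_{2m-2}+(\text{decomposable})$ and $Q=\nu\,c_{2m-4}+(\text{decomposable})$ with $\lambda,\nu\in\Z/2$.

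The crux is to show $\lambda=\nu=1$. For $\lambda$ I would apply to $\varphi_{m}$ the operation $S_{2m-2}$ dual to $b_{2m-2}$; as $b_{2m-2}$ is indecomposable, $S_{2m-2}$ is a derivation of $E^{*,*,*}_{1}$, and it lowers the internal degree by $4(2m-2)$. It therefore annihilates every generator $u_{j}$, $h_{j}$ and every factor $\varphi_{q'}$, $c_{J_{q'}}$ occurring in $\varphi_{m}$, as well as every decomposable $c$-monomial of degree $4(2m-2)$; among the generators actually present in $\varphi_{m}$ only $c_{2m-1}$ (entering $\tilde\varphi_{m}$ through the term $u_{1}c_{2m-1}$) and $c_{2m-2}$ (entering $u_{2}P$) have large enough degree, and $S_{2m-2}(c_{2m-1})$ must vanish since it lies in the trivial group $E^{0,0,4}_{1}$, while $S_{2m-2}(c_{2m-2})\in E^{0,0,0}_{1}=\Z/2$. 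Hence $S_{2m-2}\varphi_{m}=\lambda\,S_{2m-2}(c_{2m-2})\cdot u_{2}$. On the other hand Kochman's formula (\ref{fkoch}) gives $S_{2m-2}\Phi_{m}=\Phi_{1}$ (this is the content of the computations of Chapter~1 for a single-entry operation), so $S_{2m-2}\varphi_{m}=\varphi_{1}=u_{2}$; comparing, $\lambda=1$ (and incidentally $S_{2m-2}(c_{2m-2})=1$). The same argument with $S_{2m-4}$ and $S_{2m-4}\Phi_{m}=\Phi_{2}$ yields $\nu=1$. Thus $P=c_{2m-2}+(\text{decomposable})$ and $Q=c_{2m-4}+(\text{decomposable})$.

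It remains to replace $c_{2m-2}$ by $c'_{2m-2}:=P$. Being a sum of infinite cycles, $P$ is an infinite cycle, and it differs from $c_{2m-2}$ only by decomposable elements of the same internal degree, so $c'_{2m-2}$ is still an admissible choice of this generator (case~1a)); moreover $c_{2m-2}$ occurs neither in $\tilde\varphi_{m}$ nor in (\ref{eq:phi3}) nor in the action of the operations on the $c_{i}$ of smaller index already fixed, so the re-choice changes nothing established so far. After the substitution the $q=1$ part of $\sum_{J_{q}}\varphi_{q}c_{J_{q}}$ is exactly $u_{2}c_{2m-2}$, the $q=2$ part is $u_{3}(c_{2m-4}+\text{decomposable})$, and all remaining terms carry $q'\ge 3$ --- which is the assertion. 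The step I expect to need the most care is the degree bookkeeping of the previous paragraph: one has to verify that $S_{2m-2}$ really annihilates every contribution to $\varphi_{m}$ except the $u_{2}c_{2m-2}$ summand (in particular that $S_{2m-2}(c_{2m-1})=0$), and likewise for $S_{2m-4}$, so that these operations genuinely single out $c_{2m-2}$ and $c_{2m-4}$.
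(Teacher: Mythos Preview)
Your proof is correct and follows essentially the same approach as the paper: use $S_{2m-2}\varphi_{m}=u_{2}$ and $S_{2m-4}\varphi_{m}=u_{3}$ to detect that $u_{2}c_{2m-2}$ and $u_{3}c_{2m-4}$ must occur in the decomposition, then absorb the decomposable remainder attached to $u_{2}$ by redefining $c_{2m-2}$. Your version is considerably more detailed than the paper's --- the paper simply asserts the two operation values and the re-choice of $c_{2m-2}$ in three lines --- and your careful degree bookkeeping (in particular the observation that $S_{2m-2}(c_{2m-1})\in E^{0,0,4}_{1}=0$ and that all other factors are killed for degree reasons) fills in exactly the justifications the paper leaves implicit.
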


Let us precise the form of projections of some Ray's elements
$\Phi _{i}$ in MASS and also calculate the action of Landweber-Novikov
operations on the elements $c_{k}$. In all the calculations given below
in the subsections from 1 till 9 everything is made $\mod 2$ and hence
all the equalities are relations $\mod 2$.

1. Applying the operation $S_{1}$ to the decomposition (\ref{eq:phi3})
we get that
$S_{1}\varphi_{3}$ $=$ $0$ $=$ $u_{1}(S_{1}c_{5}$ $+$ $c_{4})$. Hence
$S_{1}c_{5}$ $=$ $c_{4}$.

Let us apply the operation $S_{2}$ to the decomposition (\ref{eq:phi3}).
We get that
$S_{2}\varphi_{3} = u_{3} = u_{2}(S_{2}c_{4}+ c_{2})+ u_{3}S_{2}c_{2}$.
Hence, $S_{2}c_{4}= c_{2}$, $S_{2}c_{2} = 1$.

Using the operation $S_{3}$ we get that
$S_{3}\varphi_{3} = 0 = u_{1}(S_{3}c_{5} + c_{2})$, or $S_{3}c_{5}= c_{2}$.

Applying the operation $S_{2,2}$ we have $S_{2,2}c_{4}= 0$.

And finally using $S_{5}$ we get an equality $S_{5}c_{5}= 1$.

2. The projection $\varphi _{5}$ of the element $\Phi _{5}$ has the form:
\[
\varphi_{5} = u_{1}c_{4}+ u_{2}c_{5}+ u_{3}c_{6}+
u_{4}c_{2}+ \beta\varphi_{3}c^{2}_{2}
\]
Let us apply the operation $S_{2,2}$ to this expression, we get
\[
S_{2,2}\varphi _{5} = \varphi _{3} = u_{1}S_{2,2}c_{9}+
u_{2}S_{2,2}c_{8}+ u_{2}S_{2}c_{6}+ \varphi _{3}+
\beta \varphi _{3}+ u_{2}\beta c^{2}_{2}.
\]
Hence, $S_{2,2}c_{9}= 0$, $S_{2,2}c_{8}= S_{2}c_{6}$, $\beta = 0$ .

Finally the form of $\varphi _{5}$ is the following:
\[
\varphi _{5}= u_{1}c_{9}+ u_{2}c_{8}+ u_{3}c_{6}+ u_{4}c_{2}.
\]
Using the obtained decomposition let us calculate the values of the
operations $S_{\omega }$ on $c_{9}$, $c_{8}$ and $c_{6}$.

3. The projection of the element $\Phi_{6}$ has the form:
\[
\varphi _{6} = u_{1}c_{11}+u_{2}c_{10} +u_{3}c_{8} +
u_{4}c_{4} +u_{3}(\beta_{1}c^{2}_{4} +\beta_{2}c^{4}_{2}) +
\beta_{3}\varphi_{3}c_{6} +\beta_{4}u_{4}c^{2}_{2}.
\]
Applying the operation $S_{4,4}$ we get:
$S_{4,4}\varphi_{6} = 0 = \beta_{1}u_{3}$.
Hence, $\beta_{1}=0$. Applying the operation $S_{6}$we get:
\[
S_{6}\varphi _{6} = \varphi _{3}= u_{1}S_{6}c_{11}+
u_{2}S_{6}c_{10}+ u_{3}c_{2}+ u_{2}c_{4}+ \beta _{3}\varphi _{3}+
\beta _{4}u_{2}c^{2}_{2} .
\]
Hence $S_{6}c_{11}=c_{5}$, $\beta _{3}= 0$,
$S_{6}c_{10}= \beta _{4}c^{2}_{2}$.

Let us apply the operation $S_{2,2}$, we get:
\[
S_{2,2}\varphi _{6} =0=u_{1}S_{2,2}c_{11} +u_{2}S_{2,2}c_{10} +
u_{2}c_{2}c_{4} +\varphi _{3}c_{2}+\beta _{4}u_{4} +u_{3}c^{2}_{2} +
u_{2}c_{6}.
\]
Hence, $S_{2,2}c_{11}= c_{2}c_{5}$, $\beta _{4}=0$, $S_{2,2}c_{10}= c_{6}$.

Let us apply the operation $S_{2,2,2,2}$, we have:
$S_{2,2,2,2}\varphi_{6} = u_{3} = \beta _{2}u_{3}$.
Hence, $\beta _{2}=1$ .

The final form of the projection $\varphi_{6}$ of the element $\Phi_{6}$
is the following :
\[
\varphi _{6} = u_{1}c_{11}+ u_{2}c_{10}+ u_{3}c_{8}+ u_{3}c^{4}_{2} + u_{4}c_{4} .
\]
Using the obtained decomposition let us calculate the results of the
action of the operation $S_{\omega }$ on the elements $c_{11}$, $c_{10}$.

4. The projection $\varphi_{7}$ of the element $\Phi_{7}$ has the form:
\begin{multline*}
\varphi_{7}= u_{1}c_{13} +u_{2}c_{4}c_{8} +u_{3}c_{2}c_{8} +
u_{4}c_{2}c_{4} +u_{2}c_{12}\\
+ u_{3}(c_{10}+\beta _{1}c^{2}_{5}+\beta _{2}c^{2}_{2}c_{6})+
\varphi _{3}(\beta _{3}c^{2}_{4} +\beta _{4}c^{4}_{2})+ \beta _{5}u_{4}c_{6}+\beta _{6}\varphi _{5}c^{2}_{2} .
\end{multline*}
Applying the operation $S_{5,5}$ we have:
$S_{5,5}\varphi _{7}=0=u_{3}(1+\beta _{1})$, hence, $\beta _{1}=1$.
Applying the operation $S_{2,2}$ we get:
\begin{multline*}
S_{2,2}\varphi_{7}=\varphi_{5}=u_{1}S_{2,2}c_{13}+ u_{2}(c_{8}+
c^{2}_{2}c_{4}+ S_{2,2}c_{12}+ c^{2}_{4}(1+ \beta _{3})\\
+ c^{4}_{2}(1+\beta _{2}+\beta _{4}))+ u_{3}(c_{2}c_{4}+
\beta _{2}c_{6}+ c^{3}_{2} )+ \varphi _{3}(c_{4} \\
+ c^{2}_{2}(1+\beta _{3}+\beta _{5}+\beta _{6})) + u_{4}c_{2}+
\beta _{6}\varphi_{5}.
\end{multline*}
It follows from this equality that $\beta _{6}=0$,
$\beta _{3}=\beta _{5}$,
$S_{2,2}c_{13}= c_{4}c_{5}+ c^{2}_{2}c_{5}+ c_{9}$ ,
$\beta_{2}=1$, $S_{2,2}c_{12}= \beta _{3}c^{2}_{4}+ \beta _{4}c^{4}_{2}$.

Applying the operation $S_{6}$ we have:
\[
S_{6}\varphi _{7}= u_{4} = u_{1}S_{6}c_{13}+ u_{2}S_{6}c_{12}+
u_{2}\beta _{5}c_{6}+ \beta _{5}u_{4}.
\]
Hence, $\beta _{5}=1$, $S_{6}c_{13}=0$, $S_{6}c_{12}=c_{6}$,
$\beta _{3}=1$.

Let us apply the operation $S_{2,2,2,2}$, then we have:
\[
S_{2,2,2,2}\varphi_{7} = 0 = u_{1}S_{2,2,2,2}c_{13} +
u_{2}(S_{2,2,2,2}c_{12} + c^{2}_{2} ) +\varphi_{3}(1+\beta_{4} ).
\]
Hence, $\beta _{4}=1$ , $S_{2,2,2,2}c_{13}=0$,
$S_{2,2,2,2}c_{12}=c^{2}_{2}$ .

The final form of the projection of $\Phi_{7}$ will be the following:
\begin{multline*}
\varphi_{7}= u_{1}c_{13}+ u_{2}c_{4}c_{8}+ u_{3}c_{2}c_{8}+
u_{4}c_{2}c_{4}+ u_{2}c_{12} \\
+ u_{3}(c_{10} +c^{2}_{5} + c^{2}_{2}c_{6}) + \varphi _{3}(c^{2}_{4}+
c^{4}_{2} )+ u_{4}c_{6}.
\end{multline*}
Using this decomposition let us calculate the result of the action
of the operation $S_{\omega }$ on the elements $c_{13}$ and $c_{12}$ .

5. The projection $\varphi_{9}$ of the element $\Phi_{9}$ has the form:
\begin{multline*}
\varphi_{9} = u_{1}c_{17}+ u_{2}c_{16} + u_{3}c_{14} +
u_{3}(\beta _{12}c^{2}_{2}c_{10}+ \beta _{13}c^{2}_{2}c^{2}_{5} +
\beta _{14}c^{4}_{2}c_{6} \\
+ \beta _{15}c^{2}_{4}c_{6}) + \varphi _{3}(\beta _{1}c_{12} +
\beta _{2}c^{2}_{6} + \beta _{3}c^{6}_{2} +
\beta _{4}c^{2}_{2}c^{2}_{4} ) + u_{4}( \beta _{5}c^{2}_{5}+
\beta _{6}c^{2}_{2}c_{6} \\
+ \beta _{7}c_{10})+ \varphi _{5}(\beta _{8}c^{2}_{4}+
\beta _{9}c^{4}_{2} )+ \beta _{10}\varphi _{6}c_{6}+
\beta _{11}\varphi _{7}c^{2}_{2}+ u_{5}c_{2}.
\end{multline*}
Choose the element $c_{14}$ in such a way that $\beta _{12}$,
$\beta _{13}$, $\beta _{14}$, $\beta _{15}= 0$.

Applying the operation $S_{10}$we get:
\begin{multline*}
S_{10}\varphi _{9}\\
=u_{4}=u_{1}S_{10}c_{17}+u_{2}(S_{10}c_{16}+\beta _{10}c_{6})+
\varphi _{3}c_{2}+u_{3}(S_{2}c_{14}+\beta _{11}c^{2}_{2} )+
\beta _{7}u_{4}.
\end{multline*}
It follows from here that
$\beta _{7}=1$, $S_{10}c_{17}=c_{2}c_{5}$, $S_{10}c_{16}=c_{2}c_{4}+
\beta _{10}c_{6}$,
$S_{10}c_{14}=c^{2}_{2}(1+\beta _{11})$.

Let us apply the operation $S_{12}$, we have:
\[
S_{12}\varphi _{9}= \varphi _{3}= u_{1}S_{12}c_{17}+
u_{2}(S_{12}c_{16}+ \beta _{11}c^{2}_{2} )+ u_{3}c_{2}+
\beta _{1}\varphi _{3}S_{12}c_{12} .
\]
It follows that $\beta _{1}=0 $, $S_{12}c_{17}=c_{5}$,
$S_{12}c_{16}= c_{4} + \beta _{11}c^{2}_{2}$.

Let us apply the operation $S_{6,6}$, then we have:
$S_{6,6}\varphi _{9}= \varphi _{3}= u_{1}S_{6,6}c_{17}+
u_{2}(S_{6,6}c_{16} + c^{2}_{2}(\beta _{6}+ \beta _{11})) +
\varphi _{3}(\beta _{2}+\beta _{10})$, that gives:
$ \beta _{2}+\beta _{9}=1$, $S_{6,6}c_{17}=0$,
$S_{6,6}c_{16}=c^{2}_{2}(\beta _{6}+ \beta _{11})$.

Let us apply the operation $S_{2}$, we get:
\begin{multline*}
S_{2}\varphi _{9}= u_{5} \\
= u_{1}S_{2}c_{17}+ u_{2}S_{2}c_{16}+ u_{2}c_{14}+
u_{3}S_{2}c_{14}+\varphi _{3}c_{10}+ u_{4}c^{2}_{4}+
u_{4}c^{4}_{2} \\
+ \varphi _{7}c_{2}+ u_{5}+ u_{3}(\beta _{2}c^{2}_{6}+
\beta _{3}c^{6}_{2} + \beta _{4}c^{2}_{2}c^{2}_{4} )+
\varphi _{3}(\beta _{5}c^{2}_{5}+ \beta _{6}c^{2}_{2}c_{6} ) \\
+ u_{4}\beta _{6}c^{4}_{2}+ u_{4}(\beta _{8}c^{2}_{4}+
\beta _{9}c^{4}_{2} )+ \varphi _{5}\beta _{10}c_{6}+
\varphi _{6}\beta _{11}c^{2}_{2} = u_{1}[ S_{2}c_{17} \\
+ c_{5}c_{10}+ c_{2}c_{13}+ c^{5}_{2}c_{5}+ \beta _{5}c^{2}_{5}+
\beta _{6}c^{2}_{2}c_{5}c_{6} + c_{2}c^{2}_{4}c_{5}+
\beta _{10}c_{6}c_{9} \\
+ c^{2}_{2}c_{11}(\beta _{10}+\beta _{11})] +
u_{2}[ S_{2}c_{16}+c_{14}+ c_{2}c_{12}+ c_{4}c_{10}+
c_{2}c_{4}c_{8} \\
+ c^{5}_{2}c_{4}+c_{2}c^{3}_{4}+ \beta _{5}c_{4}c^{2}_{5}+
\beta _{6}c^{2}_{2}c_{4}c_{6}+ \beta _{10}c_{6}c_{8}+
c^{2}_{2}c_{10}(\beta _{10}+\beta _{11})] \\
+ u_{3}[ S_{2}c_{14}+ c_{2}c^{2}_{5}(1+ \beta _{5})+
c^{2}_{2}c_{8}(1+\beta _{10}+ \beta _{11})+
c^{3}_{2}c_{6}(1+ \beta _{6}) \\
+ c^{2}_{6}(\beta _{2}+ \beta _{10})+ c^{6}_{2}(1+ \beta _{3}+
\beta _{10}+ \beta _{11})+ c^{2}_{2}c^{2}_{4}(1+ \beta _{4})] \\
+ u_{4}[ c^{2}_{2}c_{4}(1+ \beta _{10}+ \beta _{11})+
c^{2}_{4}(1+ \beta _{8})+ c^{4}_{2}(1+ \beta _{6}+ \beta _{9}) \\
+ c_{2}c_{6}(1+ \beta _{10})]+ u_{5}.
\end{multline*}
We conclude from this that
\begin{multline*}
\beta _{8}= 1, \ \beta _{10}= 1, \ \beta _{11}= 0, \ \beta _{5}= 1,
\ \beta _{6}= 1, \ \beta _{9}= 0, \ \beta _{2}= 0; \\
S_{2}c_{16}
= c_{14}+c_{2}c_{12}+c_{4}c_{10}+c_{2}c_{4}c_{8}+c^{5}_{2}c_{4}+
c_{4}c^{2}_{5}+c^{2}_{2}c_{4}c_{6}+c_{6}c_{8}+c^{2}_{2}c_{10}+
c_{2}c^{3}_{4}; \\
S_{2}c_{17}=c_{2}c_{13}+c_{5}c_{10}+c^{5}_{2}c_{6}+c^{3}_{5}+
c^{2}_{2}c_{5}c_{6}+c_{6}c_{9}+c^{2}_{2}c_{11}+c_{2}c^{2}_{4}c_{5}; \\
S_{2}c_{14}=c^{2}_{6}+\beta _{3}c^{6}_{2}+
(1+\beta _{4})c^{2}_{2}c^{2}_{4}.
\end{multline*}
So, $\varphi_{9}$ has the following form:
\begin{multline*}
\varphi _{9}= u_{1}c_{17}+ u_{2}c_{16}+ u_{3}c_{14}+
\varphi _{3}(\beta _{3}c^{6}_{2}+ \beta _{4}c^{2}_{2}c^{2}_{4})+
u_{4}(c_{10}+c^{2}_{5}+ c^{2}_{2}c_{6}) \\
+ \varphi _{5}c^{2}_{4} + \varphi _{6}c_{6} + u_{5}c_{2}.
\end{multline*}
To determine the coefficient $\beta_{4}$ let us apply the operation
$S_{2,2}$:
\begin{multline*}
S_{2,2}\varphi_{9} = \varphi_{7} = u_{1}S_{2,2}c_{17}+
u_{2}(S_{2,2}c_{16}+ c^{2}_{6}+ c^{2}_{2}c^{2}_{4}) +
u_{3}S_{2,2}c_{14} \\
+ \varphi _{3}((\beta _{3}+ \beta _{4})c^{4}_{2}+
\beta _{4}c^{2}_{4})+ \varphi _{7}.
\end{multline*}
We get $S_{2,2}c_{17}=(\beta _{3}+\beta _{4})c^{4}_{2}c_{5}+
\beta _{4}c^{2}_{4}c_{5}$, $S_{2,2}c_{16}= c^{2}_{6}+
c^{2}_{2}c^{2}_{4}+ \beta _{4}c^{3}_{4}
+ (\beta _{3}+ \beta _{4})c^{2}_{2}c^{2}_{4}$,
$S_{2,2}c_{14}= (\beta _{3}+\beta _{4})c^{5}_{2} +
\beta _{4}c_{2}c^{2}_{4}$; and because of $d_{1}(c_{14})
= 0$, we have $\beta _{3},\beta _{4}= 0$.

The final form of $\varphi_{9}$ will be the following:
\[
\varphi _{9}= u_{1}c_{17}+ u_{2}c_{16}+ u_{3}c_{14}+
u_{4}(c_{10}+c^{2}_{5}+ c^{2}_{2}c_{6})+ \varphi _{5}c^{2}_{4}+
\varphi _{6}c_{6}+u_{5}c_{2}.
\]
Let us calculate the result of the action of the operation
$S_{\omega }$ on the elements $c_{17}$, $c_{16}$, $c_{14}$.

6. The projection $\varphi_{10}$ of the element $\Phi_{10}$ has the form:
\begin{multline*}
\varphi _{10} = u_{1}c_{19}+ u_{2}c_{18}+ u_{3}c_{16}+ u_{5}c_{4}+
u_{3}(\beta _{2}c^{2}_{8}+ \beta _{3}c^{4}_{4}+ \beta _{4}c^{8}_{2}+
\beta _{5}c_{6}c_{10} \\
+ \beta _{6}c^{4}_{2}c^{2}_{4}+ \beta _{7}c^{2}_{2}c^{2}_{6}+
\beta _{8}c^{2}_{2}c_{12}+ \beta _{9}c^{2}_{5}c_{6}) +
\varphi _{3}(\beta _{10}c_{14}+ \beta _{11}c^{2}_{2}c_{10} \\
+ \beta _{12}c^{2}_{2}c^{2}_{5}+ \beta _{13}c^{2}_{4}c_{6}+
\beta _{14}c^{4}_{2}c_{6})+ u_{4}(\beta _{15}c_{12}+
\beta _{16}c^{2}_{6}+ \beta _{17}c^{2}_{2}c^{2}_{4}) \\
+ \varphi _{5}(\beta _{18}c_{10}+ \beta _{19}c^{2}_{5}+
\beta _{20}c^{2}_{2}c_{6})+ \varphi _{6}(\beta _{21}c^{2}_{4}+
\beta _{22}c^{4}_{2}) + \beta _{23}\varphi _{7}c_{6} \\
+ \beta _{24}u_{5}c^{2}_{2} + \beta _{1}u_{4}c^{6}_{2}.
\end{multline*}
Applying the operation $S_{8,8}$, we get
$S_{8,8}\varphi_{10}=u_{3}(1+\beta _{2})$, hence, $\beta _{2} = 0$.

Let us apply the operation $S_{4,4,4,4}$, we get
$S_{4,4,4,4}\varphi_{10}=u_{3}=u_{3}(1 +\beta _{3}+\beta _{15})$.
Hence, $\beta _{3}=\beta _{15}$.

Let us apply the operation $S_{14}$, then:
\[
S_{14}\varphi _{10} = \varphi _{3}=u_{1}S_{14}c_{19} +
u_{2}(S_{14}c_{18} + c_{4} + \beta _{24}c^{2}_{2}) +u_{3}c_{2} +
\beta _{10}\varphi _{3}.
\]
It follows from this that $\beta_{10}=0$, $S_{14}c_{19}=c_{5}$,
$S_{14}c_{18}=\beta _{24}c^{2}_{2}$.

Let us apply the operation $S_{12}$, then we get:
\[
S_{12}\varphi _{10}= u_{4}= u_{1}S_{12}c_{19}+u_{2}(S_{12}c_{18}+
\beta _{23}c_{6}) + u_{3}c^{2}_{2}(\beta _{8}+\beta _{24}) +
\beta _{15}u_{4}.
\]
Hence we have $\beta_{15} = 1$, $\beta_{3}= 1$,
$\beta _{8}= \beta _{24}$, $S_{12}c_{19}= 0$,
$S_{12}c_{18}= \beta _{23}c_{6}$.

Let us apply the operation $S_{6,6}$, this gives:
\begin{multline*}
S_{6,6}\varphi _{10}= 0 = u_{1}S_{6,6}c_{19} + u_{2}(S_{6,6}c_{18}+
c_{6}(\beta _{15}+ \beta _{23})) \\
+u_{3}c^{2}_{2}(\beta _{7} + \beta _{20}) + u_{4}(\beta _{16}+
\beta _{23}).
\end{multline*}
Hence, $\beta_{16}=\beta_{23}$, $\beta _{7}=\beta _{20}$,
$S_{6,6}c_{18}=(\beta _{15}+\beta _{23})c_{6}$, $S_{6,6}c_{19}= 0$.

Applying the operation $S_{4,4,4}$, we have:
\begin{multline*}
S_{4,4,4}\varphi _{10} = 0 = u_{1}S_{4,4,4}c_{19} +
u_{2}(S_{4,4,4}c_{18}+ (\beta _{13}+ \beta _{23})c_{6}) \\
+ u_{3}c^{2}_{2}(\beta _{8}+ \beta _{17}) + u_{4}(\beta _{15}+
\beta _{21}).
\end{multline*}
Hence $\beta_{8} = \beta_{17}$, $\beta_{15}=\beta_{21}$,
$S_{4,4,4}c_{19}=0$,
$S_{4,4,4}c_{18}= (\beta_{13}$ + $\beta _{23})c_{6}$.

Let us apply the operation $S_{10}$, we have:
\begin{multline*}
S_{10}\varphi _{10} = \varphi _{5} = u_{1}S_{10}c_{19} +
u_{2}(S_{10}c_{18}+ \beta _{21}c^{2}_{4}+ \beta _{22}c^{4}_{2}) \\
+ u_{3}(c_{2}c_{4} + c_{6}(1+ \beta _{5}+ \beta _{23})) +
\varphi _{3}(c_{4}+ c^{2}_{2}(\beta _{11}+ \beta _{24})) +
\beta _{18}\varphi _{5}.
\end{multline*}
Hence, $\beta_{11}=\beta _{24}$, $\beta_{18}= 1$,
$\beta _{5}+\beta _{23}= 1$, $S_{10}c_{19}=c_{4}c_{5}$,
$S_{10}c_{18}=(1+\beta _{21})c^{2}_{4} + \beta _{22}c^{4}_{2}$.

Applying the operation $S_{5,5}$ we have:
\begin{multline*}
S_{5,5}\varphi _{10}= \varphi _{5}= u_{1}S_{5,5}c_{19}+
u_{2}(S_{5,5}c_{18}+ \beta _{21}c^{2}_{4}+ \beta _{22}c^{4}_{2}) \\
+ u_{3}(c_{2}c_{4} + c_{6}(1+\beta _{5}+\beta _{9})) +
\varphi _{3}(c_{4}+ c^{2}_{2}(\beta _{11}+\beta _{12}+
\beta _{24})) + \varphi _{5}(1+\beta _{19}).
\end{multline*}
We get from this: $\beta_{19}=0$, $\beta _{12}=0$, $\beta _{5}+\beta _{9}=1$,
$S_{5,5}c_{19}=c_{4}c_{5}$, $S_{5,5}c_{18}=(1+\beta _{21})c^{2}_{4} +
\beta _{22}c^{4}_{2}$.

Let us apply the operation $S_{8}$, then:
\begin{multline*}
S_{8}\varphi _{10}= \varphi _{6}= u_{1}S_{8}c_{19} +
u_{2}(S_{8}c_{18} +\beta _{18}c_{10} +\beta _{20}c^{2}_{2}c_{6}) \\
+u_{3}(c_{8} +c_{4}(1+\beta _{21})+ \beta _{22}c^{4}_{2}) +
\beta _{23}\varphi _{3}c_{6} + u_{4}(c_{4}+ \beta _{24}c^{2}_{2}).
\end{multline*}
We get: $S_{8}c_{19}=c_{11} +\beta _{23}c_{5}c_{6}$,
$S_{8}c_{18}= (1+\beta _{18})c_{10} +\beta _{20}c^{2}_{2}c_{6}$,
$\beta _{23}=0$, $\beta _{24}=0$, $\beta _{21}=1$, $\beta _{22}= 1$,
$\beta _{5}=1$, $\beta _{8}=0$, $\beta _{17}=0$, $\beta _{9}=0$,
$\beta _{16}=0$.

Applying the operation $S_{4,4}$, we see:
\begin{multline*}
S_{4,4}\varphi _{10}= 0 = u_{1}S_{4,4}c_{19} + u_{2}(S_{4,4}c_{18}+
c_{10}+ \beta _{20}c^{2}_{2}c_{6}) \\
+ u_{3}\beta _{6}c^{4}_{2} + \varphi _{3}\beta _{13}c_{6}+
u_{3}c^{4}_{2}.
\end{multline*}
We conclude that $\beta _{13}=0$, $\beta _{6}=1$, $S_{4,4}c_{19}=0$,
$S_{4,4}c_{18}= c_{10}+\beta _{20}c^{2}_{2}c_{6}$.

Using the operation $S_{2,2,2,2}$ gives the following:
\begin{multline*}
S_{2,2,2,2}\varphi _{10}= \varphi _{6}= u_{1}S_{2,2,2,2}c_{19} +
u_{2}(S_{2,2,2,2}c_{18}+ \beta _{20}c^{2}_{2}c_{6}) \\
+ u_{3}c^{4}_{2}(\beta _{7}+\beta _{14}+\beta _{20}) +
\varphi _{3}(\beta _{14}+\beta _{20}) + \beta _{1}u_{4}c^{2}_{2} +
\varphi _{6}.
\end{multline*}
We conclude from this that $\beta _{14}= \beta _{20}= 0$,
$\beta _{7}= 0$, $\beta _{1}= 0$, $S_{2,2,2,2}c_{18}= 0$.

For the definition of $\beta_{4}$ let us use the operation
$S_{2,2,2,2,2,2,2,2}$:
\[
S_{2,2,2,2,2,2,2,2}\varphi_{10} = u_{3} = u_{3}(1+\beta_{4}).
\]
We get that $\beta _{4}=0$.

The final form of $\varphi_{10}$ is the following:
\begin{multline*}
\varphi _{10}= u_{1}c_{19} + u_{2}c_{18} + u_{3}c_{16}+
u_{3}(c^{2}_{8}+ c^{4}_{4}+ c_{6}c_{10}+ c^{4}_{2}c^{2}_{4}) \\
+u_{4}c_{12} + \varphi _{5}c_{10}+ \varphi _{6}(c^{4}_{2}+
c^{2}_{4}) + u_{5}c_{4}.
\end{multline*}
Using the obtained decomposition let us calculate the results of
the action of the operations $S_{\omega }$ on $c_{19}$ and $c_{18}$.

7. Let us precise the form of the projection $\varphi_{11}$ of the
element $\Phi_{11} $:
\begin{multline*}
\varphi _{11} = u_{1}c_{21}+ u_{2}c_{4}c_{16}+ u_{3}c_{2}c_{16}+
u_{5}c_{2}c_{4}+ u_{2}c_{20} + u_{3}(c_{18}+ \beta _{1}c^{2}_{9} \\
+ \beta _{2}c^{2}_{2}c_{14}+ \beta _{3}c_{6}c_{12}+
\beta _{4}c^{4}_{2}c_{10}+ \beta _{5}c^{4}_{2}c^{2}_{5}+
\beta _{6}c^{2}_{2}c^{2}_{4}c_{6}+ \beta _{7}c^{6}_{2}c_{6} \\
+ \beta _{8}c^{3}_{6}+ \beta _{9}c^{2}_{4}c_{10}+
\beta _{10}c^{2}_{4}c^{2}_{5})+ \varphi _{3}(\beta _{11}c^{2}_{8}+
\beta _{12}c^{4}_{4}+ \beta _{13}c^{2}_{2}c_{12}+
\beta _{14}c^{8}_{2} \\
+ \beta _{15}c_{6}c_{10}+ \beta _{16}c^{2}_{2}c^{2}_{6}+
\beta _{17}c_{6}c^{2}_{5}+ \beta _{18}c^{4}_{2}c^{2}_{4})+
u_{4}(\beta _{19}c_{14}+ \beta _{20}c^{2}_{2}c_{10} \\
+ \beta _{21}c^{2}_{2}c^{2}_{5}+ \beta _{22}c^{2}_{4}c_{6}+
\beta _{23}c^{4}_{2}c_{6})+ \varphi _{5}(\beta _{24}c_{12}+
\beta _{25}c^{2}_{6}+ \beta _{26}c^{2}_{2}c^{2}_{4} \\
+ \beta _{27}c^{6}_{2})+ \varphi _{6}(\beta _{28}c_{10}+
\beta _{29}c^{2}_{5}+ \beta _{30}c^{2}_{2}c_{6})+
\varphi _{7}(\beta _{31}c^{2}_{4}+ \beta _{32}c^{4}_{2}) \\
+ u_{5}\beta _{33}c_{6}+ \varphi _{9}\beta _{34}c^{2}_{2}.
\end{multline*}
Applying the operation $S_{9,9}$ we get
$S_{9,9}\varphi _{11}= 0 = u_{3}(\beta _{1}+ 1)$,
hence: $\beta _{1}= 1$.

Applying the operation $S_{16}$, we arrive to the equality:
\[
S_{16}\varphi_{11}= \varphi_{3}= u_{1}S_{16}c_{21}+ u_{2}c_{4}+
u_{3}c_{2}+ u_{2}(S_{16}c_{20}+ \beta _{34}c^{2}_{2}),
\]
which gives $S_{16}c_{21}= c_{5}$, $S_{16}c_{20}= \beta _{34}c^{2}_{2}$.

Using the operation $S_{8,8}$, we obtain:
\[
S_{8,8}\varphi _{11}= \varphi _{3}= u_{1}S_{8,8}c_{21}+
u_{2}c_{4}+ u_{3}c_{2}+ u_{2}(S_{8,8}c_{20}+
\beta _{34}c^{2}_{2})+ \beta _{11}\varphi _{3},
\]
so $\beta_{11}= 0$, $S_{8,8}c_{21}= c_{5}$,
$S_{8,8}c_{20}= \beta _{34}c^{2}_{2}$.

Using the operation $S_{4,4,4,4}$, we have:
\begin{multline*}
S_{4,4,4,4}\varphi _{11}=0= u_{1}S_{4,4,4,4}c_{21}+
u_{2}S_{4,4,4,4}c_{20}+ \varphi _{3}(\beta _{12}+ \beta _{24}+
\beta _{31})+\\
+ u_{2}(\beta _{13}+ \beta _{26}+ \beta _{34})c^{2}_{2},
\end{multline*}
Hence, $\beta _{12}+\beta _{24}+\beta _{31}=0$,
$S_{4,4,4,4}c_{20}= (\beta _{13}+\beta _{26}+\beta _{34})c^{2}_{2}$,
$S_{4,4,4,4}c_{21}= 0.$

Using the operation $S_{2,2,2,2,2,2,2,2}$, we get
\begin{multline*}
S_{2,2,2,2,2,2,2,2}\varphi _{11} = \varphi _{3}=
u_{1}S_{2,2,2,2,2,2,2,2}c_{21} + u_{2}c^{2}_{2}(\beta _{3}+
\beta _{4}+\beta_{6}+\beta _{8}+ \\
+ \beta _{13}+ \beta _{15}+ \beta _{16}+ \beta _{18}+
\beta _{24}+ \beta _{26}+ \beta _{28}+ \beta _{34}) +
\varphi _{3}(\beta _{14}+ \beta _{16}+ \\
+ \beta _{23}+ \beta _{25}+ \beta _{27}+ \beta _{34})+
u_{2}S_{2,2,2,2,2,2,2,2}c_{20}.
\end{multline*}
This means that
\begin{multline*}
S_{2,2,2,2,2,2,2,2}c_{20}= c^{2}_{2}(\beta _{3}+ \beta _{4}+
\beta _{6}+ \beta _{8}+ \beta _{13}+ \beta _{15}+ \\
+ \beta _{16}+ \beta _{18}+ \beta _{24}+ \beta _{26}+ \beta _{28}+
\beta _{34}),
\end{multline*}
\[
\beta _{14}+ \beta _{16}+ \beta _{23}+ \beta _{25}+ \beta _{27}+
\beta _{34} = 1,
\]
\[
S_{2,2,2,2,2,2,2,2}c_{21}= 0.
\]
Applying the operation $S_{14}$, we get:
\[
S_{14}\varphi _{11}= u_{4}= u_{1}S_{14}c_{21}+ u_{2}(S_{14}c_{20}+
\beta _{33}c_{6})+u_{3}c^{2}_{2}(1+ \beta _{2}+\beta _{34})+
u_{4}\beta _{19},
\]
Hence, $\beta _{19}= 1$, $S_{14}c_{21}= 0$, $S_{14}c_{20}=
\beta _{33}c_{6}$, $\beta _{2}+ \beta _{34}= 1.$

Let us apply the operation $S_{12}$, we get:
\begin{multline*}
S_{12}\varphi _{11}= \varphi _{5}= u_{1}S_{12}c_{21}+
u_{2}(S_{12}c_{20}+ c^{2}_{4}(1+ \beta _{31})+
\beta _{32}c^{4}_{2})+ \varphi _{5}\beta _{24}+ \\
+ u_{3}(\beta _{3}+ \beta _{33})c_{6}+ \varphi _{3}(\beta _{13}+
\beta _{34})c^{2}_{2}.
\end{multline*}
So, $\beta _{24}= 1$, $\beta _{3}= \beta _{33}$, $\beta _{13}=
\beta _{34}$, $S_{12}c_{20}= c^{2}_{4}(1+ \beta _{31})+
\beta _{32}c^{4}_{2}.$

Let us act by the operation $S_{6,6}$, we get a relation:
\begin{multline*}
S_{6,6}\varphi _{11}= \varphi _{5}= u_{1}S_{6,6}c_{21}+
u_{2}c^{2}_{2}c_{4}+ u_{3}c^{3}_{2}+ u_{2}(S_{6,6}c_{20}+
c^{2}_{4}(\beta _{19}+ \beta _{22}+\\
+ \beta _{31})+ c^{4}_{2}(\beta _{23}+ \beta _{32}))+
u_{3}c_{6}(1+ \beta _{3}+ \beta _{8}+ \beta _{24})+
\varphi _{5}(\beta _{25}+ \beta _{33})+ \\
+ \varphi _{3}c^{2}_{2}(\beta _{16}+ \beta _{30}+ \beta _{34}).
\end{multline*}
Hence: $S_{6,6}c_{21}= c_{5}c^{2}_{2}$, $\beta _{16}+ \beta _{30}+
\beta _{34}= 1$, $\beta _{3}+ \beta _{8}+ \beta _{24}=1$,
$S_{6,6}c_{20}= c^{2}_{4}(\beta _{19}+ \beta _{22}+ \beta _{31})+
c^{4}_{2}(\beta _{23}+ \beta _{32})$, $\beta _{25}+ \beta _{33}= 1.$

Let us apply the operation $S_{4,4,4}$, we get:
\begin{multline*}
S_{4,4,4}\varphi _{11}= \varphi _{5}= u_{1}S_{4,4,4}c_{21}+
u_{2}(S_{4,4,4}c_{20}+ c^{2}_{4}(1+ \beta _{31})+
c^{4}_{2}(\beta _{24}+ \beta _{32}))+ \\
+u_{3}c_{6}(\beta _{3}+ \beta _{22})+
\varphi _{3}c^{2}_{2}(\beta _{13}+ \beta _{26}+ \beta _{34})+ \varphi _{5}(\beta _{24}+ \beta _{31}).
\end{multline*}
Hence, $S_{4,4,4}c_{21}= 0$, $\beta _{24}+ \beta _{31}= 1$,
$\beta _{3}+ \beta _{22}= 0$, $\beta _{31}= 0$, $\beta _{12}= 1$,
$\beta _{13}+ \beta _{26}+ \beta _{34}= 0$, $S_{4,4,4}c_{20}=
c^{2}_{4}(1+ \beta _{31})+ c^{4}_{2}(\beta _{24}+ \beta _{32}).$

Applying the operation $S_{10}$, we arrive to the following:
\begin{multline*}
S_{10}\varphi _{11}= \varphi _{6}= u_{1}S_{10}c_{21}+
u_{2}c_{2}c^{2}_{4}+ u_{2}c_{4}c_{6}+ u_{3}c^{2}_{2}c_{4}+
u_{3}c_{2}c_{6}+ \varphi _{3}c_{2}c_{4}+ \\
+ u_{2}(S_{10}c_{20}+ \beta _{28}c_{10}+ \beta _{29}c^{2}_{5}+
\beta _{30}c^{2}_{2}c_{6})+ u_{3}(c^{4}_{2}(1+ \beta _{2}+
\beta _{4}+ \beta _{32})+ \\
+ c^{2}_{4}(\beta _{9}+ \beta _{31}))+
\varphi _{3}c_{6}(\beta _{15}+ \beta _{33})+
u_{4}c^{2}_{2}(1+ \beta _{20}+ \beta _{34})+ \varphi _{6}\beta _{28}.
\end{multline*}
It follows from this: $\beta _{28}= 1$, $\beta _{9}= 0$, $ \beta _{2}+
\beta _{4}+ \beta _{32}=1$, $\beta _{15}+ \beta _{33}= 1$,  $\beta _{20}+
\beta _{34}=1$,
$S_{10}c_{21}= c_{5}c_{6}+ c_{2}c_{4}c_{6}$, $S_{10}c_{20}= c_{10}+
\beta _{29}c^{2}_{5}+ \beta _{30}c^{2}_{2}c_{6}.$

Let us act by the operation $S_{5,5}$, we get:
\begin{multline*}
S_{5,5}\varphi _{11}= 0= u_{1}S_{5,5}c_{21}+ u_{2}c_{2}c^{2}_{4}+
u_{2}c_{4}c_{6}+ u_{3}c^{2}_{2}c_{4}+ u_{3}c_{2}c_{6}+
\varphi _{3}c_{2}c_{4}+  \\
+ u_{2}(S_{5,5}c_{20}+ \beta _{28}c_{10}+ \beta _{29}c^{2}_{5}+
\beta _{30}c^{2}_{2}c_{6})+ u_{3}(c^{4}_{2}(\beta _{2}+
\beta _{4}+\beta _{5}+ 1)+ \\
+ c^{2}_{4}(\beta _{9}+ \beta _{10})+ \varphi _{3}c_{6}(\beta _{15}+
\beta _{17}+ \beta _{33})+ u_{4}c^{2}_{2}(\beta _{20}+ \beta _{21}+ 1))+ \\
+\varphi _{6}(1+ \beta _{29}).
\end{multline*}
We get from this
$\beta _{29}= 1$, $\beta _{2}+ \beta _{4}+ \beta _{5}= 1$,
$\beta _{9}= \beta _{10}= 0$, $\beta _{15}+ \beta _{17}
+ \beta _{33}= 1$, hence, $\beta _{17}= 0$,
$\beta _{20}+ \beta _{21}= 1$, $S_{5,5}c_{21}= c_{2}c_{4}c_{5}+
+ c_{5}c_{6}$, $S_{5,5}c_{20}= c_{10}+ c^{2}_{5}+
\beta _{30}c^{2}_{2}c_{6}.$

Let us apply the operation $S_{8}$, we get the following:
\begin{multline*}
S_{8}\varphi _{11}= \varphi _{7}= u_{1}S_{8}c_{21}+
u_{2}c_{4}c_{8}+ u_{3}c_{2}c_{8}+ u_{2}c^{3}_{4}+
u_{3}c_{2}c^{2}_{4}+ u_{4}c_{2}c_{4}+\\
+ u_{2}(S_{8}c_{20}+ c_{12}+ \beta _{25}c^{2}_{6}+
\beta _{26}c^{2}_{2}c^{2}_{4}+ \beta _{27}c^{6}_{2})+
u_{3}(c_{10}+ c^{2}_{5}+ \\
+ \beta _{30}c^{2}_{2}c_{6})+ \varphi _{3}c^{4}_{2}\beta _{32}+
u_{4}c_{6}\beta _{33}+ \varphi _{5}c^{2}_{2}\beta _{34}.
\end{multline*}
Hence, $\beta _{34}= 0$, $\beta _{32}= 1$, $\beta _{33}= 1$,
$\beta _{30}= 1$, $S_{8}c_{21}= c_{13}+c_{5}c^{2}_{4}$,
$S_{8}c_{20}= \beta _{25}c^{2}_{6}+ \beta _{26}c^{2}_{2}c^{2}_{4}+
\beta _{27}c^{6}_{2}$, using the previous relations we get from this:
$\beta _{5}, \beta _{3}$, $ \beta _{22}$, $ \beta _{2}$, $\beta _{4},
\beta _{20},\beta _{8}$ are all equal to 1, and $\beta _{13},
\beta _{15}, \beta _{16}, \beta _{25},
\beta _{21}, \beta _{26}$ all are equal to 0.

Let us apply the operation $S_{6}$, then we have:
\begin{multline*}
S_{6}\varphi _{11}= u_{5}= u_{1}S_{6}c_{21}+ u_{2}c_{4}(c_{4}c_{6}+
c_{2}c_{8})+ u_{3}c_{2}(c_{4}c_{6}+ c_{2}c_{8})+
\varphi _{5}c_{2}c_{4}+\\
+ u_{2}(S_{6}c_{20}+ c_{14}+ c^{2}_{2}c_{10}+ c^{2}_{4}c_{6}+
\beta _{23}c^{4}_{2}c_{6})+ u_{3}(c^{2}_{2}c^{2}_{4}(1+ \beta _{6})+ \\
+ c^{6}_{2}(\beta _{7}+ \beta _{27}))+
u_{4}c^{4}_{2}(1+ \beta _{23})+ \varphi _{6}c^{2}_{2}.
\end{multline*}
We get that $\beta _{6}= 1$, $\beta _{7}+ \beta _{27}= 1$,
$\beta _{23}= 1$, $S_{6}c_{21}= c_{5}(c_{10}+ c^{2}_{5}
+ c^{2}_{2}c_{6})+ c_{2}c_{4}c_{9}+ c^{2}_{2}c_{11}$,
$S_{6}c_{20}= c_{14}+ c^{4}_{2}c_{6}.$

Let us calculate the action of the operation $S_{4}$:
\begin{multline*}
S_{4}\varphi _{11}= \varphi _{9}= u_{1}S_{4}c_{21}+ u_{2}c_{16}+
u_{2}c_{4}(c_{12}+ c_{2}c_{10}+ c_{4}c_{8}+ c_{4}c^{4}_{2})+ \\
+u_{5}c_{2}+ u_{3}c_{2}(c_{12}+ c_{2}c_{10}+ c_{4}c_{8}+
c_{4}c^{4}_{2})+ \varphi _{6}c_{2}c_{4}+ \varphi _{6}c_{6}+
u_{3}(c_{14}+\\
+c^{2}_{2}c_{10}+ c^{2}_{4}c_{6})+ u_{2}(S_{4}c_{20}+ c^{4}_{4}+
\beta _{14}c^{8}_{2}+ \beta _{18}c^{4}_{2}c^{2}_{4})+
\varphi _{3}(c_{12}+ c^{6}_{2}\beta _{27})+\\
+ u_{4}(c_{10}+ c^{2}_{5}+ c^{2}_{2}c_{6}).
\end{multline*}
It follows from this that $\beta _{27}= 0$, $\beta _{7}= 1$,
$\beta _{14}= 0$, $S_{4}c_{20}= (1+\beta _{18})c^{2}_{4}c^{4}_{2}+
c^{4}_{4}$, $S_{4}c_{21}= c_{17}+ c_{2}c_{4}c_{11}+
c_{9}c^{2}_{4}+ c_{5}c_{12}.$

To determine the coefficient $\beta _{18}$, let us use the operation
$S_{4,4}$:
\begin{multline*}
S_{4,4}\varphi _{11}= \varphi _{7}=u_{1}S_{4,4}c_{21}+
u_{2}(S_{4,4}c_{20}+ c_{12}+ c_{2}c_{10}+ c_{4}c_{8}+
c_{4}c^{4}_{2}+c^{3}_{4})+\\
+ u_{3}(c_{2}c^{2}_{4}+ c_{10}+ c^{2}_{5}+ c^{2}_{2}c_{6})+
\varphi _{3}c^{4}_{2}\beta _{18}+ u_{4}c_{6}+ \varphi _{6}c_{2}.
\end{multline*}
Hence, $\beta _{18}= 0$, $S_{4,4}c_{21}= c_{13}+ c_{5}(c^{2}_{4}+
c^{4}_{2})+ c_{2}c_{11}$, $S_{4,4}c_{20}= c_{12}.$

The final form of the projection $\varphi _{11}$, will be the following:
\begin{multline*}
\varphi _{11}= u_{1}c_{21}+ u_{2}c_{4}c_{16}+ u_{3}c_{2}c_{16}+
u_{5}c_{2}c_{4}+ u_{2}c_{20}+ u_{3}(c_{18}+c^{2}_{9}+
c^{4}_{2}(c_{10}+\\
+ c^{2}_{5}+ c^{2}_{2}c_{6})+ c^{2}_{2}(c_{14}+ c^{2}_{4}c_{6})+
c_{6}(c_{12}+ c^{2}_{6}))+ \varphi _{3}c^{4}_{4}+ u_{4}(c_{14}+
c^{2}_{2}c_{10}+\\
+ c^{2}_{4}c_{6}+ c^{4}_{2}c_{6})+ \varphi _{5}c_{12}+
\varphi _{6}(c_{10}+ c^{2}_{5}+ c^{2}_{2}c_{6})+
\varphi _{7}c^{4}_{2}+ u_{5}c_{6}.
\end{multline*}
Using the obtained decomposition let us calculate the result of the
action of the operations $S_{\omega }$ on the elements $c_{21}, c_{20}$.

8. Let us precise the form of projection in MASS $\varphi _{12}$ of the
element $\Phi _{12}$:
\begin{multline*}
\varphi _{12}= u_{1}c_{23}+ u_{2}c_{22}+ u_{4}c_{16}+ u_{5}c_{8}+
u_{3}(c_{20}+ \beta _{1}c^{2}_{10}+ \beta _{2}c^{4}_{5}+
\beta _{3}c^{2}_{5}c_{10}+\\
+ \beta _{4}c^{2}_{2}c^{2}_{8}+ \beta _{5}c^{2}_{2}c^{4}_{4}+
\beta _{6}c^{10}_{2}+ \beta _{7}c^{2}_{2}c_{6}c_{10}+
\beta _{8}c^{4}_{2}c^{2}_{6}+ \beta _{9}c^{4}_{2}c_{12}+
\beta _{10}c^{2}_{2}c^{2}_{5}c_{6}+\\
+\beta _{11}c^{6}_{2}c^{2}_{4} + \beta _{12}c_{6}c_{14}+
\beta _{13}c^{2}_{4}c^{2}_{6}+ \beta _{14}c^{2}_{4}c_{12})+
\varphi _{3}(\beta _{15}c_{18} + \beta _{16}c^{2}_{9}+ \\
+\beta _{17}c^{2}_{2}c_{14}+ \beta _{18}c_{6}c_{12}+
\beta _{19}c^{4}_{2}c_{10}+ \beta _{20}c^{4}_{2}c^{2}_{5}+
\beta _{21}c^{2}_{4}c^{2}_{2}c_{6}+ \\
+\beta _{22}c^{6}_{2}c_{6}+ \beta _{23}c^{3}_{6}+
\beta _{24}c^{2}_{4}c_{10}+\beta _{25}c^{2}_{4}c^{2}_{5})+
u_{4}(\beta _{26}c^{2}_{8}+ \beta _{27}c^{4}_{4}+ \\
+\beta _{28}c^{8}_{2}+ \beta _{29}c_{6}c_{10}+
\beta _{30}c^{2}_{2}c^{2}_{6}+ \beta _{31}c^{2}_{2}c_{12}+
\beta _{32}c^{4}_{2}c^{2}_{4} + \beta _{33}c^{2}_{5}c_{6})+\\
+\varphi _{5}(\beta _{34}c_{14}+ \beta _{35}c^{2}_{2}c_{10}+
\beta _{36}c^{2}_{2}c^{2}_{5}+ \beta _{37}c^{2}_{4}c_{6}+
\beta _{38}c^{4}_{2}c_{6})+ \\
+ \varphi _{6}(\beta _{39}c_{12}+ \beta _{40}c^{2}_{6}+
\beta _{41}c^{6}_{2}+ \beta _{42}c^{2}_{2}c^{2}_{4})+
\varphi _{7}(\beta _{43}c_{10}+ \beta _{44}c^{2}_{5}+\\
+ \beta _{45}c^{2}_{2}c_{6})+ u_{5}(\beta _{46}c^{4}_{2}+
\beta _{47}c^{2}_{4})+ \varphi _{9}\beta _{48}c_{6}+
\varphi _{10}\beta _{49}c^{2}_{2}.
\end{multline*}
It follows from the action of the operation $S_{10,10}$ that
\begin{equation}\label{eq:c2p8f1}
S_{10,10}\varphi _{12}= 0= u_{3}(\beta _{1}+\beta _{43}), \
\beta _{1}= \beta _{43}.
\end{equation}
We get from the action of the operation $S_{5,5,5,5}$:
\begin{equation}\label{eq:c2p8f2}
S_{5,5,5,5}\varphi _{12}= u_{3}= u_{3}(\beta _{1}+ \beta _{2}+
\beta _{3}), \ \beta _{1}+ \beta _{2}+ \beta _{3}= 1.
\end{equation}
Applying the operation $S_{18}$, we get the relation:
\[
S_{18}\varphi _{12}= \varphi _{3}= u_{1}S_{18}c_{23}+
u_{2}S_{18}c_{22}+ \beta _{15}\varphi _{3}+
u_{3}c^{2}_{2}\beta _{49},
\]
so, $S_{18}c_{23}= 0$, $S_{18}c_{22}= \beta _{49}c^{2}_{2}$,
$\beta _{15}= 1.$

Let us act by the operation $S_{9,9}$, we get the equality:
\[
S_{9,9}\varphi _{12}= \varphi _{3}= u_{1}S_{9,9}c_{23}+
u_{2}S_{9,9}c_{22}+ u_{2}\beta _{49}c^{2}_{2}+
\varphi _{3}(1+ \beta _{16}).
\]
So, $S_{9,9}c_{23}= 0$, $S_{9,9}c_{22}= \beta _{49}c^{2}_{2}$,
$\beta _{16}= 0.$

Using the operation $S_{16}$, we get:
\[
S_{16}\varphi _{12}= u_{4}= u_{1}S_{16}c_{23}+
u_{2}(S_{16}c_{22}+ \beta _{48}c_{6})+ u_{4}+
u_{3}\beta _{49}c^{2}_{2}.
\]
So, $S_{16}c_{23}= 0$,  $S_{16}c_{22}= \beta _{48}c_{6}$,
$\beta _{49}= 0.$

From the action of the operation $S_{8,8}$, we conclude that:
\[
S_{8,8}\varphi _{12}= 0= u_{1}S_{8,8}c_{23}+ u_{2}(S_{8,8}c_{22}+
\beta _{48}c_{6})+ u_{3}\beta _{4}c^{2}_{2}+ u_{4}\beta _{26}.
\]
So, $S_{8,8}c_{23}= 0, \ S_{8,8}c_{22}= \beta _{48}c_{6}, \
\beta _{4}= 0, \ \beta _{26}= 0.$

Using the operation $S_{4,4,4,4}$, we arrive to the equality:
\begin{multline*}
S_{4,4,4,4}\varphi _{12}= 0= u_{1}S_{4,4,4,4}c_{23}+
u_{2}(S_{4,4,4,4}c_{22}+ (\beta _{37}+\beta _{48})c_{6})+ \\
+ u_{3}c^{2}_{2}(\beta _{5}+ \beta _{31})+ u_{4}(\beta _{27}+
\beta _{39}).
\end{multline*}
It follows from this that:
\begin{equation} \label{eq:c2p8f3}
S_{4,4,4,4}c_{23}= 0, \ S_{4,4,4,4}c_{22}= (\beta _{37}+
\beta _{48})c_{6}, \ \beta _{5}= \beta _{31}, \ \beta _{27}=
\beta _{39}.
\end{equation}
Applying the operation $S_{14}$, we conclude that:
\begin{multline*}
S_{14}\varphi _{12}= \varphi _{5}= u_{1}S_{14}c_{23}+ u_{2}c_{8}+
u_{3}c_{6}(1+ \beta _{12}+ \beta _{48})+ u_{4}c_{2}+
\varphi _{3}\beta _{17}c^{2}_{2}+\\
+ u_{2}(S_{14}c_{22}+ \beta _{46}c^{4}_{2}+ \beta _{47}c^{2}_{4})+
\varphi _{5}\beta _{34}.
\end{multline*}
Hence,
\begin{equation} \label{eq:c2p8f4}
S_{14}c_{23}= c_{5}, \ S_{14}c_{22}= \beta _{46}c^{4}_{2}+
\beta _{47}c^{2}_{4}, \ \beta _{17}= 0, \ \beta _{34}=0, \
\beta _{12}= \beta _{48}.
\end{equation}
From the action of the operation $S_{12}$, it follows that:
\begin{multline*}
S_{12}\varphi _{12}= \varphi _{6}= u_{1}S_{12}c_{23}+ u_{3}c_{8}+
u_{4}c_{4}+ u_{2}(S_{12}c_{22}+ \beta _{43}c_{10}+
\beta _{44}c^{2}_{5}+\\
+ \beta _{45}c^{2}_{2}c_{6})+ u_{3}(c^{2}_{4}(1+ \beta _{14}+
\beta _{47})+ c^{4}_{2}(1+ \beta _{9}+ \beta _{46}))+
\varphi _{3}c_{6}(\beta _{18}+\\
+ \beta _{48})+ u_{4}\beta _{31}c^{2}_{2}+ \varphi _{6}\beta _{39}.
\end{multline*}
We get from this
\begin{multline} \label{eq:c2p8f5}
S_{12}c_{23}= c_{11}, \  S_{12}c_{22}= (\beta _{43}+ 1)c_{10}+
\beta _{44}c^{2}_{5}+ \beta _{45}c^{2}_{2}c_{6}, \\
\beta _{31}=0, \ \beta _{39}= 0, \ \beta _{14}+ \beta _{47}= 1, \
\beta _{9}= \beta _{46}, \ \beta _{18}= \beta _{48}.
\end{multline}
From the formulas (\ref{eq:c2p8f3}), (\ref{eq:c2p8f5}) it follows that
$\beta _{5}= 0, \beta_{27}= 0$.

Let us apply the operation $S_{6,6}$, then we have:
\begin{multline*}
S_{6,6}\varphi _{12}= 0= u_{1}S_{6,6}c_{23}+ u_{2}(c_{2}c_{8}+
c_{4}c_{6})+ \varphi _{5}c_{2}+ \varphi _{3}c_{6}(1+ \beta _{18}+
\beta _{23}+\\
+\beta _{48})+ u_{3}(c^{2}_{4}(\beta _{12}+ \beta _{13}+
\beta _{37})+ c^{4}_{2}(\beta _{8}+ \beta _{38}))+
u_{4}c^{2}_{2}(1+ \beta _{30}+ \beta _{45})+\\
+\varphi _{6}(\beta _{40}+ \beta _{48})+ u_{2}(S_{6,6}c_{22}+
(1+ \beta _{29}+ \beta _{43})c_{10}+ (1+ \beta _{33}+
\beta _{44})c^{2}_{5}+ \\
+ (1+ \beta _{31}+ \beta _{45})c^{2}_{2}c_{6}).
\end{multline*}
Hence,
\begin{multline} \label{eq:c2p8f6}
S_{6,6}c_{23}= c_{2}c_{9}+ c_{5}c_{6},S_{6,6}c_{22}=
(1+ \beta _{29}+ \beta _{43})c_{10}+
(1+ \beta _{33}+ \beta _{44})c^{2}_{5}+\\
+(1+ \beta _{31}+ \beta _{45})c^{2}_{2}c_{6}, \ \beta _{40}=
\beta _{48}, \ \beta _{18}= \beta _{48}+ \beta _{23}, \
\beta _{30}= \beta _{45}, \\
\beta _{12}+ \beta _{13}= \beta _{37}, \ \beta _{8}= \beta _{38}.
\end{multline}
Using theses relations and the formula (\ref{eq:c2p8f5}) we have
$\beta _{23}= 0.$

Let us apply the operation $S_{4,4,4}$, then we have:
\begin{multline*}
S_{4,4,4}\varphi _{12}= 0= u_{1}S_{4,4,4}c_{23}+
u_{3}(c^{2}_{4}\beta _{14}+ c^{4}_{2}(\beta _{9}+
\beta _{32}))+ u_{4}c^{2}_{2}\beta _{42}+\\
+ \varphi _{3}c_{6}(\beta _{18}+ \beta _{37}+ \beta _{48})+
\varphi _{6}(1+ \beta _{47})+ u_{2}(S_{4,4,4}c_{22}+\\
+(1+ \beta _{24}+ \beta _{43})c_{10}+ (\beta _{25}+
\beta _{44})c^{2}_{5}+ (\beta _{21}+ \beta _{45})c^{2}_{2}c_{6}).
\end{multline*}
Hence,
\begin{multline} \label{eq:c2p8f7}
S_{4,4,4}c_{23}= 0, \ S_{4,4,4}c_{22}=
(1+ \beta _{24}+ \beta _{43})c_{10}+
(\beta _{25}+ \beta _{44})c^{2}_{5}+
(\beta _{21}+\beta _{45})c^{2}_{2}c_{6},\\
\beta _{47}= 1, \ \beta _{14}= 0, \ \beta _{42}= 0, \
\beta _{9}= \beta _{32}, \ \beta _{18}= \beta _{48}+ \beta _{37}.
\end{multline}
From these relations and from (\ref{eq:c2p8f5}) it follows that
$\beta_{37}= 0$.

Let us apply the operation $S_{3,3,3,3}$, then it will be:
\begin{multline*}
S_{3,3,3,3}\varphi _{12}= \varphi _{6}= u_{1}S_{3,3,3,3}c_{23}+
u_{2}c_{10}+ u_{3}c_{8}+ u_{4}c_{2}+ u_{2}(c_{2}c_{8}+\\
+ c_{4}c_{6})+ u_{4}c^{2}_{2}+ \varphi _{3}c_{6}(1+ \beta _{48})+
\varphi _{5}c_{2}+ u_{2}(S_{3,3,3,3}c_{22}+\\
+ c^{2}_{5}+ c^{2}_{2}c_{6}(1+ \beta _{33}))+
u_{3}(\beta _{2}+ \beta _{46})c^{4}_{2}.
\end{multline*}
Hence,
\begin{multline}\label{eq:c2p8f8}
S_{3,3,3,3}c_{23}= c_{11}+ c_{2}c_{9}+ c_{5}c_{6}, \ S_{3,3,3,3}c_{22}=
c^{2}_{5}+ c^{2}_{2}c_{6}(1+ \beta _{33}), \\
\beta _{48}= 0, \ \beta _{2}= \beta _{46}+ 1.
\end{multline}
From here and from (\ref{eq:c2p8f5}) it follows that $\beta _{18}= 0$,
from (\ref{eq:c2p8f6})
it follows that $\beta _{40}= 0$, from (\ref{eq:c2p8f4}) it follows that
$\beta _{12}= 0$, from (\ref{eq:c2p8f6}) if follows that $\beta _{13}= 0$.

Applying the operation $S_{10}$, we get:
\begin{multline*}
S_{10}\varphi _{12}= \varphi _{7}= u_{1}S_{10}c_{23}+
\varphi _{3}c_{8}+ u_{4}c_{2}c_{4}+
\varphi _{3}(c^{4}_{2}+ c^{2}_{4})+ u_{3}(c_{10}+ c^{2}_{5}+\\
+ c^{2}_{2}c_{6})+ u_{4}c_{6}+ u_{2}(S_{10}c_{22}+
\beta _{41}c^{6}_{2})+ u_{3}(c_{10}\beta _{43}+
c^{2}_{5}(\beta _{3}+\beta _{44})+\\
+ c^{2}_{2}c_{6}(\beta _{7}+ \beta _{45}))+
\varphi _{3}(c^{2}_{4}\beta _{24}+
c^{4}_{2}(\beta _{19}+ \beta _{46}))+ u_{4}c_{6}\beta _{29}+\\
+ \varphi _{5}c^{2}_{2}\beta _{35}+ \varphi _{7}\beta _{43}.
\end{multline*}
Hence,
\begin{multline} \label{eq:c2p8f9}
S_{10}c_{23}= c_{13}+ c_{5}c_{8}, \ S_{10}c_{22}= c_{12}+
\beta _{41}c^{6}_{2}, \ \beta _{43}= 0, \ \beta _{24}= 0, \\
\beta _{29}= 0, \ \beta _{35}= 0, \ \beta _{3}= \beta _{44},
\ \beta _{7}= \beta _{45}, \ \beta _{19}= \beta _{46}.
\end{multline}
From these relations and from the formula (\ref{eq:c2p8f1}), we have: $\beta _{1}= 0.$
If we apply the operation $S_{5,5}$, we get:
\begin{multline*}
S_{5,5}\varphi _{12}= \varphi _{7}= u_{1}S_{5,5}c_{23}+
u_{4}c_{2}c_{4}+ \varphi _{3}c_{8}+ u_{3}(c_{10}+ c^{2}_{5}+
c^{2}_{2}c_{6})+u_{4}c_{6}+\\
+ \varphi _{3}(c^{2}_{4}+ c^{4}_{2})+ u_{2}(S_{5,5}c_{22}+
\beta _{41}c^{6}_{2})+ u_{3}(c_{10}\beta _{3}+
c^{2}_{5}\beta _{3}+\\
+ c^{2}_{2}c_{6}(\beta _{7}+ \beta _{10}))+
\varphi _{3}(c^{2}_{4}\beta _{25}+ c^{4}_{2}(\beta _{19}+
\beta _{20}+ \beta _{46}))+\\
+ u_{4}c_{6}\beta _{33}+ \varphi _{5}c^{2}_{2}\beta _{36}+
\varphi _{7}\beta _{44}.
\end{multline*}
So,
\begin{multline} \label{eq:c2p8f10}
S_{5,5}c_{23}= c_{13}+ c_{5}c_{8}, S_{5,5}c_{22}= c_{12}+
c^{6}_{2}\beta _{41}, \ \beta _{3}= 0, \ \beta _{25}=0, \
\beta _{33}= 0,\\
\beta _{36}= 0, \ \beta _{44}= 0, \ \beta _{7}= \beta _{10},
\ \beta _{19}= \beta _{20}+ \beta _{46}.
\end{multline}
From these relations and from (\ref{eq:c2p8f9}) it follows that $\beta _{20}= 0$,
from (\ref{eq:c2p8f2}) it follows that $\beta _{2}= 1$, from (\ref{eq:c2p8f8})
 it follows that
$\beta _{46}= 0$, from (\ref{eq:c2p8f9}) it follows that $\beta _{19}= 0$,
from (\ref{eq:c2p8f5}) it follows that $\beta _{9}= 0,$ from (\ref{eq:c2p8f7})
 it follows that
$\beta _{32}= 0$.

Let us apply the operation $S_{8}$, this leads to the relation:
\begin{multline*}
S_{8}\varphi _{12}= u_{5}= u_{1}S_{8}c_{23}+ u_{5}+
u_{2}(S_{8}c_{22}+ \beta _{38}c^{4}_{2}c_{6})+
u_{3}\beta _{41}c^{6}_{2}+\varphi _{3}c^{2}_{2}c_{6}\beta _{45}.
\end{multline*}
This gives us: $S_{8}c_{23}= 0$, $S_{8}c_{22}=
\beta _{38}c^{4}_{2}c_{6}$, $\beta _{41}= 0$, $\beta _{45}= 0.$
From here and from (\ref{eq:c2p8f9}) it follows that $\beta_{7} = 0$,
from (\ref{eq:c2p8f6})
it follows that $\beta _{30}= 0$, from (\ref{eq:c2p8f10}) it follows that
$\beta _{10}= 0$.

If we use the operation $S_{4,4}$, we get:
\begin{multline*}
S_{4,4}\varphi _{12}= 0= u_{1}S_{4,4}c_{23}+ u_{3}(c_{2}c_{10}+
c_{4}c_{8}+ c_{4}c^{4}_{2})+ u_{4}c^{2}_{4}+ \varphi _{3}c_{10}+\\
+ \varphi _{6}c_{4}+ u_{2}(S_{4,4}c_{22}+
\beta _{38}c^{4}_{2}c_{6})+ u_{3}c^{6}_{2}\beta _{11}+
\varphi _{3}c^{2}_{2}c_{6}\beta _{21}.
\end{multline*}
Hence, $S_{4,4}c_{23}= c_{4}c_{11}+ c_{5}c_{10}$, $S_{4,4}c_{22}=
\beta _{38}c^{4}_{2}c_{6}$, $\beta _{11}=0$, $\beta _{21}= 0$.

Using the operation $S_{6}$, we get:
\begin{multline*}
S_{6}\varphi _{12}= \varphi _{9}= u_{1}S_{6}c_{23}+ u_{2}c_{16}+
u_{3}c_{14}+ u_{4}(c_{10}+ c^{2}_{5}+ c^{2}_{2}c_{6})+
\varphi _{5}c^{2}_{4}+\\
+ u_{5}c_{2}+ u_{4}(c_{4}c_{6}+ c_{2}c_{8})+ \varphi _{5}c_{8}+
u_{3}c^{4}_{2}c_{6}(1+ \beta _{38})+ \varphi _{3}c^{6}_{2}\beta _{22}+\\
+ \varphi _{5}c^{4}_{2}\beta _{38}+ u_{2}(S_{6}c_{22}+
\beta _{28}c^{8}_{2}).
\end{multline*}
Hence, $S_{6}c_{23}= c_{17}+ c_{9}c_{8}+ c_{6}c_{11}$, $S_{6}c_{22}=
c^{2}_{8}+ c_{6}c_{10}+ \beta _{28}c^{8}_{2}$,
$\beta _{22}= 0, \beta _{38}= 0$.
From here and from (\ref{eq:c2p8f6}) it follows that $\beta_{8}= 0$.

Finally we use the operation $S_{4}$.
\begin{multline*}
S_{4}\varphi _{12}= \varphi _{10}= u_{1}S_{4}c_{23}+ u_{3}c_{16}+
u_{4}(c_{12}+ c_{2}c_{10}+ c_{4}c_{8}+ c_{4}c^{4}_{2})+
\varphi _{6}c^{2}_{4}+\\
+ \varphi _{6}c_{8}+ u_{5}c_{4}+ u_{2}S_{4}c_{22}+
u_{3}(c^{4}_{2}c^{2}_{4}+ c^{4}_{4})+ u_{2}c_{18}+
u_{3}c^{8}_{2}\beta _{28}.
\end{multline*}
Hence, $\beta _{28}= 1$, $S_{4}c_{23}= c_{19}+ c_{8}c_{11}+
c^{4}_{2}c_{11}+ c_{9}c_{10}$,
$S_{4}c_{22}= c^{4}_{2}c_{10}$.

Finally for the determination of the coefficient $\beta_{6}$, we apply
the operation $ S_{2,2,2,2,2,2,2,2,2,2}$:
\[
S_{2,2,2,2,2,2,2,2,2,2}\varphi _{12}= 0= u_{3}\beta _{6}.
\]
So, $\beta _{6}= 0$, and the projection of the element $\Phi _{12}$
has the form:
\[
\varphi _{12}= u_{1}c_{23}+ u_{4}c_{16}+ u_{5}c_{8}+ u_{2}c_{22}+
u_{3}(c_{20}+ c^{4}_{5})+ \varphi _{3}c_{18}+ u_{4}c^{8}_{2}+
u_{5}c^{2}_{4}.
\]
Using this decomposition let us calculate the action of the operation
$S_{\omega }$ on the elements $c_{23}, c_{22}$.

9. Let us precise the form of the projection in MASS of the element
$\Phi _{13}$:
\begin{multline*}
\varphi _{13}= u_{1}c_{25}+ u_{2}c_{8}c_{16}+ u_{4}c_{2}c_{16}+
u_{5}c_{2}c_{16}+ u_{2}c_{24}+ u_{3}(c_{22}+ \beta _{1}c^{2}_{11} +\\
+ \beta _{2}c^{2}_{2}c_{18}+ \beta _{3}c^{2}_{2}c^{2}_{9}+
\beta _{4}c^{4}_{2}c_{14}+ \beta _{5}c^{2}_{2}c_{6}c_{12}+
\beta _{6}c^{6}_{2}c_{10}+ \beta _{7}c^{6}_{2}c^{2}_{5}+\\
+ \beta _{8}c^{4}_{2}c^{2}_{4}c_{6}+ \beta _{9}c^{8}_{2}c_{6}+
\beta _{10}c^{2}_{2}c^{3}_{6}+ \beta _{11}c^{2}_{2}c^{2}_{4}c_{10}+
\beta _{12}c^{2}_{2}c^{2}_{5}c^{2}_{4}+\beta _{13}c_{6}c^{2}_{8}+\\
+ \beta _{14}c_{6}c^{4}_{4}+ \beta _{15}c^{2}_{6}c_{10}+
\beta _{16}c^{2}_{6}c^{2}_{5}+ \beta _{17}c^{2}_{4}c_{14}+
\beta _{18}c_{10}c_{12}+\beta _{19}c^{2}_{5}c_{12})+\\
+ \varphi _{3}(\beta _{20}c_{20}+ \beta _{21}c^{2}_{10}+
\beta _{22}c^{2}_{2}c^{2}_{8}+ \beta _{23}c^{2}_{2}c^{4}_{4}+
\beta _{24}c^{10}_{2}+ \beta _{25}c^{2}_{2}c_{6}c_{10} +\\
+ \beta _{26}c^{4}_{2}c^{2}_{6}+ \beta _{27}c^{4}_{2}c_{12}+
\beta _{28}c^{2}_{2}c^{2}_{5}c_{6}+ \beta _{29}c^{6}_{2}c^{2}_{4}+
\beta _{30}c_{6}c_{14}+ \beta _{31}c^{2}_{4}c^{2}_{6}+\\
+ \beta _{32}c^{2}_{4}c_{12}+ \beta _{33}c^{2}_{5}c_{10}+
\beta _{34}c^{4}_{5})+ u_{4}(\beta _{35}c_{18}+
\beta _{36}c^{2}_{9}+ \beta _{37}c^{2}_{2}c_{14}+\\
+ \beta _{38}c_{6}c_{12}+ \beta _{39}c^{4}_{2}c_{10}+
\beta _{40}c^{4}_{2}c^{2}_{5}+ \beta _{41}c^{2}_{2}c^{2}_{4}c_{6}+
\beta _{42}c^{6}_{2}c_{6}+ \beta _{43}c^{3}_{6}+\\
+ \beta _{44}c^{2}_{4}c_{10}+ \beta _{45}c^{2}_{4}c^{2}_{5})+
\varphi _{5}(\beta _{46}c^{2}_{8}+ \beta _{47}c^{4}_{4}+
\beta _{48}c^{8}_{2}+ \beta _{49}c_{6}c_{10}+\\
+ \beta _{50}c^{2}_{2}c^{2}_{6}+ \beta _{51}c^{2}_{2}c_{12}+
\beta _{52}c^{2}_{5}c_{6}+ \beta _{53}c^{4}_{2}c^{2}_{4})+
\varphi _{6}(\beta _{54}c_{14}+ \beta _{55}c^{2}_{2}c_{10}+\\
+ \beta _{56}c^{2}_{2}c^{2}_{5}+ \beta _{57}c^{2}_{4}c_{6}+
\beta _{58}c^{4}_{2}c_{6})+ \varphi _{7}(\beta _{59}c_{12}+
\beta _{60}c^{2}_{6}+ \beta _{61}c^{2}_{2}c^{2}_{4}+\\
+ \beta _{62}c^{6}_{2})+ u_{5}(\beta _{63}c_{10}+
\beta _{64}c^{2}_{5}+ \beta _{65}c^{2}_{2}c_{6})+
\varphi _{9}(\beta _{66}c^{2}_{4}+ \beta _{67}c^{4}_{2})+\\
+ \varphi _{10}\beta _{68}c_{6}+ \varphi _{11}\beta _{69}c^{2}_{2}.
\end{multline*}
From the action of the operation $S_{11,11}$:
 $$S_{11,11}\varphi _{13}= 0= u_{3}(\beta _{1}+ 1)$$
we conclude:
\[
\beta _{1}= 1.
\]
From the action of the operation $S_{20}$, we have:
\[
S_{20}\varphi _{13}= \varphi _{3}= u_{1}S_{20}c_{25}+
u_{2}S_{20}c_{24}+ \varphi _{3}\beta _{20}+ u_{2}\beta _{69}c^{2}_{2},
\]
Hence, $\beta _{20}= 1$, $S_{20}c_{25}= 0$, $S_{20}c_{24}=
\beta _{69}c^{2}_{2}$.

Applying the operation $S_{10,10}$, we get:
\begin{multline*}
S_{10,10}\varphi _{13}= \varphi _{3}= u_{1}S_{10,10}c_{25}+
u_{2}S_{10,10}c_{24}+ (\beta _{21}+ \beta _{63})\varphi _{3}+\\
+ u_{2}c^{2}_{2}(\beta _{54}+ \beta _{55}+ \beta _{69}).
\end{multline*}
Hence,
\begin{equation}\label{eq:c2p9f1}
S_{10,10}c_{25}= 0, \ S_{10,10}c_{24}= c^{2}_{2}(\beta _{54}+
\beta _{55}+ \beta _{69}), \ \beta _{21}+ \beta _{63}= 1.
\end{equation}
If we apply the operation $S_{18}$, we get:
\[
S_{18}\varphi _{13}= u_{4}= u_{1}S_{18}c_{25}+ u_{2}(S_{18}c_{24}+
\beta _{68}c_{6})+ u_{3}c^{2}_{2}(\beta _{2}+ \beta _{69})+ u_{4}\beta _{35}.
\]
Hence,
\begin{equation}\label{eq:c2p9f2}
\beta _{35}= 1, \ S_{18}c_{25}= 0, \ S_{18}c_{24}= \beta _{68}c_{6},
\ \beta _{2}= \beta _{69}.
\end{equation}
From the action $S_{9,9}$, we conclude:
\begin{multline*}
S_{9,9}\varphi _{13}= 0= u_{1}S_{9,9}c_{25}+ u_{2}(S_{9,9}c_{24}+
\beta _{68}c_{6})+ u_{3}c^{2}_{2}(\beta _{2}+ \beta _{3})+\\
+ u_{4}(\beta _{36}+ 1),
\end{multline*}
hence,
\begin{equation}\label{eq:c2p9f3}
S_{9,9}c_{25}= 0, \ S_{9,9}c_{24}= \beta _{68}c_{6}, \ \beta _{36}= 1,
\ \beta _{2}= \beta _{3}.
\end{equation}
Using the operation $S_{6,6,6}$, we get the equality:
\begin{multline*}
S_{6,6,6}\varphi _{13}=u_{4}= u_{1}S_{6,6,6}c_{25}+
u_{2}(S_{6,6,6}c_{24}+ (1+ \beta _{38}+ \beta _{59}+\beta _{43})c_{6})+\\
+ u_{3}c^{2}_{2}(1+ \beta _{10}+ \beta _{13}+ \beta _{46}+
\beta _{50}+ \beta _{69})+ u_{4}(\beta _{43}+ \beta _{60}).
\end{multline*}
So,
\begin{multline}\label{eq:c2p9f4}
S_{6,6,6}c_{25}=0, \ S_{6,6,6}c_{24}= (1+ \beta _{38}+ \beta _{53}+
\beta _{43})c_{6},\\
\beta _{43}+\beta _{60}= 1, \ \beta _{10}+ \beta _{13}+ \beta _{46}+
\beta _{50}+ \beta _{69}=1.
\end{multline}
If we apply the operation $S_{16}$, then we get:
\begin{multline*}
S_{16}\varphi _{13}= \varphi _{5}= u_{1}S_{16}c_{25}+
u_{2}(S_{16}c_{24}+ \beta _{66}c^{2}_{4}+ \beta _{67}c^{4}_{2}))+
u_{2}c_{8}+ u_{4}c_{2}+\\
+ u_{3}\beta _{68}c_{6}+ \varphi _{3}c^{2}_{2}\beta _{69},
\end{multline*}
So, it will be: $S_{16}c_{25}= c_{9}$, $S_{16}c_{24}=
\beta _{66}c^{2}_{4}+\beta _{67}c^{4}_{2}$, $\beta _{68}=1$,
$\beta _{69}= 0$. From these relations and from (\ref{eq:c2p9f2}) and
(\ref{eq:c2p9f3}) we get that
$\beta _{2},\beta _{3}= 0$.

Let us act by the operation $S_{8,8}$, we get:
\begin{multline*}
S_{8,8}\varphi _{13}= \varphi _{5}= u_{1}S_{8,8}c_{25}+
u_{2}(S_{8,8}c_{24}+ (1+\beta _{66})c^{2}_{4}+
\beta _{67}c^{4}_{2})+u_{3}\beta _{13}c_{6}+\\
+ \varphi _{3}\beta _{22}c^{2}_{2}+ \varphi _{5}\beta _{46}.
\end{multline*}
Hence, $S_{8,8}c_{25}= 0$, $S_{8,8}c_{24}=
(1+\beta _{66})c^{2}_{4}+ \beta _{67}c^{4}_{2}$,
$\beta _{13}= 0$, $\beta _{22}= 0$, $\beta _{46}= 1.$

Let us apply the operation $S_{14}$, then it will be:
\begin{multline*}
S_{14}\varphi _{13}= \varphi _{6}= u_{1}S_{14}c_{25}+
u_{2}(S_{14}c_{24}+ \beta _{63}c_{10}+ \beta _{64}c^{2}_{5}+
\beta _{65}c^{2}_{2}c_{6})+\\
+ u_{3}(c^{2}_{4}(1+ \beta _{17}+ \beta _{66})+
c^{4}_{2}(\beta _{4}+ \beta _{67}))+ \varphi _{3}\beta _{30}c_{6}+\\
+ u_{4}c^{2}_{2}(\beta _{37}+ 1)+ \varphi _{6}\beta _{54}.
\end{multline*}
From these relations we obtain:
\begin{multline}\label{eq:c2p9f5}
S_{14}c_{25}= 0, \ S_{14}c_{24}= \beta _{63}c_{10}+
\beta _{64}c^{2}_{5}+ \beta _{65}c^{2}_{2}c_{6},\\
\beta _{30}=0, \ \beta _{37}=1, \ \beta _{54}=1, \ \beta _{17}+
\beta _{66} =1, \ \beta _{4}= \beta _{67}.
\end{multline}
Applying the operation $S_{12}$, we get the equality:
\begin{multline*}
S_{12}\varphi _{13}= \varphi _{7}= u_{1}S_{12}c_{25}+
u_{2}c_{4}c_{8}+ u_{3}c_{2}c_{8}+ u_{4}c_{2}c_{4}+
\varphi _{3}(c^{2}_{4}+ c^{4}_{2})+u_{4}c_{6}+\\
+ u_{2}(S_{12}c_{24}+\beta _{59}c_{12}+ \beta _{60}c^{2}_{6}+
\beta _{61}c^{2}_{2}c^{2}_{4}+ \beta _{62}c^{6}_{2})+
u_{3}(c_{10}(1+ \beta _{18}+ \beta _{63})+\\
+ c^{2}_{5}(\beta _{19}+ \beta _{64})+
c^{2}_{2}c_{6}(\beta _{5}+ \beta _{65}))+
\varphi _{3}(c^{4}_{2}(\beta _{27}+ \beta _{67})+
c^{2}_{4}(\beta _{32}+ \beta _{66}))+\\
+ u_{4}c_{6}\beta _{38}+ \varphi _{5}\beta _{51}c^{2}_{2}+
\varphi _{7}\beta _{59}.
\end{multline*}
This gives the result:
\begin{multline}\label{eq:c2p9f6}
S_{12}c_{25}= c_{13}, \ S_{12}c_{24}= (\beta _{59}+1)c_{12}+
\beta _{60}c^{2}_{6}+ \beta _{61}c^{2}_{2}c^{2}_{4}+
\beta _{62}c^{6}_{2},\\
\beta _{59}= 0, \ \beta _{51}= 0, \ \beta _{38}= 0, \ \beta _{19}+
\beta _{64}= 1, \ \beta _{5}+\beta _{65}=1, \
\beta _{18}=\beta _{63},  \\
\beta _{27}= \beta _{67}, \ \beta _{32}= \beta _{66}.
\end{multline}
Applying the operation $S_{6,6}$ we obtain:
\begin{multline*}
S_{6,6}\varphi _{13}= \varphi _{7}= u_{1}S_{6,6}c_{25}+
u_{2}c^{2}_{2}c_{8}+ u_{3}c^{2}_{2}c_{6}+ u_{4}c^{3}_{2}+
\varphi _{5}c^{2}_{2}(\beta _{50}+ \beta _{65})+\\
+ \varphi _{7}+ u_{2}(S_{6,6}c_{24}+
c^{2}_{2}c^{2}_{4}(1+\beta _{41}+ \beta _{61})+
c^{6}_{2}(\beta _{42}+ \beta _{62})+ c^{2}_{6}(\beta _{43}+\\
+ \beta _{60}))+ u_{3}(c_{10}(1+ \beta _{15}+\beta _{49})+
c^{2}_{5}(\beta _{16}+ \beta _{52})+
c^{2}_{2}c_{6}(\beta _{5}+\beta _{10}))+\\
+ \varphi _{3}(c^{2}_{4}(1+ \beta _{31}+ \beta _{57}+
\beta _{66})+ c^{4}_{2}(\beta _{26}+\beta _{58}+\beta _{67}))+\\
+ u_{4}c_{6}(1+\beta _{43})+ \varphi _{7}\beta _{60}.
\end{multline*}
We have:
\begin{multline}\label{eq:c2p9f7}
S_{6,6}c_{25}= c_{9}c^{2}_{2}, \ S_{6,6}c_{24}=
c^{2}_{2}c^{2}_{4}(1+\beta _{41}+ \beta _{61})+
c^{6}_{2}(\beta _{42}+ \beta _{62})+\\
+ c^{2}_{6}(\beta _{43}+ \beta _{60}), \ \beta _{60}= 0, \
\beta _{43}= 1, \  \beta _{15}+\beta _{49} =1, \ \beta _{16}=
\beta _{52},\\
\beta _{5}=\beta _{10}, \ \beta _{31}+ \beta _{57}+ \beta _{66}=1,
\ \beta _{26}+\beta _{58}+\beta _{67}= 0.
\end{multline}
Let us calculate the result of the action of the operation $S_{10}$
on $\varphi _{13}$, we have:
\begin{multline*}
S_{10}\varphi _{13}= u_{5}= u_{1}S_{10}c_{25}+
u_{2}c_{8}(c_{2}c_{4}+ c_{6})+ u_{4}c_{2}(c_{2}c_{4}+c_{6})+
\varphi _{3}c_{2}c_{8}+\\
+ \varphi _{5}c_{6}+ \varphi _{6}c^{2}_{2}+
u_{3}c^{2}_{6}\beta _{15}+ u_{2}(S_{10}c_{24}+
c_{14}+ c^{2}_{2}c_{10}\beta _{55}+ c^{2}_{2}c^{2}_{5}\beta _{56}+\\
+ c^{2}_{4}c_{6}\beta _{57}+ c^{4}_{2}c_{6}\beta _{58})+
u_{3}(c_{12}(1+ \beta _{18})+ c^{6}_{2}(\beta _{4}+ \beta _{6}+
\beta _{62})+\\
+ c^{2}_{2}c^{2}_{4}(\beta _{11}+ \beta _{17}+ \beta _{61}))+
\varphi _{3}(c_{10}(1+ \beta _{63})+ c^{2}_{5}(1+ \beta _{33}+
\beta _{64})+\\
+ c^{2}_{2}c_{6}(1+ \beta _{25}+ \beta _{65}))+
u_{4}(c^{4}_{2}(\beta _{39}+ \beta _{67})+
c^{2}_{4}(\beta _{44}+ \beta _{66}))+\\
+ \varphi _{5}c_{6}\beta _{49}+ \varphi _{6}c^{2}_{2}\beta _{55}+
u_{5}\beta _{63}.
\end{multline*}
It follows from these relations that
\begin{multline}\label{eq:c2p9f8}
S_{10}c_{24}= c_{14}+ c^{2}_{2}c_{10}+ c^{2}_{2}c^{2}_{5}\beta _{56}+
c^{2}_{4}c_{6}\beta _{57}+ c^{4}_{2}c_{6}\beta _{58},\\
S_{10}c_{25}= c^{2}_{2}c_{11}+ c_{2}c_{5}c_{8}+ c_{6}c_{9}, \
\beta _{4}+ \beta _{6}+ \beta _{62}= 1, \ \beta _{11}+ \beta _{17}+
\beta _{61}= 0,\\
\beta _{44}= \beta _{66}, \ \beta _{63}= 1, \  \beta _{15}= 1, \
\beta _{18}= 1, \ \beta _{55}= 0, \ \beta _{49}= 0, \
\beta _{39}= \beta _{67},\\
\beta _{33}+ \beta _{64}=1, \ \beta _{25}+ \beta _{65}=1.
\end{multline}
From these relations and from (\ref{eq:c2p9f1}) it follows that
$\beta _{21}= 0$.
Let us use now the operation $S_{5,5}$, we have:
\begin{multline*}
S_{5,5}\varphi _{13}= 0= u_{1}S_{5,5}c_{25}+ u_{2}c_{8}(c_{2}c_{4}+
c_{6})+ u_{4}c_{2}(c_{2}c_{4}+ c_{6})+ \varphi _{3}c_{2}c_{8}+ \\
+ \varphi _{5}c_{6}+ \varphi _{6}c^{2}_{2}+ u_{3}c^{2}_{6}+
u_{2}(S_{5,5}c_{24}+c_{14}+c^{2}_{2}c^{2}_{5}\beta _{56}+
c^{2}_{4}c_{6}\beta _{57}+\\
+ c^{4}_{2}c_{6}\beta _{58})+
u_{3}(c^{6}_{2}(\beta _{4}+ \beta _{6}+ \beta _{7})+
c^{2}_{2}c^{2}_{4}(\beta _{11}+ \beta _{12}+ \beta _{17})+\\
+ c^{2}_{6}\beta _{16}+c_{12}\beta _{19})+
\varphi _{3}(c^{2}_{5}(1+ \beta _{33}+
\beta _{64})+ c^{2}_{2}c_{6}(1+ \beta _{25}+ \beta _{28}+\\
+ \beta _{65}))+ u_{4}(c^{4}_{2}(\beta _{39}+ \beta _{40})+
c^{2}_{4}(\beta _{44}+ \beta _{45}))+ \varphi _{5}c_{6}\beta _{52}+\\
+\varphi _{6}c^{2}_{2}\beta _{56}+ u_{5}(1+ \beta _{64}).
\end{multline*}
This means that:
\begin{multline}\label{eq:c2p9f9}
S_{5,5}c_{24}= c_{14}+c^{2}_{2}c_{10}+ c^{2}_{4}c_{6}\beta _{57}+
c^{4}_{2}c_{6}\beta _{58}, \ S_{5,5}c_{25}= c_{2}c_{5}c_{8}+
c^{2}_{2}c_{11}+ c_{6}c_{9},\\
\beta _{4}+ \beta _{6}+ \beta _{7}= 1, \ \beta _{11}+ \beta _{12}+
\beta _{17}= 0, \ \beta _{25}+ \beta _{28}+ \beta _{65} =1, \
\beta _{16}= 0, \\
\beta _{19}= 0, \ \beta _{52}= 0, \ \beta _{64}= 1, \ \beta _{33}=0,
\ \beta _{56}=0, \ \beta _{39}= \beta _{40}, \ \beta _{44}= \beta _{45}.
\end{multline}
From these relations and from (\ref{eq:c2p9f8}) it follows that
$\beta _{28}= 0$.
Let us act on $\varphi _{13}$ by the operation $S_{8}$:
\begin{multline*}
S_{8}\varphi _{13}= \varphi _{9}= u_{1}S_{8}c_{25}+ u_{2}c_{16}+
u_{3}c_{14}+u_{4}(c_{10}+c^{2}_{5}+c^{2}_{2}c_{6}\beta _{65})+
\varphi _{6}c_{6}+\\
+u_{5}c_{2}+ c^{2}_{4}(u_{2}c_{8}+ u_{3}c_{6}\beta _{57}+u_{4}c_{2})+
u_{3}c^{4}_{2}c_{6}\beta _{58}+u_{2}(S_{8}c_{24}+
\beta _{47}c^{4}_{4}+ \beta _{48}c^{8}_{2}+\\
+ \beta _{50}c^{2}_{2}c^{2}_{6}+ \beta _{53}c^{4}_{2}c^{2}_{4})+
\varphi _{3}(c^{6}_{2}\beta _{62}+ c^{2}_{2}c^{2}_{4}\beta _{61})+
\varphi _{5}(c^{2}_{4}\beta _{66}+ c^{4}_{2}\beta _{67}).
\end{multline*}
This means that
\begin{multline*}
S_{8}c_{25}= c_{17}+ c_{9}c^{2}_{4}, \ S_{8}c_{24}=
\beta _{47}c^{4}_{4}+ \beta _{48}c^{8}_{2}+
\beta _{50}c^{2}_{2}c^{2}_{6}+\beta _{53}c^{4}_{2}c^{2}_{4}.\\
\beta _{65}= 1, \ \beta _{57}= 1, \ \beta _{58}= 0, \ \beta _{61}= 0,
\ \beta _{62}=0, \ \beta _{66}=0, \ \beta _{67}= 0,
\end{multline*}
From these relations and from (\ref{eq:c2p9f5}) it follows that
$\beta _{17} = 1$,
$\beta _{4}= 0$; from (\ref{eq:c2p9f6}) it follows that $\beta _{5}= 0$,
$\beta _{27}= 0$, $\beta _{32}= 0$; from (\ref{eq:c2p9f7}) it follows that
$\beta _{10}= 0$, $\beta_{31}= 0$, $\beta _{26}=0$; from (\ref{eq:c2p9f8})
it follows
that $\beta _{6}=1$, $\beta _{11}= 1$, $\beta _{25}= 0$,
$\beta _{39}= 0$,
$\beta _{44}= 0$; from (\ref{eq:c2p9f9}) it follows
$\beta _{7}= 0$, $\beta _{12}= 0$,
$\beta _{40}=0$, $\beta _{45}= 0$; from (\ref{eq:c2p9f4}) it follows that
$\beta _{50}= 0$.

Applying the operation $S_{6}$, we obtain the equality:
\begin{multline*}
S_{6}\varphi _{13}= \varphi _{10}= u_{1}S_{6}c_{25}+
u_{2}c_{8}(c_{10}+ c^{2}_{5}+ c^{2}_{2}c_{6}+ c_{4}c_{6}+
c_{2}c_{8})+ u_{4}c_{2}(c_{10}+ \\
+ c^{2}_{5}+ c^{2}_{2}c_{6}+ c_{4}c_{6}+ c_{2}c_{8})+
\varphi _{5}c_{2}c_{8}+ \varphi _{5}(c_{10}+ c^{2}_{5}+
c^{2}_{2}c_{6})+ \varphi _{7}c_{6}+\\
+ \varphi _{3}(c^{2}_{4}+ c^{4}_{2})c_{6}+ u_{4}c^{2}_{6}+
u_{2}(S_{6}c_{24}+ c_{18}+ c^{2}_{9}+c^{2}_{2}c_{14}+
\beta _{41}c^{2}_{2}c^{2}_{4}c_{6}+\\
+ \beta _{42}c^{6}_{2}c_{6}+ c^{3}_{6})+
u_{3}(c^{8}_{2}(1+ \beta _{9}+ \beta _{48})+
c^{4}_{2}c^{2}_{4}(\beta _{8}+\beta _{53})+\\
+c^{4}_{4}(1+\beta _{14}+ \beta _{47}))+
u_{4}(c^{2}_{2}c^{2}_{4}(1+ \beta _{41})+
c^{6}_{2}\beta _{42})+ \varphi _{10}.
\end{multline*}
From these relations we obtain:
\begin{multline}\label{eq:c2p9f10}
S_{6}c_{25}= c_{9}(c_{2}c_{8}+ c_{10}+ c^{2}_{5}+ c^{2}_{2}c_{6})+
c_{6}c_{13},\\
S_{6}c_{24}= c_{18}+ c^{2}_{9}+ c^{2}_{2}c_{14}+ c_{6}c_{12}+
c^{2}_{2}c^{2}_{4}c_{6},\\
\beta _{9}+ \beta _{48}= 1, \ \beta _{8}= \beta _{53}, \
\beta _{14}+ \beta _{47}= 1,\\
\beta _{41}= 1, \ \beta _{42}= 0.
\end{multline}
Developing the relation $S_{4}\varphi _{13}= \varphi _{11}$, we get after
cancellation:
\begin{multline*}
u_{1}S_{4}c_{25}+ u_{2}c_{8}(c_{12}+ c_{4}c_{8}+ c_{4}c^{4}_{2})+
(u_{1}c_{11}+ u_{3}c_{8}+u_{3}c^{4}_{2})c_{2}c_{8}+ u_{2}S_{4}c_{24}+\\
+\varphi _{3}c^{4}_{2}c^{2}_{4}+ u_{2}(\beta _{23}c^{2}_{2}c^{4}_{4}+
\beta _{24}c^{10}_{2}+ \beta _{29}c^{6}_{2}c^{2}_{4}+
\beta _{34}c^{4}_{5})+\\
+ \varphi _{3}(c^{2}_{8}+ \beta _{47}c^{4}_{4}+ \beta _{48}c^{8}_{2}+
\beta _{53}c^{4}_{2}c^{2}_{4})+ u_{4}c_{2}(c_{12}+ c_{4}c^{4}_{2})=
u_{1}c_{21}+\\
+ u_{3}((c_{10}+ c^{2}_{5}+ c^{2}_{2}c_{6})c^{4}_{2}+ c_{6}c_{12})+
u_{4}c^{4}_{2}c_{6}+ \varphi _{5}c_{12}+ \varphi _{7}c^{4}_{2}.
\end{multline*}
After regrouping we have:
\begin{multline*}
u_{1}(S_{4}c_{25}+ c_{21}+ c_{2}c_{8}c_{11})+ u_{2}c_{4}c^{2}_{8}+
u_{3}c^{2}_{8}c_{2}+ \varphi _{3}c^{2}_{8}+ u_{2}c_{8}c_{12}+
u_{3}c_{6}c_{12}+ u_{4}c_{2}c_{12}+\\
+\varphi _{5}c_{12}+ u_{2}c_{8}c^{4}_{2}+ u_{3}c_{2}c_{8}c^{4}_{2}+
u_{4}c_{2}c_{4}c^{4}_{2}+ u_{3}(c_{10}+c^{2}_{5}+
c^{2}_{2}c_{6})c^{4}_{2}+ \\
+\varphi _{3}(c^{2}_{4}+ c^{4}_{2}\beta _{48})c^{4}_{2}+
u_{4}c_{6}c^{4}_{2}+ \varphi _{7}c^{4}_{2}+ u_{2}(S_{4}c_{24}+
\beta _{23}c^{2}_{2}c^{4}_{4}+ \beta _{24}c^{10}_{2}+\\
+ \beta _{29}c^{6}_{2}c^{2}_{4}+ \beta _{34}c^{4}_{5})+
\varphi _{3}(\beta _{47}c^{4}_{4} + \beta _{53}c^{4}_{2}c^{2}_{4}) = 0.
\end{multline*}
Hence,
\begin{multline*}
S_{4}c_{24}= \beta _{23}c^{2}_{2}c^{4}_{4}+ \beta _{24}c^{10}_{2}+
\beta _{29}c^{6}_{2}c^{2}_{4}+ \beta _{34}c^{4}_{5}+ c^{4}_{2}c_{12},\\
S_{4}c_{25}= c_{21}+ c_{2}c_{8}c_{11}+c_{5}c^{2}_{8}+c_{9}c_{12}+
c_{11}c^{4}_{2},\\
\beta _{47}= 0, \ \beta _{53}= 0, \ \beta _{48}= 1.
\end{multline*}
From these relations and from (\ref{eq:c2p9f10}), it follows that
$\beta _{9}= 0$,
$\beta _{8}= 0$, $\beta _{14}= 1$.

Using the relation $S_{2}\varphi _{13}= \varphi _{12}$, we get after the
cancellation:
\begin{multline*}
u_{1}(S_{2}c_{25}+ c_{23}+ c_{2}c_{5}c_{8}(c^{2}_{4}+c^{4}_{2})+
c_{2}c_{5}c_{16}+c_{2}c_{8}c_{13}+c_{13}(c_{10}+c^{2}_{5}+
c^{2}_{2}c_{6})+ \\
+ c_{9}c_{14}+ c_{6}c_{17}+(c_{19}+c_{9}c_{10}+c_{11}(c^{2}_{4}+
c^{4}_{2}))c^{2}_{2})+ u_{2}(S_{2}c_{24}+c_{6}c^{2}_{8}+
c^{2}_{4}c_{14}+ \\
+ c^{2}_{11}+ c_{6}c^{4}_{4}+ c^{2}_{6}c_{10}+ c_{12}(c^{2}_{5}+
c^{2}_{2}c_{6})+ c^{2}_{2}c_{18})+u_{3}(c^{4}_{5}(1+\beta _{34})+
\beta _{24}c^{10}_{2}+\\
+ \beta _{23}c^{2}_{2}c^{4}_{4}+ \beta _{29}c^{6}_{2}c^{2}_{4})= 0.
\end{multline*}
Hence,
\begin{multline*}
S_{2}c_{24}= c_{6}c^{2}_{8}+ c^{2}_{11}+ c_{6}c^{4}_{4}+
c^{2}_{6}c_{10}+ c_{12}(c^{2}_{5}+ c^{2}_{2}c_{6})+ c^{2}_{2}c_{18}+
c^{2}_{4}c_{14},\\
S_{2}c_{25}= c_{23}+ c_{2}c_{5}c_{8}(c^{2}_{4}+c^{4}_{2})+
c_{2}c_{5}c_{16}+c_{2}c_{8}c_{13}+ c_{13}(c_{10}+ c^{2}_{5}+
c^{2}_{2}c_{6})+\\
+ c_{9}c_{14}+ c_{6}c_{17}+ (c_{19}+ c_{9}c_{10}+ c_{11}(c^{2}_{4}+
c^{4}_{2}))c^{2}_{2},\\
\beta _{34}= 1, \ \beta _{23}= 0, \ \beta _{24}= 0, \ \beta _{29}= 0.
\end{multline*}
The final form of the projection of $\Phi _{13}$ is the following:
\begin{multline*}
\varphi _{13}= u_{1}c_{25}+ u_{2}c_{8}c_{16}+ u_{4}c_{2}c_{16}+
u_{5}c_{2}c_{8}+ u_{2}c_{24}+ u_{3}(c_{22}+c^{2}_{11}+c^{6}_{2}c_{10}+\\
+ c^{2}_{2}c^{2}_{4}c_{10}+ c_{6}c^{4}_{4}+ c^{2}_{6}c_{10}+
c^{2}_{4}c_{14}+ c_{10}c_{12})+ \varphi _{3}(c_{20}+ c^{4}_{5})+
u_{4}(c_{18}+\\
+ c^{2}_{9}+ c^{2}_{2}c_{14}+ c^{2}_{2}c^{2}_{4}c_{6}+ c^{3}_{6})+
\varphi _{5}(c^{2}_{8}+ c^{8}_{2})+
\varphi _{6}(c_{14}+ c^{2}_{4}c_{6})+ u_{5}(c_{10}+\\
+ c^{2}_{5}+ c^{2}_{2}c_{6})+ \varphi _{10}c_{6}.
\end{multline*}
Using this decomposition let us calculate the result of action of the
operations $S_{\omega }$ on the elements $c_{25}$ and $c_{24}$.

10. The projection of the element $\Phi _{14}$ has the following form:
\begin{multline*}
\varphi _{14}= u_{1}c_{27}+ u_{3}c_{8}c_{16}+ u_{4}c_{4}c_{16}+
u_{5}c_{4}c_{8}+ u_{2}c_{26}+ u_{3}(c_{24}+ \beta _{1}c^{2}_{12}+ \\
+ \beta _{2}c^{2}_{5}c_{14}+ \beta _{3}c_{10}c_{14}+
\beta _{4}c^{6}_{4}+ \beta _{5}c^{2}_{4}c^{2}_{8}+
\beta _{6}c^{2}_{4}c^{2}_{5}c_{6}+ \beta _{7}c^{2}_{4}c_{6}c_{10}+
\beta _{8}c^{4}_{6}+\\
+ \beta _{9}c^{2}_{6}c_{12}+ \beta _{10}c_{6}c^{2}_{9}+
\beta _{11}c_{6}c_{18}+ \beta _{12}c^{2}_{2}c_{20}+
\beta _{13}c^{2}_{2}c^{2}_{10}+\beta _{14}c^{2}_{2}c^{2}_{5}c_{10}+\\
+ \beta _{15}c^{2}_{2}c^{4}_{5}+ \beta _{16}c^{4}_{2}c^{2}_{8}+
\beta _{17}c^{4}_{2}c^{4}_{4}+ \beta _{18}c^{6}_{2}c^{2}_{6}+
\beta _{19}c^{4}_{2}c_{6}c_{10} + \beta _{20}c^{6}_{2}c_{12}+\\
+ \beta _{21}c^{8}_{2}c^{2}_{4}+ \beta _{22}c^{4}_{2}c^{2}_{5}c_{6}+
\beta _{23}c^{2}_{2}c_{6}c_{14}+
\beta _{24}c^{2}_{2}c^{2}_{4}c^{2}_{6} +
\beta _{25}c^{2}_{2}c^{2}_{4}c_{12}+\\
+ \beta _{26}c^{12}_{2})+
\varphi _{3}(\beta _{27}c_{22}+ \beta _{28}c^{2}_{11}+
\beta _{29}c^{2}_{5}c_{12}+ \beta _{30}c_{10}c_{12}+
\beta _{31}c^{2}_{4}c_{14}+\\
+ \beta _{32}c^{2}_{5}c^{2}_{6}+ \beta _{33}c^{2}_{6}c_{10}+
\beta _{34}c_{6}c^{4}_{4}+ \beta _{35}c_{6}c^{2}_{8}+
\beta _{36}c^{2}_{2}c_{18}+ \beta _{37}c^{2}_{2}c^{2}_{9}+\\
+ \beta _{38}c^{4}_{2}c_{14}+ \beta _{39}c^{2}_{2}c_{6}c_{12}+
\beta _{40}c^{6}_{2}c_{10}+ \beta _{41}c^{6}_{2}c^{2}_{5}+
\beta _{42}c^{4}_{2}c^{2}_{4}c_{6}+\beta _{43}c^{8}_{2}c_{6}+\\
+ \beta _{44}c^{2}_{2}c^{3}_{6}+ \beta _{45}c^{2}_{2}c^{2}_{4}c_{10}+
\beta _{46}c^{2}_{2}c^{2}_{4}c^{2}_{5})+u_{4}(\beta _{47}c_{20}+
\beta _{48}c^{2}_{10}+\beta _{49}c^{2}_{5}c_{10}+ \\
+ \beta _{50}c^{4}_{5}+ \beta _{51}c^{2}_{4}c_{12}+
\beta _{52}c^{2}_{4}c^{2}_{6}+ \beta _{53}c_{6}c_{14}+
\beta _{54}c^{2}_{2}c^{2}_{8}+\beta _{55}c^{2}_{2}c^{4}_{4}+
\beta _{56}c^{10}_{2}+\\
+ \beta _{57}c^{2}_{2}c_{6}c_{10}+ \beta _{58}c^{4}_{2}c^{2}_{6}+
\beta _{59}c^{4}_{2}c_{12}+\beta _{60}c^{2}_{2}c^{2}_{5}c_{6}+
\beta _{61}c^{6}_{2}c^{2}_{4})+\varphi _{5}(\beta _{62}c_{18}+ \\
+ \beta _{63}c^{2}_{9}+ \beta _{64}c^{2}_{2}c_{14}+
\beta _{65}c_{6}c_{12}+ \beta _{66}c^{4}_{2}c_{10}+
\beta _{67}c^{4}_{2}c^{2}_{5} + \beta _{68}c^{2}_{2}c^{2}_{4}c_{6}+\\
+ \beta _{69}c^{6}_{2}c_{6}+ \beta _{70}c^{3}_{6}+
\beta _{71}c^{2}_{4}c_{10}+ \beta _{72}c^{2}_{4}c^{2}_{5})+
\varphi _{6}(\beta _{73}c^{2}_{8}+ \beta _{74}c^{4}_{4}+
\beta _{75}c^{8}_{2}+\\
+ \beta _{76}c_{6}c_{10}+ \beta _{77}c^{2}_{2}c^{2}_{6}+
\beta _{78}c^{2}_{2}c_{12}+ \beta _{79}c^{2}_{5}c_{6}+
\beta _{80}c^{4}_{2}c^{2}_{4})+ \varphi _{7}(\beta _{81}c_{14}+\\
+ \beta _{82}c^{2}_{2}c_{10}+ \beta _{83}c^{2}_{2}c^{2}_{5}+
\beta _{84}c^{2}_{4}c_{6}+ \beta _{85}c^{4}_{2}c_{6})+
u_{5}(\beta _{86}c_{12}+ \beta _{87}c^{2}_{6}+\\
+ \beta _{88}c^{2}_{2}c^{2}_{4}+ \beta _{89}c^{6}_{2})+
\varphi _{9}(\beta _{90}c_{10}+ \beta _{91}c^{2}_{5}+
\beta _{92}c^{2}_{2}c_{6})+ \varphi _{10}(\beta _{93}c^{2}_{4}+\\
+ \beta _{94}c^{4}_{2})+ \varphi _{11}c_{6}\beta _{95}+
\varphi _{12}c^{2}_{2}\beta _{96}.
\end{multline*}
Using the operation $S_{12,12}$, we get:
\[
S_{12,12}\varphi _{14}= 0= u_{3}(1+ \beta _{1}+ \beta _{86}),
\]
so,
\begin{equation}\label{eq:c2p10f1}
\beta _{1}+ \beta _{86}=1.
\end{equation}
If we use the operation $S_{22}$, we get:
\[
S_{22}\varphi _{14}= \varphi _{3}= u_{1}S_{22}c_{27}+
u_{2}(S_{22}c_{26}+ \beta _{96}c^{2}_{2})+ \varphi _{3}\beta _{27}.
\]
i.e.
\[
\beta _{27}= 1, \ S_{22}c_{27}= 0, \ S_{22}c_{26}= \beta _{96}c^{2}_{2}.
\]
Using the operation $S_{11,11}$, we get the equality:
\[
S_{11,11}\varphi _{14}= \varphi _{3}= u_{1}S_{11,11}c_{27}+
u_{2}(S_{11,11}c_{26}+ \beta _{96}c^{2}_{2})+
\varphi _{3}(1+\beta _{28}),
\]
or,
\[
\beta _{28}= 0, \ S_{11,11}c_{27}= 0, \
S_{11,11}c_{26}= \beta _{96}c^{2}_{2}.
\]
Application of the operation $S_{20}$, $S_{20}\varphi _{14}= u_{4}$,
gives us:
\[
u_{4}= u_{1}S_{20}c_{27}+ u_{2}(S_{20}c_{26}+ \beta _{95}c_{6})+
u_{3}c^{2}_{2}(\beta _{12}+ \beta _{96})+ u_{4}\beta _{47},
\]
or
\begin{equation}\label{eq:c2p10f2}
S_{20}c_{27}= 0,\  S_{20}c_{26}= \beta _{95}c_{6}, \ \beta _{47}= 1,
\ \beta _{12}= \beta _{96}.
\end{equation}
From the condition $S_{10,10}\varphi _{14}= 0,$ it follows that
\begin{multline*}
0 = u_{1}S_{10,10}c_{27}+ u_{2}(S_{10,10}c_{26}+
c_{6}(\beta _{76}+ \beta _{95}))+\\
+u_{3}c^{2}_{2}(\beta _{82} + \beta _{13}+ \beta _{3}+ \beta _{96}+ 1)+
u_{4}(\beta _{48}+ \beta _{90}).
\end{multline*}
Hence,
\begin{multline}\label{eq:c2p10f3}
S_{10,10}c_{27}= 0, \ S_{10,10}c_{26}= c_{6}(\beta _{76}+ \beta _{95}),\\
\beta _{48}=\beta _{90}, \ \beta _{82}+ \beta _{13}+ \beta _{3}+
\beta _{96}= 1.
\end{multline}
Applying the operation $S_{6,6,6,6}$ we get:
\[
S_{6,6,6,6}\varphi _{14}= u_{3}= u_{1}S_{6,6,6,6}c_{27}+
u_{2}S_{6,6,6,6}c_{26}+ u_{3}(\beta _{8}+ \beta _{70}+\beta _{95})
\]
So,
\begin{equation}\label{eq:c2p10f4}
S_{6,6,6,6}c_{27}= 0, \ S_{6,6,6,6}c_{26}= 0, \ \beta _{8}+ \beta _{70}+
\beta _{95}= 1.
\end{equation}
From the condition $S_{4,4,4,4,4,4}\varphi _{14}= 0,$ we get the equality:
\begin{equation}\label{eq:c2p10f5}
\beta _{1}+ \beta _{4}+ \beta _{5}+ \beta _{51}+ \beta _{93}= 0.
\end{equation}
Let us act by the operation $S_{5,5,5,5}$ on the element $\varphi _{14}$.
Then we have:
\begin{multline*}
S_{5,5,5,5}\varphi _{14}= 0= u_{1}S_{5,5,5,5}c_{27}+
u_{2}(S_{5,5,5,5}c_{26}+ c_{6}(\beta _{76}+ \beta _{79}))+\\
+ u_{3}c^{2}_{2}(1+ \beta _{13}+ \beta _{14}+ \beta _{15}+
\beta _{2}+ \beta _{3}+ \beta _{96})+
u_{4}(\beta _{48}+ \beta _{49}+ \beta _{50}).
\end{multline*}
This gives
\begin{multline}\label{eq:c2p10f6}
S_{5,5,5,5}c_{27}= 0, \ S_{5,5,5,5}c_{26}= c_{6}(\beta _{76}+
\beta _{79}), \ \beta _{48}+ \beta _{49}= \beta _{50},\\
\beta _{13}+ \beta _{14}+ \beta _{15}+ \beta _{2}+ \beta _{3}+
\beta _{96} = 1.
\end{multline}
If we use the operation $S_{18}$we get:
\begin{multline*}
S_{18}\varphi _{14}= \varphi _{5}= u_{1}S_{18}c_{27}+
u_{2}(S_{18}c_{26}+ \beta _{93}c^{2}_{4}+ \beta _{94}c^{4}_{2})+ \\
+u_{3}c_{6}(1+ \beta _{11}+\beta _{95})+
\varphi _{3}c^{2}_{2}(\beta _{36}+ \beta _{96})+ \varphi _{5}\beta _{62}.
\end{multline*}
Hence
\begin{multline}\label{eq:c2p10f7}
S_{18}c_{27}= 0, \ S_{18}c_{26}= \beta _{93}c^{2}_{4}+
\beta _{94}c^{4}_{2},\\
\beta _{62} = 1, \ \beta _{36}= \beta _{96}, \ \beta _{11}+ \beta _{95}=1.
\end{multline}
Let us apply now the operation $S_{9,9}$:
\begin{multline*}
S_{9,9}\varphi _{14}= \varphi _{5}= u_{1}S_{9,9}c_{27}+
u_{2}(S_{9,9}c_{26}+ \beta _{93}c^{2}_{4}+ \beta _{94}c^{4}_{2})+\\
+u_{3}c_{6}(1+\beta _{11}+ \beta _{10})+
\varphi _{3}c^{2}_{2}(\beta _{36}+ \beta _{96}+ \beta _{37})+
\varphi _{5}(\beta _{63}+ 1).
\end{multline*}
This gives
\begin{multline}\label{eq:c2p10f8}
S_{9,9}c_{27}= 0, \ S_{9,9}c_{26}=
\beta _{93}c^{2}_{4}+ \beta _{94}c^{4}_{2}, \\
\beta _{63}=0, \ \beta _{11}+\beta _{10} = 1,\
\beta _{36}+ \beta _{96}= \beta _{37}.
\end{multline}
It follows from (\ref{eq:c2p10f7}) and (\ref{eq:c2p10f8}) that
$\beta _{37}= 0$.

Let us apply the operation $S_{16}$, this gives:
\begin{multline*}
S_{16}\varphi _{14}= \varphi _{6}= u_{1}S_{16}c_{27}+
u_{2}S_{16}c_{26}+ u_{3}c_{8}+ u_{4}c_{4}+ u_{3}c^{4}_{2}\beta _{94}+\\
+u_{2}(\beta _{90}c_{10}+ \beta _{91}c^{2}_{5}+
\beta _{92}c^{2}_{2}c_{6})+ u_{3}c^{2}_{4}\beta _{93}+
\varphi _{3}c_{6}\beta _{95}+ u_{4}c^{2}_{2}\beta _{96}.
\end{multline*}
This means that
\begin{multline*}
S_{16}c_{27}=c_{11}, \ S_{16}c_{26}= (\beta _{90}+ 1)c_{10}+
\beta _{91}c^{2}_{5}+ \beta _{92}c^{2}_{2}c_{6},\\
\beta _{94}= 1, \ \beta _{93}= 0, \ \beta _{95}= 0, \ \beta _{96}= 0.
\end{multline*}
From these relations and from (\ref{eq:c2p10f7}) it follows that
$\beta _{36}= 0$,
$\beta _{11}= 1$; from (\ref{eq:c2p10f8}) it follows that
$\beta _{10}= 0$; from
(\ref{eq:c2p10f2}) it follows that $\beta _{12}= 0$.

Let us use the operation $S_{8,8}$, this gives:
\begin{multline*}
S_{8,8}\varphi _{14}= 0= u_{1}S_{8,8}c_{27}+ u_{2}(S_{8,8}c_{26}+
\beta _{90}c_{10}+ \beta _{91}c^{2}_{5}+ \beta _{92}c^{2}_{2}c_{6})+\\
+ u_{3}(\beta _{16}c^{4}_{2}+ \beta _{5}c^{2}_{4})+
\varphi _{6}\beta _{73}+ \varphi _{3}c_{6}\beta _{35}+
u_{4}c^{2}_{2}\beta _{54}.
\end{multline*}
Hence
\begin{multline*}
S_{8,8}c_{27}= 0, \ S_{8,8}c_{26}= \beta _{90}c_{10}+
\beta _{91}c^{2}_{5}+ \beta _{92}c^{2}_{2}c_{6}, \\
\beta _{16}= 0,\ \beta _{5}=0,\ \beta _{73}= 0, \ \beta _{35}= 0,
\ \beta _{54}= 0.
\end{multline*}
Using the operation $S_{14}$ we obtain:
\begin{multline*}
S_{14}\varphi _{14}= \varphi _{7}= u_{1}S_{14}c_{27}+
u_{2}S_{14}c_{26}+ u_{2}c_{4}c_{8}+ u_{3}c_{2}c_{8}+
u_{4}c_{2}c_{4}+\\
+u_{4}c_{6}+ u_{3}(c_{10}+ c^{2}_{5}+ c^{2}_{2}c_{6})+
\varphi _{3}(c^{2}_{4}+ c^{4}_{2})+ u_{2}(\beta _{86}c_{12}+
\beta _{87}c^{2}_{6}+\\
+\beta _{88}c^{2}_{2}c^{2}_{4}+ \beta _{89}c^{6}_{2})+
u_{3}(c_{10}(\beta _{3}+ \beta _{90})+ c^{2}_{5}(\beta _{2}+
\beta _{91})+ \\
+c^{2}_{2}c_{6}(\beta _{23}+ \beta _{92}))+
\varphi _{3}(\beta _{31}c^{2}_{4}+ \beta _{38}c^{4}_{2})+
u_{4}c_{6}\beta _{53}+ \varphi _{5}c^{2}_{2}\beta _{64}+
\varphi _{7}\beta _{81}.
\end{multline*}
It follows from the relation
\begin{multline}\label{eq:c2p10f9}
S_{14}c_{27}= c_{13}, \ S_{14}c_{26}= (\beta _{86}+ 1)c_{12}+
\beta _{87}c^{2}_{6}+ \beta _{88}c^{2}_{2}c^{2}_{4}+
\beta _{89}c^{6}_{2};\\
\beta _{3}= \beta _{90}, \ \beta _{2}= \beta _{91}, \
\beta _{23}= \beta _{92},\ \beta _{31}, \ \beta _{38}, \ \beta _{53},
\ \beta _{64}, \ \beta _{81}= 0;$$
\end{multline}
Let us apply the operation $S_{12}$, we have:
\begin{multline*}
S_{12}\varphi _{14}= u_{5}= u_{1}S_{12}c_{27}+ u_{2}(S_{12}c_{26}+
\beta _{82}c^{2}_{2}c_{10}+ \beta _{83}c^{2}_{2}c^{2}_{5}+
\beta _{84}c^{2}_{4}c_{6}+\\
+ \beta _{85}c^{4}_{2}c_{6})+
u_{3}(c_{12}(1+ \beta _{86})+ c^{2}_{6}(\beta _{9}+ \beta _{87}) +
c^{2}_{2}c^{2}_{4}(\beta _{25}+ \beta _{88})+\\
+ c^{6}_{2}(\beta _{20}+ \beta _{89}))+
\varphi _{3}(c_{10}(1+ \beta _{30}+ \beta _{90})+
c^{2}_{5}(\beta _{29}+ \beta _{91})+ (\beta _{79}+\\
+ \beta _{92})c^{2}_{2}c_{6})+ u_{4}(c^{2}_{4}\beta _{51}+
c^{4}_{2}\beta _{59})+ \varphi _{5}c_{6}\beta _{65}+
\varphi _{6}c^{2}_{2}\beta _{78}+ u_{5}\beta _{86}.
\end{multline*}
This gives:
\begin{multline}\label{eq:c2p10f10}
S_{12}c_{27}= 0, \ S_{12}c_{26}=\beta _{82}c^{2}_{2}c_{10}+
\beta _{83}c^{2}_{2}c^{2}_{5}+ \beta _{84}c^{2}_{4}c_{6}+
\beta _{85}c^{4}_{2}c_{6};\\
\beta _{9}= \beta _{87}, \ \beta _{25}=\beta _{88}, \
\beta _{20}= \beta _{89}, \ \beta _{29}=\beta _{91}, \ \beta _{86}= 1,\\
\beta _{78}, \ \beta _{65}, \ \beta _{51}, \ \beta _{59}= 0; \
\beta _{79}= \beta _{92}, \ \beta _{30}+ \beta _{90}= 1.
\end{multline}
From these relations and from (\ref{eq:c2p10f1}) it follows
that $\beta _{1}= 0$,
from (\ref{eq:c2p10f5}) it follows that $\beta _{4}= 0$.

Calculating the action of the operation $S_{6,6}$ on $\varphi_{14}$ we
obtain:
\begin{multline*}
S_{6,6}\varphi _{14}= 0= u_{1}S_{6,6}c_{27}+
u_{2}c_{4}(c_{10}+ c^{2}_{5}+c^{2}_{2}c_{6})+
u_{3}c_{2}(c_{10}+c^{2}_{5}+c^{2}_{2}c_{6})+\\
+ \varphi _{3}(c_{10}+ c^{2}_{5}+ c^{2}_{2}c_{6}) +
u_{2}c_{2}c_{4}c_{8} + u_{3}c_{2}c_{4}c_{6}+
u_{4}c^{2}_{2}c_{4}+ \varphi _{5}c_{2}c_{4}+\\
+ u_{2}(S_{6,6}c_{26}+ c_{14}+
c^{4}_{2}c_{6}(1+ \beta _{85})+ c^{2}_{4}c_{6}\beta _{84}+
c^{2}_{2}c_{10}(\beta _{57}+ \beta _{82})+\\
+ c^{2}_{2}c^{2}_{5}(\beta _{60}+ \beta _{83}))+
u_{3}(c^{2}_{6}\beta _{70}+ c_{12}\beta _{9}+
c^{6}_{2}(\beta _{18}+ \beta _{69})+c^{2}_{2}c^{2}_{4}(\beta _{23}+\\
+ \beta _{24}+ \beta _{68}))+ \varphi _{3}(c_{10}(\beta _{33}+
\beta _{76} + \beta _{90}) + c^{2}_{5}(\beta _{32} + \beta _{79}+
\beta _{91})+\\
+ c^{2}_{2}c_{6}(\beta _{39}+ \beta _{44}+\beta _{92}))+
u_{4}(c^{2}_{4}(\beta _{52}+\beta _{84})+
c^{4}_{2}(\beta _{58}+\beta _{85}))+\\
+ u_{5}\beta _{87}+ \varphi _{5}c_{6}\beta _{70}+
\varphi _{6}c^{2}_{2}(\beta _{92}+ \beta _{77}).
\end{multline*}
Hence,
\begin{multline}\label{eq:c2p10f11}
S_{6,6}c_{26}= c_{14}+ c^{4}_{2}c_{6}(1+ \beta _{85})+
c^{2}_{4}c_{6}\beta _{84}+ c^{2}_{2}c_{10}(\beta _{57}+
\beta _{82})+ c^{2}_{2}c^{2}_{5}(\beta _{60}+ \beta _{83});\\
S_{6,6}c_{27}= c_{5}(c_{10}+c^{2}_{5}+ c^{2}_{2}c_{6})+
c_{2}c_{4}c_{8},\beta _{18}= \beta _{69}, \ \beta _{23}+
\beta _{24}= \beta _{68},\\
\beta _{33}+ \beta _{76}= \beta _{90}, \ \beta _{32}+ \beta _{79}=
\beta _{91}, \ \beta _{39}+ \beta _{44}= \beta _{92},\\
\beta _{52}= \beta _{84}, \ \beta _{58}= \beta _{85}, \
\beta _{92}= \beta _{77}, \ \beta _{70}, \ \beta _{9}, \
\beta _{87}= 0.
\end{multline}
Applying the operation $S_{10}$ we obtain:
\begin{multline*}
S_{10}\varphi _{14}= \varphi _{9}= u_{1}S_{10}c_{27}+
u_{2}c_{6}c_{10}\beta _{76} + u_{3}c_{2}c_{4}c_{8} +
u_{3}c_{6}c_{8} + u_{4}c_{2}c^{2}_{4}+ \\
+ u_{4}c_{4}c_{6}+ \varphi _{3}c_{4}c_{8}+ u_{3}c_{6}c^{2}_{4}+
u_{3}c_{6}c^{4}_{2}+ \varphi _{5}c^{2}_{4}\beta _{71}+
\varphi _{6}c_{6}\beta _{76}+\\
+u_{2}(S_{10}c_{26}+\beta _{74}c^{4}_{4}+ \beta _{75}c^{8}_{2}+
\beta _{77}c^{2}_{2}c^{2}_{6}+ \beta _{79}c^{2}_{5}c_{6} +
\beta _{80}c^{4}_{2}c^{2}_{4})+\\
+ u_{3}(c_{14}(1+ \beta _{3})+ c^{2}_{2}c_{10}(1+ \beta _{3}+
\beta _{82}) + c^{2}_{2}c^{2}_{5}(\beta _{2} + \beta _{14} +
\beta _{83}) + c^{2}_{4}c_{6}(\beta _{7}+\\
+ \beta _{84})+ c^{4}_{2}c_{6}(\beta _{19}+ \beta _{23}+
\beta _{85}))+\varphi _{3}(c_{12}\beta _{30}+c^{2}_{6}\beta _{33}+
c^{6}_{2}(\beta _{40}+\beta _{89})+\\
+c^{2}_{2}c^{2}_{4}(\beta _{45}+ \beta _{88}))+
u_{4}(c_{10}(1+ \beta _{90})+ c^{2}_{5}(1+ \beta _{49}+ \beta _{91})+\\
+ c^{2}_{2}c_{6}(\beta _{57}+ \beta _{92}+1))+
\varphi _{7}c^{2}_{2}\beta _{82}+ \varphi _{9}\beta _{90}+
\varphi _{5}c^{4}_{2}\beta _{66}.
\end{multline*}
This means that:
\begin{multline}\label{eq:c2p10f12}
S_{10}c_{26}= \beta _{74}c^{4}_{4}+ \beta _{75}c^{8}_{2}+
\beta _{77}c^{2}_{2}c^{2}_{6}+ \beta _{79}c^{2}_{5}c_{6} +
\beta _{80}c^{4}_{2}c^{2}_{4},\\
S_{10}c_{27}=c_{4}c_{5}c_{8}+c_{9}c^{2}_{4}+ c_{11}c_{6}; \
\beta _{2}+ \beta _{14}= \beta _{83}, \ \beta _{7}= \beta _{84}, \
\beta _{19}+ \beta _{23}= \beta _{85},\\
\beta _{3}, \ \beta _{71}, \ \beta _{76}, \ \beta _{90}= 1, \
\beta _{30}, \
\beta _{33}, \ \beta _{82}, \ \beta _{66}=0, \ \beta _{40}= \beta _{89},
\ \beta _{45}= \beta _{88},\\
\beta _{49}+ \beta _{91}= 1, \ \beta _{57}+ \beta _{92}= 1.
\end{multline}
It follows from (\ref{eq:c2p10f12}) and (\ref{eq:c2p10f3})
that $\beta _{48}= 1,\beta _{13}= 0$,
from (\ref{eq:c2p10f4}) it follows that $\beta _{8}= 1.$

From the equality $S_{8}\varphi _{14}= \varphi _{10}$, we obtain after
cancellations:
\begin{multline*}
u_{1}c_{19}+ u_{3}(c^{4}_{4}+ c^{4}_{2}c^{2}_{4}) +
\varphi _{6}c^{2}_{4} = u_{1}S_{8}c_{27} + u_{3}c^{2}_{4}c_{8} +
u_{4}c^{3}_{4}+ u_{2}c^{2}_{4}c_{10}+\\
+ u_{2}(S_{8}c_{26}+ c^{4}_{2}c^{2}_{5}\beta _{67}+
\beta _{68}c^{2}_{2}c^{2}_{4}c_{6}+ \beta _{69}c^{6}_{2}c_{6}+
\beta _{72}c^{2}_{4}c^{2}_{5})+\\
+u_{3}(c^{8}_{2}(1+\beta _{75})+ c^{4}_{4}\beta _{74}+
c^{2}_{2}c^{2}_{6}\beta _{77}+c^{2}_{5}c_{6}\beta _{79}+
c^{4}_{2}c^{2}_{4}\beta _{80})+
\varphi _{3}(c^{2}_{2}c^{2}_{5}\beta _{83}+\\
+c^{2}_{4}c_{6}\beta _{84}+c^{4}_{2}c_{6}\beta _{85})+
u_{4}(c^{2}_{2}c^{2}_{4}\beta _{88}+ c^{6}_{2}\beta _{89})+
\varphi _{5}(c^{2}_{5}\beta _{91}+ \beta _{92}c^{2}_{2}c_{6}).
\end{multline*}
Hence,
\begin{multline*}
S_{8}c_{26}= c^{4}_{2}c^{2}_{5}\beta _{67}+
\beta _{68}c^{2}_{2}c^{2}_{4}c_{6}+ \beta _{69}c^{6}_{2}c_{6}+
\beta _{72}c^{2}_{4}c^{2}_{5},\\
\beta _{77}, \ \beta _{79}, \ \beta _{80}, \ \beta _{83}, \
\beta _{84}, \
\beta _{85}, \ \beta _{88}, \ \beta _{89}, \ \beta _{91}, \
\beta _{92}= 0,\\
\beta _{74}, \ \beta _{75}=1, \ S_{8}c_{27}= c_{19}+ c_{11}c^{2}_{4}.
\end{multline*}
From these relations and from (\ref{eq:c2p10f12}) we get $\beta _{57},
\beta _{49}= 1$, $\beta _{45},\beta _{40},\beta _{7}=0$; from
(\ref{eq:c2p10f11}) it
follows that $\beta _{58},\beta _{52},\beta _{32}= 0$; from
(\ref{eq:c2p10f10}) it
follows that $\beta _{29},\beta _{20},\beta _{25}= 0$; from
(\ref{eq:c2p10f9}) it
follows that $\beta _{2}, \beta _{23}= 0$; from (\ref{eq:c2p10f12})
it follows that
$\beta _{14},\beta _{19}= 0$; from (\ref{eq:c2p10f6}) it follows that
$\beta _{15}= 0$; from (\ref{eq:c2p10f6}) it follows also that
$\beta _{50}=0.$

Using the equality $S_{6}\varphi _{14}= \varphi _{11}$, we obtain after
the cancellation:
\begin{multline*}
u_{1}c_{21}+ u_{3}c^{4}_{2}(c_{10}+ c^{2}_{5}+ c^{2}_{2}c_{6})+
\varphi _{6}(c_{10}+ c^{2}_{5}+ c^{2}_{2}c_{6})=
u_{1}S_{6}c_{27}+u_{3}c_{2}c^{2}_{8}+\\
+ u_{3}c_{8}(c_{10}+ c^{2}_{5}+ c^{2}_{2}c_{6})+
u_{4}c_{4}(c_{10}+ c^{2}_{5}+ c^{2}_{2}c_{6})+
u_{2}c_{10}(c_{10}+c^{2}_{5}+c^{2}_{2}c_{6})+\\
+ u_{3}c_{4}c_{6}c_{8}+ u_{4}c_{2}c_{4}c_{8}+
\varphi _{5}c_{4}c_{8}+\varphi _{3}c^{2}_{8}+
u_{2}(S_{6}c_{26}+c^{4}_{2}c_{12}+c^{4}_{2}c_{12}+\\
+c^{10}_{2}\beta _{56}+ c^{2}_{2}c^{4}_{4}\beta _{55}+
c^{2}_{2}c^{2}_{5}c_{6}\beta _{60}+ \beta _{61}c^{6}_{2}c^{2}_{4})+
u_{3}(c^{2}_{4}c^{2}_{5}(\beta _{6}+\beta _{72})+\\
+c^{4}_{2}c^{2}_{5}(\beta _{22}+\beta _{67})+
c^{2}_{2}c^{2}_{4}c_{6}\beta _{68} + c^{6}_{2}c_{6}\beta _{69}) +
\varphi _{3}(c^{4}_{2}c^{2}_{4}\beta _{42} + c^{8}_{2}\beta _{43} +\\
+ c^{4}_{4}\beta _{34}+ c^{2}_{2}c^{2}_{6}\beta _{44})+
u_{4}c^{2}_{2}c^{2}_{5}\beta _{60}+
\varphi _{5}(c^{2}_{2}c^{2}_{4}\beta _{68}+ c^{6}_{2}\beta _{69}).
\end{multline*}
Hence,
\begin{multline}\label{eq:c2p10f13}
S_{6}c_{26}= c^{2}_{2}c^{2}_{4}\beta _{55}+ c^{10}_{2}\beta _{56}+
c^{6}_{2}c^{2}_{4}\beta _{61}; \ \beta _{6}= \beta _{72}, \ \beta _{22}=
\beta _{67}.\\
S_{6}c_{27}=c_{21}+c_{5}c^{2}_{8}+ c_{11}(c_{10}+ c^{2}_{5}+
c^{2}_{2}c_{6})+ c_{9}c_{4}c_{8}, \\
\beta _{69}, \ \beta _{68}, \ \beta _{60}, \ \beta _{43}, \ \beta _{42},
\ \beta _{44}, \ \beta _{34}= 0.
\end{multline}
From these relations and from (\ref{eq:c2p10f11}) it follows that
$\beta _{18},\beta _{24},\beta _{39}= 0$.

From the equality $S_{4}\varphi _{14}= \varphi _{12}$ we get after the
cancellation:
\begin{multline*}
u_{1}S_{4}c_{27}+ u_{3}c_{8}(c_{12}+ c_{2}c_{10}+ c_{4}c_{8}+
c_{4}c^{4}_{2})+ u_{4}c_{4}(c_{12}+ c_{2}c_{10}+ c_{4}c_{8})+ \\
+ \varphi _{6}c_{4}c_{8}+ u_{3}c^{4}_{2}c_{12}+
\varphi _{3}c^{4}_{2}c_{10}+ u_{3}c_{10}(c_{10}+
c^{2}_{5}+c^{2}_{2}c_{6})+u_{4}c_{6}c_{10}+\\
+\varphi _{6}c_{12}+ \varphi _{7}c_{10}+ \varphi _{3}c^{2}_{4}c_{10}+
u_{2}(S_{4}c_{26}+ \beta _{41}c^{6}_{2}c^{2}_{5} +
\beta _{46}c^{2}_{2}c^{2}_{4}c^{2}_{5}) +\\
+ u_{3}(\beta _{55}c^{2}_{2}c^{2}_{4}+ \beta _{56}c^{10}_{2}+
\beta _{61}c^{6}_{2}c^{2}_{4})+
\varphi _{3}(\beta _{67}c^{4}_{2}c^{2}_{5}+
\beta _{72}c^{2}_{4}c^{2}_{5})= u_{1}c_{23}.
\end{multline*}
This means that
\begin{multline*}
S_{4}c_{27}= c_{23}+ c_{11}c_{12}+ c_{13}c_{10}+ c_{4}c_{8}c_{11},\\
S_{4}c_{26}= \beta _{41}c^{6}_{2}c^{2}_{5} +
\beta _{46}c^{2}_{2}c^{2}_{4}c^{2}_{5}; \ \beta _{67}, \
\beta _{72}= 0, \ \beta _{55}, \ \beta _{56}, \ \beta _{61}= 0.
\end{multline*}
From these relations and from (\ref{eq:c2p10f13}) it follows that
$\beta _{6},\beta _{22}= 0.$

To determine the coefficients $\beta _{41}$ and $\beta _{46}$ let us
apply the operation $S_{5,5}$.
\begin{multline*}
S_{5,5}\varphi _{14}= \varphi _{9}= u_{1}S_{5,5}c_{27}+
u_{3}(c_{2}c_{4}c_{8}+ c_{6}c_{8}+ c^{2}_{4}c_{6} +
c^{4}_{2}c_{6})+ \varphi _{3}c_{4}c_{8}+\\
+ u_{2}c_{6}c_{10}+ u_{4}(c_{2}c^{2}_{4}+ c_{4}c_{6})+
\varphi _{5}c^{2}_{4}+ \varphi _{6}c_{6}+ \varphi _{9}+\\
+ u_{2}(S_{5,5}c_{26}+ c^{8}_{2}+c^{4}_{4})+
\varphi _{3}(\beta _{41}c^{6}_{2}+ \beta _{46}c^{2}_{2}c^{2}_{4}).
\end{multline*}
Hence,
\[
\beta _{41}, \ \beta _{46}= 0; \ S_{5,5}c_{27}= c_{4}c_{5}c_{8}+
c_{9}c^{2}_{4}+ c_{11}c_{6}, \ S_{5,5}c_{26}= c^{8}_{2}+ c^{4}_{4}.
\]
To determine the coefficient $\beta _{21}$ we apply the operation
$S_{4,4}$:
\begin{multline*}
S_{4,4}\varphi _{14}= 0= u_{1}S_{4,4}c_{27}+
u_{2}(c^{2}_{4}+ c^{4}_{2})c_{10}+ u_{3}(c^{2}_{4}c_{8}+ c^{2}_{8} +
c^{8}_{2} +c_{6}c_{10}+\\
+ c^{4}_{2}c^{2}_{4})+ u_{4}(c^{3}_{4}+ c_{2}c_{10}+ c_{4}c_{8}+
c_{4}c^{4}_{2})+\varphi _{6}(c_{8}+c^{4}_{2}+c^{2}_{4})+
\varphi _{5}c_{10}+\\
+ u_{3}c^{8}_{2}\beta _{21}+ u_{2}(S_{4,4}c_{26}+ c_{18}).
\end{multline*}
It follows from these relations that:
\[
\beta _{21}= 0; \ S_{4,4}c_{27}= c_{11}(c_{8}+ c^{2}_{4}+ c^{4}_{2})+
c_{9}c_{10}, \ S_{4,4}c_{26}= c_{18}.
\]
To determine $\beta _{17}$ let us use the operation $S_{4,4,4,4}$:
\[
S_{4,4,4,4}\varphi _{14}= \varphi _{6}= u_{1}S_{4,4,4,4}c_{27}+
u_{2}S_{4,4,4,4}c_{26}+ u_{3}c^{4}_{2}(1+\beta _{17})+\varphi _{6}.
\]
So,
\[
\beta _{17}= 1; \ S_{4,4,4,4}c_{27}= S_{4,4,4,4}c_{26}= 0.
\]
To determine the coefficient $\beta_{26}$ we apply the operation
$S_{2,2,2,2,2,2,2,2,2,2,2,2}$:
\[
S_{2,2,2,2,2,2,2,2,2,2,2,2}\varphi _{14}= 0= u_{3}(1+ \beta _{26}),
\]
hence, $\beta _{26}= 1$.
The final form of the projection of the element $\Phi _{14}$ is the
following:
\begin{multline*}
\varphi _{14}= u_{1}c_{27}+ u_{3}c_{8}c_{16}+ u_{4}c_{4}c_{16}+
u_{5}c_{4}c_{8}+ u_{2}c_{26}+ u_{3}(c_{24}+ c_{10}c_{14}+\\
+ c^{4}_{6}+ c_{6}c_{18}+ c^{4}_{2}c^{4}_{4}+ c^{12}_{2})+
\varphi _{3}c_{22}+ u_{4}(c_{20}+c^{2}_{10}+c^{2}_{5}c_{10}+
c^{2}_{2}c_{6}c_{10})+\\
+ \varphi _{5}(c_{18}+ c^{2}_{4}c_{10})+ \varphi _{6}(c^{4}_{4}+
c^{8}_{2}+ c_{6}c_{10})+ u_{5}c_{12}+ \varphi _{9}c_{10}+
\varphi _{10}c^{4}_{2}.
\end{multline*}
Using this decomposition we calculate the action of the operation
$S_{\omega }$ on the elements $c_{27}$, and $c_{26}$.

\section{Matrix Massey products in MASS}

If elements $\xi $, $\eta $, $\zeta $ belong to $E^{0,1,t}_{2}$,
for $t< 106$ then the following triple Massey products are defined
and almost all of them contain zero: $<\xi , h_{0},\eta >$,
$<\xi , h_{0},\zeta >$, $<\zeta , h_{0},\eta >$. Hence for all such
triples the matrix Massey products of the following two types are
defined: $<\xi , h_{0},\eta ,h_{0}>$,
$< \xi , h_{0}, ( \eta ,\zeta ) ,\begin{pmatrix} \zeta \\
\eta \end{pmatrix}>$.
Really, let $c_{\zeta,\eta }$ be the element belonging to $E^{0,0,t}_{1}$
which is defined uniquely up to cycles of the differential $d_{1}$,
by the property: $d_{1}(c_{\zeta,\eta }) \in  <\zeta, h_{0},\eta >$.
Let also $h_{\zeta }$ denotes the element belonging to $E^{1,0,t}_{1}$,
and having the property $d_{1}(h_{\zeta }) = h_{0}\zeta $. Then we have:
\[
<\xi, h_{0}, \eta,h_{0} > = \left( \begin{array}{lcccr}
0 & \xi & h_{\xi } & c_{\xi ,\eta } & *  \\
0 & 0 & h_{0} & h_{\eta } & 0 \\
0 & 0 & 0 & \eta & h_{\eta } \\
0 & 0 & 0 & 0 & h_{0} \\
\end{array} \right)
= h_{0}c_{\xi ,\eta } + h_{\xi }h_{\eta },
\]
\begin{multline*}
< \xi , h_{0}, ( \eta ,\zeta ) ,\begin{pmatrix} \zeta \\ \eta
\end{pmatrix}> =
\left( \begin{array}{lcccc}
0 & \xi & h_{\xi } & ( c_{\xi ,\eta },c_{\xi,\zeta } ) & *  \\
0 & 0 & h_{0} & ( h_{\eta },h_{\zeta } ) & c_{\eta,\zeta } \\
0 & 0 & 0 & ( \eta,\zeta ) & 0 \\
0 & 0 & 0 & 0 & \begin{pmatrix} \zeta \\ \eta \end{pmatrix}  \\
\end{array} \right) = \\
= \xi c_{\eta ,\zeta } + \zeta c_{\xi ,\eta } +\eta c_{\xi ,\zeta }.
\end{multline*}
Let us denote the first product by ${\cal A}_{\xi ,\eta }$, and the
second by ${\cal F}_{\xi ,\zeta ,\eta }$. There is the equality:
$\varphi _{3} = {\cal F}_{u_{1},u_{2},u_{3}}$, and in the notations
of the work [6] we have: $c_{1} = {\cal A} _{u_{1},u_{1}},
a_{6} = {\cal A} _{u_{1},\varphi _{3}}$ .

As a canonical representative for ${\cal F}_{u_{1},u_{i},u_{j}}$ we choose
the element $\tilde{\varphi }_{i,j}$. As a canonical representative for
$c_{u_{1},u_{j}}$ we take $c_{1,j}$. If $\xi \in  E^{0,1,t}_{2}$ has in
$E^{0,1,t}_{1}$ the following decomposition:
$\xi =\sum^{}_{i} u_{i}\tilde{c}_{i}$, where
$\tilde{c}_{i}\in  E^{0,0,t}_{1}$, then as $h_{\xi }$ we take the
following element: $h_{\xi }= \sum^{}_{i} h_{i}\tilde{c}_{i}$.
As a canonical representative for ${\cal A}_{\xi ,\xi }$ we take
$h^{2}_{\xi }$. Under these conditions the elements
${\cal F}_{\xi ,\zeta ,\eta }$ and ${\cal A}_{\xi ,\zeta }$
are defined uniquely for $t<108.$
For the simplicity let us introduce the following new notations:
\[
\begin{array}{lcl}
 old  &  notation  &  new  \\
 {\cal F}_{u_{1},u_{i},u_{j}} &  & \tilde{\varphi }_{i,j} \\
 {\cal F}_{u_{k},u_{i},u_{j}} &  & \omega _{k,i,j} \\
 {\cal F}_{u_{1},u_{i},\omega _{i,j,k}} &  & \psi_{\hat{i},j,k} \\
 {\cal F}_{u_{1},u_{i,j},\omega _{i,j,k}} &  & \psi_{\hat{i},\hat{j},k}
\end{array}
\]

\medskip
\section{The cell $E^{0,1,t}_{2}$ for $t < 108$ }

Generators are given in the Table~3. The following brief notations are
used:

\medskip
\begin{tabular}{|l|c|c|c|c|c|c|c|} \hline
complete & $\omega_{2,3,4}$ & $\omega_{2,3,5}$ & $\omega_{2,4,5}$ &
$\omega_{3,4,5}$ & $\psi_{\hat{2},3,4}$ & $\psi_{2,\hat{3},4}$ &
$\psi_{\hat{2},\hat{3},4}$ \\ \hline
brief & $\omega_{1}$ & $\omega_{2}$ & $\omega_{3}$ & $\omega_{4}$ &
$\psi_{1}$ & $\psi_{2}$ & $\psi_{3}$  \\ \hline
\end{tabular}

\medskip
\begin{tabular}{|l|c|c|c|c|c|c|} \hline
complete & $\psi_{2,3,\hat{4}}$ & $\psi_{\hat{2},3,\hat{4}}$ &
$\psi_{\hat{2},3,5}$ & $\psi_{2,\hat{3},\hat{4}}$ &
$\psi_{2,\hat{3},5}$ & $\psi_{\hat{2},\hat{3},5}$\\ \hline
brief & $\psi_{4}$ & $\psi_{5}$ & $\psi_{6}$ & $\psi_{7}$ &
$\psi_{8}$ & $\psi_{9}$ \\ \hline
\end{tabular}

\medskip

\begin{lemma} Let all the elements given below are taken from the cell
$E^{0,1,t}_{2}$ for $t < 108$. Let also the following conditions hold:
$i \neq j \neq k \neq i; i,j,k \in \{ 2,3,4,5 \}$. Let
$\xi \neq \zeta \neq \eta \neq \theta \neq \xi \neq \eta , \theta \neq
\zeta $. Then the following conditions hold:
\begin{align*}
1) & \  u_{i}\tilde{\varphi }_{j,k} + u_{j}\tilde{\varphi }_{i,k} +
u_{k}\tilde{\varphi }_{i,j} = u_{1}\omega _{i,j,k}.\\
2) & \ u_{i}\tilde{\varphi }_{i,j,k} +
\tilde{\varphi }_{i,j}\tilde{\varphi }_{j,k} =
u_{1}\psi_{ \hat{i},j,k} + u_{j}u_{k}c^{2}_{1,i} .\\
3) & \ \tilde{\varphi }_{i,j}\tilde{\varphi }_{i,j,k} =
u_{1}\psi_{ \hat{i},\hat{j},k} +
u_{i}\tilde{\varphi }_{i,k}c^{2}_{1,j} +
u_{j}\tilde{\varphi }_{j,k}c^{2}_{1,i} .\\
4) & \ u_{i}\psi _{i,\hat{j},k} + u_{j}\psi_{ \hat{i},j,k} =
\tilde{\varphi }_{i,j}\omega _{i,j,k}.\\
5) & \ \tilde{\varphi }^{2}_{i,j} = u^{2}_{1}c^{2}_{i,j} +
u_{i}c^{2}_{1,j} + u_{j}c^{2}_{1,i} .\\
6) & \ u_{i}\psi_{ \hat{i},\hat{j},k} +
\tilde{\varphi }_{i,j}\psi_{ \hat{i},j,k} =
u_{1}\tilde{\varphi }_{i,k}c^{2}_{i,j} +
u_{j}\omega _{i,j,k}c^{2}_{1,i}.\\
7) & \ u_{i}\psi _{i,\hat{j},\hat{k}} +
\tilde{\varphi }_{i,j}\psi _{i,j,\hat{k}} +
\tilde{\varphi }_{i,k}\psi _{i, \hat{j},k} =
\tilde{\varphi }_{i,j,k}\omega _{i,j,k}.\\
8) & \ \omega ^{2}_{i,j,k} = u^{2}_{i}c^{2}_{j,k} +
u^{2}_{j}c^{2}_{i,k} + u^{2}_{k}c^{2}_{i,j}.\\
9) & \ \tilde{\varphi }^{2}_{i,j,k}= u^{2}_{1}c^{2}_{i,j,k} +
u^{2}_{i}c^{2}_{1,j}c^{2}_{1,k} + u^{2}_{j}c^{2}_{1,i}c^{2}_{1,k} +
u^{2}_{k}c^{2}_{1,j}c^{2}_{1,i}.\\
10)& \ \xi {\cal F}_{\zeta ,\eta ,\theta } +
\zeta {\cal F}_{\xi ,\eta ,\theta } +
\eta {\cal F}_{\xi ,\zeta ,\theta } +
\theta {\cal F}_{\xi ,\zeta ,\eta } = 0 .\\
\end{align*}
\end{lemma}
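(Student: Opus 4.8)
The plan is to verify every one of the ten relations by an explicit computation inside the differential graded algebra $(E^{*,*,*}_{1},d_{1})$, using the formulas (\ref{eq:cij}), (\ref{eq:ciq}) and $d_{1}(h_{j})=h_{0}u_{j}$ together with the fact, established in the preceding subsection, that for $t<108$ each product ${\cal F}_{\xi ,\zeta ,\eta }$ and ${\cal A}_{\xi ,\zeta }$ occurring here is a single well-defined element of $E_{2}$ (zero indeterminacy). The first step is to choose a defining system for every product in the statement, which amounts to fixing explicit $E_{1}$-cochains; from the formula ${\cal F}_{\xi ,\zeta ,\eta }=\xi c_{\eta ,\zeta }+\zeta c_{\xi ,\eta }+\eta c_{\xi ,\zeta }$ and the canonical choices $c_{u_{1},u_{j}}=c_{1,j}$ this gives the representatives
\[
\tilde\varphi _{i,j}=u_{1}c_{i,j}+u_{i}c_{1,j}+u_{j}c_{1,i},\qquad \omega _{i,j,k}=u_{i}c_{j,k}+u_{j}c_{i,k}+u_{k}c_{i,j},
\]
\[
\tilde\varphi _{i,j,k}=u_{1}c_{i,j,k}+u_{i}c_{1,j}c_{1,k}+u_{j}c_{1,i}c_{1,k}+u_{k}c_{1,i}c_{1,j},
\]
the last from (\ref{eq:phiiq}). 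For the $\psi$'s one needs in addition cochains $c_{u_{1},\omega _{i,j,k}}$, $c_{u_{i},\omega _{i,j,k}}$, $c_{\tilde\varphi _{i,j},\omega _{i,j,k}}$; here I would exploit that suitable sums of products of the $c$'s already carry the right $d_{1}$: for instance $d_{1}(c_{1,i}c_{j,k}+c_{1,j}c_{i,k}+c_{1,k}c_{i,j}+c_{i,j,k})$ equals $u_{1}(h_{i}c_{j,k}+h_{j}c_{i,k}+h_{k}c_{i,j})+h_{1}\omega _{i,j,k}$, which is a representative of $\langle u_{1},h_{0},\omega _{i,j,k}\rangle$, so one may take $c_{u_{1},\omega _{i,j,k}}$ to be that polynomial, and similarly for the others. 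With all auxiliary cochains chosen as honest polynomials, each ${\cal F}$ is represented by an explicit $d_{1}$-cycle, equal by the zero-indeterminacy remark to the product named in the statement.

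With the representatives in hand the relations split into three kinds. Relations 1), 4) and 10) are the linearity and Jacobi (``juggling'') identities of the ${\cal F}$-construction: on substituting the formula for ${\cal F}$ and using $c_{\alpha ,\beta }=c_{\beta ,\alpha }$ one finds that, modulo $2$, every monomial on one side occurs an even number of times, so the two sides coincide at the cochain level; 10) is the clean prototype (each product $\alpha \beta c_{\gamma ,\delta }$ appears exactly twice among the four summands), and 1) and 4) are the same cancellation after specializing arguments to $u_{1}$ and, in 4), replacing one argument by $\omega _{i,j,k}$ and invoking the vanishing of $\langle u_{1},h_{0},\omega _{i,j,k}\rangle$ noted above. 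Relations 5), 8) and 9) are obtained by applying the Frobenius $x\mapsto x^{2}$ to the displayed representatives: over $\mathbb Z/2$ the square of a sum of monomials is the sum of their squares, and since each product is a single element its square is unambiguous, which produces exactly the stated quadratic formulas.

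The substance is in 2), 3), 6) and 7), each equating a product of two overlapping Massey-product elements with a further Massey product plus an explicit decomposable correction. For these I would multiply out the two chosen representatives, regroup, and split off the subsum that is $u_{1}$ times a representative of the relevant $\psi$; the remaining terms are then rewritten, using $d_{1}(c_{1,i})=u_{1}h_{i}+u_{i}h_{1}$, $d_{1}(h_{i})=h_{0}u_{i}$ and (\ref{eq:ciq}), to show that what is not absorbed into $u_{1}\psi$ coincides, modulo a $d_{1}$-boundary, with the asserted decomposable term (for example $u_{j}u_{k}c^{2}_{1,i}$ in 2)). I expect the main obstacle to be precisely this matching: several cochain choices enter the $\psi$-representatives, and one has to check simultaneously that the leftover is a $d_{1}$-cycle and that it represents the stated class in $E_{2}$ --- which is where the range $t<108$ is used a second time, since it forces all these choices to agree and annihilates any would-be extra class. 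One can hope to cut down the labour by establishing 2) by the direct computation and then deducing parts of 3), 6) and 7) by multiplying 2) and the quadratic relation 5) by appropriate elements and recombining with the formal relations 4) and 10).
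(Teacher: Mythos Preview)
Your approach is correct and is exactly what the paper does: its entire proof reads ``It consists in direct check writing the decompositions of participating elements,'' i.e., substitute the explicit $E_{1}$-representatives you wrote down and verify each identity over $\mathbb Z/2$. Your organization into cancellation-type identities (1, 4, 10), Frobenius squares (5, 8, 9), and the remaining mixed products (2, 3, 6, 7) is a helpful elaboration of that one-line proof, but the underlying method is the same.
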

\begin{proof} It consists in direct check writing the decompositions
of participating elements. \end{proof}

This gives all the relations in $E^{0,1,t}_{2}$, $t < 108$.

\medskip
\section{The cell $E^{ 2,0,t }_{ 2 }$, for $ t < 108$}
\medskip

\begin{lemma} i) If $\xi ,\zeta ,\eta \in E^{0,1,t}_{2}$, and
$\theta \in  E^{*,*,*}_{2}$ is an arbitrary element, and expression
${\cal A}_{\xi ,\zeta }$ is defined, then the expressions
${\cal A}_{\theta \xi ,\zeta }$ and ${\cal A}_{\xi ,\theta \zeta }$
are also defined and we have the equality
$\theta {\cal A}_{\xi ,\zeta } = {\cal A}_{\theta \xi ,\zeta } =
{\cal A}_{\xi ,\theta \zeta }$.
ii) If ${\cal A}_{\xi,\zeta }$ and ${\cal A}_{\xi ,\eta }$ are defined,
then the relation ${\cal A}_{\xi,\zeta + \eta }$ is also defined, and
the following equality holds:
${\cal A}_{\xi,\zeta } + {\cal A}_{\xi,\eta } =
{\cal A}_{\xi,\zeta + \eta }$.
\end{lemma}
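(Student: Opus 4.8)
The plan is to reduce everything to the explicit formula for the matrix Massey product recorded just above, namely ${\cal A}_{\xi,\zeta} = h_0 c_{\xi,\zeta} + h_\xi h_\zeta$, where $d_1(h_\xi) = h_0\xi$, $d_1(h_\zeta) = h_0\zeta$ and $d_1(c_{\xi,\zeta}) \in \langle \xi, h_0, \zeta\rangle$, together with the fact established in the previous section that for $t<108$ the value ${\cal A}_{\xi,\zeta}$ is independent of the choices of $h_\xi$, $h_\zeta$, $c_{\xi,\zeta}$. So the whole argument consists in manufacturing, for each of the three products ${\cal A}_{\theta\xi,\zeta}$, ${\cal A}_{\xi,\theta\zeta}$, ${\cal A}_{\xi,\zeta+\eta}$, an admissible system of defining representatives built out of those for ${\cal A}_{\xi,\zeta}$ and ${\cal A}_{\xi,\eta}$, and reading off the asserted identities; all computations are mod $2$, so there is no sign bookkeeping.

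For part i) I would first check that the new products are defined. Since $\theta\in E^{*,*,*}_2$ it is represented by a $d_1$-cycle, so $\theta\xi$ and $\theta\zeta$ are again $d_1$-cycles; the vanishing of the lower matrix products needed to define $\langle\theta\xi,h_0,\zeta,h_0\rangle$ follows from the corresponding vanishing for $\langle\xi,h_0,\zeta,h_0\rangle$ by multiplying representatives through by $\theta$ and invoking the standard inclusion $\theta\langle\xi,h_0,\zeta\rangle\subseteq\langle\theta\xi,h_0,\zeta\rangle$. Then, because $d_1$ is a derivation and $d_1\theta=0$, the elements $h_{\theta\xi}:=\theta h_\xi$ and $c_{\theta\xi,\zeta}:=\theta c_{\xi,\zeta}$ satisfy $d_1(\theta h_\xi)=\theta h_0\xi=h_0(\theta\xi)$ and $d_1(\theta c_{\xi,\zeta})=\theta\,d_1(c_{\xi,\zeta})\in\theta\langle\xi,h_0,\zeta\rangle\subseteq\langle\theta\xi,h_0,\zeta\rangle$, hence form an admissible defining system. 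Substituting into the formula gives ${\cal A}_{\theta\xi,\zeta}=h_0\,\theta c_{\xi,\zeta}+(\theta h_\xi)h_\zeta=\theta(h_0 c_{\xi,\zeta}+h_\xi h_\zeta)=\theta{\cal A}_{\xi,\zeta}$. The identity ${\cal A}_{\xi,\theta\zeta}=\theta{\cal A}_{\xi,\zeta}$ is obtained the same way on the other slot, or deduced from the symmetry ${\cal A}_{\xi,\zeta}={\cal A}_{\zeta,\xi}$, which itself follows from $h_\xi h_\zeta=h_\zeta h_\xi$ and the fact that $c_{\xi,\zeta}$ may be taken equal to $c_{\zeta,\xi}$ since $\langle\xi,h_0,\zeta\rangle=\langle\zeta,h_0,\xi\rangle$ mod $2$.

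For part ii) I would similarly set $h_{\zeta+\eta}:=h_\zeta+h_\eta$, so that $d_1(h_\zeta+h_\eta)=h_0(\zeta+\eta)$, and $c_{\xi,\zeta+\eta}:=c_{\xi,\zeta}+c_{\xi,\eta}$, so that $d_1(c_{\xi,\zeta}+c_{\xi,\eta})\in\langle\xi,h_0,\zeta\rangle+\langle\xi,h_0,\eta\rangle\subseteq\langle\xi,h_0,\zeta+\eta\rangle$ by additivity of the triple product in its last argument; the matrix products needed to define $\langle\xi,h_0,\zeta+\eta,h_0\rangle$ vanish for the same additivity reason. Feeding this system into the formula, ${\cal A}_{\xi,\zeta+\eta}=h_0(c_{\xi,\zeta}+c_{\xi,\eta})+h_\xi(h_\zeta+h_\eta)={\cal A}_{\xi,\zeta}+{\cal A}_{\xi,\eta}$.

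The only genuinely delicate point — and the one I would be most careful about — is the claim that each new product is actually \emph{defined}: the matrix Massey product $\langle-,h_0,-,h_0\rangle$ requires that certain off-diagonal blocks of a defining system be honest $d_1$-boundaries of prescribed shape, and one must verify that multiplying an admissible system for ${\cal A}_{\xi,\zeta}$ by $\theta$, or adding those for ${\cal A}_{\xi,\zeta}$ and ${\cal A}_{\xi,\eta}$, again yields such a system. Once definedness is secured, the identities are immediate from the displayed formula, and the fact that these particular admissible systems compute the canonical value (rather than merely some element of an indeterminacy coset) is exactly the $t<108$ uniqueness statement proved earlier.
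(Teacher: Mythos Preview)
Your proposal is correct and follows essentially the same approach as the paper: both arguments construct an explicit defining system for ${\cal A}_{\theta\xi,\zeta}$ (respectively ${\cal A}_{\xi,\zeta+\eta}$) by multiplying the top row of the defining matrix for ${\cal A}_{\xi,\zeta}$ by $\theta$ (respectively by adding the two defining systems), and then read the identity off the formula $h_0 c_{\xi,\zeta}+h_\xi h_\zeta$. The paper's proof is terser---it simply displays the two matrices and declares ii) analogous---whereas you spell out the verification that the new systems are admissible and invoke the $t<108$ uniqueness, but the substance is identical.
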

\begin{proof} i) If ${\cal A}_{\xi ,\zeta }$ is defined by the expression:
\[
<\xi, h_{0}, \eta,h_{0} > = \left( \begin{array}{lcccr}
0 & \xi & h_{\xi } & c_{\xi ,\eta } & *  \\
0 & 0 & h_{0} & h_{\eta } & 0 \\
0 & 0 & 0 & \eta & h_{\eta } \\
0 & 0 & 0 & 0 & h_{0} \\
\end{array} \right)
= h_{0}c_{\xi ,\eta } + h_{\xi }h_{\eta },
\]
then ${\cal A}_{\theta \xi ,\zeta }$ can be given by the formula:
\[
<\theta \xi, h_{0}, \eta,h_{0} > = \left( \begin{array}{lcccr}
0 & \theta \xi & \theta h_{\xi } & \theta c_{\xi ,\eta } & *  \\
0 & 0 & h_{0} & h_{\eta } & 0 \\
0 & 0 & 0 & \eta & h_{\eta } \\
0 & 0 & 0 & 0 & h_{0} \\
\end{array} \right)
= \theta (h_{0}c_{\xi ,\eta } + h_{\xi }h_{\eta }),
\]
ii) is proved analogously. \end{proof}

\begin{lemma} Let $\sum^{}_{i} \xi _{i}\zeta _{i} = 0$ be one of the
relations of Lemma~1, where the elements
$\xi _{i},\zeta _{i} \in  E^{0,1,t}_{2}$ are such that the sum of their
$t$-gradings is less than 108, and the elements
${\cal A}_{\xi _{i},\zeta _{i}}$ are defined for all i. Then
$\sum^{}_{i} {\cal A}_{\xi _{i},\zeta _{i}} = 0$.
\end{lemma}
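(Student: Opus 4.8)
The plan is to work entirely at the chain level of the modified algebraic spectral sequence, i.e.\ inside the differential graded $\mathbb{Z}/2$-algebra $(E^{*,*,*}_1,d_1)=(\mathbb{Z}/2[h_0,h_1,\dots;u_1,u_2,\dots;c_2,c_4,\dots],d_1)$, and to exploit the explicit formula $\mathcal{A}_{\xi,\eta}=h_0c_{\xi,\eta}+h_\xi h_\eta$ for the canonical representatives, where $h_\xi=\sum_j h_j\tilde c_j$ when $\xi=\sum_j u_j\tilde c_j$ and $d_1c_{\xi,\eta}=\xi h_\eta+h_\xi\eta$. First I would record the elementary facts already used above in building the matrix Massey products: the assignment $\xi\mapsto h_\xi$ is $\mathbb{Z}/2[c_*]$-linear on $E^{0,1,*}_1$, and $d_1h_\xi=h_0\xi$ for every $\xi\in E^{0,1,*}_2$ (this follows from $d_1h_j=h_0u_j$ together with $d_1\xi=0$ and the polynomial, hence free, structure of $E_1$). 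Under the hypothesis that all $\mathcal{A}_{\xi_i,\zeta_i}$ are defined, each $\xi_ih_{\zeta_i}+h_{\xi_i}\zeta_i$ is a $d_1$-boundary, so a representative $c_{\xi_i,\zeta_i}$ with $d_1c_{\xi_i,\zeta_i}=\xi_ih_{\zeta_i}+h_{\xi_i}\zeta_i$ exists.

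The core of the argument is a formal consequence of freeness of $E_1$ over $\mathbb{Z}/2[c_*]$. Writing $\xi_i=\sum_j u_ja_{ij}$ and $\zeta_i=\sum_k u_kb_{ik}$ with $a_{ij},b_{ik}\in\mathbb{Z}/2[c_*]$, the relation $\sum_i\xi_i\zeta_i=\sum_i\sum_{j,k}u_ju_k\,a_{ij}b_{ik}=0$, read off against the $\mathbb{Z}/2[c_*]$-linearly independent monomials $u_ju_k$ ($j\le k$), is equivalent to $\sum_i(a_{ij}b_{ik}+a_{ik}b_{ij})=0$ for $j<k$ together with $\sum_i a_{ij}b_{ij}=0$ for $j=k$. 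The formal substitution $u_\bullet\mapsto h_\bullet$ then gives, by exactly these identities, $\sum_i h_{\xi_i}h_{\zeta_i}=\sum_{j\le k}h_jh_k\big(\text{coeff}\big)=0$ in $E^{2,0,*}_1$, and likewise $\sum_i(\xi_ih_{\zeta_i}+h_{\xi_i}\zeta_i)=\sum_{j<k}(u_jh_k+u_kh_j)\big(\sum_i(a_{ij}b_{ik}+a_{ik}b_{ij})\big)=0$ in $E^{1,1,*}_1$. The second identity says precisely that $d_1\big(\sum_i c_{\xi_i,\zeta_i}\big)=0$, so $C:=\sum_i c_{\xi_i,\zeta_i}$ is a $d_1$-cycle in $E^{0,0,*}_1$, and therefore at the chain level $\sum_i\mathcal{A}_{\xi_i,\zeta_i}=h_0C+\sum_i h_{\xi_i}h_{\zeta_i}=h_0C$.

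It remains to see that $h_0C$ represents $0$ in $E^{2,0,*}_2$. Since $d_1$ raises the grading $q$ by $1$ and $E^{1,-1,*}_1=0$, the cell $E^{2,0,*}_1$ contains no $d_1$-boundaries, so $E^{2,0,*}_2$ embeds into $E^{2,0,*}_1$ and the vanishing of $h_0C$ in $E^{2,0,*}_2$ is equivalent to $h_0C=0$ in the polynomial ring $E_1$, i.e.\ to $C=0$. This is where the normalisation of the canonical representatives enters: $c_{\xi,\eta}\in E^{0,0,*}_1$ is pinned down (uniquely, for $t<108$) as the element of a fixed vector-space complement of the $d_1$-cycles satisfying $d_1c_{\xi,\eta}=\xi h_\eta+h_\xi\eta$, so $(\xi,\eta)\mapsto c_{\xi,\eta}$ is bilinear and $C$ again lies in that complement; being also a $d_1$-cycle, $C=0$, whence $\sum_i\mathcal{A}_{\xi_i,\zeta_i}=0$.

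The genuine obstacle is precisely this last step — showing that the chosen canonical $c_{\xi_i,\zeta_i}$ are \emph{additive} with respect to each of the ten relations of Lemma~1, not merely correct modulo $d_1$-cycles. If one does not wish to rely on the ``complement'' normalisation, the fallback (and, given the style of the paper, probably the intended route) is a relation-by-relation check: for each of the finitely many relations of Lemma~1 one substitutes the explicit decompositions of $\tilde\varphi_{i,j}$, $\omega_{i,j,k}$, $\psi_\bullet$ fixed in the previous sections, solves $d_1c_{\xi_i,\zeta_i}=\xi_ih_{\zeta_i}+h_{\xi_i}\zeta_i$ for the canonical $c_{\xi_i,\zeta_i}$, and verifies directly that the sum cancels; the bilinearity of $\mathcal{A}$ from the preceding Lemma lets these computations be reused across the families indexed by $\{i,j,k\}\subset\{2,3,4,5\}$.
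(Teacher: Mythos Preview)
Your reduction is correct and cleaner than what the paper does: writing $\xi_i=\sum_j u_ja_{ij}$, $\zeta_i=\sum_k u_kb_{ik}$, the vanishing $\sum_i\xi_i\zeta_i=0$ in the polynomial ring $E_1$ forces the same coefficient identities that kill both $\sum_i h_{\xi_i}h_{\zeta_i}$ and $\sum_i(\xi_ih_{\zeta_i}+h_{\xi_i}\zeta_i)$; hence $C=\sum_i c_{\xi_i,\zeta_i}$ is a $d_1$-cycle and $\sum_i\mathcal{A}_{\xi_i,\zeta_i}=h_0C$. Your observation that $d_1$ raises $s$ by $1$, so $E_2^{2,0,*}\hookrightarrow E_1^{2,0,*}$ and vanishing in $E_2$ is equivalent to $C=0$ in $E_1$, is also correct.

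The gap is exactly where you flag it, but it is a genuine one rather than a technicality. Your ``complement normalisation'' would make $(\xi,\eta)\mapsto c_{\xi,\eta}$ bilinear and force $C=0$, but that is \emph{not} how the paper fixes its canonical representatives. The paper specifies $c_{\xi,\eta}$ by ad hoc explicit formulas (e.g.\ $c_{u_i,\tilde\varphi_{j,k}}=c_{i,j,k}+c_{1,i}c_{j,k}$ and $c_{u_1,\omega_{i,j,k}}=c_{i,j,k}+c_{1,i}c_{j,k}+c_{1,j}c_{i,k}+c_{1,k}c_{i,j}$), and these are not obtained as the bilinear extension of $c_{u_i,u_j}=c_{i,j}$ --- indeed the naive bilinear extension fails because the coefficients $c_{1,k}$, $c_{j,k}$ in the decomposition of $\tilde\varphi_{j,k}$ are not $d_1$-cycles. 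So your abstract argument does not, as stated, apply to the representatives actually in play.

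The paper's own proof is precisely your fallback: it writes out the explicit $h_{\xi}$, $c_{\xi,\eta}$ and $\mathcal{A}_{\xi,\eta}$ for relation~1, adds the four $\mathcal{A}$'s and observes the sum is zero in $E_1$ (in your language, the four $c$'s sum to $4c_{i,j,k}+2(c_{1,i}c_{j,k}+c_{1,j}c_{i,k}+c_{1,k}c_{i,j})=0$ over $\mathbb{Z}/2$), then says the remaining relations are handled analogously. Your framework explains \emph{why} these computations must reduce to checking $C=0$ --- the $h_\xi h_\eta$ contributions cancel automatically --- which is a worthwhile conceptual gain; but for the paper's specific $c_{\xi,\eta}$ the verification of $C=0$ still has to be done relation by relation.
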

\begin{proof} Let us consider for example the first of relations:
\[
u_{i}\tilde{\varphi }_{j,k} + u_{j}\tilde{\varphi }_{i,k} +
u_{k}\tilde{\varphi }_{i,j} = u_{1}\omega _{i,j,k}.
\]
Because of the equality
$\tilde{\varphi }_{j,k}= u_{1}c_{j,k}+ u_{k}c_{1,j}+ u_{j}c_{1,k}$,
we have $h\tilde{\varphi }_{j,k}= h_{1}c_{j,k}+ h_{k}c_{1,j}+h_{j}c_{1,k}$
and
$c_{u_{i},}\tilde{\varphi }_{j,k}= c_{i,j,k}+ c_{1,i}c_{j,k}$.
Because of equality
$\omega _{i,j,k} = u_{i}c_{j,k} + u_{j}c_{i,k}+ u_{k}c_{i,j}$,
we have $h_{\omega _{i,j,k}}= h_{i}c_{j,k}+ h_{j}c_{i,k}+ h_{k}c_{i,j}$,
and
$c_{u_{1},\omega _{i,j,k}}= c_{i,j,k}+ c_{1,i}c_{j,k}+
c_{1,j}c_{i,k}+ c_{1,k}c_{i,j}$. Hence,
\begin{align*}
{\cal A}_{u_{i},}\tilde{\varphi }_{j,k} =
& h_{0}(c_{i,j,k} + c_{1,i}c_{j,k}) + h_{i}(h_{1}c_{j,k}+
h_{k}c_{1,j}+ h_{j}c_{1,k}), \\
{\cal A}_{u_{j},}\tilde{\varphi }_{i,k} = &
h_{0}(c_{i,j,k} + c_{1,j}c_{i,k}) + h_{j}(h_{1}c_{i,k}+
h_{k}c_{1,i}+ h_{i}c_{1,k}), \\
{\cal A}_{u_{k},}\tilde{\varphi }_{i,j} = & h_{0}(c_{i,j,k} +
c_{1,k}c_{i,j}) + h_{k}(h_{1}c_{i,j}+ h_{i}c_{1,j}+ h_{j}c_{1,i}), \\
{\cal A}_{u_{1},\omega _{i,j,k}}= & h_{0}(c_{i,j,k}+
c_{1,i}c_{j,k}+ c_{1,j}c_{i,k}+ c_{1,k}c_{i,j}) + \\
+ & h_{1}(h_{i}c_{j,k}+ h_{j}c_{i,k}+ h_{k}c_{i,j}).
\end{align*}
Adding these equalities we get the assertion of Lemma. The other
equalities are proved analogously.
\end{proof}

Now we can describe the generators of the cell
$E^{2,0,t}_{2} , t < 108.$
As it will be shown in Subsection~2.4, there are the following
equalities for the Massey products
$<\tilde{\varphi }_{7},h_{0},\omega _{1}>=
<\psi _{1},h_{0},\tilde{\varphi }_{6}> =
<\psi _{2},h_{0},\tilde{\varphi }_{5}>=$
$<\psi _{3},h_{0},u_{4}>=$ $=<\psi _{4},h_{0},\tilde{\varphi }_{3}>=
<\psi _{5},h_{0},u_{3}>= <\psi _{7},h_{0},u_{2}>,$
and they are not equal to zero in $E^{*,*,*}_{2}$. Equalities are
understood in $E^{*,*,*}_{2}$, indeterminacy of each product is equal
to zero in $E^{*,*,*}_{2}$. Let us call the pairs
$(\tilde{\varphi }_{7},\omega _{1})$,
$(\tilde{\varphi }_{6},\psi _{1})$,
$(\tilde{\varphi }_{5},\psi _{2})$, $(u_{4},\psi _{3})$,
$(\tilde{\varphi }_{3},\psi _{4})$, $(u_{3},\psi _{5})$,
$(u_{2},\psi_{7})$
{\it forbidden}. For each forbidden pair $\xi,\zeta $
the element ${\cal A}_{\xi,\zeta }$ is not defined.

Let us take for each non-forbidden pair
$\xi ,\zeta  \in  E^{0,1,*}_{2}$,
such that the sum of $t$-gradings of $\xi$ and $\zeta $ is less
than 108, the element ${\cal A}_{\xi,\zeta }$.
Let us delete from this set the elements ${\cal A}_{\xi,\zeta }$
which according to Lemma~2.3.2 can be expressed by the others. Let
us add one element which we shall denote by
${\cal A}_{<u_{2},\psi _{7}>+<\tilde{\varphi }_{7},\omega_{1}>}$,
and having the following decomposition:
\begin{align*}
{\cal A}_{<u_{2},\psi _{7}>+<\tilde{\varphi }_{7},\omega_{1}>} =
& h_{0}(c_{5}c_{8}c_{13}+ c_{4}c_{9}c_{13})+ h_{1}h_{3}c_{9}c_{13}+
h_{1}h_{4}c_{5}c_{13}+\\
+ & h_{2}h_{3}c_{8}(c_{13}+ c_{8}c_{5}+ c_{4}c_{9})+
h^{2}_{3}c_{2}c_{8}c_{9}+ h^{2}_{4}c_{2}c_{4}c_{5}+\\
+ & h_{2}h_{4}c_{4}(c_{13}+ c_{8}c_{5}+ c_{4}c_{9})+
h_{3}h_{4}c_{2}(c_{8}c_{5}+ c_{4}c_{9}).
\end{align*}
This gives the final system of generators of the cell $E^{2,0,t}_{2}$,
for $t < 108$. Let us describe relations.

\begin{lemma} There are the following relations:
\begin{align*}
1) & \ \xi {\cal A}_{\xi,\eta } = \eta {\cal A}_{\xi,\xi };\\
2) & \ \xi {\cal A}_{\zeta,\eta } = \zeta {\cal A}_{\xi,\eta } =
\eta {\cal A}_{\xi,\zeta };\\
3) & \ {\cal A}^{2}_{\xi,\eta } = h_{0}c^{2}_{\xi ,\eta } +
h^{2}_{\xi }h^{2}_{\eta };\\
4) & \ {\cal A}_{\xi ,\eta }{\cal A}_{\xi ,\zeta } =
h^{2}_{\xi }{\cal A}_{\eta ,\zeta } +
h_{0}{\cal A}_{\xi ,{\cal F}_{\xi,\eta,\zeta }},\xi ,\eta ,\zeta
\in  E^{0,1,t}_{2};
\end{align*}
under the condition that all the expressions appearing in the
formulas above are defined.
\end{lemma}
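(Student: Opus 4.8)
The plan is to prove each of the four relations by the same strategy used for Lemma~1 and Lemma~2.3.2: reduce everything to explicit $d_1$-cocycle representatives in the polynomial algebra $E_1^{*,*,*}$ and verify the identities as literal equalities of matrix Massey product representatives. Throughout, a Massey-product element $\mathcal A_{\xi,\eta}$ is represented by the canonical cocycle $h_0 c_{\xi,\eta}+h_\xi h_\eta$, where $h_\xi=\sum_i h_i\tilde c_i$ when $\xi=\sum_i u_i\tilde c_i$, and $c_{\xi,\eta}$ is the element of $E_1^{0,0,t}$ determined up to $d_1$-cycles by $d_1(c_{\xi,\eta})\in\langle\xi,h_0,\eta\rangle$. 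Since all $t$-gradings involved are $<108$, these representatives are unique (as already noted), so it suffices to compare polynomial expressions.

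First I would dispose of 1) and 2), which are the ``linearity/Jacobi'' relations. For 1), $\xi\mathcal A_{\xi,\eta}$ is represented by $\xi(h_0 c_{\xi,\eta}+h_\xi h_\eta)$; using Lemma~2.3.1 (the $\theta$-linearity $\theta\mathcal A_{\xi,\eta}=\mathcal A_{\theta\xi,\eta}$) one rewrites $\xi\mathcal A_{\xi,\eta}=\mathcal A_{\xi\xi,\eta}$ in the loose sense, and then the symmetry $c_{\xi,\eta}=c_{\eta,\xi}$ together with $h_0 c_{\xi,\xi}+h_\xi^2$ representing $\mathcal A_{\xi,\xi}$ gives $\eta\mathcal A_{\xi,\xi}$; the two sides agree as cocycles after using $\xi=\sum u_i\tilde c_i$, $\eta=\sum u_j\tilde d_j$ and matching coefficients. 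Relation 2) is the same computation carried out symmetrically in the three variables, exactly parallel to relation 10) of Lemma~1 and to Lemma~2.3.2; it is the statement that the ``coboundary of the $3\times 3$ block'' vanishes, and I would present it by writing out $\xi\mathcal A_{\zeta,\eta}$, $\zeta\mathcal A_{\xi,\eta}$, $\eta\mathcal A_{\xi,\zeta}$ in terms of the $c$'s and $h$'s and observing the difference is a $d_1$-boundary, hence zero in $E_2$.

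Next, relation 3): $\mathcal A_{\xi,\eta}^2=(h_0 c_{\xi,\eta}+h_\xi h_\eta)^2=h_0^2 c_{\xi,\eta}^2+h_\xi^2 h_\eta^2$ working $\bmod 2$, and since $d_1(h_j)=h_0 u_j$ one has (after squaring) that $h_0^2=0$ is \emph{not} available — instead one uses $h_0 c_{\xi,\eta}^2$ as the surviving term, so the identity as stated, $\mathcal A_{\xi,\eta}^2=h_0 c_{\xi,\eta}^2+h_\xi^2 h_\eta^2$, must be read as an equality of $E_2$-classes where $h_0\cdot(h_0 c_{\xi,\eta}^2)$-type corrections are absorbed; I would check this by the same cocycle bookkeeping, noting the formula is the exact analogue of relations 5), 8), 9) of Lemma~1 with $u_i\mapsto h_i$, $u_1\mapsto h_0$. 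Finally, relation 4) is the main obstacle: it is a genuine ``product of two Massey products equals a sum of Massey products'' identity, and the correct statement requires the auxiliary element $\mathcal A_{\xi,\mathcal F_{\xi,\eta,\zeta}}$ (and, in the one exceptional case, the element $\mathcal A_{\langle u_2,\psi_7\rangle+\langle\tilde\varphi_7,\omega_1\rangle}$ introduced above, which is precisely the ``defect'' making the relation hold on the nose). The plan for 4) is: expand $\mathcal A_{\xi,\eta}\mathcal A_{\xi,\zeta}=(h_0 c_{\xi,\eta}+h_\xi h_\eta)(h_0 c_{\xi,\zeta}+h_\xi h_\zeta)$, collect the four terms, recognize $h_\xi^2\cdot\mathcal A_{\eta,\zeta}$ and $h_0\cdot\mathcal A_{\xi,\mathcal F_{\xi,\eta,\zeta}}$ by matching against their canonical cocycles, and show the remaining cross term $h_0 h_\xi(h_\eta c_{\xi,\zeta}+h_\zeta c_{\xi,\eta})$ is exactly $h_0 h_\xi h_{\mathcal F_{\xi,\eta,\zeta}}$ up to a $d_1$-boundary, using the defining decomposition of $\mathcal F$ (relation 2) of Lemma~1, i.e.\ $u_i\tilde\varphi_{i,j,k}+\tilde\varphi_{i,j}\tilde\varphi_{j,k}=u_1\psi_{\hat i,j,k}+u_ju_kc_{1,i}^2$) to identify $h_{\mathcal F_{\xi,\eta,\zeta}}$.

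The hard part will be relation 4): keeping track of the indeterminacies and verifying that, for the forbidden pairs, the ``missing'' $\mathcal A_{\xi,\zeta}$ is supplied by the single extra generator $\mathcal A_{\langle u_2,\psi_7\rangle+\langle\tilde\varphi_7,\omega_1\rangle}$ so that the right-hand side of 4) is still meaningful. I expect this to be handled case by case over the seven forbidden pairs listed before Lemma~2.3.4, each reduced to a finite $\bmod 2$ polynomial check in the $c_n$'s using Table~10 (the Landweber–Novikov action) to pin down the relevant $c_{\xi,\eta}$; the remaining relations 1)--3) are, as the authors say, ``direct check writing the decompositions of participating elements.''
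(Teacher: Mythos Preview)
Your overall strategy---work with the canonical cocycle representatives $h_0 c_{\xi,\eta}+h_\xi h_\eta$ and verify everything as polynomial identities in $E_1$ modulo $d_1$-boundaries---is exactly the paper's approach, and for 3) and 4) the paper says only ``can be checked directly.'' So on those two you are aligned. A few corrections and simplifications:

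\textbf{Relations 1) and 2).} Your route through Lemma~2.3.1 and ``$\mathcal A_{\xi\xi,\eta}$ in the loose sense'' is more circuitous than necessary and does not quite cohere as written. The paper's argument is cleaner: simply exhibit an explicit $d_1$-preimage of the difference. For 1), using $d_1(h_\xi)=h_0\xi$ and $d_1(c_{\xi,\eta})=\xi h_\eta+\eta h_\xi$ one computes
\[
d_1(h_\xi c_{\xi,\eta})=\xi(h_0 c_{\xi,\eta}+h_\xi h_\eta)+\eta\,h_\xi^2=\xi\,\mathcal A_{\xi,\eta}+\eta\,\mathcal A_{\xi,\xi},
\]
recalling that the canonical representative of $\mathcal A_{\xi,\xi}$ is $h_\xi^2$. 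For 2), the element $h_\xi c_{\zeta,\eta}+h_\zeta c_{\xi,\eta}$ does the same job. This is shorter and avoids any appeal to linearity.

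\textbf{Relation 3).} Your confusion here is caused by what is almost certainly a typo in the statement: on $q$-grading grounds the right-hand side must be $h_0^{2}c_{\xi,\eta}^{2}+h_\xi^{2}h_\eta^{2}$, not $h_0 c_{\xi,\eta}^{2}$. With the exponent corrected, squaring $h_0 c_{\xi,\eta}+h_\xi h_\eta$ over $\mathbb Z/2$ gives the result instantly; there is nothing to ``absorb.''

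\textbf{Relation 4).} Your expansion plan is correct, and indeed the cross term $h_0 h_\xi(h_\zeta c_{\xi,\eta}+h_\eta c_{\xi,\zeta})$ matches $h_0 h_\xi h_{\mathcal F_{\xi,\eta,\zeta}}$ once you write out $\mathcal F_{\xi,\eta,\zeta}=\xi c_{\eta,\zeta}+\zeta c_{\xi,\eta}+\eta c_{\xi,\zeta}$. However, your concern about the forbidden pairs and the exceptional generator $\mathcal A_{\langle u_2,\psi_7\rangle+\langle\tilde\varphi_7,\omega_1\rangle}$ is misplaced: the lemma is stated ``under the condition that all the expressions appearing in the formulas above are defined,'' so the forbidden-pair cases are simply excluded from the hypothesis and require no separate treatment.
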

\begin{proof} 1) We have the formula for the first differential:
$d_{1}(h_{\xi }c_{\xi ,\eta }) = \xi {\cal A}_{\xi,\eta }+
\eta {\cal A}_{\xi,\xi }$.

2) Also we have:
$ d_{1}(h_{\xi }c_{\zeta ,\eta } + h_{\zeta }c_{\xi ,\eta }) =
\xi {\cal A}_{\zeta,\eta }+ \zeta {\cal A}_{\xi,\eta }$.

Relations 3) and 4) can be checked directly.
\end{proof}

\medskip
\section{The cell $E^{ 1,1,t }_{ 2 }$ for $ t < 108$ }
\medskip

\begin{lemma} i) The element
$\kappa \in <\tilde{\varphi }_{7},h_{0},\omega _{1}>$ is defined and not
equal to zero in $E^{1,1,t}_{2}$;

ii) There are the equalities:
$$<\tilde{\varphi }_{7},h_{0},\omega _{1}> =
<\tilde{\varphi }_{6},h_{0},\psi _{1}> =
<\tilde{\varphi }_{5},h_{0},\psi _{2}> =
 <u_{4},h_{0},\psi _{3}> = $$
$$ = <\tilde{\varphi }_{3},h_{0},\psi _{4}>=
<u_{3},h_{0},\psi _{5}>= <u_{2},h_{0},\psi _{7}>, $$
which are understood without an ambiguity because the indeterminacy
in the given dimension is equal to zero.

iii) There is the equality $h_{0}\kappa = 0$ in $E^{*,*,*}_{2}$.
\end{lemma}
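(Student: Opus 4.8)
The plan is to treat the three assertions separately; the substance is the non‑vanishing of $\kappa $ in (i) and the chain of equalities in (ii), while the definedness in (i) and the relation in (iii) are formal.

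\emph{Definedness, ambient group, and non‑vanishing of $\kappa $.} For any $\xi ,\eta \in E^{0,1,t}_{2}$ the product $<\xi ,h_{0},\eta >$ is defined with no extra hypothesis, because $d_{1}(h_{\xi })=h_{0}\xi $ for the canonical lift $h_{\xi }$ of the previous section, so both $\xi h_{0}$ and $h_{0}\eta $ are $d_{1}$‑boundaries in $E_{1}$, with $E_{1}$‑representative $z_{\xi ,\eta }=h_{\xi }\eta +\xi h_{\eta }$ (a $d_{1}$‑cycle since $\xi ,\eta $ are cycles and everything is mod $2$). Its indeterminacy is $\xi \cdot E^{1,0,*}_{2}+E^{1,0,*}_{2}\cdot \eta $, which I would check is zero in the relevant internal degree from the description of $E^{1,0,*}_{2}$; a bookkeeping of $t$‑gradings then places all seven brackets of (ii) in $E^{1,1,104}_{2}$. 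For non‑vanishing I would write out $z_{\tilde{\varphi}_{7},\omega _{1}}=h_{\tilde{\varphi}_{7}}\omega _{1}+\tilde{\varphi}_{7}h_{\omega _{1}}$ from the decompositions fixed earlier, $\tilde{\varphi}_{7}=u_{1}c_{2,3,4}+u_{2}c_{1,3}c_{1,4}+u_{3}c_{1,2}c_{1,4}+u_{4}c_{1,2}c_{1,3}$ and $\omega _{1}=u_{2}c_{3,4}+u_{3}c_{2,4}+u_{4}c_{2,3}$ (so that $h_{\tilde{\varphi}_{7}}=h_{1}c_{2,3,4}+h_{2}c_{1,3}c_{1,4}+h_{3}c_{1,2}c_{1,4}+h_{4}c_{1,2}c_{1,3}$ and $h_{\omega _{1}}=h_{2}c_{3,4}+h_{3}c_{2,4}+h_{4}c_{2,3}$), and compare the resulting element of $E^{1,1,104}_{1}$ with the list of $d_{1}$‑boundaries occurring in the computation of $E^{1,1,t}_{2}$ (Table~13); it is not a boundary, so $\kappa \neq 0$.

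\emph{The equalities.} The tool is the analogue, for the cell $E^{1,1,t}_{2}$, of the juggling Lemma already proved for $E^{2,0,t}_{2}$: a relation $\sum _{m}\xi _{m}\zeta _{m}=0$ in $E^{0,2,t}_{2}$ among $\xi _{m},\zeta _{m}\in E^{0,1}_{2}$ with $t<108$ yields $\sum _{m}<\xi _{m},h_{0},\zeta _{m}>=0$ in $E^{1,1,t}_{2}$, with primitive assembled from the explicit decompositions exactly as there. I would feed in the three instances of relation 7) of Lemma~1 with indices in $\{2,3,4\}$,
\[
\tilde{\varphi}_{7}\omega _{1}=u_{2}\psi _{7}+\tilde{\varphi}_{3}\psi _{4}+\tilde{\varphi}_{5}\psi _{2}=u_{3}\psi _{5}+\tilde{\varphi}_{3}\psi _{4}+\tilde{\varphi}_{6}\psi _{1}=u_{4}\psi _{3}+\tilde{\varphi}_{5}\psi _{2}+\tilde{\varphi}_{6}\psi _{1},
\]
together with the further relations of Lemma~1 that survive in internal degree $104$ — instances of 10) with mixed arguments such as $(u_{1},u_{2},\tilde{\varphi}_{6},\omega _{1})$, and the $c^{2}$‑bearing relations once their correction terms have been checked to be $d_{1}$‑boundaries there — and use that $<\xi ,h_{0},\eta >=0$ for every non‑forbidden pair (for instance $<u_{2},h_{0},\psi _{2}>=<u_{3},h_{0},\psi _{1}>=0$ from relation 4)). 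The resulting linear relations in $E^{1,1,104}_{2}$ should force the seven brackets $<\tilde{\varphi}_{7},h_{0},\omega _{1}>,<\tilde{\varphi}_{6},h_{0},\psi _{1}>,\dots ,<u_{2},h_{0},\psi _{7}>$ to coincide; conceptually one expects them all to equal a single iterated matrix Massey product assembled from $u_{1}$, $u_{2},u_{3},u_{4}$ and $\omega _{1}$, whose invariance under regrouping the $u_{i}$ gives the equalities at once. I expect this elimination — and in particular the verification that the $c^{2}$‑corrections vanish in degree $104$, where the detailed structure of $E^{1,1,104}_{2}$ is used essentially — to be the main obstacle; in practice it may be quicker to compare the seven explicit $E_{1}$‑representatives directly, the relations of Lemma~1 serving as the guide for the required $d_{1}$‑primitives.

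\emph{The equality $h_{0}\kappa =0$.} This is immediate and holds for every such product in the commutative $\mathbb{Z}/2$‑algebra $E_{1}$: with $z=h_{\tilde{\varphi}_{7}}\omega _{1}+\tilde{\varphi}_{7}h_{\omega _{1}}$ representing $\kappa $ and $d_{1}(h_{\tilde{\varphi}_{7}})=h_{0}\tilde{\varphi}_{7}$, $d_{1}(h_{\omega _{1}})=h_{0}\omega _{1}$,
\[
h_{0}z=h_{0}h_{\tilde{\varphi}_{7}}\omega _{1}+(d_{1}h_{\tilde{\varphi}_{7}})h_{\omega _{1}}=h_{0}h_{\tilde{\varphi}_{7}}\omega _{1}+d_{1}(h_{\tilde{\varphi}_{7}}h_{\omega _{1}})+h_{\tilde{\varphi}_{7}}h_{0}\omega _{1}=d_{1}(h_{\tilde{\varphi}_{7}}h_{\omega _{1}}),
\]
the first and last terms cancelling by commutativity mod $2$; hence $h_{0}\kappa =0$ in $E^{*,*,*}_{2}$.
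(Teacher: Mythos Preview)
Your treatment of definedness in (i) and of (iii) is fine; in fact your argument for (iii) is cleaner than the paper's. The paper verifies $h_{0}\kappa =0$ by exhibiting a specific twelve-term element of $E^{2,0}_{1}$ whose $d_{1}$ equals $h_{0}\kappa$, whereas your computation $h_{0}(h_{\xi}\eta+\xi h_{\eta})=d_{1}(h_{\xi}h_{\eta})$ works for any $\langle\xi,h_{0},\eta\rangle$ and bypasses the explicit check.

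The real issue is (ii). Your juggling scheme does not close. The three instances of relation 7) give exactly three linear relations among the seven forbidden brackets:
\[
a+b+c=\kappa,\qquad d+b+e=\kappa,\qquad f+c+e=\kappa
\]
(with $a=\langle u_{2},h_{0},\psi_{7}\rangle$, $b=\langle\varphi_{3},h_{0},\psi_{4}\rangle$, $c=\langle\tilde\varphi_{5},h_{0},\psi_{2}\rangle$, $d=\langle u_{3},h_{0},\psi_{5}\rangle$, $e=\langle\tilde\varphi_{6},h_{0},\psi_{1}\rangle$, $f=\langle u_{4},h_{0},\psi_{3}\rangle$). Over $\mathbb{Z}/2$ this is a three-dimensional family of solutions, far from forcing $a=b=\cdots=f=\kappa$. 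The further relations you gesture at --- instances of 4), 6), 10) --- either involve only non-forbidden pairs (hence give $0=0$) or carry $c^{2}$-correction terms and $u_{1}$-brackets whose vanishing in degree $104$ you do not establish. You yourself flag this as ``the main obstacle'' and propose direct comparison as a fallback; that fallback is in fact what the paper does, and it is not merely quicker but necessary.

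Concretely, the paper writes down the $E_{1}$-representative for $\kappa$ in terms of $\alpha_{i,j}=u_{i}h_{j}+u_{j}h_{i}$ and then produces explicit $d_{1}$-primitives witnessing each equality: for instance $d_{1}\bigl((c_{5}c_{8}+c_{4}c_{9})c_{13}\bigr)=\kappa+\langle u_{2},h_{0},\psi_{7}\rangle$, and similarly with $(c_{5}c_{8}+c_{2}c_{11})c_{13}$ and $(c_{4}c_{9}+c_{2}c_{11})c_{13}$ for the $u_{3}$ and $u_{4}$ cases. The remaining three equalities ($\langle\tilde\varphi_{6},h_{0},\psi_{1}\rangle=\langle u_{2},h_{0},\psi_{7}\rangle$, etc.) hold already at the level of $E_{1}$-representatives, not just in $E_{2}$. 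For the non-vanishing in (i) the paper likewise first reduces $\kappa$ modulo an explicit boundary to the simpler form $\alpha_{3,4}c_{11}c_{2}^{2}+\alpha_{2,4}c_{9}c_{4}^{2}+\alpha_{2,3}c_{5}c_{8}^{2}$ before checking. So for (ii) you should abandon the juggling plan and carry out the direct comparison you mention at the end; the required primitives are the short ones above.
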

\begin{proof} i) The element $\kappa$ has the following decomposition:
\begin{align*}
\kappa  =& (\alpha _{1,2}c_{11} + \alpha _{1,3}c_{9} +
\alpha _{1,4}c_{5})c_{13} + (\alpha _{2,3}c_{2}c_{8} +
\alpha _{2,4}c_{2}c_{4})c_{11}+\\
+& (\alpha _{2,3}c_{4}c_{8} + \alpha _{3,4}c_{2}c_{4})c_{9} +
(\alpha _{2,4}c_{4}c_{8} + \alpha _{3,4}c_{2}c_{8})c_{5}.
\end{align*}
Here $\alpha _{i,j} = u_{i}h_{j} + u_{j}h_{i}$. Because of
\[
d_{1}[(c_{2}c_{11} + c_{4}c_{9} + c_{5}c_{8})c_{13}] = \kappa  + \alpha _{3,4}c_{11}c^{2}_{2} + \alpha _{2,4}c_{9}c^{2}_{4} + \alpha _{2,3}c_{5}c^{2}_{8},
\]
and
\[
d_{1}(c_{11}) = \alpha _{3,4}, d_{1}(c_{9}) = \alpha _{2,4}, d_{1}(c_{5})= \alpha _{2,3},
\]
the fact is proved.

ii)
\begin{align*}
d_{1}((c_{5}c_{8}+ c_{4}c_{9})c_{13}) = &\kappa  +
<u_{2},h_{0},\psi _{7}>, \\
d_{1}((c_{5}c_{8}+ c_{2}c_{11})c_{13}) = &\kappa  +
<u_{3},h_{0},\psi _{5}>, \\
d_{1}((c_{4}c_{9}+ c_{2}c_{11})c_{13}) = &\kappa  +
<u_{4},h_{0},\psi _{3}>, \\
<\tilde{\varphi }_{6},h_{0},\psi _{1}>= &<u_{2},h_{0},\psi _{7}>, \\
<\tilde{\varphi }_{5},h_{0},\psi _{2}> = &<u_{3},h_{0},\psi _{5}>, \\
<u_{4},h_{0},\psi _{3}>= &<\tilde{\varphi }_{3},h_{0},\psi _{4}>,
\end{align*}
all the equalities are fulfilled strictly for the decompositions
of the given elements in $E^{*,*,*}_{1}$).

iii)  It follows from the formula:
\begin{align*}
d_{1}(h_{1}h_{2}c_{11}c_{13} + &h_{2}h_{3}c_{2}c_{8}c_{11} +
h_{2}h_{4}c_{2}c_{4}c_{11} + h_{1}h_{3}c_{9}c_{13} +\\
+ h_{2}h_{3}c_{4}c_{8}c_{9} + &h_{3}h_{4}c_{2}c_{4}c_{9} +
h_{1}h_{4}c_{5}c_{13} + h_{2}h_{4}c_{4}c_{5}c_{8} + \\
+ h_{3}h_{4}c_{2}c_{5}c_{8} + &h^{2}_{2}c_{4}c_{8}c_{11} +
h^{2}_{3}c_{2}c_{8}c_{9} + h^{2}_{4}c_{2}c_{4}c_{5}) = h_{0}\kappa .
\end{align*}
\end{proof}

\begin{remarka} Described elements give a complete set of generators
of $E^{*,*,t}_{2}$ for $t < 108.$ \end{remarka}

\medskip
\section{On the action of differentials $ d_{r}$ $ ( r > 1)$}
\medskip

Because of multiplicative properties it is sufficient to describe an
action of $d_{r}$ on $E^{2,0,t}_{r}$ and $E^{0,1,t}_{r}$. Remind that
the cell $E^{0,0,t}_{2}$ consists of cycles of all the differentials
$d_{r}$. If $t < 104$ in all the cells $E^{q,1,t}_{2}$ nonzero elements
have $t$-grading equal to $4m+2$, and in the cells $E^{q,2,t}_{2}$
nonzero elements have $t$-grading equal to $4m$. First "irregular"
elements are  $\kappa $, $u_{1}\kappa h^{2}_{1}$ and
$(h^{2}_{1})^{2}\kappa$; $\deg (u_{1}\kappa h^{2}_{1}) = (3,2,110)$,
$\deg [(h^{2}_{1})^{2}\kappa ] = (2,5,112)$. Hence first elements
of the cells $E^{2,0,t}_{r}$
and $E^{0,1,t}_{r}$ for which a differential $d_{r}$ can be
non-equal to zero for $r > 1$ must have  $t$-grading not less than
110.

\begin{teore} There is an isomorphism of the terms $E^{*,*,t}_{2}$
and $E^{*,*,t}_{\infty }$ of MASS up to dimension 108:
\end{teore}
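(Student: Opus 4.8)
The plan is to show that every differential $d_r$ with $r>1$ vanishes in the range $t<108$, which is precisely the asserted identification $E^{*,*,t}_2\cong E^{*,*,t}_\infty$. Recall that for $r>1$ the differential $d_r$ conserves $t$, raises $s$ by $1$ and $q$ by $r$, that $E^{0,0,t}_2$ consists of permanent cycles, and that $d_r$ is a derivation. By the concluding Remark of Section~2.5, together with the computations of Sections~2.3--2.5, the algebra $E^{*,*,t}_2$ with $t<108$ is generated by classes lying in $E^{0,0,t}_2$, $E^{0,1,t}_2$, $E^{2,0,t}_2$ and by the single class $\kappa\in E^{1,1,104}_2$ of Section~2.5; the class $\kappa$ is genuinely indecomposable because the $q$-degrees of all the other generators lie in $\{0,2\}$, so no product of them can land in $q$-degree $1$. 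It therefore suffices to check that $d_r$ vanishes on each of these generators and that none of them is the target of a $d_r$.

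Next I carry out the parity bookkeeping. Call a nonzero class of $E^{q,s,t}_2$ \emph{regular} if its $E_1$-representative involves an even total power of the generators $h_i$ ($i\ge1$); a regular class satisfies $t\equiv 2s\pmod{4}$, and by the discussion preceding the theorem the only irregular classes in the computed range are $\kappa$, $u_1\kappa h_1^2$ and $(h_1^2)^2\kappa$, of $t$-degrees $104$, $110$, $112$. If $d_r$ is nonzero on a regular source in $E^{q,s,t}_r$, the target lies in $E^{q+r,s+1,t}_r$, where $t\not\equiv 2(s+1)\pmod{4}$; hence the target contains no regular class and must be irregular. For $t<110$ the only irregular class is $\kappa\in E^{1,1,104}_2$, which cannot be a target since that would force $q+r=1$ with $q\ge0$ and $r\ge2$. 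So $d_r$ vanishes on every regular generator with $t<108$, i.e. on all of $E^{0,1,t}_2$ and $E^{2,0,t}_2$ in that range; and since $d_r$ raises $s$, none of the generators in $s$-degree $0$ or $1$ (in particular $\kappa$, whose hypothetical source would sit in $q$-degree $1-r<0$) can be hit.

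The only case left open by parity is $d_r(\kappa)$: its target $E^{1+r,2,104}$ has $s$-degree $2$ and $t=104\equiv0\pmod{4}$, hence lies in the regular parity range and could a priori be nonzero. Here I would instead invoke the explicit generator description. A class of $E^{q,2,104}_2$ with $q=1+r\ge3$ is a $\mathbb Z/2$-combination of monomials in the generators of $E^{0,0}$, $E^{0,1}$, $E^{2,0}$ and $\kappa$ whose $q$-degrees sum to $q$ and $s$-degrees to $2$; since the $q$-degrees of the $E^{0,0}$-, $E^{0,1}$- and $E^{2,0}$-generators are all even, an odd total $q$ forces a factor $\kappa$, and then reaching $q\ge3$ with total $s$-degree $2$ forces in addition at least one $E^{2,0}$-generator and one $E^{0,1}$-generator, whence $t\ge t(\kappa)+t(\mathcal A_{u_1,u_1})+t(u_1)=104+4+2=110>104$. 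Thus $E^{1+r,2,104}_2=0$, and since $d_r(\kappa)$ lives in a subquotient of this group, $d_r(\kappa)=0$. (Alternatively one can combine $h_0\kappa=0$ from Section~2.5 with the same description of $E^{*,2,104}_2$.)

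Putting these together: $d_r$ is a derivation vanishing on every algebra generator with $t<108$, hence $d_r\equiv0$ on the whole subalgebra of classes with $t<108$ for every $r>1$; as $d_r$ preserves $t$, no class in that range is hit, so $E^{*,*,t}_{r}=E^{*,*,t}_{r+1}$ for all $r>1$ in this range, and therefore $E^{*,*,t}_2=E^{*,*,t}_\infty$ for $t<108$. I expect the main obstacle to be exactly the treatment of $d_r(\kappa)$: $\kappa$ is the unique indecomposable generator lying outside the regular parity pattern in this range, the parity count alone leaves its differential undetermined, and closing the gap requires the structural information about the cells $E^{*,2,104}_2$ supplied by Sections~2.3--2.5 and the relations of Lemma~1.
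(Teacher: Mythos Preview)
Your reduction to the single problem of showing that $\kappa$ is a permanent cycle is correct and agrees with the paper's setup: the parity bookkeeping in the paragraph preceding the theorem is exactly the observation that regular classes cannot support or receive nontrivial higher differentials in this range, and $\kappa$ is the only irregular indecomposable.

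Your argument for $d_r(\kappa)=0$ is, however, incomplete. You show that a nonzero class in $E^{q,2,104}_2$ with \emph{odd} $q$ must contain a factor $\kappa$ and hence have $t\ge 110$, which is fine; but you then assert ``$E^{1+r,2,104}_2=0$'' for \emph{all} $r\ge2$. For odd $r$ (so even $q=1+r\ge4$) your parity count does not apply: one can build classes in $E^{q,2,*}_2$ from two factors in $E^{0,1}$ and $q/2$ factors in $E^{2,0}$ (plus $E^{0,0}$ padding) with no $\kappa$ involved at all---for instance $u_1^2 h_1^4\in E^{4,2,12}_2$ is nonzero, and analogous products exist in large $t$. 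So the vanishing of $E^{4,2,104}_2$, which you would need for $d_3(\kappa)=0$, is not established by your argument. The $h_0\kappa=0$ alternative you mention does not help either, since every class with $s>0$ is already $h_0$-torsion by the relation $h_0 x=0$ for $s(x)>0$.

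The paper takes a completely different route for this one remaining step. Rather than analyzing the target groups, it shows $\kappa$ is a permanent cycle by realizing it as the Massey product $\langle\omega_1,h_0,\varphi_7\rangle$ and invoking convergence: Chapter~3 constructs an element $\Omega_1\in MSp_{49}$ of order two whose projection to the Adams--Novikov $E_2$ maps to $\omega_1$ in the MASS, so the Massey product $\langle\Omega_1,2,\Phi_7\rangle$ is defined in the Adams--Novikov $E_2$ and is associated to $\kappa$; May's convergence theorem for Massey products in spectral sequences then forces $\kappa$ to survive. This sidesteps any computation of $E^{q,2,104}_2$.
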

\begin{proof} We need to prove only the fact that the element
$\kappa $ is an infinite cycle. In the chapter~3 it will be shown
that there exists an element $\Omega _{1} \in  MSp_{49}$,
having the order 2 and whose projection to the term  $E_{2}$
of the Adams-Novikov spectral sequence is an element
(having the same notation) $\Omega _1 \in E^{1,50}_{2}$.
This last element has the order 2 and projects into the element
$\omega _{1} \in  E^{0,1,50}_{\infty }$ in MASS. Hence in the term
$E_{2}$ of the Adams-Novikov spectral sequence the following
Massey product $<\Omega _{1},2,\Phi _{7}>$ is defined and is
associated to the element $\kappa  \in  <\omega _{1},h_{0},\phi _{7}>$
defined in MASS. All the elements in $E^{0,0,*}_{2}$ are
infinite cycles. Hence for the last product all the conditions of
Theorem~3 of the work~[11] about the convergence of Massey
products in spectral sequences are fulfilled. Hence the element
$\kappa $ is an infinite cycle.
\end{proof}

\medskip                          
\medskip
\chapter[The Adams-Novikov spectral sequence]{The Adams-Novikov
spectral sequence for $t - s< 56$}
\section{Algebraic structure}

In the work \cite{V2} of the second author the ring $MSp_{*}$ was
calculated
up to dimensions $* < 32$. We continue these calculations and
compute this ring in dimensions  $* < 56$ (up to some integer relations).
On the base
of these calculations we construct the element
$\Omega _{49} \in MSp_{49}$ of the order 2 and with the following
properties: $\theta ^{2}_{1}\Omega _{1}$ = 0 and
$0 \in <\theta _{1},2,\Omega _{1}>$ in $MSp_{*}$. Let us determine
the term $E_{2}$ of the Adams-Novikov spectral sequence, which is
associated to the term $E_{\infty }$ of the MASS  which is calculated
up to $t < 106$ in the Chapter~2. We also calculate the action of the
differentials in the Adams-Novikov spectral sequence for
$32 < t-s < 52.$ These results are given in Tables~6 and 7.

1. We follow the work \cite{V2} and denote by $\pi ^{1}_{i}(x)$ the
projection
of the element $x \in MSp_{*}$ (if $x$ is in the i-th module of
filtration) into the term $E_{2}$ of the Adams-Novikov spectral sequence.
Let us introduce the following notations for the projections of the elements
$\theta _{1} \in MSp_{1}$, $\Phi _{i} \in MSp_{8i-3}$:
\[
U_{1} = \pi ^{1}_{1}(\theta _{1}) \in  E^{1,2}_{2}, \ U_{i+2} =
\pi ^{1}_{1}(\Phi _{2}i) \in  E^{1,2(2^{i}-1)}_{2},
\]
\[
\Phi _{j} = \pi ^{1}_{1}(\Phi _{j}) \in  E^{1,8j-2}_{2}, \
\mbox{if } \ j \neq  2^{i}.
\]
We have: $2U_{i} = 0, \ 2\Phi _{j} = 0$ and $U_{i},\ \Phi _{j}$ are the
infinite cycles of the Adams-Novikov spectral sequence.

2. We follow the work \cite{V2} and denote by $\pi ^{2}_{i}(x)$ the
projection
of the element $x \in  E_{2} \cong {\rm Ext}_{A}(BP^{*}(MSp),BP^{*})$,
(here we put $A = A^{BP}$) into the i-th line of the term
$E_{\infty }$ of MASS if $x$ is in the i-th module of filtration
corresponding to MASS. For the shortening let us introduce new
notations of some generators of $E^{2,0,t}_{\infty }$, $t < 56$, results
are given in Table~5. Let us consider the elements
$z_{i},y_{j}$  $(1 \leq{i,j} \leq 8)$, having the following
projections into the term $E_{\infty }$ of MASS (notations for the
elements of MASS are taken from Table~5):
\begin{align*}
&\pi ^{2}_{2}(z_{1}) = a_{1}, \ \pi ^{2}_{2}(z_{2}) =
a_{2}, \ \pi ^{2}_{2}(z_{3}) = a_{3}, \ \pi ^{2}_{2}(z_{4}) = a_{4},\\
&\pi ^{2}_{0}(y_{4}) = e_{4}, \ \pi ^{2}_{2}(z_{5}) =
a_{5}, \ \pi ^{2}_{2}(z_{6}) = a_{6}, \ \pi ^{2}_{0}(y_{6}) = c_{6},\\
&\pi ^{2}_{2}(z_{7}) = a_{7}, \ \pi ^{2}_{2}(y_{7}) =
b_{7}, \ \pi ^{2}_{2}(z_{8}) = a_{8}, \ \pi ^{2}_{0}(y_{8}) = e_{8}.
\end{align*}
These notations differ from notations of the work \cite{V2} by
the transposition
of the elements $y_{6}$ and $z_{6}$. The action of the differentials
on these elements are described in the work \cite{V2}.
Also in the work \cite{V2} the following elements are introduced:
\[
\tau_{1} = U_{1}y_{4} + U_{2}z_{3} \in  E^{1,18}_{2}, \ \tau_{2} =
U_{1}y_{6} + U_{2}z_{5} \in  E^{1,26}_{2},
\]
\[
\tau_{3} = U_{2}y_{6} + U_{3}y_{4} \in  E^{1,30}_{2}.
\]
These elements are the infinite cycles in the Adams-Novikov spectral
sequence and define the elements $\tau_{1} \in MSp_{17}$,
$\tau_{2} \in  MSp_{25}$, $\tau_{3} \in  MSp_{29}$. Let us denote by
$F^{i}E^{*,*}_{2}$ the set of elements from $E^{*,*}_{2} \cong
{\rm Ext}^{*,*}_{A}(BP^{*}(MSp),BP^{*})$ having the filtration
corresponding MASS not less than $i$.

3. It follows from the relation $h^{2}_{0}e_{8} = a_{1}a_{7} + a^{2}_{4}$,
which is fulfilled in the term $E_{\infty }$ of MASS that in the term
$E_{2}$ of the Adams-Novikov spectral sequence we have the relation:
\begin{align*}
4y_{8} &= z_{1}z_{7}+ z^{2}_{4} + \beta _{1}4z_{8}+
\beta _{2}(z^{2}_{1}z^{3}_{2} + z^{4}_{1}y_{4}) +
\beta _{3}(z_{1}z_{3}z_{4} + z^{2}_{1}y_{6}) +\\
&+ \beta _{4}2z_{1}y_{7} + \beta _{5}2z_{2}z_{6} +
\beta _{6}2z_{1}z_{7} + \beta _{7}2z_{3}z_{5} +
\beta _{8}z^{3}_{1}z_{5} + \beta _{9}2z^{2}_{4}+\\
&+ \beta _{10}z^{2}_{1}z_{2}z_{4}+ \beta _{11}z^{2}_{1}z^{2}_{3}+
\beta _{12}z_{1}z^{2}_{2}z_{3}+ \beta _{13}z^{5}_{1}z_{3}+
\beta _{14}z^{4}_{2}+ \beta _{15}z^{4}_{1}z^{2}_{2}+\\
&+ \beta _{16}z^{8}_{1} + \beta _{17}2z^{2}_{1}z_{2}y_{4} +
\beta _{18}2z^{4}_{1}z_{4} + \beta _{19}2z^{3}_{1}z_{2}z_{3} +
\beta _{20}2z^{6}_{1}z_{2} +\\
&+ \beta _{21}2z^{2}_{1}z_{6} + \beta _{22}2z_{1}z_{3}y_{4}+
\beta _{23}2z_{1}z_{2}z_{5}+ \beta _{24}2z^{2}_{2}z_{4}+
\beta _{25}2z_{2}z^{2}_{3} +\\
&+ \beta _{26}2z^{2}_{2}y_{4} + \beta _{27}4y_{4}z_{4} +
\beta _{28}8y^{2}_{4} + \beta _{29}4z_{2}y_{6}.
\end{align*}
Let us choose the element $y_{8}$ in such a way that
$\beta _{28} = 0$ (changing $y_{8}$ for
$y^{*}_{8} = y_{8} + 2\beta _{28}y^{2}_{4}$).
Let us multiply by $U_{1}$ both parts of studying equality, then we have:
\begin{multline*}
\beta _{2}z^{4}_{1}\tau_{1} + \beta _{3}z^{2}_{1}\tau_{2} +
(\beta _{8}+ \beta _{10})z^{3}_{1}z_{5}U_{1} + \\
+ (\beta _{11}+ \beta _{12}+ \beta _{14})z^{4}_{2}U_{1} +
(\beta _{13}+ \beta _{15})z^{5}_{1}z_{3}U_{1} +
\beta _{16}z^{8}_{1}U_{1} = 0.
\end{multline*}
we obtain that $\beta _{2}$, $\beta _{3}$, $\beta _{16} \equiv 0 \mod{2}$;
$\beta _{8}\equiv  \beta _{10}$, $\beta _{13}\equiv  \beta _{15} \mod{2}$;
$\beta _{11}+ \beta _{12}+ \beta _{14} \equiv  0 \mod{2}$.
Not loosing the generality in the last relation we can have
$\beta _{11} \equiv  \beta _{12} \mod{2}$, then
$\beta _{14} \equiv 0 \mod{2}$.
We can choose the element $z_{7}$ in such a way that the action of
the differential $d_{3}(z_{7}) = U_{1}U^{2}_{3}$ and the relation
$U_{1}z_{7} = U_{3}z_{4}$ are conserved and our equation takes the
form:
\begin{align*}
4y_{8} =& z_{1}z_{7} + z^{2}_{4} +
\beta _{1}4z_{8}\hbox{ + }\beta _{5}2z_{2}z_{6} +
\beta _{7}2z_{3}z_{5} + \beta _{8}(z^{3}_{1}z_{5} +
z^{2}_{1}z_{2}z_{4}) +\\
+& \beta _{9}2z^{2}_{4} + \beta _{11}(z^{2}_{1}z^{2}_{3} +
z_{1}z^{2}_{2}z_{3}) + \beta _{13}(z^{5}_{1}z_{3}+
z^{4}_{1}z^{2}_{2}) + \beta _{14}2z^{4}_{2} + \\
+& \beta _{24}2z^{2}_{2}z_{4}+ \beta _{25}2z_{2}z^{2}_{3} +
\beta _{26}2z^{2}_{2}y_{4} + \beta _{27}4y_{4}z_{4} +
\beta _{29}4z_{2}y_{6}.
\end{align*}
Let us apply the operation $S_{4,4}$ to the both parts of the last
equality
$$S_{4,4}y_{8} \equiv ((S_{2}z_{4})/{2})^{2} (1 + 2\beta _{9}) \mod{4}.$$
We can suppose (changing $y_{8}$ if necessary), that
$S_{4,4}y_{8} \equiv  ((S_{2}z_{4})/{2})^{2} \mod{4}$
Hence $\beta _{9} \equiv 0 \mod{2}$. Then by the choice of $y_{8}$
we can achieve that our relation take the form:
\begin{align*}
4y_{8} =& z_{1}z_{7} + z^{2}_{4} + \beta _{1}4z_{8} +
\beta _{5}2z_{2}z_{6} + \beta _{7}2z_{3}z_{5} +
\beta _{8}(z^{3}_{1}z_{5} + z^{2}_{1}z_{2}z_{4}) +\\
+& \beta _{11}(z^{2}_{1}z^{2}_{3} + z_{1}z^{2}_{2}z_{3}) +
\beta _{13}(z^{5}_{1}z_{3} + z^{4}_{1}z^{2}_{2}) +
\beta _{14}2z^{4}_{2} +\\
+& \beta _{24}2z^{2}_{2}z_{4} + \beta _{25}2z_{2}z^{2}_{3} +
\beta _{26}2z^{2}_{2}y_{4}+ \beta _{27}4y_{4}z_{4} +
\beta _{29}4z_{2}y_{6}.
\end{align*}
Because of the fact  $S_{7}(U_{2}U^{2}_{3}) = 0$, it follows that
$S_{7}y_{8} \equiv  0 \mod{(2z_{1})}$, and because of the fact
$S_{7}U_{4} = U_{1}$, it follows that
$S_{7}z_{8} \equiv  z_{1} \mod{(2z_{1})}$. Applying $S_{7}$ to our
equality we obtain $4z_{1} \equiv 4\beta _{1}z_{1} \mod{(8z_{1})}$,
hence $\beta _{1} \equiv 1 \mod{2}$, and so one can choose the
element $y_{8}$ in such a way that our equality takes the form:
\begin{align*}
4y_{8} + 4z_{8} =& z_{1}z_{7}+ z^{2}_{4}+ \beta _{5}2z_{2}z_{6}+
\beta _{7}2z_{3}z_{5}+ \beta _{8}(z^{3}_{1}z_{5} +
z^{2}_{1}z_{2}z_{4})+\\
+& \beta _{11}(z^{2}_{1}z^{2}_{3} + z_{1}z^{2}_{2}z_{3}) +
\beta _{13}(z^{5}_{1}z_{3} + z^{4}_{1}z^{2}_{2}) +
\beta _{14}2z^{4}_{2} +\\
+& \beta _{24}2z^{2}_{2}z_{4}+ \beta _{25}2z_{2}z^{2}_{3}+
\beta _{26}2z^{2}_{2}y_{4}+ \beta _{27}4y_{4}z_{4}+
\beta _{29}4z_{2}y_{6}.
\end{align*}
From the relations $2z_{6} = z_{1}z_{5} + z_{2}z_{4}$ and
$4z_{4} + 4y_{4} = z_{1}z_{3} + z^{2}_{2}$ it follows that
$\beta _{8}(z^{3}_{1}z_{5} + z^{2}_{1}z_{2}z_{4}) =
\beta _{8}2z^{2}_{1}z_{6}$, $\beta _{11}(z^{2}_{1}z^{2}_{3} +
z_{1}z^{2}_{2}z_{3}) =
\beta _{11}z_{1}z_{3}4(z_{4} + y_{4})$,
$\beta _{13}(z^{5}_{1}z_{3} + z^{4}_{1}z^{2}_{2}) =
\beta _{13}z^{4}_{1}4(z_{4} + y_{4})$.
Hence one can choose $z_{7}$ (not changing the relation
$U_{1}z_{7} = U_{3}z_{4}$) in such a way that our expression takes
the form:
\begin{align*}
4y_{8} + 4z_{8} =& z_{1}z_{7}+ z^{2}_{4}+ \beta _{5}2z_{2}z_{6}+
\beta _{7}2z_{3}z_{5} + \beta _{14}2z^{4}_{2}+
\beta _{24}2z^{2}_{2}z_{4}+\\
+& \beta _{25}2z_{2}z^{2}_{3}+ \beta _{26}2z^{2}_{2}y_{4}+
\beta _{27}4y_{4}z_{4}+ \beta _{29}4z_{2}y_{6}.
\end{align*}
From the relation $4z_{4} + 4y_{4} = z_{1}z_{3} + z^{2}_{2}$ it follows
that
$2\beta _{24}z^{2}_{2}z_{4} = 2\beta _{24}z_{4}(4z_{4} + 4y_{4} -
z_{1}z_{3})$,
$\beta _{26}2z^{2}_{2}y_{4} = 2\beta _{26}y_{4}(4z_{4} + 4z_{4} -
z_{1}z_{3})$,
$\beta _{14}2z^{4}_{2} = \beta _{14}2z^{2}_{2}(4z_{4} + 4y_{4} -
z_{1}z_{3})$. Hence we can choose $y_{8}$ and $z_{8}$ that our
equality takes the form:
\begin{align*}
4y_{8} + 4z_{8} =& z_{1}z_{7}+ z^{2}_{4} + \beta _{5}2z_{2}z_{6} +
\beta _{7}2z_{3}z_{5} + \beta _{25}2z_{2}z^{2}_{3} + \\
+& \beta _{27}4y_{4}z_{4}+ \beta _{29}4z_{2}y_{6}.
\end{align*}
From the conditions $S_{6}c_{6} = 1$ and $S_{6}c_{8} = c_{2}$ it follows
that $S_{6}y_{6} \equiv 1 \mod{2}$;
$S_{6}z_{8} \equiv z_{2} \mod{(2z_{2},z^{2}_{1})}$;
$S_{6}y_{8} \equiv  0 \mod{(2z_{2},z^{2}_{1})}$;
$S_{6}z_{7} \equiv  z_{1} \mod{(2z_{1})}$.
It follows from these facts that
$4z_{2} \equiv  \beta _{29}4z_{4} \mod{(8z_{4})}$, hence
$\beta _{29} \equiv 1 \mod{2}$,
and one can choose $y_{8}$ that our equality takes the form:
\[
4y_{8} + 4z_{8} + 4z_{2}y_{6} = z_{1}z_{7}+ z^{2}_{4} +
\beta _{7}2z_{3}z_{5} + \beta _{25}2z_{2}z^{2}_{3} +
\beta _{27}4y_{4}z_{4}+ \beta _{5}2z_{2}z_{6}.
\]
From the relations $S_{4}c_{4} = 1$, $S_{4}c_{6} = 0$,
$S_{4}c_{8} = c_{4}$
we obtain
\begin{align*}
S_{4}z_{6} \equiv z_{2} \mod{(2z_{2},z^{2}_{1})}; \
S_{4}z_{4} \equiv  0 \mod{2}; \
S_{4}z_{7} \equiv  0 \mod{(2z_{3},z_{1}z_{2},2z^{3}_{1})};\\
S_{4}y_{6} \equiv  0 \mod{(2z_{2},z^{2}_{1})}; \
S_{4}y_{8} \equiv  0
\mod{(2z_{4},2y_{4},2z^{2}_{1}z_{2},z^{2}_{2},z_{1}z_{3},z^{4}_{1})};\\
S_{4}z_{8} \equiv  z_{4}
\mod{(2z_{4},2y_{4},2z^{2}_{1}z_{2},z^{2}_{2},z_{1}z_{3},z^{4}_{1})}.
\end{align*}
If we apply the operation $S_{4}$ to our equation then we get the
relation: $ 2\beta _{5}z^{2}_{2} \equiv 0 \mod{(2z^{2}_{2})}$,
hence it follows that $\beta _{5} \equiv 0 \mod{2}$, hence one can choose
$y_{8}$ in such a way that the relation holds:
\[
4y_{8} + 4z_{8} + 4z_{2}y_{6} = z_{1}z_{7}+ z^{2}_{4} +
\beta _{7}2z_{3}z_{5} + \beta _{25}2z_{2}z^{2}_{3} +
\beta _{27}4y_{4}z_{4}.
\]
From the following relations in MASS: $S_{2,2}c_{8} = 0$,
$S_{2}c_{6} = c^{2}_{2}$, $S_{2}c_{4} = c_{2}$,
$S_{2,2}c_{4} = 0$, and the condition $S_{2,2}U_{4} = 0$ we get the
relations:
\begin{align*}
S_{2,2}y_{8} &\equiv  y_{4}
\mod{(2y_{4},2z_{4},z_{1}z_{3},z^{2}_{2},2z^{2}_{1}z_{2},z^{4}_{1})};
\ S_{2,2}y_{6} \equiv  0 \mod{(2z_{2},z^{2}_{1})};\\
S_{2,2}z_{7} &\equiv  z_{3} \mod{(2z_{3},z_{1}z_{2},z^{3}_{1})};
\ S_{2}z_{5} \equiv  z_{3} \mod{(2z_{3},z_{1}z_{2},z^{3}_{1})}; \\
S_{2,2}z_{8} &\equiv  0
\mod{(2y_{4},2z_{4},z_{1}z_{3},z^{2}_{2},2z^{2}_{1}z_{2},z^{4}_{1})};
\ S_{2,2}z_{5} \equiv  0 \mod{(2z_{1})};\\
S_{2}y_{6} &\equiv  y_{4}
\mod{(2y_{4},2z_{4},z_{1}z_{3},z^{2}_{2},2z^{2}_{1}z_{2},z^{4}_{1})};
\ S_{2}z_{4} \equiv  z_{2} \mod{(2z_{2},z^{2}_{1})}; \\
S_{2,2}z_{4} &\equiv  0 \mod 4; \  S_{2}z_{3} \equiv  0 \mod{(2z_{1})}.
\end{align*}
Hence if we apply $S_{2,2}$ to our equation we get a relation:
\[
4\beta _{27}z_{4} + 4y_{4} \equiv  z_{1}z_{3} + z^{2}_{2}
\mod{(8y_{4},8z_{4},2z_{1}z_{3},2z^{2}_{2})}.
\]
Hence, $\beta _{27} \equiv 1 \mod{2}$, and we can choose $y_{8}$,
in such a way that our equation takes the form:
\[
4y_{8} + 4z_{8} +\ 4z_{2}y_{6} + 4y_{4}z_{4} = z_{1}z_{7} +
 z^{2}_{4} + \beta _{7}2z_{3}z_{5} + \beta _{25}2z_{2}z^{2}_{3}.
\]
From the conditions $S_{3}(U_{1}U_{2}U_{3}) = U^{2}_{1}U_{2}$ and
$S_{3}(U_{1}U^{2}_{3}) = 0$ it follows that
\begin{align*}
&S_{3}z_{5} \equiv  z_{2} \mod{(2z_{2},z^{2}_{1})},
S_{3}z_{4} \equiv  z_{1} \mod{(2z_{1})}, \\
&S_{3}z_{7} \equiv  0
\mod{(2z_{4},2y_{4},z^{2}_{2},z_{1}z_{3},2z^{2}_{1}z_{2},z^{4}_{1})},
\end{align*}
so the application of the operation $S_{3}$ to our equation gives
the relation:
$$2\beta _{7}z_{2}z_{3}  \equiv  0\mod{(4z_{2}z_{3})}.$$
Hence $\beta _{7} \equiv  0 \mod{2}$. Let us choose $y_{8}$ in such a
way that our equation takes the form:
\[
4y_{8} + 4z_{8} + 4z_{2}y_{6} + 4y_{4}z_{4} = z_{1}z_{7}+ z^{2}_{4} +
\beta _{25}2z_{2}z^{2}_{3}.
\]
Multiplying this equation by $z_{1}$, we obtain:
\[
(4y_{8} + 4z_{8} + 4z_{2}y_{6} + 4y_{4}z_{4})z_{1} = z^{2}_{1}z_{7}+
z^{2}_{4}z_{1} + \beta _{25}2z_{2}z^{2}_{3}z_{1}.
\]
From the relation~4 of the work \cite{V2} it follows that
$z_{1}z_{3}= 4y_{4}+ 4z_{4} - z^{2}_{2}$,
hence our equation can be rewritten in the form:
\[
4(y_{8} + z_{8}+ z_{2}y_{6} + y_{4}z_{4})z_{1}= z^{1}_{1}z_{7}+
z^{2}_{4}z_{1} + \beta _{25}2z_{2}z_{3}(4z_{4}+ 4y_{4}- z^{2}_{2}).
\]
From the relations $S_{1,1}U^{2}_{3} = 0; U_{1}z_{7} = U_{3}z_{4};
U_{1}z_{4} = U_{3}z_{1}$ it follows that
\begin{align*}
&S_{1,1}z_{4} \equiv  z_{2}\mod{(2z_{2},z^{2}_{1})};
S_{1}z_{4} \equiv 0\mod{(2z_{3},z_{1}z_{2},2z^{3}_{1})}; \\
&S_{1,1}z_{7} \equiv 0
\mod{(2z_{5},2z_{1}z_{4},2z_{2}z_{3},2z^{2}_{1}z_{3},
2z_{1}z^{2}_{2},2z^{3}_{1}z_{2},2z^{5}_{1})};\\
&S_{1,1}z_{3} \equiv  z_{1}\mod{(2z_{1})}.
\end{align*}
Applying to the studying equality the operation $S_{1,1}$, we have
$0 \equiv \beta _{25}2z_{1}z^{3}_{2}\mod{(4z_{1}z^{3}_{2})}$, or
$0 \equiv  \beta _{25} \mod{2}$. Choosing $y_{6}$ transform our
equality to the form:
\[
4(y_{8} + z_{8}+ z_{2}y_{6} + y_{4}z_{4}) = z_{1}z_{7}+ z^{2}_{4}.
\]

4. Consider the expression $U_{1}z_{8}$. Because of the equality
$d_{3}(U_{1}z_{8}) = U^{3}_{1}U_{4}$, and the identity
$u_{1}a_{8} = u_{4}a_{1}$, fulfilled in the term $E_{\infty }$ of MASS
we have the following equality in the term $E_{2}$ of the Adams-Novikov
spectral sequence:
\begin{align*}
U_{1}z_{8} &= U_{4}z_{1} + \beta _{1}z^{4}_{1}\tau_{1} +
\beta _{2}z^{2}_{2}\tau_{1} + \beta _{3}z^{2}_{1}\tau_{2} +
\beta _{4}z_{1}y_{7}U_{1} + \beta _{5}z_{1}z_{7}U_{1} +\\
&+ \beta _{6}z_{3}z_{5}U_{1} + \beta _{7}z^{3}_{1}z_{5}U_{1} +
\beta _{8}z^{4}_{2}U_{1} + \beta _{9}z^{8}_{1}U_{1} +
\beta _{10}z^{5}_{1}z_{3}U_{1}.
\end{align*}
Changing $z_{8}$ and $z_{7}$ in such a way that the equality of the item~3
and the relation $U_{1}z_{7} = U_{3}z_{4}$ are conserved, it is possible
to transform our equality to the form:
\[
U_{1}z_{8} = U_{4}z_{1} + \beta _{6}z_{3}z_{5}U_{1} +
\beta _{8}z^{4}_{2}U_{1}.
\]
Changing $z_{8}$ and $z_{4}$ in such a way that the equality of the
item~3 and the equalities 3,4,5,6,9 and 11 of the work [3] are
conserved, it is possible to transform our equality to the form:
\[
U_{1}z_{8} = U_{4}z_{1} + \beta _{6}z_{3}z_{5}U_{1}.
\]
Changing $z_{8}$ simultaneously with the changing of $y_{8}$ in order to
conserve the equality of the item~3, it is possible to transform our
equality to the form:
\[
U_{1}z_{8} = U_{4}z_{1}.
\]

5. Let us consider the expression $U_{2}y_{7}$. Because of the
equality $d_{3}(U_{2}y_{7}) = U_{1}U^{2}_{2}\Phi _{3}$ and the identity
$u_{2}b_{7} = \varphi _{3}a_{3}$, fulfilled for the term
$E_{\infty }$ of the MASS the following equation is valid in the term
$E_{2}$ of the Adams-Novikov spectral sequence:
\begin{align*}
U_{2}y_{7} &= \Phi _{3}z_{3} + \beta _{1}z^{4}_{1}\tau_{1} +
\beta _{2}z^{2}_{2}\tau_{1} + \beta _{3}z^{2}_{1}\tau_{2} +
\beta _{4}z_{1}y_{7}U_{1} + \beta _{5}z_{1}z_{7}U_{1} +\\
&+ \beta _{6}z_{3}z_{5}U_{1} + \beta _{7}z^{3}_{1}z_{5}U_{1} +
\beta _{8}z^{4}_{2}U_{1} + \beta _{9}z^{8}_{1}U_{1} +
\beta _{10}z^{5}_{1}z_{3}U_{1}.
\end{align*}
Let us multiply both parts of this equality by $U_{1}$ and use the
relations
$U_{1}z_{3} = U_{2}z_{2}$ and $U_{1}y_{7} = \Phi _{3}z_{2}$
from [3]. We get the relations $\beta _{i} \equiv  0 \mod{2}$ for
each $i$. Hence we have:
\[
U_{2}y_{7} = \Phi _{3}z_{3}.
\]

6. Let us consider the expression $U_{2}z_{7}$. Because of the
equality $d_{3}(U_{2}z_{7}) = U_{1}U_{2}U^{2}_{3}$, and the identity
$u_{2}a_{7} = u_{3}a_{5}$, fulfilled for the term $E_{\infty }$ of the
MASS the following equation is valid in the term
$E_{2}$ of the Adams-Novikov spectral sequence:
\begin{align*}
U_{2}z_{7} &= U_{3}z_{5} + \beta _{1}z^{4}_{1}\tau_{1} +
\beta _{2}z^{2}_{2}\tau_{1} + \beta _{3}z^{2}_{1}\tau_{2} +
\beta _{4}z_{1}y_{7}U_{1} + \beta _{5}z_{1}z_{7}U_{1} +\\
&+ \beta _{6}z_{3}z_{5}U_{1} + \beta _{7}z^{3}_{1}z_{5}U_{1} +
\beta _{8}z^{4}_{2}U_{1} + \beta _{9}z^{8}_{1}U_{1} +
\beta _{10}z^{5}_{1}z_{3}U_{1}.
\end{align*}
Let us multiply both parts of this equality by $U_{1}$ and use the
relations $U_{1}z_{7} = U_{3}z_{4}$ and $U_{1}z_{5} = U_{2}z_{4}$ from
[3]. We obtain the relations $\beta _{i} \equiv  0 \mod{2}$ for each
$i$. Hence we have:
\[
U_{2}z_{7} = U_{3}z_{5}.
\]

7. An element $z_{9} \in  E^{0,36}_{2}$ we choose in such a way
that $\pi ^{2}_{2}(z_{9}) = a_{9}$. From the action of the
Landweber-Novikov operation $S_{1}$: $S_{1}a_{9} = a_{8}$ it follows
that
$$S_{1}(z_{9} + \text{any element in this dimension not equal to} \
z_{9}) = z_{8} +\text{decomposables}.$$
Hence, the following conditions must be fulfilled:
$$d_{3}(z_{9}) \neq  0,$$
$$d_{3}(z_{9} + \text{any element in this dimension not equal to} \
z_{9}) \neq  0.$$
So,
\begin{align*}
d_{3}(z_{9}) &= \beta _{1}U_{1}U_{2}U_{4} +
\beta _{2}U_{1}U_{3}\Phi _{3} + \beta _{3}U^{2}_{2}(U_{1}y_{6} +
U_{3}z_{3}) + \\
&+ \beta _{4}U^{2}_{1}(U_{1}y_{8} + U_{2}z_{7}) +
\beta _{5}U_{2}U_{3}(U_{1}y_{4} + U_{2}z_{3}) +
\beta _{6}U^{3}_{1}y^{2}_{4}.
\end{align*}
Applying the operation $S_{4,4}$ to this equality and because of the
relations
$S_{4,4}z_{9} \equiv 0 \mod{2}$, $S_{4,4}y_{8} \equiv 1 \mod{2}$
we obtain $\beta _{4}U^{3}_{1} = 0$, or $\beta _{4} \equiv 0\mod{2}$.
Let us use the operation $S_{7}$, then we have
$S_{7}z_{9} \equiv  z_{2} \mod{(2z_{2},z^{2}_{1})}$,
$S_{7}U_{4} = U_{1}$. So
$d_{3}(z_{2}) = U^{2}_{1}U_{2} = \beta _{1}U^{2}_{1}U_{2}$, i.e.
$\beta _{1} \equiv  1 \mod{2}$.
Let us use the operation $S_{2,2,2,2}$; it follows from the relations
$S_{2,2,2,2}z_{9} \equiv 0 \mod{(2z_{1})}$;
$S_{2,2}y_{4} \equiv 1 \mod{2}$, that $\beta _{6}U^{3}_{1} = 0$, or
$\beta _{6} \equiv  0 \mod{2}$. Let us use the operation $S_{6}$, we
have $S_{6}z_{9} \equiv  z_{3} \mod{(2z_{3},z_{1}z_{2},2z^{3}_{1})}$.
Hence, $d_{3}(z_{3}) = U_{1}U^{2}_{2} = (\beta _{3} + 1)U_{1}U^{2}_{2}$,
so $\beta _{3} \equiv  0 \mod{2}$. Applying the operation $S_{5}$ and
having in mind
$S_{5}z_{9} \equiv 0
\mod{(z^{2}_{2},2z_{4},2y_{4},z_{1}z_{3},2z^{2}_{1}z_{2},z^{4}_{1})}$,
and also $S_{5}\Phi _{3} = U_{1}$, we obtain
$\beta _{2}U^{2}_{1}U_{3} = 0$, or $\beta _{2} \equiv  0 \mod{2}$.
From the conditions $S_{2,2}U_{4} = 0$, $S_{2,2}\Phi _{3} = U_{2}$,
$S_{2,2}z_{9} \equiv  0
\mod{(2z_{5},z_{1}z_{4},z_{2}z_{3},2z^{2}_{1}z_{3},z^{3}_{1}z_{2},
2z^{5}_{1})}$ and $S_{2,2}y_{4} \equiv  1 \mod{2}$, it follows the
relation
$\beta _{5}U_{1}U_{2}U_{3} = 0$, from which it follows that
$\beta _{5} \equiv  0 \mod{2}$. So
\[
d_{3}(z_{9}) = U_{1}U_{2}U_{4}.
\]

8. An element $y_{9} \in  E^{0,36}_{2}$ we choose so that
$\pi ^{2}_{2}(y_{9}) = b_{9}$. From the action of the Landweber-Novikov
operation $S_{2}: S_{2}a_{9} = a_{7}+b_{7}$, it follows that
$$S_{2}(y_{9} + \text{any element in this dimension not equal to }
y_{9})=$$
$$ z_{7} + y_{7} + \text{decomposables}.$$
Hence the following conditions must be fulfilled:
$$d_{3}(y_{9}) \neq  0,$$
$$d_{3}(y_{9} + \text{any element in this dimension not equal to }
y_{9}) \neq  0.$$
So,
\begin{align*}
d_{3}(y_{9}) &= U_{1}U_{3}\Phi _{3} + \beta _{1}U_{1}U_{2}U_{4} +
\beta _{2}U^{2}_{2}(U_{1}y_{6} + U_{3}z_{3}) +\\
&+ \beta _{3}U^{2}_{1}(U_{1}y_{8} + U_{2}z_{7}) +
\beta _{4}U_{2}U_{3}(U_{1}y_{4} + U_{2}z_{3}) +
\beta _{5}U^{3}_{1}y^{2}_{4}.
\end{align*}
Applying the operation $S_{4,4}$ we obtain $\beta _{3}U^{3}_{1} = 0$,
so $\beta _{3} \equiv 0 \mod{2}$. Applying the operation $S_{2,2,2,2}$
we obtain the relation $\beta _{5}U^{3}_{1} = 0$, that means that
$\beta _{5} \equiv 0 \mod{2}$. Acting by the operation $S_{6}$ we
arrive to the expression $0 = \beta _{2}U_{1}U^{2}_{2}$, i.e.
$\beta _{2} \equiv 0 \mod{2}$. From the condition $S_{2,2}b_{9} = 0$,
fulfilled in MASS the equality follows
$S_{2,2}y_{9} \equiv 0 \mod{(2z_{5},z_{1}z_{4},z_{2}z_{3},2z^{5}_{1})}$.
Hence applying the operation $S_{2,2}$ to the equality that we are
studying we obtain $\beta_{4}U_{1}U_{2}U_{3} = 0$, hence,
$\beta _{4} \equiv 0 \mod{2}$. Because of the equality $S_{7}b_{9} = 0$,
it follows that $S_{7}y_{9} \equiv  0 \mod{(2z_{2},z^{2}_{1})}$, this
gives $\beta _{1}U^{2}_{1}U_{2} = 0$, or $\beta _{1} \equiv 0 \mod{2}$.
We get the following final form of the action of the differential
$d_{3}$ on $y_{9}$:
\[
d_{3}(y_{9}) = U_{1}U_{3}\Phi _{3}.
\]

9. From the condition $h_{0}b_{9} = a_{2}a_{7} + a_{4}a_{5}$,
fulfilled in the term $E_{\infty }$ of the MASS it follows that one
can choose an element $y_{9}$ in such a way that the following
equality is fulfilled:
\begin{align*}
2y_{9} &= z_{2}z_{7} + z_{4}z_{5} + \beta _{1}2z_{4}z_{5} +
\beta _{2}2z^{3}_{3} + \beta _{3}2z_{3}(z_{2}y_{4} + z^{2}_{3}) +
\beta _{4}2z^{5}_{1}y_{4} +&\\
&+ \beta _{5}2z^{2}_{1}z_{3}y_{4} + \beta _{6}2z_{1}z^{2}_{2}y_{4} +
\beta _{7}2z^{2}_{1}y_{7} + \beta _{8}z^{3}_{1}(z_{2}y_{4} +
z^{2}_{3}) +&\\
&+ \beta _{9}2z_{1}z_{2}z_{6}+ \beta _{10}2z^{2}_{1}z_{7} +
\beta _{11}2z_{1}z^{2}_{4} + \beta _{12}2z_{1}z_{3}z_{5} +&\\
&+ \beta _{13}2z^{2}_{2}z_{5} + \beta _{14}2z^{3}_{1}z_{2}z_{4} +
\beta _{15}2z^{4}_{1}z_{5} + \beta _{16}2z_{1}z^{4}_{2} +&\\
&+ \beta _{17}2z^{9}_{1} + \beta _{18}2z^{3}_{1}y_{6} +
\beta _{19}2z^{6}_{1}z_{3} + \beta _{20}2z^{5}_{1}z^{2}_{2} +& \\
&+ \beta _{21}2z^{2}_{1}z^{2}_{2}z_{3} + \beta _{22}2z_{2}z_{3}z_{4} +
\beta _{23}z_{2}(z_{1}z_{6} + z_{2}z_{5}) +& \\
&+ \beta _{24}z_{4}(z_{1}y_{4}+ z_{2}z_{3})+
\beta _{25}z^{3}_{2}z_{3}+ \beta _{26}z^{2}_{1}z_{2}z_{5}+
\beta _{27}z^{2}_{1}z_{3}z_{4} +&\\
&+ \beta _{28}z^{3}_{1}z_{6} + \beta _{29}z_{1}z_{2}z^{2}_{3} +
\beta _{30}z_{1}z^{2}_{2}z_{4} + \beta _{31}z^{5}_{1}z_{4} +&\\
&+ \beta _{32}z^{3}_{1}z^{3}_{2} + \beta _{33}z^{4}_{1}z_{2}z_{3} +
\beta _{34}z^{7}_{1}z_{2} + \beta _{35}2z^{3}_{1}z^{2}_{3}.
\end{align*}
Let us multiply both parts of the equality by $U_{1}$. Then we have the
following identity:
\begin{align*}
0 &= \beta _{8}z^{3}_{1}z_{2}\tau_{1} +
\beta _{18}z^{3}_{1}\tau_{2} + \beta _{23}z_{1}z_{2}\tau_{2} +
\beta _{24}z_{1}z_{4}\tau_{1} + \beta _{28}U_{1}z^{3}_{1}z_{6} + \\
&+ (\beta _{25}+ \beta _{29})U_{1}z^{3}_{2}z_{3} + (\beta _{26}+
\beta _{27} + \beta _{30})U_{1}z^{2}_{1}z_{2}z_{5}+
\beta _{31}U_{1}z^{5}_{1}z_{4} + \\
&+ (\beta _{32} + \beta _{33})U_{1}z^{3}_{1}z^{3}_{2} +
\beta _{34}U_{1}z^{7}_{1}z_{2}.
\end{align*}
It follows from this identity that $\beta _{8}$, $\beta _{18}$,
$\beta _{23}$, $\beta _{24}$, $\beta _{28}$, $\beta _{31}$,
$\beta _{34} \equiv 0 \mod{2}$; $\beta _{25} \equiv \beta _{29} \mod{2}$;
$\beta _{32} \equiv  \beta _{33} \mod{2}$;
$\beta _{26} + \beta _{30} + \beta _{27} \equiv 0 \mod{2}$.
Without loss of generality we can consider that
$\beta _{30} \equiv 0 \mod{2}$, hence
$\beta _{26} \equiv \beta _{27} \mod{2}$,
and we can choose $y_{9}$ in such a way that the considering equality
takes the form:
\begin{align*}
2y_{9} &= z_{2}z_{7} + z_{4}z_{5} + \beta _{1}2z_{4}z_{5} +
\beta _{2}2z^{3}_{3} + \beta _{4}2z^{5}_{1}y_{4} +\\
&+ \beta _{5}2z^{2}_{1}z_{3}y_{4} +
\beta _{6}2z_{1}z^{2}_{2}y_{4} + \beta _{7}2z^{2}_{1}y_{7} +
\beta _{9}2z_{1}z_{2}z_{6} +\\
&+ \beta _{10}2z^{2}_{1}z_{7} + \beta _{11}2z_{1}z^{2}_{4} +
\beta _{12}2z_{1}z_{3}z_{5} + \beta _{13}2z^{2}_{2}z_{5} + \\
&+ \beta _{14}2z^{3}_{1}z_{2}z_{4} + \beta _{15}2z^{4}_{1}z_{5} +
\beta _{16}2z_{1}z^{4}_{2} + \beta _{17}2z^{9}_{1} + \\
&+ \beta _{18}2z^{3}_{1}y_{6} + \beta _{19}2z^{6}_{1}z_{3} +
\beta _{20}2z^{5}_{1}z^{2}_{2} +
\beta _{21}2z^{2}_{1}z^{2}_{2}z_{3} +\\
&+ \beta _{22}2z_{2}z_{3}z_{4} + \beta _{25}(z^{3}_{2}z_{3} +
z_{1}z_{2}z^{2}_{3}) + \beta _{35}2z^{3}_{1}z^{2}_{3} +\\
&+ \beta _{26}(z^{2}_{1}z_{2}z_{5} + z^{2}_{1}z_{3}z_{4}) +
\beta _{32}(z^{3}_{1}z^{3}_{2} + z^{4}_{1}z_{2}z_{3}).
\end{align*}
The following relations arrive from Table~2 of the work \cite{V2}:
\begin{align*}
\beta _{25}(z^{3}_{2}z_{3} + z_{1}z_{2}z^{2}_{3}) &=
\beta _{25}z_{2}z_{3}(4z_{4} + 4y_{4}),\\
\beta _{26}(z^{2}_{1}z_{2}z_{5} + z^{2}_{1}z_{3}z_{4}) &=
\beta _{26}2z_{1}z_{2}z_{6},\\
\beta _{32}(z^{3}_{1}z^{3}_{2} + z^{4}_{1}z_{2}z_{3}) &=
\beta _{32}z^{3}_{1}z_{3}(4z_{4} + 4y_{4}),\\
\beta _{5}2z^{2}_{1}z_{3}y_{4} + \beta _{6}2z_{1}z^{2}_{2}y_{4} &=
\beta _{6}2z_{1}y_{4}(4z_{4} + 4y_{4}) + (\beta _{5} -
\beta _{6})2z^{2}_{1}z_{3}y_{4},\\
\beta _{10}2z^{2}_{1}z_{7} + \beta _{11}2z_{1}z^{2}_{4} &=
\beta _{10}2z_{1}(4z_{8} + 4y_{8} + 4z_{2}y_{6} + 4z_{4}y_{4}) +\\
&+ (\beta _{10} - \beta _{11})2z_{1}z^{2}_{4},\\
\beta _{12}2z_{1}z_{3}z_{5} + \beta _{22}2z_{2}z_{3}z_{4} &=
2z_{3}y_{6}\beta _{12} + (\beta _{22} -
\beta _{12})2z_{2}z_{3}z_{4},\\
\beta _{14}2z^{3}_{1}z_{2}z_{4} + \beta _{15}2z^{4}_{1}z_{5} &=
\beta _{14}2z^{3}_{1}2y_{6} + (\beta _{15} -
\beta _{14})2z^{4}_{1}z_{5},\\
\beta _{16}2z_{1}z^{4}_{2} + \beta _{21}2z^{2}_{1}z^{2}_{2}z_{3} +
\beta _{35}2z^{3}_{1}z^{2}_{3} &=
\beta _{16}2z_{1}z^{2}_{2}(4y_{4} + 4z_{4})+\\
+ (\beta _{21} - \beta _{16})2z^{2}_{1}z_{3}(4y_{4} &+ 4z_{4}) +
(\beta _{35} - \beta _{21} + \beta _{16})2z^{3}_{1}z^{2}_{3};\\
\beta _{19}2z^{6}_{1}z_{3}+ \beta _{20}2z^{5}_{1}z^{2}_{2} &=
\beta _{19}2z^{5}_{1}(4y_{4}+ 4z_{4}) +
(\beta _{20}- \beta _{19})2z^{5}_{1}z^{2}_{2}.
\end{align*}
Hence one can choose the element $y_{9}$ in order to get the relation
(changing the notations of the elements $\beta _{i}$ for convenience):
\begin{align*}
2y_{9} &= z_{2}z_{7} + z_{4}z_{5} + \beta _{1}2z_{4}z_{5} +
\beta _{2}2z^{3}_{3} + \beta _{4}2z^{5}_{1}y_{4} +\\
&+ \beta _{5}2z^{2}_{1}z_{3}y_{4} + \beta _{7}2z^{2}_{1}y_{7} +
\beta _{9}2z_{1}z_{2}z_{6} + \beta _{11}2z_{1}z^{2}_{4} + \\
&+ \beta _{13}2z^{2}_{2}z_{5} + \beta _{15}2z^{4}_{1}z_{5} +
\beta _{17}2z^{9}_{1} + \beta _{18}2z^{3}_{1}y_{6} + \\
&+ \beta _{19}2z^{6}_{1}z_{3} + \beta _{22}2z_{2}z_{3}z_{4} +
\beta _{35}2z^{3}_{1}z^{2}_{3}.
\end{align*}
Using the relation $2y_{7} = z_{2}z_{5} + 3z_{3}z_{4}$ we can choose
the element $y_{9}$ so that $\beta _{22}$ = 0. Let us apply the
operation $S_{2,2}$ to our equality.
From the conditions
\begin{align*}
S_{2,2}y_{9} \equiv  0 \mod{(2z^{5}_{1},2z^{2}_{1}z_{3})},
S_{2}z_{5} \equiv  0 \mod{(2z^{3}_{1})},\\
S_{2}z_{4} \equiv  0 \mod{(2z^{2}_{1})}, S_{2}z_{7} \equiv  0
\mod{(2z^{5}_{1})},\\
S_{2}z_{2} \equiv  0 \mod{2},
\end{align*}
we obtain that $\beta _{4}2z^{5}_{1} \equiv  0 \mod{(4z^{5}_{1})}$,
or $\beta _{4} \equiv  0 \mod{2}$. By the choice of $y_{9}$ we
transform our equality to the form:
\begin{align*}
2y_{9} &= z_{2}z_{7} + z_{4}z_{5} + \beta _{1}2z_{4}z_{5} +
\beta _{2}2z^{3}_{3} + \beta _{5}2z^{2}_{1}z_{3}y_{4} +\\
&+ \beta _{7}2z^{2}_{1}y_{7} + \beta _{9}2z_{1}z_{2}z_{6} +
\beta _{11}2z_{1}z^{2}_{4} + \beta _{13}2z^{2}_{2}z_{5} + \\
&+ \beta _{15}2z^{4}_{1}z_{5} + \beta _{17}2z^{9}_{1} +
\beta _{18}2z^{3}_{1}y_{6} + \beta _{19}2z^{6}_{1}z_{3} +\\
&+ \beta _{35}2z^{3}_{1}z^{2}_{3}.
\end{align*}
From the condition $S_{6}y_{9} \equiv  0 \mod{(2z^{3}_{1})}$ we obtain
that $\beta _{18}2z^{3}_{1} \equiv  0 \mod{(4z^{3}_{1})}$, hence,
$\beta _{18} \equiv  0 \mod{2}$, and we can choose the element
$y_{9}$, in such a way that the corresponding summand will be equal
to zero. So, we get:
\begin{align*}
2y_{9} &= z_{2}z_{7} + z_{4}z_{5} + \beta _{1}2z_{4}z_{5} +
\beta _{2}2z^{3}_{3} + \beta _{5}2z^{2}_{1}z_{3}y_{4} + \\
&+ \beta _{7}2z^{2}_{1}y_{7} + \beta _{9}2z_{1}z_{2}z_{6} +
\beta _{11}2z_{1}z^{2}_{4} + \beta _{13}2z^{2}_{2}z_{5} + \\
&+ \beta _{15}2z^{4}_{1}z_{5} + \beta _{17}2z^{9}_{1} +
\beta _{19}2z^{6}_{1}z_{3} + \beta _{35}2z^{3}_{1}z^{2}_{3}.
\end{align*}
Multiplying our expression by $z_{1}$, and changing (according to the
formula in the item~3) the expression $z_{1}z_{2}z_{7}$ by
\[
z_{1}z_{2}z_{7} = z_{2}(4y_{8} + 4z_{8} + 4z_{2}y_{6} +
4z_{4}y_{4}) - z_{2}z^{2}_{4},
\]
and changing (according to the relation $2y_{6} = z_{1}z_{5} +
z_{2}z_{4}$ from the work [6]) the expression
$z_{1}z_{4}z_{5}$ by:
\[
z_{1}z_{4}z_{5} = z_{4}2z_{6} - z_{2}z^{2}_{4}.
\]
Now let us divide all coefficients by 2, we obtain:
\begin{align*}
z_{1}y_{9} &+ z_{4}y_{6} + 2(y_{8} + z_{8} + z_{2}y_{6} +
z_{4}y_{4})z_{2} = z_{2}z^{2}_{4} + \beta _{1}z_{1}z_{4}z_{5} +\\
&+  \beta _{2}z_{1}z^{3}_{3} + \beta _{5}z^{3}_{1}z_{3}y_{4} +
\beta _{7}z^{3}_{1}y_{7} + \beta _{9}z^{2}_{1}z_{2}z_{6} +
\beta _{11}z^{2}_{1}z^{2}_{4} +\\
&+  \beta _{13}z_{1}z^{2}_{2}z_{5} + \beta _{15}z^{5}_{1}z_{5} +
\beta _{17}z^{10}_{1} + \beta _{19}z^{7}_{1}z_{3} +
\beta _{35}z^{4}_{1}z^{2}_{3}.
\end{align*}
The rest of coefficients we determine later.

10. Let us consider the expression $U_{1}y_{9}$. From the action
of the differential $d_{3}$:
$d_{3}(U_{1}y_{9}) = U^{2}_{1}U_{3}\Phi _{3}$ and the relation
in the MASS: $u_{1}b_{9} = \varphi _{3}a_{4}$ it follows that
one can choose the element $y_{9}$ (not changing the relation
of the previous item) in such a way that the equality is
fulfilled:
\begin{align*}
U_{1}y_{9} &= \Phi _{3}z_{4} + U_{1}[\gamma _{1}z_{3}(z^{2}_{3} +
z_{2}y_{4}) + \gamma _{2}z_{2}(z_{3}z_{4} + z_{1}y_{6}) + \\
&+ \gamma _{3}z_{4}(z_{1}y_{4} + z_{2}y_{3}) +
\gamma _{4}z^{2}_{1}z_{2}(z_{1}y_{4} + z_{2}z_{3}) +
\gamma _{5}z_{2}y_{7} +  \\
&+ \gamma _{6}z_{2}z_{7} + \gamma _{7}z^{3}_{2}z_{3} +
\gamma _{8}z_{1}z_{8} + \gamma _{9}z^{2}_{1}z_{2}z_{5} +
\gamma _{10}z^{3}_{1}y_{6} +\\
&+ \gamma _{11}z^{5}_{1}z_{4} + \gamma _{12}z^{4}_{1}z^{3}_{2} +
\gamma _{13}z^{8}_{1}z_{2}] + \gamma _{14}U_{2}z_{3}z_{5}.
\end{align*}
Let us multiply this equality by $z_{1}$, and the last equality
of the item~9 we multiply by $U_{1}$ and add them up. Then we get
an equality:
\begin{align*}
(\beta _{1} + \gamma _{6} + 1)U_{1}z_{2}z^{2}_{4} +
\beta _{2}U_{1}z_{1}z^{3}_{3} + \beta _{5}U_{1}z^{3}_{1}z_{3}y_{4} +
(\beta _{7} + \\
+ \beta _{9})U_{1}z^{3}_{1}y_{7} +
\beta _{11}U_{1}z^{2}_{1}z^{2}_{4} +
\beta _{13}U_{1}z_{1}z^{2}_{2}z_{5} +
\beta _{15}U_{1}z^{5}_{1}z_{5} +\\
+ \beta _{17}U_{1}z^{10}_{1} + \beta _{19}U_{1}z^{7}_{1}z_{3} +
\beta _{35}U_{1}z^{4}_{1}z^{2}_{3} +
\gamma _{1}U_{1}z_{1}z_{3}(z^{2}_{3} +\\
+ z_{2}y_{4}) + \gamma _{2}U_{1}z_{1}z_{2}(z_{3}z_{4} + z_{1}y_{6}) +
\gamma _{3}U_{1}z_{1}z_{4}(z_{1}y_{4} +\\
+ z_{2}y_{3}) +
\gamma _{4}U_{1}z^{3}_{1}z_{2}(z_{1}y_{4} + z_{2}z_{3}) +
\gamma _{5}U_{1}z_{1}z_{2}y_{7} +\\
+ \gamma _{7}U_{1}z_{1}z^{3}_{2}z_{3} +
\gamma _{8}U_{1}z^{2}_{1}z_{8} +
\gamma _{9}U_{1}z^{3}_{1}z_{2}z_{5} +
\gamma _{10}U_{1}z^{4}_{1}y_{6} +\\
+ \gamma _{11}U_{1}z^{6}_{1}z_{4} +
\gamma _{12}U_{1}z^{5}_{1}z^{3}_{2} +
\gamma _{13}U_{1}z^{8}_{1}z_{2} +
\gamma _{14}U_{1}z_{2}z_{3}z_{5} = 0.
\end{align*}
from this we obtain that $\beta _{2}$, $\beta _{5}$,
$\beta _{11}$, $\beta _{13}$, $\beta _{15}$, $\beta _{17}$,
$\beta _{19}$, $\beta _{35}$, $\gamma _{1}$, $\gamma _{2}$,
$\gamma _{3}$, $\gamma _{4}$,
$\gamma _{5}$, $\gamma _{7}$, $\gamma _{8}$, $\gamma _{9}$,
$\gamma _{10}$, $\gamma _{11}$, $\gamma _{12}$, $\gamma _{13}$,
$\gamma _{14} \equiv 0 \mod{2}$,
$\beta _{1} + \gamma _{1} + 1 \equiv  0 \mod{2}$,
$\beta _{7} \equiv  \beta _{9} \mod{2}$.
Hence we can change the choice of $y_{9}$, in order to obtain the
relations:
\begin{align*}
2y_{9} &= z_{2}z_{7} + z_{4}z_{5} + 2(\beta  + 1)z_{4}z_{5} +
\gamma (z^{2}_{1}y_{7} + z_{1}z_{2}z_{6}),\\
U_{1}y_{9} &= \Phi _{3}z_{4} + \beta U_{1}z_{2}z_{7}.
\end{align*}
We can choose $y_{9}$ so that $\beta = 0$, then we obtain:
\[
U_{1}y_{9} = \Phi _{3}z_{4}, 2y_{9} = z_{2}z_{7} + 3z_{4}z_{5} +
\gamma (z^{2}_{1}y_{7} + z_{1}z_{2}z_{6}).
\]
Applying the operation $S_{4}$ to this equality and having in mind
the equality $S_{4}y_{7} \equiv  z_{3} \mod{(2z_{3})}$, we obtain
$\gamma z^{2}_{1}z_{3} \equiv  0
\mod{(2z^{2}_{1}z_{3},2z^{3}_{1}z_{2},2z^{5}_{1})}$,
hence it follows that $\gamma  \equiv 0 \mod{2}$, so one can choose
the element $y_{9}$ to obtain:
\[
2y_{9} = z_{2}z_{7} + 3z_{4}z_{5}.
\]

11. Let us consider the expression $U_{3}y_{6}$. From the action of
the differential $d_{3}$: $d_{3}(U_{3}y_{6}) = U^{2}_{1}U_{3}\Phi _{3}$
and the relation in the MASS: $u_{3}b_{3} = \varphi _{3}a_{4}$, we
obtain the following equality:
\begin{align*}
U_{3}y_{6} &= \Phi _{3}z_{4} + U_{1}[\beta _{1}z_{3}(z^{2}_{3} +
z_{2}y_{4}) + \beta _{2}z_{2}(z_{1}y_{6} + z_{3}z_{4}) + \\
&+ \beta _{3}z_{4}(z_{1}y_{4} + z_{2}z_{3}) +
\beta _{4}z^{2}_{1}z_{2}(z_{1}y_{4} + z_{2}z_{3}) +
\beta _{5}z_{2}y_{7} + \\
&+ \beta _{6}z_{2}z_{7} + \beta _{7}z^{3}_{2}z_{3} +
\beta _{8}z_{1}z_{8} + \beta _{9}z^{2}_{1}z_{2}z_{5} +
\beta _{10}z^{3}_{1}z_{6} +\\
&+ \beta _{11}z^{5}_{1}z_{4} + \beta _{12}z^{3}_{1}z^{3}_{2} +
\beta _{13}z^{7}_{1}z_{2}] + U_{2}\beta _{14}z_{3}z_{5}.
\end{align*}
Let us multiply this equation by $U_{1}$ and use the relations:
\[
U_{1}y_{6} = \Phi _{3}z_{1}\hbox{,  }U_{3}z_{1} = U_{1}z_{3},
\]
we obtain $\beta _{i} \equiv 0 \mod{2}$. Hence we obtain:
\[
U_{3}y_{6} = \Phi _{3}z_{4}.
\]

12. Let us consider the expression $U_{1}z_{9}$. From the condition
$d_{3}(U_{1}z_{9}) = U^{2}_{1}U_{2}U_{4}$ and the relation
$u_{1}a_{9} = u_{4}a_{2}$ fulfilled in the MASS it follows that
one can choose the element $z_{9}$, in such a way that the equality
is fulfilled:
\[
U_{1}z_{9} = U_{4}z_{2} + \beta U_{2}z_{2}z_{5}.
\]
Let us apply the operation $S_{2}$ to this equality. From the relations
\[
S_{2}z_{9} \equiv y_{7} \mod{(2y_{7},2z_{7},\ldots )},
S_{2}U_{4} = \Phi _{3}, S_{2}z_{5} \equiv  z_{3}
\mod{(2z_{3},z_{1}z_{2},2z^{3}_{1})}
\]
we obtain that $\beta U_{2}z^{2}_{3} \equiv 0 \mod{2}$, hence,
$\beta \equiv 0 \mod{2}$. So our equality takes the form:
\[
U_{1}z_{9} = U_{4}z_{2}.
\]

13. Let us consider the expression $U_{2}z_{8}$. Because of relation
$d_{3}(U_{2}z_{8}) = U_{1}U_{2}U_{4}$, of the term $E_{3}$ of the
Adams-Novikov spectral sequence and the relation
$u_{2}a_{8} = u_{4}a_{2}$, fulfilled in the MASS we obtain the
following:
\begin{align*}
U_{2}z_{8} &= U_{4}z_{2} +
\beta _{1}U_{1}[\beta _{1}z_{3}(z^{2}_{3} + z_{2}y_{4}) +
\beta _{2}z_{2}(z_{1}y_{6} + z_{3}z_{4}) + \\
&+ \beta _{3}z_{4}(z_{1}y_{4} + z_{2}z_{3}) +
\beta _{4}z^{2}_{1}z_{2}(z_{1}y_{4} + z_{2}z_{3}) +
\beta _{5}z_{2}y_{7} + \\
&+ \beta _{6}z_{2}z_{7} + \beta _{7}z^{3}_{2}z_{3} +
\beta _{8}z_{1}z_{8} + \beta _{9}z^{2}_{1}z_{2}z_{5} +
\beta _{10}z^{3}_{1}z_{6} + \\
&+ \beta _{11}z^{5}_{1}z_{4} + \beta _{12}z^{3}_{1}z^{3}_{2} +
\beta _{13}z^{7}_{1}z_{2}] + U_{2}\beta _{14}z_{3}z_{5}.
\end{align*}
If we multiply this equality by $U_{1}$, and then use the relation:
$U_{1}z_{8} = U_{4}z_{1}$ and $U_{2}z_{1} = U_{1}z_{2}$, then we get
the equality:
\begin{align*}
U^{2}_{1}[\beta _{1}z_{3}(z^{2}_{3} + z_{2}y_{4}) +
\beta _{2}z_{2}(z_{1}y_{6} + z_{3}z_{4}) +
\beta _{3}z_{4}(z_{1}y_{4} + z_{2}z_{3}) +\\
+ \beta _{4}z^{2}_{1}z_{2}(z_{1}y_{4} + z_{2}z_{3}) +
\beta _{5}z_{2}y_{7} + \beta _{6}z_{2}z_{7} +
\beta _{7}z^{3}_{2}z_{3} + \beta _{8}z_{1}z_{8} +\\
+ \beta _{9}z^{2}_{1}z_{2}z_{5} + \beta _{10}z^{3}_{1}z_{6} +
\beta _{11}z^{5}_{1}z_{4} + \beta _{12}z^{3}_{1}z^{3}_{2} +
\beta _{13}z^{7}_{1}z_{2}] + \\
+ U_{1}U_{2}\beta _{14}z_{3}z_{5} = 0.
\end{align*}
From the last equality we conclude that all the elements $\beta _{i}$
are equal to 0 $\mod 2$ and so the initial equality can be transformed
to the form:
\[
U_{2}z_{8} = U_{4}z_{2}.
\]

14. Let us choose an element $z_{10} \in  E^{0,40}_{2}$ so that
the following equality is fulfilled:
$\pi ^{2}_{2}(z_{10}) = b_{10} + a_{4}c_{6}$. From the action of
the Landweber-Novikov operation $S_{2}$ in the MASS:
$S_{2}b_{10} = b_{8}$, we conclude that in the Adams-Novikov
spectral sequence the following relation need to be fulfilled:
$$S_{2}(z_{10} + \text{any element from \ $F^{2}E^{0,40}_{2}$, not
equal to } z_{10}) = z_{8} + \text{decomposables}.$$
Hence
$$d_{3}(z_{10}) \neq  0,$$
$$d_{3}(z_{10} + \text{decomposable}) \neq 0. $$
Hence we choose the element $z_{10}$ so that the following equality
will be fulfilled:
\begin{align*}
d_{3}(z_{10}) &= U^{2}_{1}\Phi _{5} + \beta _{1}U^{3}_{3} +
\beta _{2}U_{2}U_{3}\Phi _{3} + \beta _{3}U^{2}_{2}U_{4} +\\
&+ \beta _{4}U_{1}U_{2}(U_{1}y_{8} + U_{3}z_{7}) +
\beta _{5}U^{2}_{1}U_{2}y^{2}_{4} +\\
&+ \beta _{6}U^{2}_{2}(U_{2}y_{6} + U_{3}y_{4}) +
\beta _{7}U_{1}\Phi _{3}(U_{1}y_{4} + U_{2}z_{3})+\\
&+ \beta _{8}U_{1}U_{3}(U_{1}y_{6} + U_{3}z_{3}).
\end{align*}
Let us apply the operation $S_{4,4}$ to this equality. From the
conditions: $S_{4,4}\Phi _{5} = \Phi _{1}$,
$S_{4,4}z_{10} \equiv  z_{2} \mod{(2z_{2},z^{2}_{1})}$,
$S_{4,4}y_{8} \equiv  1 \mod{2}$,
we get the relation $\beta _{4} \equiv 0 \mod{2}$. From the relations
fulfilled for the operation $S_{2,2,2,2}$:
$S_{2,2,2,2}\Phi _{5} = \Phi _{1}$, $S_{2,2,2,2}z_{10} \equiv  z_{2} \mod{(2z_{2},z^{2}_{1})}$,
and the operation $S_{2,2}$:
$S_{2,2}y_{4} \equiv 1 \mod{2}$, $S_{2,2}\Phi _{3} = \Phi _{1}$,
we obtain the equality
$$d_{3}(z_{2}) =
(1 + \beta _{5} + \beta _{7} + \beta _{8})U^{2}_{1}U_{2},$$
from which it follows that
$\beta _{5} + \beta _{7} + \beta _{8} \equiv 0 \mod{2}$.
Using the operation $S_{7}$, we arrive to the relation
$\beta _{3}U_{1}U^{2}_{2}$, from which we conclude, that
$\beta _{3} \equiv 0 \mod{2}$. Acting by the operation $S_{6}$
on our equality and having in mind the relation:
$S_{6}z_{10} \equiv z_{4}
\mod{(2z_{4}, 2y_{4}, z_{1}z_{3}, 2z^{2}_{1}z_{2}, z^{2}_{2}, z^{4}_{1})}$,
we get the equality
$d_{3}(z_{4}) = (1 + \beta _{6})U^{2}_{1}U_{3} + \beta _{8}U^{3}_{2}$.
Hence, $\beta _{6}$,$\beta _{8} \equiv 0 \mod{2}$. Using the conditions
\begin{align*}
S_{2,2}z_{10} &\equiv  z_{6}
\mod{(2z_{6}, 2y_{6}, z_{1}z_{5}, z_{2}z_{4}, z^{2}_{3}, 2z^{2}_{1}z_{4},
z^{3}_{1}z_{3}, 2z^{4}_{1}z_{2}, z^{2}_{1}z^{2}_{2}, z^{6}_{1})}, \\
S_{2,2}\Phi _{5} &= \Phi _{3},
\end{align*}
and applying the operation $S_{2,2}$ to the considering equality, we
obtain:
\[
d_{3}(y_{6}) = \beta _{7}U_{1}U_{2}(U_{1}y_{4} + U_{2}z_{3}) +
\beta _{1}U^{2}_{2}U_{3} + (1 + \beta _{7})U^{2}_{1}\Phi _{3}.
\]
This gives the relations: $\beta _{1},\beta _{7} \equiv  0 \mod{2}$.
From the relation $\beta _{5} + \beta _{7} + \beta _{8} \equiv 0 \mod{2}$
we conclude $\beta _{5} \equiv 0 \mod{2}$. Applying the operation
$S_{2}$ to our equality. From the equality
$S_{2}(U_{2}U_{3}\Phi _{3}) = U^{2}_{2}\Phi _{3} + U_{2}U^{2}_{3}$
it follows that $\beta _{2} \equiv  0 \mod{2}$.
So, finally we get:
\[
d_{3}(z_{10}) = U^{2}_{1}\Phi _{5}.
\]

15. Choose the element $y_{10} \in  E^{0,40}_{2}$ so that
$\pi ^{2}_{0}(y_{10}) = c_{10}$. From the condition:
$$S_{2}(y_{10} + \text{any element in this dimension not equal to} \
y_{10}) \equiv $$
$$y_{8} \mod (\text{elements from } \ F^{2}E^{0,32}_{2})$$
the relations follow: $$d_{3}(y_{10}) \neq 0,$$
$$d_{3}(y_{10} + \text{any element in this dimension not equal to} \
y_{10}) \neq 0.$$
Hence we can choose the element $y_{10}$ so that the equality will be
fulfilled:
\[
d_{3}(y_{10}) = \beta _{1}U^{3}_{3} + \beta _{2}U_{2}U_{3}\Phi _{4} +
\beta _{3}U^{2}_{2}U_{4} + \beta _{4}U^{2}_{2}(U_{2}y_{6} + U_{3}y_{4}).
\]
Apply the operation $S_{6}$ to this equality and notice that
$S_{6}y_{10} \equiv 0 \mod{(2y_{4})}$, so we get the relation
$(\beta _{3} + \beta _{4})U^{3}_{2} = 0$, hence
$\beta _{3} \equiv  \beta _{4} \mod{2}$. Because of the fact
$S_{2,2}y_{10} \equiv  y_{6} \mod (\text{elements from} \
F^{2}E^{0,24}_{2})$,
after the application of the operation $S_{2,2}$ to our equality we
obtain
$d_{3}(y_{6}) = (\beta _{1} + \beta _{4})U^{2}_{2}U_{3}$.
This gives the relation $\beta _{1}$ + $\beta _{4} \equiv 1 \mod{2}$.
Using the operation $S_{2}$ we arrive to the equality
\[
d_{3}(y_{8}) = (\beta _{1} + \beta _{2})U_{2}U^{2}_{3} +
(\beta _{2} + \beta _{3})U^{2}_{2}\Phi _{3}.
\]
This gives us the relations: $\beta _{2} \equiv  \beta _{3} \mod{2}$
$\beta _{1} + \beta _{2} \equiv 1 \mod{2}$.
Finally if we use the operation $S_{1,1}$, and have in mind the fact that
$S_{1,1}y_{10} \equiv  y_{8} + y^{2}_{4}
\mod(\text{elements from} \ F^{2}E^{0,32}_{2})$, the we get the following
expression:
$d_{3}(y_{8}) = \beta _{1}U_{2}U^{2}_{3}$ + $(\beta _{2}$ + $\beta _{3})U^{2}_{2}\Phi _{3}$ + $\beta _{4}U^{2}_{1}(U_{2}y_{6} + U_{3}y_{4})$.
Hence
$\beta _{1} \equiv 1 \mod{2}$, $\beta _{2},\beta _{3},\beta _{4} \equiv 0 \mod{2}$.
Finally we have the formula for the action of $d_{3}$:
\[
d_{3}(y_{10}) = U^{3}_{3}.
\]

16. Choose the element $y^{*}_{10} \in  E^{0,40}_{2}$ so that
$\pi ^{2}_{0}(y^{*}_{10}) = c^{2}_{5} + c^{2}_{2}c_{6}$.
From the condition:
$$S_{3,3}(y^{*}_{10} + \text{any element from this dimension not equal to}
\ y^{*}_{10}) \equiv $$
$$ y_{4} \mod(\text{elements form} \ F^{2}E^{0,16}_{2})$$
the relations follow:
$$d_{3}(y_{10}) \neq  0,$$
$$d_{3}(y_{10} + \text{any element from his dimension not equal to}
 \ y^{*}_{10}) \neq  0.$$
Hence it is possible to choose the element $y^{*}_{10}$ so that the
following equality is fulfilled:
\[
d_{3}(y^{*}_{10}) = \beta _{1}U^{3}_{3} +
\beta _{2}U_{2}U_{3}\Phi _{4} + \beta _{3}U^{2}_{2}U_{4} +
\beta _{4}U^{2}_{2}(U_{2}y_{6} + U_{3}y_{4}).
\]
Applying the operation $S_{3,3}$ to this equality and noticing
that $S_{3,3}y_{10} \equiv  y_{4} \mod{(2y_{4})}$,
$S_{3,3}y_{6} \equiv 0 \mod{2}$, $S_{3,3}U_{4} = U_{2}$,
we obtain the relation $d_{3}(y_{4}) = \beta _{3}U^{3}_{2}$,
hence, $\beta _{3} \equiv  1 \mod{2}$.
If we apply the operation $S_{6}$ to our equality and notice that
$S_{6}y^{*}_{10} \equiv  y_{4} \mod{(2y_{4})}$,
we get the relation $d_{3}(y_{4})$ = $(1 + \beta _{4})U^{3}_{2}$,
hence
$\beta _{4} \equiv 0 \mod{2}$.
Using the operation $S_{2}$, and having in mind the equality
$$S_{2}y^{*}_{10} \equiv  y^{2}_{4} \mod(\text{elements from} \
F^{2}E^{0,32}_{2} \text{and decomposables}),$$
we come to the expression
$$0 = (\beta _{1} + \beta _{2})U_{2}U^{2}_{3} +
(\beta _{2} + 1)U^{2}_{2}\Phi _{3},$$
hence,
$\beta _{1} \equiv 1 \mod{2}$, $\beta _{2} \equiv 1 \mod{2}$.
Finally our equality has the form:
\[
d_{3}(y^{*}_{10}) = U^{2}_{2}U_{4} + U_{2}U_{3}\Phi _{3} + U^{3}_{3}.
\]

17. The is an equality: $h^{2}_{0}e_{10} = a_{3}a_{7} + a^{2}_{5}$
in the MASS. Hence, one can choose the element $y^{*}_{10}$ so that
the following equality will be fulfilled:
\[
4(y^{*}_{10} + y_{4}y_{6}) = (1 + 2\beta _{1})z_{3}z_{7} +
(1 + 2\beta _{2})z^{2}_{5} +
+ X^{1}_{40}(\widehat{y^{*}_{10}}, \widehat{y_{4}y_{6}},
\widehat{z_{3}z_{7}}, \widehat{{z}^{2}_{5}} ).
\]
where $X^{1}_{40} \in  F^{6}(E^{0,40}_{2})$, $d_{3}(X^{1}_{40}) = 0,$
and the sign ($\widehat{\quad}$) over an element means that the given
element does not appear in the expression for $X^{1}_{40}$.
Let us apply the operation $S_{5,5}$ to our equality, we get
the relation
$S_{5,5}y^{*}_{10} \equiv  ((S_{5}z_{5})/2)^{2}(1 + 2\beta _{2})
\mod{4}$;
we may suppose that
$S_{5,5}y^{*}_{10} \equiv ((S_{5}z_{5})/2)^{2} \mod{4}$.
Hence, $\beta _{2} \equiv 0 \mod{2}$,
so, by the choice of $y_{10}$ we can transform our equality
to the form
\[
4(y^{*}_{10}+ y_{4}y_{6}) = (1 + 2\beta _{1})z_{3}z_{7} + z^{2}_{5} +
X^{1}_{40}(\widehat{y^{*}_{10}}, \widehat{y_{4}y_{6}},
\widehat{z_{3}z_{7}}, \widehat{{z}^{2}_{5}} ).
\]
Let us apply the operation $S_{7}$ to this expression, we get the
relation:
$S_{7}y^{*}_{10} \equiv ((S_{7}z_{7})/4)(1 + 2\beta _{1})z_{3}
\mod{(8z_{3})}$. We may suppose that
$S_{7}y^{*}_{10} \equiv ((S_{7}z_{7})/4)z_{3} \mod{(8z_{3})}$.
Hence, $\beta _{1} \equiv 0 \mod{2}$ and by the choice of
$y^{*}_{10}$ it is possible to transform our expression to the form:
\[
4(y^{*}_{10}+ y_{4}y_{6}) = z_{3}z_{7} + z^{2}_{5} +
X^{1}_{40}(\widehat{y^{*}_{10}}, \widehat{y_{4}y_{6}},
\widehat{z_{3}z_{7}}, \widehat{{z}^{2}_{5}} ).
\]

18. There is the equality: $h_{0}b_{10} = a_{2}a_{8} + a_{1}a_{9}$
in the MASS. Hence it is possible to choose the element $z_{10}$ so
that the following equality is fulfilled:
\[
2(z_{10} + z_{4}y_{6}) = z_{2}z_{8} + (1 + 2\beta )z_{1}z_{9} +
X^{2}_{40}(\widehat{z_{10}},\widehat{z_{4}y_{6}},
\widehat{z_{2}z_{8}},\widehat{z_{1}z_{9}}).
\]
In this expression we have: $X^{2}_{40} \in  F^{6}(E^{0,40}_{2})$,
$d_{3}(X^{2}_{40}) = 0$. Applying the operation $S_{9}$ to this expression
we get the following relation:
$S_{9}z_{10} \equiv$ $ ((S_{9}z_{9})/2)(1 + 2\beta )z_{1}$
$\mod{(4z_{1})}$.
We may suppose that $S_{9}z_{10} \equiv$
$ ((S_{9}z_{9})/2)3z_{1}$ $\mod{(4z_{1})}$.
Hence it is possible to choose $z_{10}$ so that the following equality
is fulfilled:
\[
2(z_{10} + z_{4}y_{6}) = z_{2}z_{8} + 3z_{1}z_{9} +
X^{2}_{40}(\widehat{z_{10}},\widehat{z_{4}y_{6}},
\widehat{z_{2}z_{8}},\widehat{z_{1}z_{9}}).
\]
Applying the operation $S_{7}$ to this relation we obtain
$$S_{7}z_{10} \equiv 0 \mod{(2z_{1}z_{2},2z^{3}_{1},4z_{3})}.$$

19. From the relation $u_{1}(b_{10} + a_{4}c_{6}) = a_{1}\varphi _{6}$
in the MASS and the action of the differential:
$d_{3}(U_{1}z_{10}) = U^{3}_{1}\Phi _{5}$
it follows that we may choose the element $z_{10}$ so that
the equality is fulfilled:
\[
U_{1}z_{10} = \Phi _{5}z_{1} + \beta U_{1}z_{2}z_{8}.
\]
Applying the operation $S_{7}$ to this equality we obtain the
following relation:
$\beta U_{1}z_{1}z_{2} \equiv 0 \mod{(2z_{1}z_{2})}$,
from where it follows that one may choose the element
$z_{10}$ so that the equality is fulfilled:
\[
U_{1}z_{10} = \Phi _{5}z_{1} .
\]

20. Let us consider the expression: $U_{2}z_{9}$.
From the condition: $d_{3}(U_{2}z_{9}) = U^{2}_{1}U_{2}U_{4}$
and $u_{2}a_{9} = u_{4}a_{3}$ in the MASS it follows that one can
choose the element $z_{9}$ (not violating the previous relations)
that the following relation is fulfilled:
$U_{2}z_{9} = U_{4}z_{2} + Y[\beta _{i}]$ (*), where
$Y[\beta _{i}]$ has the following form:
\begin{align*}
Y[\beta _{i}] &= [\beta _{1}z_{3}(z_{3}y_{4} + z_{3}z_{4}) +
\beta _{2}z_{1}(z_{1}y_{8} + z_{2}z_{7}) +
\beta _{3}z_{1}z^{2}_{2}(z_{1}y_{4} + z_{2}z_{3}) +\\
&+ \beta _{4}z^{3}_{1}(z_{1}y_{6} + z_{2}z_{5}) +
\beta _{5}z^{4}_{1}(z^{2}_{1}y_{4} + z^{3}_{2}) +
\beta _{6}z_{3}(z_{1}y_{6} + z_{2}z_{5}) +\\
&+ \beta _{7}z_{5}(z_{1}y_{4} + z_{2}z_{3}) +
\beta _{8}z_{1}z_{9} + \beta _{9}z_{3}z_{7} +
\beta _{10}z^{3}_{1}z_{7} + \beta _{11}z^{3}_{2}z_{4} +\\
&+ \beta _{12}z^{5}_{1}z_{5} + \beta _{13}z^{2}_{2}z^{2}_{3} +
\beta _{14}z^{2}_{1}z^{4}_{2} + \beta _{15}z^{6}_{1}z^{2}_{2} +
\beta _{16}z^{10}_{1} + \beta _{17}z_{4}y_{6} +\\
&+ \beta _{18}z_{3}y_{7} + \beta _{19}z^{3}_{1}y_{7} +
\beta _{20}z^{2}_{1}y^{2}_{4}]U_{1}+
\beta _{21}U_{2}z_{3}(z_{2}y_{4}+ z^{2}_{3})
\end{align*}
If we multiply now the expression (*) by $U_{1}$ and use
the relations
$U_{1}U_{2}z_{9} = U_{2}U_{4}z_{1} = U_{1}U_{4}z_{2}$,
then we get $U_{1}Y[\beta _{i}] = 0$
Hence, it follows for all $\beta _{i} \equiv 0 \mod{2}$, and so:
\[
U_{2}z_{9} = U_{4}z_{2}.
\]

21. From the action of the differential $d_{3}$ and the relation
$u_{2}a_{9} = u_{3}a_{7} = \varphi _{3}a_{5}$ in the MASS
it follows that one can choose the element $y_{9}$
(not violating the previous relations) that in the Adams-Novikov
spectral sequence the following relations will be fulfilled:
\[
U_{2}y_{9} = U_{3}y_{7} + Y[\beta _{i}], \quad U_{2}y_{9} =
\Phi _{3}z_{5} + Y[\beta _{i}] .
\]
Here $Y[\beta _{i}]$ denotes the expression from the item 20.
Multiplying both relations (term by term) by  $U_{1}$,
we obtain that in both cases all coefficients
$\beta _{i} \equiv 0 \mod{2}$.
So, we have:
\[
U_{2}y_{9} = U_{3}y_{7},\quad U_{2}y_{9} = \Phi _{3}z_{5} .
\]

22. There is the relation in the MASS:
$\varphi ^{2}_{3} = u^{2}_{1}c^{2}_{5} + u^{2}_{2}c^{4}_{4} +
u^{2}_{3}c^{2}_{2}$.
Hence it is possible to choose the element $y^{*}_{10}$
(not violating his properties) so that the following equality
is fulfilled in the Adams-Novikov spectral sequence:
\begin{multline*}
\Phi ^{2}_{3} = U_{1}[U_{1}y^{*}_{10} + U_{2}(y_{9} + z_{9}) +
U_{3}z_{7}] + U^{2}_{2}y_{8} + U^{2}_{3}y_{4} +\\
+ U_{1}[U_{1}y_{4}y_{6} + U_{2}(y_{4}z_{5} + y_{6}z_{3})] +
\beta U^{2}_{2}z_{3}z_{5}.
\end{multline*}
Let us apply the operation $S_{2}$ to our equality. From the relation
$U_{1}z_{8} = U_{4}z_{1}$ it follows that
$S_{2}z_{8} \equiv 0 \mod{(2z^{2}_{3})}$,
and because of the relation from the item~3 it follows that
$S_{2}y_{8} \equiv 0 \mod{(2z^{2}_{3})}$.
So, we obtain that
$\beta U^{2}_{2}z^{2}_{3} \equiv 0 \mod{(2U^{2}_{2}z^{2}_{3})}$,
Hence, $\beta  \equiv 0 \mod{2}$. Finally:
\begin{multline*}
\Phi ^{2}_{3} = U_{1}[U_{1}y^{*}_{10} + U_{2}(y_{9} + z_{9}) +
U_{3}z_{7}] + U^{2}_{2}y_{8} + U^{2}_{3}y_{4} +\\
+ U_{1}[U_{1}y_{4}y_{6} + U_{2}(y_{4}z_{5} + y_{6}z_{3})].
\end{multline*}

23. An element $z_{11}\in  E^{0,44}_{2}$ we choose so that
$\pi ^{2}_{2}(z_{11}) = a_{11}$.
Because of the relation $S_{3}a_{11} = a_{8}$ we obtain that
$$S_{3}(z_{11} + \text{any element from this dimension not equal to}
\ z_{11}) = $$
$$z_{8} + \text{decomposables}.$$
Hence,
$$d_{3}(z_{11}) \neq 0,$$
$$d_{3}(z_{11} + \text{any element from this dimension not equal to }
z_{11}) \neq 0.$$
Hence it is possible to choose the element $z_{11}$ so that for
the action of the differential $d_{3}$ the following equality
will be fulfilled:
\begin{align*}
d_{3}(z_{11}) &= U_{1}U_{3}U_{4} + \beta _{1}U_{1}\Phi ^{2}_{3} +
\beta _{2}U_{1}U_{2}\Phi _{5} + \beta _{3}U_{2}\Phi _{3}(U_{1}y_{4} +
U_{2}z_{3}) +\\
&+ \beta _{4}U^{2}_{2}(U_{1}y_{8}+ U_{2}z_{7}) +
\beta _{5}U_{2}U_{3}(U_{2}y_{6}+ U_{3}y_{4}) +
\beta _{6}U^{2}_{3}(U_{1}y_{4}+ U_{2}z_{3})+ \\
&+ \beta _{7}U_{1}U^{2}_{2}y^{2}_{4}.
\end{align*}
Let us apply the operation $S_{9}$ to this expression. From the
relation $S_{9}z_{11} \equiv 0 \mod{(2z_{2})}$, we obtain
the equality $\beta _{2}U^{2}_{1}U_{2} = 0$. Hence,
$\beta _{2} \equiv 0 \mod{2}$.
Acting on the equality under consideration by the operation
$S_{5,5}$ and using the relation
$S_{5,5}z_{11} \equiv 0 \mod{(2z_{1})}$,
we obtain $\beta _{1} \equiv 0 \mod{2}$.
Let us use now the operation $S_{4,4}$. Because of the equality
$S_{4,4}z_{11} \equiv 0 \mod{(2z_{3})}$,
it follows the relation:
$\beta _{4} \equiv  0 \mod{2}$.
Analogous use of the operation $S_{2,2,2,2}$ together with the
relations $S_{2,2,2,2}z_{11} \equiv 0 \mod{(2z_{3})}$,
$S_{2,2,2}y_{6} \equiv 1 \mod{2}$,
$S_{2,2}\Phi _{3} = U_{2}$,
gives us the condition
($\beta _{3}$ + $\beta _{5}$ + $\beta _{6}$ +
$\beta _{7}$)$U_{1}U^{2}_{2} = 0$.
Hence,
$\beta _{3}$ + $\beta _{5}$ + $\beta _{6}$ + $\beta _{7}
\equiv 0 \mod{2}$.
Using the operation $S_{6}$ we get the following:
$S_{6}z_{11} \equiv  z_{5} \mod{(2z_{5})}$, $S_{6}y_{6} \equiv  1 \mod{2}$,
and so,
$d_{3}(z_{5}) = U_{1}U_{2}U_{3} = U_{1}U_{2}U_{3} +
\beta _{5}U_{1}U_{2}U_{3}$.
Hence, $\beta _{5} \equiv 0 \mod{2}$.
From the following property of the operation $S_{5}$:
$S_{5}z_{11} \equiv  0 \mod{(2y_{6},2z_{6})}$,
it follows that $\beta _{3} \equiv 0 \mod{2}$.
Finally using the operation $S_{2,2}$ we have:
$S_{2,2}z_{11} \equiv  y_{7} \mod{(2y_{7},2z_{7})}$,
that gives: $$d_{3}(y_{7}) = U_{1}U_{2}\Phi _{3}=
U_{1}U_{2}\Phi _{3} + \beta _{6}U_{1}U^{2}_{3} +
\beta _{6}U^{2}_{2}(U_{1}y_{4} + U_{2}z_{3}).$$
Hence, $\beta _{6} \equiv 0 \mod{2}$.
From the previous relations we obtain now that
$\beta _{7} \equiv 0 \mod{2}$. Summarizing all relations
we get finally:
\[
d_{3}(z_{11}) = U_{1}U_{3}U_{4}.
\]

24. An element $y_{11}\in  E^{0,44}_{2}$ we choose so that
$\pi ^{2}_{2}(y_{11}) = b_{11} + a_{5}c_{6}$.
Because of the relation $S_{9}b_{11} = a_{2}$, we obtain that
$$S_{9}(y_{11} + \text{any element of this dimension not equal to}
\ y_{11}) \equiv  z_{2} \mod{(2z_{2},z^{2}_{1})}.$$
Hence,
$$d_{3}(y_{11}) \neq 0,$$
$$d_{3}(y_{11} + \text{any element of this dimension not equal to }
y_{11}) \neq 0.$$
Hence it is possible to choose the element $y_{11}$, so that for the
action of the differential $d_{3}$ the following equality is fulfilled:
\begin{align*}
d_{3}(y_{11}) &= U_{1}U_{2}\Phi _{5} + \beta _{1}U_{1}U_{3}U_{4} +
\beta _{2}U_{1}\Phi ^{2}_{3} + \beta _{3}U_{2}\Phi _{3}(U_{1}y_{4} +
U_{2}z_{3}) +\\
&+ \beta _{4}U^{2}_{2}(U_{1}y_{8}+ U_{2}z_{7}) +
\beta _{5}U_{2}U_{3}(U_{2}y_{6}+ U_{3}y_{4}) +
\beta _{6}U^{2}_{3}(U_{1}y_{4}+ U_{2}z_{3}) +\\
&+ \beta _{7}U_{1}U^{2}_{2}y^{2}_{4}.
\end{align*}
Acting by the operation $S_{5,5}$ on our equality an using the
relation
$S_{5,5}y_{11} \equiv 0 \mod{(2z_{1})}$,
we obtain that $\beta _{2} \equiv 0 \mod{2}$.
Analogous use of the operation $S_{2,2,2,2}$, together with
the relations
\[
S_{2,2,2,2}y_{11} \equiv  z_{3} \mod{(2z_{3})}, \ S_{2,2,2}y_{6}
\equiv 1 \mod{2}, \ S_{2,2,2,2}\Phi _{5} = U_{2},
\]
gives the formula
\[
d_{3}(z_{3}) = U_{1}U^{2}_{2} = U_{1}U^{2}_{2} +
(\beta _{3} + \beta _{5} + \beta _{6} + \beta _{7})U_{1}U^{2}_{2}.
\]
Hence,
$\beta _{3} + \beta _{5} + \beta _{6} + \beta _{7} \equiv 0 \mod{2}$.
Let us use the operation $S_{4,4}$.
From the relations: $S_{4,4}y_{11} \equiv  z_{3} \mod{(2z_{3})}$
and $S_{4,4}\Phi _{5} = U_{2}$ we obtain the equality:
$d_{3}(z_{3}) = U_{1}U^{2}_{2} = U_{1}U^{2}_{2} + \beta _{4}U_{1}U^{2}_{2}$,
hence, it follows that
$\beta _{4} \equiv 0 \mod{2}$.
Let us apply the operation $S_{7}$ to our equality. Because of the
relation $S_{7}y_{11} \equiv 0 \mod{(2z_{4},2y_{4})}$, we obtain:
$\beta _{1} \equiv 0 \mod{2}$. From the following property of the
operation $S_{6}$: $S_{6}y_{11} \equiv z_{5} \mod{(2z_{5})}$,
we obtain that
$$d_{3}(z_{5}) = U_{1}U_{2}U_{3} = U_{1}U_{2}U_{3} +
\beta _{5}U_{1}U_{2}U_{3}.$$
Hence,
$\beta _{5} \equiv 0 \mod{2}$.
From the following property of the operation
$S_{5}$: $S_{5}y_{11} \equiv 0 \mod{(2y_{6},2z_{6})}$
we obtain that
$\beta _{3}U_{1}U_{2}(U_{1}y_{4} + U_{2}z_{3}) = 0$.
Hence, $\beta _{3} \equiv 0 \mod{2}$.
Finally using the operation $S_{2,2}$, we obtain:
$S_{2,2}y_{11} \equiv  y_{7} \mod{(2y_{7},2z_{7})}$,
this gives:
\[
d_{3}(y_{7}) = U_{1}U_{2}\Phi _{3} = U_{1}U_{2}\Phi _{3} +
\beta _{6}U_{1}U^{2}_{3} + \beta _{6}U^{2}_{2}(U_{1}y_{4} + U_{2}z_{3}).
\]
Hence, $\beta _{6} \equiv 0 \mod{2}$.
From the previous relations we obtain now
$\beta _{7} \equiv 0 \mod{2}$.
So, finally we get:
\[
d_{3}(y_{11}) = U_{1}U_{2}\Phi _{5}.
\]

25. There are the following equalities in the MASS:
$h_{0}e_{4}a_{7} + h_{0}e_{8}a_{3} = a_{2}b_{9} + a_{4}b_{7}$,
and $h_{0}e_{8}a_{3} + h_{0}e_{10}a_{1} = a_{4}b_{7} + a_{5}b_{6}$.
Using the relations 4, 6, 9 from the work \cite{V2} and relations of
the items 3, 9, 16 of the present work, we obtain that there is the
equality in the Adams-Novikov spectral sequence:
\begin{align*}
2y_{8}z_{3} + 2(y^{*}_{10} + y_{6}y_{4})z_{1} = z_{4}y_{7} +
z_{5}z_{6} + X^{2}_{44}(\widehat{y_{6}z_{5}}, \widehat{y_{8}z_{3}},
\widehat{z_{1}(y^{*}_{10}+y_{6}y_{4})}, \widehat{{z}_{4}y_{7}} ).\\
2y_{4}z_{7} + 2y_{8}z_{3} = z_{2}y_{9} + z_{4}y_{7} +
X^{1}_{44}(\widehat{y_{4}z_{7}}, \widehat{y_{8}z_{3}},
\widehat{z_{2}y_{9}}, \widehat{z_{4}y_{7}} ).
\end{align*}
Here we have: $X^{i}_{44} \in  F^{6}(E^{0,44}_{2})$,
$d_{3}(X^{i}_{44}) = 0$, and the symbol $\widehat{\quad}$ over
an element means that this element does not appear in the given
expression.

26. From the relation $h_{0}b_{11} = a_{3}a_{8} + a_{2}a_{9}$ in the
MASS, the action of the operation and the relation~9 from the work
[3] it follows that we can choose the element $y_{11}$ so that the
relation is fulfilled:
\[
2y_{11} = z_{2}z_{9} + 3z_{3}z_{8} + X_{44}(\widehat{y_{11}},
\widehat{z_{2}z_{9}},\widehat{z_{3}z_{8}}).
\]
Here: $X_{44} \in  F^{6}(E^{0,44}_{2})$, $d_{3}(X_{44}) = 0,$ and
the symbol $\widehat{\quad}$ has the same meaning as earlier.

27. From the relation:
$u_{1}(a_{1}e_{10} + a_{3}e_{8} + a_{7}e_{4}) = \varphi _{3}b_{6}$,
valid in the MASS, the equality follows:
\begin{align*}
U_{1}(z_{1}y^{*}_{10} + z_{1}y_{4}y_{6} + z_{3}y_{8} + z_{7}y_{4}) =
\Phi _{3}y_{6} + U_{1}[\beta _{1}z_{1}z_{10} +\\
+ \beta _{2}z_{2}z_{9} + \beta _{3}z_{2}y_{9} +
\beta _{4}z^{2}_{1}z_{2}z_{7} + \beta _{5}z_{4}z_{7} +
\beta _{6}z_{2}(z_{1}y_{8} +\\
+ z_{2}z_{7}) + \beta _{7}z_{4}(z_{1}y_{6} + z_{3}z_{4}) +
\beta _{8}z^{2}_{1}z_{4}(z_{1}y_{4} + z_{2}z_{3}) +\\
+ \beta _{9}z^{2}_{1}z_{2}(z_{1}y_{6} + z_{3}z_{4}) +
\beta _{10}z^{3}_{2}(z_{1}y_{4} + z_{2}z_{3}) +
\beta _{11}z^{4}_{1}\times \\
\times z_{2}(z_{1}y_{4} + z_{2}z_{3}) +
\beta _{12}z_{6}(z_{1}y_{4} + z_{2}z_{3}) +
\beta _{13}z_{3}(z_{2}y_{6}+\\
+ z_{4}y_{4}) + \beta _{14}z_{3}(z_{2}y_{6} + z_{3}z_{5}) +
\beta _{15}z^{2}_{1}z_{2}y_{7} + \beta _{16}z^{3}_{1}z_{8}+\\
+ \beta _{17}z^{3}_{1}z_{2}z^{2}_{3} +
\beta _{18}z^{3}_{1}z^{2}_{2}z_{4} + \beta _{19}z^{7}_{1}z_{4} +
\beta _{20}z^{5}_{1}z_{6} + \beta _{21}\times \\
\times z^{5}_{1}z^{3}_{2} + \beta _{22}z^{9}_{1}z_{2} +
\beta _{23}z_{1}z_{2}z_{3}z_{5}+ \beta _{24}z_{1}z_{2}y^{2}_{4} +
\beta _{25}z_{4}z_{7}+\\
+ \beta _{26}z_{2}z^{3}_{3}] + \beta _{27}U_{2}z_{3}z_{7} +
\beta _{28}U_{2}z_{3}y_{7}.
\end{align*}
Multiplying this equality by $U_{1}$, and the equality from the item~22
by $z_{1}$ and add them. Using the relations:
$U^{2}_{1}z_{3} = U^{2}_{2}z_{1}$, $U^{2}_{1}z_{7} = U^{2}_{3}z_{1}$,
$U_{1}y_{6} = \Phi _{3}z_{1}$, $U_{1}z_{2} = U_{2}z_{1}$,
we obtain, that
$\beta _{2},\beta _{3},\beta _{5},\beta _{13} \equiv 1 \mod{2}$,
and for all the other $i$ the relation is valid:
$\beta _{i} \equiv 0 \mod{2}$.
So, finally we get:
\begin{align*}
U_{1}(z_{1}y^{*}_{10} + z_{1}y_{4}y_{6} + z_{3}y_{8} + z_{7}y_{4}) +
U_{1}z_{4}z_{7} + U_{1}z_{2}z_{9} +\\
+ U_{1}z_{2}y_{9} + U_{1}z_{3}(z_{2}y_{6} + z_{4}y_{4}) = \Phi _{3}y_{6}.
\end{align*}

28. Let us consider the product $U_{1}y_{11}$. From the action of
the differential $d_{3}$:
$d_{3}(U_{1}y_{11}) = U^{2}_{1}U_{2}\Phi _{5}$
and the relation valid in the MASS:
$u_{1}(b_{11} + a_{5}c_{6}) = \varphi _{5}a_{2}$,
it follows that one can choose $y_{11}$ so that the following formula
holds:
\[
U_{1}y_{11} = \Phi _{5}z_{2} + \beta _{1}U_{2}z_{3}z_{7} +
\beta _{2}U_{2}z_{3}y_{7} + \beta _{3}U_{3}y^{2}_{4} +
\beta _{4}U_{1}z_{3}z_{8}.
\]
From the relation of the item~25 it follows that
$S_{6}y_{11} \equiv  0 \mod{(2z_{2}z_{3})}$.
Hence, if we apply the operation $S_{6}$ to our equality we obtain:
$\beta _{4}U_{1}z_{2}z_{3} = 0$,
hence, $\beta _{4} \equiv 0 \mod{2}$.
Let us apply the operation $S_{2,2,2,2}$ to our equality, then we have:
$U_{1}z_{3} = U_{2}z_{2} + \beta _{3}U_{3}$.
Hence, $\beta _{3} \equiv 0 \mod{2}$.
Now let us act by the operation $S_{3,3}$ on our equality. We shall
have: $\beta _{1}U_{2}z_{1}z_{3} = 0$, hence,
$\beta _{1} \equiv 0 \mod{2}$. Finally, if we apply the operation
$S_{5}$, then we get the relation $\beta _{2}U_{2}z_{2}z_{3}$ = 0,
in other words: $\beta _{2} \equiv 0 \mod{2}$.
Hence, it is possible to choose the element $y_{11}$ in such a way
that the relation holds:
\[
U_{1}y_{11} = \Phi _{5}z_{2}.
\]

29. Let us consider the product $U_{2}z_{10}$. The action of the
differential $d_{3}$ on this element in the following:
$d_{3}(U_{2}z_{10}) = U^{2}_{1}U_{2}\Phi _{5}$.
Also we have the formula in the MASS:
$u_{2}(b_{10} + a_{4}c_{6}) = \varphi _{5}a_{2}$.
Hence by the choice of the element $z_{10}$ we can obtain the
following equality:
$U_{2}z_{10} = \Phi _{5}z_{2} + X(\beta _{i})$,
where $X(\beta _{i})$ has the decomposition:
\begin{align*}
X(\beta _{i}) &= U_{1}[\beta _{1}z^{2}_{1}z_{2}z_{7} +
\beta _{2}z_{1}z_{10} + \beta _{3}z_{2}(z_{111}y_{8} + z_{2}z_{7}) +
\beta _{4}z_{4}(z_{1}y_{6} + z_{3}z_{4}) +\\
&+ \beta _{5}z^{2}_{1}z_{4}(z_{1}y_{4} + z_{2}z_{3}) +
\beta _{6}z^{2}_{1}z_{2}(z_{1}y_{6}+ z_{2}z_{3})+
\beta _{7}z^{3}_{2}(z_{1}y_{4}+ z_{2}z_{3}) +\\
&+ \beta _{8}z^{4}_{1}z_{2}(z_{1}y_{4}+ z_{2}z_{3})+
\beta _{9}z_{6}(z_{1}y_{4} + z_{2}z_{3}) +
\beta _{10}z_{3}(z_{2}y_{6}+ z_{4}y_{4}) +\\
&+ \beta _{11}z_{3}(z_{2}y_{6}+ z_{3}z_{5}) +
\beta _{12}z^{2}_{1}z_{2}y_{7} +
\beta _{13}z^{3}_{1}z_{2}z^{2}_{3} + \beta _{14}z^{3}_{1}z_{8} +\\
&+ \beta _{15}z^{7}_{1}z_{4} + \beta _{16}z^{3}_{1}z^{2}_{2}z_{4} +
\beta _{17}z^{5}_{1}z_{6} + \beta _{18}z^{5}_{1}z^{3}_{2} +
\beta _{19}z^{9}_{1}z_{2} +\\
&+ \beta _{20}z_{4}z_{7} + \beta _{21}z_{1}z_{2}z_{3}z_{5} +
\beta _{22}z_{1}z_{2}y^{2}_{4} + \beta _{23}z_{2}y_{9} +
\beta _{24}z_{2}z_{9} +\\
&+ \beta _{25}z_{2}z^{3}_{3}] + \beta _{26}U_{2}z_{3}z_{7} +
\beta _{27}U_{2}z_{3}y_{7}.
\end{align*}
Multiplying the expression $U_{2}z_{10} = \Phi _{5}z_{2} + X(\beta _{i})$
by $U_{1}$, and using the relations:
$U_{1}z_{10} = \Phi _{5}z_{1}$ and $U_{1}z_{2} = U_{2}z_{1}$
we get the following: $U_{1}X(\beta _{i}) = 0$.
Hence it follows that for all
$i = 1,2,\ldots ,27 $, $\beta _{i} \equiv  0 \mod{2}$.
So, finally we obtain:
\[
U_{2}z_{10} = \Phi _{5}z_{2}.
\]

30. Let us consider the expression: $U_{1}z_{11}$. Having in mind the
action of the differential $d_{3}$:
$d_{3}(U_{1}z_{11}) = U^{2}_{1}U_{3}U_{4}$
and the relation in the MASS: $u_{1}a_{11} = u_{3}a_{8}$,
we get the equality:
\[
U_{1}z_{11} = U_{3}z_{8} + \beta _{1}U_{2}z_{3}z_{7} +
\beta _{2}U_{2}z_{3}y_{7} + \beta _{3}U_{3}y^{2}_{4}.
\]
Let us act by the operation $S_{2,2,2,2}$ on this relation, we
get $\beta _{3}U_{3} = 0$, hence, $\beta _{3} \equiv 0 \mod{2}$.
Acting by the operation $S_{3,3}$, we get:
$U_{1}z_{5} = U_{3}z_{2} + \beta _{1}U_{2}z_{1}z_{3}$, so,
$\beta _{1} \equiv 0 \mod{2}$. Finally, using the operation $S_{5}$,
we get the expression: $\beta _{2} \equiv 0 \mod{2}$.
The final form of the equality is the following:
\[
U_{1}z_{11} = U_{3}z_{8}.
\]

31. Let us consider the product: $U_{3}z_{8}$. Because of the fact
that $d_{3}(U_{3}z_{8}) = U^{2}_{1}U_{3}U_{4}$ and in the MASS
we have the relation: $u_{3}a_{8} = u_{4}a_{4}$, it follows that
it is possible to choose the element $z_{8}$ to fulfill the
equality:
\[
U_{3}z_{8} = U_{4}z_{4} + X(\beta _{i}),
\]
where the expression $X(\beta _{i})$ is defined in the item~29.
Let us multiply this expression by $U_{1}$ and use the relations:
$U_{1}z_{8} = U_{4}z_{1}$ and $U_{1}z_{4} = U_{3}z_{1}$. Then we get:
$U_{1}X(\beta _{i}) = 0$. Hence, $\beta _{i} \equiv 0 \mod{2}$.
So, we have:
\[
U_{3}z_{8} = U_{4}z_{4}.
\]

32. An element $z_{12} \in  E^{0,40}_{2}$ we choose to fulfill the
condition:
$\pi ^{2}_{2}(z_{12}) = b_{12} + a_{2}c_{10} + a_{4}c^{2}_{4}$.
From the condition:
$$S_{2}(z_{12} + \text{any element of this dimension not equal to} \
 z_{12}) = z_{10} + ...$$
it follows that the conditions are fulfilled:
$d_{3}(z_{12}) \neq  0,$
$d_{3}(z_{12} + \ldots ) \neq  0.$
Hence, we can choose $z_{12}$ to satisfy the conditions:
\begin{align*}
d_{3}(z_{12}) &= U^{2}_{1}\Phi _{6} + \beta _{1}U^{2}_{2}\Phi _{5} +
\beta _{2}U_{2}\Phi ^{2}_{3} + \beta _{3}U_{2}U_{3}U_{4} +
\beta _{4}U^{2}_{3}\Phi _{3} + \beta _{5}U^{3}_{2}y^{2}_{4} +\\
&+ \beta _{6}U^{2}_{2}(U_{2}y_{8} + U_{3}y_{6}) +
\beta _{7}U_{2}U_{3}(U_{2}y_{6} + U_{3}y_{4}) +
\beta _{8}U^{2}_{1}U_{3}y^{2}_{4} +\\
&+ \beta _{12}U_{1}U_{4}(U_{1}y_{4}+ U_{2}z_{3})+
\beta _{9}U_{1}U_{3}(U_{1}y_{8}+ U_{2}z_{7})+
\beta _{10}U_{1}\Phi _{3}(U_{1}y_{6}+\\
&+ U_{3}z_{3}) + \beta _{11}U_{1}U_{2}(U_{1}(y_{10} + y^{*}_{10}) +
U_{2}(z_{9} + y_{9})).
\end{align*}
Applying the operation $S_{10}$ to our equality and having in mind
the relation $S_{10}(z_{12}) \equiv  z_{2} \mod{(2z_{2},z^{2}_{1})}$,
we obtain: $d_{3}(z_{2}) = \beta _{11}U^{2}_{1}U_{2} + U^{2}_{1}U_{2}$.
Hence, $\beta _{11}\equiv 0 \mod{2}$. Let us use the operation $S_{5,5}$.
We have the relations:
$S_{5,5}z_{12} \equiv z_{2} \mod{(2z_{2},z^{2}_{1})}$ and
$S_{5,5}\Phi _{6} = U_{2}$.
So, we get the equality: $\beta _{2} \equiv 0 \mod{2}$. From the
conditions: $S_{9}z_{12} \equiv 0 \mod{(2z_{3})}$ and
$S_{9}\Phi _{5} = U_{1}$ it follows that if we apply the operation
$S_{9}$ to our equality we obtain the relation:
$\beta _{1} \equiv 0 \mod{2}$. Using the operation $S_{4,4}$ and
having in mind the facts:
$S_{4,4}z_{12} \equiv 0 \mod{(2z_{4},2y_{4})}$ and
$S_{4,4}\Phi _{6} = 0$, we obtain the relation:
$0 = \beta _{6}U^{3}_{2} + \beta _{9}U^{2}_{1}U_{3}$. Hence,
$\beta _{6}$,$\beta _{9} \equiv 0 \mod{2}$. Let us the operation
$S_{2,2,2,2}$: $S_{2,2,2,2}z_{12} \equiv  z_{4} \mod{(2z_{4},2y_{4})}$,
$S_{2,2,2,2}\Phi _{6} = U_{3}$. Hence,
\[
d_{3}(z_{4}) = U^{2}_{1}U_{3} = U^{2}_{1}U_{3} +
(\beta _{4}+ \beta _{5})U^{3}_{2} +
(\beta _{8} + \beta _{10})U^{2}_{1}U_{3}.
\]
So, we obtain: $\beta _{4} \equiv \beta_{5} \mod{2}$,
$\beta _{8} \equiv  \beta _{10} \mod{2}$. Let us act by the operation
$S_{7}$ on our equality, we obtain:
$0 = \beta _{3}U_{1}U_{2}U_{3} + \beta _{12}U_{1}(U_{1}y_{4} +
U_{2}z_{3})$. Hence, $\beta _{3}$,$\beta _{12}\equiv 0 \mod{2}$.
Using the operation $S_{6}$ and the relation
$S_{6}z_{12}\equiv  z_{6} \mod{(2z_{6},2y_{6})}$,
gives us the relation:
\[
d_{3}(z_{6}) = U^{2}_{1}\Phi _{3} = \beta _{7}U^{2}_{2}U_{3} +
(1 + \beta _{10})U^{2}_{1}\Phi _{3},
\]
from which it follows that $\beta _{7}$,$\beta _{10} \equiv 0 \mod{2}$,
and from the previous relations we get: $\beta _{8} \equiv 0 \mod{2}$.
Let us use the operation $S_{5}$, we obtain that
$0 = \beta _{4}U_{1}U^{2}_{3}$, i.e. $\beta _{4} \equiv  0 \mod{2}$,
and so $\beta _{5} \equiv 0 \mod{2}$. Finally we obtain:
\[
d_{3}(z_{12}) = U^{2}_{1}\Phi _{6}.
\]

33. An element $y_{12} \in  E^{0,48}_{2}$ we choose so that
$\pi ^{2}_{0}(y_{12}) = c_{12}$. From the relation in the MASS:
$S_{2}c_{12} = c^{2}_{5} + c^{2}_{2}c_{6}$, we obtain:
$$S_{2}(y_{12}+ \text{any element of this dimension not equal to} \
y_{12}) = y^{*}_{10} + \ldots .$$
Hence, $d_{3}(y_{12}) \neq 0$, $d_{3}(y_{12} + \ldots ) \neq  0$.
Choose the element $y_{12}$ so that:
\begin{multline*}
d_{3}(y_{12}) = \beta _{1}U_{2}\Phi ^{2}_{3} +
\beta _{2}U^{2}_{2}\Phi _{5} + \beta _{3}U_{2}U_{3}U_{4} +
\beta _{4}U^{2}_{3}\Phi _{3} + \beta _{5}U^{3}_{2}y^{2}_{4} +\\
+ \beta _{6}U^{2}_{2}(U_{2}y_{8} + U_{3}y_{6}) +
\beta _{7}U_{2}U_{3}(U_{2}y_{6} + U_{3}y_{4}).
\end{multline*}
Let us apply the operation $S_{8}$ to this equality, we have:
$S_{8}y_{12}\equiv 0 \mod{(2y_{4})}$, so $0 = \beta _{2}U^{3}_{2}$.
Hence: $\beta _{2} \equiv 0 \mod{2}$. Let us apply the operation
$S_{4,4}$. From the condition $S_{4,4}y_{12} \equiv 0 \mod{(2y_{4})}$,
we obtain $0 = \beta _{1}U^{3}_{2} +  \beta _{6}U^{3}_{2}$. Hence,
$\beta _{1} \equiv \beta _{6} \mod{2}$. Let us act by the operation
$S_{2,2,2,2}$ on our equality, we have:
$S_{2,2,2,2}y_{12} \equiv y_{4} \mod{(2y_{4})}$.
Hence, we have:
$d_{3}(y_{4}) = U^{3}_{2} = \beta _{1}U^{3}_{2} +
\beta _{4}U^{3}_{2} + \beta _{5}U^{3}_{2} + \beta _{6}U^{3}_{2}$.
So:
$\beta _{1}$ + $\beta _{4}$ + $\beta _{5}$ + $\beta _{6} \equiv 1
\mod{2}$.
Let us apply the operation $S_{6}$, then we have:
$S_{6}y_{12} \equiv  y_{6} \mod{(2y_{6})}$. So it will be:
$d_{3}(y_{6}) = U^{2}_{2}U_{3} = (\beta _{3} + \beta _{6} +
\beta _{7})U^{2}_{2}U_{3}$, hence, $\beta _{3}$ + $\beta _{6}$ +
$\beta _{7} \equiv 1 \mod{2}$. Let us apply the operation $S_{3,3}$,
then we obtain: $S_{3,3}y_{12} \equiv y_{6} \mod{(2y_{6})}$.
Hence, we have:
$d_{3}(y_{6}) = U^{2}_{2}U_{3} = \beta _{3}U^{2}_{2}U_{3} +
\beta _{4}U^{2}_{1}\Phi _{3}$. So, $\beta _{3} \equiv 1
\mod{2}$.
Let us apply the operation $S_{2,2,2}$, then we obtain:
$S_{2,2,2}y_{12} \equiv  y_{6} \mod{(2y_{6})}$, and so
$d_{3}(y_{6}) = U^{2}_{2}U_{3} = \beta _{4}U^{2}_{2}U_{3} +
\beta _{6}U^{2}_{2}U_{3}$. Hence, $\beta _{4}$ +  $\beta _{6}
\equiv 1 \mod{2}$. Let us act by the operation $S_{2,2}$ on our
equality. From the relation
$S_{2,2}y_{12} \equiv  y_{8} \mod{(2y_{8})}$ we get:
$$d_{3}(y_{8}) = U_{2}U^{2}_{3} = \beta _{1}U_{2}U^{2}_{3} +
U^{2}_{2}\Phi _{3} + \beta _{4}U^{2}_{2}\Phi _{3} +
\beta _{4}U_{2}U^{2}_{3} + \beta _{7}U_{2}U^{2}_{3}.$$
Hence, $\beta _{4} \equiv 1 \mod{2}$,
$\beta _{1} \equiv  \beta _{7} \mod{2}$. So,
$\beta _{1},\beta _{5},\beta _{6},\beta _{7} \equiv 0 \mod{2}$.
Finally we obtain:
\[
d_{3}(y_{12}) = U_{2}U_{3}U_{4} + U^{2}_{3}\Phi _{3}.
\]

34. There is a formula in the MASS:
$b^{2}_{6} = a^{2}_{2}e_{8} + a_{1}a_{7}e_{4} + a^{2}_{1}e_{10}$.
Hence in the Adams-Novikov spectral sequence we have a relation:
\[
z^{2}_{6} = (1 + 2\beta _{1})z^{2}_{2}y_{8} +
(1 + 2\beta _{2})z_{1}z_{7}y_{4} +
(1 + 2\beta _{3})z^{2}_{1}(y^{*}_{10} + y_{4}y_{6}) + X^{1}_{48}.
\]
Here $X^{1}_{48} \in  F^{6}(E^{0,48}_{2})$, $d_{3}(X^{1}_{48}) = 0$.

35. In the MASS there is the relation:
$h_{0}b_{12} = a_{1}a_{11} + a_{4}a_{8}$. Hence, we can change the
choice of $z_{12}$, not changing the properties, to satisfy the
relation:
\[
2z_{12} = z_{1}z_{11} + 3z_{4}z_{8} + X^{2}_{48}(\widehat{y_{12}},
\widehat{z_{1}z_{11}}, \widehat{z_{4}z_{8}}),
\]
where $X^{2}_{48} \in  F^{6}(E^{0,48}_{2})$, $d_{3}(X^{2}_{48}) = 0,$
and the meaning of the symbol $\widehat{\quad}$ is the same as in the
item~25.

36. Consider the expression $U_{1}z_{12}$. Because of the formula
$d_{3}(U_{1}z_{12}) = U^{3}_{1}\Phi _{6}$ and the equality in the MASS:
$u_{1}(b_{12} + a_{2}c_{10} + a_{4}e^{2}_{4})$, it is possible to
change the choice of $z_{12}$, conserving its properties to satisfy
the relation:
\[
U_{1}z_{12} = \Phi _{6}z_{1} + \beta _{1}U_{1}z_{4}z_{8}.
\]
From the relation of the item~35 it follows that
$S_{7}z_{12} \equiv 0 \mod{(2z_{1}z_{4})}$, hence applying the operation
$S_{7}$, we arrive to relation: $\beta _{1} \equiv 0 \mod{2}$. So,
\[
U_{1}z_{12} = \Phi _{6}z_{1}.
\]

37. Let us consider the expression $U_{2}y_{11}$. From the action of
$d_{3}$ : $d_{3}(U_{2}y_{11}) = U_{1}U^{2}_{2}\Phi _{5}$ and the relation
in the MASS: $u_{2}(b_{11} + a_{5}c_{6}) = \varphi _{5}z_{3}$, it
follows that the equality is fulfilled:
\[
U_{2}y_{11} = \Phi _{5}z_{3} + Y_{49}(\beta _{i}),
\]
where $Y_{49} \in  F^{4}(E^{1,49}_{2})$ and $d_{3}(Y_{49}) = 0$.
Multiplying this expression by $U_{1}$, and using the relations:
$U_{1}y_{11} = \Phi _{5}z_{2}$ and $U_{1}z_{3} = U_{2}z_{2}$, we
obtain: $Y_{49}U_{1} = 0$, that gives after the consideration the
relations $\beta _{i} \equiv 0 \mod{2}$ for all $i$.
Hence,
\[
U_{2}y_{11} = \Phi _{5}z_{3}.
\]

38. Let us consider the expression $U_{2}z_{11}$. From the
condition $d_{3}(U_{2}z_{11}) = U_{1}U_{2}U_{3}U_{4}$
and relation in the MASS: $u_{2}a_{11} = u_{3}a_{9} = u_{4}a_{5}$ it
follows that:
\[
U_{2}z_{11} = U_{3}z_{9} + Y_{49}(\beta _{i}), \quad U_{3}z_{9} =
U_{4}z_{5} + Y(\beta _{i}).
\]
where $Y(\beta _{i})$ has the same sense as in the item~36. Multiplying
both last relations by $U_{1}$ and using the equalities:
\[
U_{1}z_{11} = U_{3}z_{8} = U_{4}z_{4}, \quad U_{1}z_{9} = U_{2}z_{8},
\quad U_{1}z_{5} = U_{2}z_{4},
\]
we obtain that in both relations coefficients $\beta _{i} \equiv 0 \mod{2}$.
Hence,
\[
U_{2}z_{11} = U_{3}z_{9} = U_{4}z_{5}.
\]

39. Let us consider the expression $U_{3}y_{9}$. Because of the formula
$d_{3}(U_{3}y_{9}) = U_{1}U^{2}_{3}\Phi _{3}$ and the equality in the
MASS: $u_{3}b_{9} = \varphi _{3}a_{7}$ we get the relation:
$U_{3}y_{9} = \Phi _{3}z_{7} + Y_{49}(\beta _{i})$, where $Y(\beta _{i})$
has the same sense as in the item~37. Multiply this expression by $U_{1}$
and use the relations $U_{1}y_{9} = \Phi _{3}z_{4}$ and
$U_{1}z_{7} = U_{3}z_{4}$. We have for all $\beta _{i}$:
$\beta _{i} \equiv 0 \mod{2}$. The final relation is the following:
\[
U_{3}y_{9} = \Phi _{3}z_{7}.
\]

40. From the relation in the MASS
$u_{2}(a_{1}e_{10} + a_{3}e_{8} + a_{7}e_{4}) = \varphi _{3}b_{7}$
the relation in the Adams-Novikov spectral sequence follows:
\begin{multline*}
U_{2}(z_{1}y^{*}_{10} + z_{3}y_{8} + z_{1}y_{4}y_{6} + z_{7}y_{4}) =
\Phi _{3}y_{7} + \beta _{1}U_{2}z_{1}z_{10} +
\beta _{2}U_{2}z_{2}z_{9} +\\
+ \beta _{3}U_{2}z_{2}y_{9} + \beta _{4}U_{2}z_{3}(z_{2}y_{6} +
z_{4}y_{4}) + Y^{*}_{49}(\beta _{i}).
\end{multline*}
In this expression $Y(\beta _{i}) \in  F^{4}(E^{1,49}_{2})$ denotes
a linear combination of various generators of the given cell of the
Adams-Novikov spectral sequence such that the filtration of these
generators is not less than 4 and there do not participate the
monomials that are already mentioned. Here we consider all 
coefficients $\beta _{i}$ with $5 \le  i \le 37$. Let us multiply
this expression by $U_{1}$ and use the relation of the item~26. We
obtain:
$\beta _{1},\beta _{2},\beta _{3},\beta _{4} \equiv 1 \mod{2}$,
$\beta _{i} \equiv 0 \mod{2}$ for all $5 \le i \le 37$.
So, we get the relation:
\[
\Phi _{3}y_{7} = U_{2}(z_{1}(y^{*}_{10} + z_{10} + y_{4}y_{6}) +
z_{2}(y_{9} + z_{9}) + z_{3}(y_{8} + y_{6}z_{2} + y_{4}z_{4}) +
z_{7}y_{4}).
\]

41. Let us choose the element $\Omega _{1} \in  E^{1,49}_{2}$ so that
$\pi ^{2}_{0}(\Omega _{1}) = \omega _{1}$. For any cycle $x$ of the
differential $d_{3}$, lying in $E^{4,48}_{2} = E^{4,48}_{3}$ of the
Adams-Novikov spectral sequence, there exists an element
$\tilde{x} \in  F^{2}(E^{1,49}_{2})$ such that: $d_{3}(\tilde{x}) = x$.
Hence, the element $\Omega _{1}$ can be chosen in such a way that
$d_{3}(\Omega _{1}) = 0$. Because in the term $E^{*,*}_{4}$ of the
Adams-Novikov spectral sequence for all $s > 4$ the cells
$E^{s,48}_{4}$ consist of zeros all higher differentials map the
element $\Omega _{1}$ to zero. Hence, $\Omega _{1}$ lives to infinity
and define an indecomposable element $\Omega _{1} \in MSp_{49}$. All
the cells $E^{s,49}_{4}$ of the Adams-Novikov spectral sequence for
$s > 1$ consist of zeros, so there is no extension problem for the
cell $E^{1,49}_{\infty }$. Hence, the order of the element
$\Omega _{1} \in MSp_{49}$ is equal to two.

42. There is the relation in the MASS
\[
u_{1}\omega _{1} = u_{2}(\varphi _{6} + u_{2}c_{10}+ u_{4}e^{2}_{4}) +
u_{3}(\varphi _{5} + u_{3}c_{6}) +u_{4}\varphi _{3}.
\]
Hence it is possible to choose $\Omega _{1}$ in such a way that the
equality is satisfied:
\begin{multline*}
U_{1}\Omega _{1}= U_{2}\Phi _{6}+ U_{3}\Phi _{5}+ U_{4}\Phi _{3}+
U^{2}_{2}y_{10}+ U^{2}_{3}y_{6}+\\
+ \beta _{1}U^{2}_{2}z^{2}_{5} + \beta _{2}U^{2}_{2}z_{3}y_{7}+
U_{2}U_{3}y^{2}_{4}.
\end{multline*}
Let us apply the operation $S_{3,3}$ to this equality. Then we get
the relation:
$U_{1}S_{3,3}\Omega _{1} = U^{2}_{2}S_{3,3}y_{10} + U^{2}_{1}y_{6} +
\beta _{1}U^{2}_{2}z^{2}_{3}$ (because of relation
$U_{1}z_{5} = U_{3}z_{2}$ we have
$S_{3}z_{5} \equiv z_{2} \mod{(2z_{2},2z^{2}_{1})}$, and because of
relation $U_{1}y_{7} = \Phi _{3}z_{2}$ we have
$S_{3,3}y_{7} \equiv  0 \mod{(2z_{1}) )}$. So,
$S_{3,3}y_{10} \equiv  \sigma z^{2}_{2} + z_{4}
\mod{(2z^{2}_{2},2z_{4},2z^{2}_{1}z_{2},z_{1}z_{3},z^{4}_{1})}$.
Let us choose $\Omega _{1}$ so that
$S_{3,3}\Omega _{1} \equiv  \sigma U_{1}z^{2}_{3}
\mod{(U_{1}z^{2}_{1}z^{2}_{2},U_{1}z^{6}_{1})}$. Then we have
$\beta _{1} \equiv 0 \mod{2}$. Let us apply the operation $S_{5}$:
$S_{5}y_{7} \equiv  z_{2} \mod{(2z_{2},z^{2}_{1})}$ (corollary of
the condition
$U_{1}y_{7} = \Phi _{2}z_{2})$, $S_{5}y_{10} \equiv
\gamma z_{2}z_{3} \mod{(2z_{2}z_{3})}$, $S_{5}y_{6} \equiv  0
\mod{(2z_{1})}$.
We obtain: $U_{1}S_{5}\Omega _{1} = U_{1}U_{4} +
U_{1}U_{2}(\gamma  + \beta _{2})z^{2}_{3}$. Choose $\Omega _{1}$
so that
$S_{5}\Omega _{1} \equiv  U_{4} + \gamma U_{2}z^{2}_{3}
\mod{(U_{2}2z^{2}_{3})}$. Then $\beta _{2} \equiv 0 \mod{2}$.
We have:
\[
U_{1}\Omega _{1} = U_{2}\Phi _{6} + U_{3}\Phi _{5} + U_{4}\Phi _{3} +
U^{2}_{2}y_{10} + U^{2}_{3}y_{6} + U_{2}U_{3}y^{2}_{4}.
\]

43. We choose an element $z_{13} \in  E^{0,52}_{2}$ to satisfy the
relation: $\pi ^{2}_{2}(z_{13}) = a_{13}$. From the condition
$$S_{9}(z_{13} + \text{any element of this dimension not equal to}
\ z_{13}) = z_{4} + \text{decomposables}$$
we obtain:
$d_{3}(z_{13}) \neq  0$, $d_{3}(z_{13} + \ldots ) \neq  0$.
It is possible to choose $z_{13}$ so that
\begin{align*}
d_{3}(z_{13}) &= U^{2}_{1}\Omega _{1} +
\beta _{1}U_{1}U_{2}\Phi _{6} + \beta _{2}U_{1}U_{3}\Phi _{5} +
\beta _{4}U_{1}U_{4}\Phi _{3} +
\beta _{5}U_{1}U_{2}U_{3}y^{2}_{4} +\\
&+ \beta _{6}U^{2}_{1}y^{2}_{6} + \beta _{7}U_{1}U_{3}(U_{2}y_{8} +
U_{3}y_{6}) + \beta _{8}U_{1}U_{2}(U_{2}y_{10} + U_{3}y_{8}) +\\
&+ \beta _{19}U^{2}_{1}(U_{1}y_{12} + U_{2}z_{11} + U_{3}y_{8}) +
\beta _{20}U^{2}_{1}y^{2}_{4}(U_{1}y_{4} + U_{2}z_{3}) +\\
&+ \beta _{9}U^{2}_{2}(U_{1}(y^{*}_{10} + y_{10}) +
U_{2}(y_{9} + z_{9}))+ \beta _{10}U^{2}_{2}(U_{1}y_{10} +
U_{3}z_{7}) +\\
&+ \beta _{11}U_{2}U_{3}(U_{2}y_{8} + U_{2}z_{7}) +
\beta _{12}U^{2}_{3}(U_{1}y_{6} + U_{3}z_{3}) +\\
&+ \beta _{13}U_{3}\Phi _{3}(U_{1}y_{4} + U_{2}z_{3}) +
\beta _{14}U_{2}\Phi _{3}(U_{1}y_{6} + U_{3}z_{3}) +\\
&+ \beta _{15}U_{2}U_{4}(U_{1}y_{4} + U_{2}z_{3}) +
\beta _{16}U_{1}\Phi _{3}(U_{2}y_{6} + U_{3}y_{4}) +\\
&+ \beta _{17}U_{1}U_{2}(U_{2}(y^{*}_{10}+ y_{10})+
\Phi _{3}y_{6}+ U_{4}y_{4}) +\\
&+ \beta _{18}U^{2}_{2}(U_{1}y_{6}y_{4} + U_{2}(y_{6}z_{3} +
y_{4}z_{5})) +\\
&+ \beta _{21}U^{2}_{1}(U_{1}y_{8}y_{4} + U_{2}y_{8}z_{3} +
U_{3}y_{4}z_{5}).
\end{align*}
Applying the operation $S_{12}$ and having in mind that
$S_{12}z_{13} \equiv 0 \mod{(2z_{1})}$, $S_{12}\Omega _{1} = 0$,
we obtain the relation: $\beta _{19} \equiv 0 \mod{2}$. Using the
operation $S_{6,6}$ and relations
$S_{6,6}z_{13} \equiv 0 \mod{(2z_{1})}$, $S_{6,6}\Omega _{1} = 0$,
we get: $\beta _{6} \equiv 0 \mod{2}$. Let us consider the action
of the operation $S_{2,2,2,2,2,2}$. There are relations:
$S_{2,2,2,2,2,2}z_{13} \equiv  0 \mod{(2z_{1})}$,
$S_{2,2,2,2,2,2}\Omega _{1} = 0$, hence,
$\beta _{20} \equiv 0 \mod{2}$. Now we apply the operation $S_{11}$.
Because of the formulae $S_{11}z_{13} \equiv  z_{2}
\mod{(2z_{2},z^{2}_{1})}$, $S_{11}\Omega _{1} \equiv U_{1}$,
we have the relation:
$d_{3}(z_{2}) = (1 + \beta _{1})U^{2}_{1}U_{2} = U^{2}_{1}U_{2}$,
or,
$\beta _{1} \equiv 0 \mod{2}$. Let us use the operation $S_{10}$.
Because of relations:
$S_{10}z_{13} \equiv  0 \mod{(2z_{3},z_{1}z_{2},2z^{3}_{1})}$,
$S_{10}\Omega _{1} \equiv  0 \mod{(U_{1}z^{2}_{1})}$,
we obtain that
$\beta _{8} + \beta _{9} + \beta _{10} + \beta _{17} \equiv 0 \mod{2}$.
Let us consider the action of the operation $S_{5,5}$ on our equality.
We have $S_{5,5}z_{13} \equiv 0 \mod{(2z_{3},z_{1}z_{2},2z^{2}_{1})}$
and $S_{5,5}\Omega _{1} \equiv 0 \mod{(U_{1}z^{2}_{1})}$. This gives the
relation: $\beta _{8}$ + $\beta _{10} \equiv 0 \mod{2}$.
Using the relations:
$S_{9}z_{13} \equiv  z_{4}
\mod{(2z_{4},z_{1}z_{3},2y_{4},z^{2}_{2},2z^{2}_{1}z_{2},z^{4}_{1})}$
and $S_{9}\Omega _{1} \equiv  U_{3} \mod{(U_{2}z^{2}_{1})}$, and
applying the operation $S_{9}$ to the equality under the consideration
we get the following:
$d_{3}(z_{4}) = (1 + \beta _{2})U^{2}_{1}U_{3}=U^{2}_{1}U_{3},$
so, $\beta _{2} \equiv  0 \mod{2}$. Let us consider the action of the
operation $S_{4,4}$, we have:
\begin{align*}
S_{4,4}z_{13} &\equiv  0
\mod{(2z_{5},z_{1}z_{4},z_{2}z_{3},2z^{2}_{1}z_{3},2z_{1}z^{2}_{2},
2z^{5}_{1},2z_{1}y_{4},2z^{3}_{1}z_{2})},\\
S_{4,4}\Omega _{1} &\equiv  0 \mod{(U_{1}z^{4}_{1},U_{1}z^{2}_{2})}.
\end{align*}
It follows from these relations that:
\[
\beta _{4}U_{1}U_{2}U_{3}+ \beta _{7}U_{1}U_{2}U_{3} +
\beta _{8}U_{1}U_{2}U_{3} + \beta _{11}U_{1}U_{2}U_{3} +
\beta _{21}U^{2}_{1}(U_{1}y_{4} + U_{2}z_{3}) \equiv  0.
\]
Hence, $\beta _{21} \equiv 0 \mod{2}$, $\beta _{4} + \beta _{7} +
\beta _{8} + \beta _{11} \equiv  0 \mod{2}$.
Let us apply the operation $S_{2,2,2,2}$. We obtain the following
equalities:
\begin{align*}
S_{2,2,2,2}z_{13} &\equiv  0
\mod{(2z_{5},z_{1}z_{4},z_{2}z_{3},2z^{2}_{1}z_{3},2z_{1}z^{2}_{2},
2z^{5}_{1},2z_{1}y_{4},2z^{3}_{1}z_{2})},\\
S_{2,2,2,2}\Omega _{1} &\equiv 0 \mod{(U_{1}z^{2}_{2},U_{2}z^{4}_{1})}.
\end{align*}
From these conditions it follows that:
$\beta _{5}U_{1}U_{2}U_{3} + \beta _{7}U_{1}U_{2}U_{3} +
\beta _{14}U_{1}U_{2}U_{3} \equiv 0$. Hence,
$\beta _{5}$ + $\beta _{7}$ + $\beta _{14} \equiv 0 \mod{2}$.
Let us consider the action of the operation $S_{7}$, we have:
\begin{align*}
S_{7}z_{13} \equiv  z_{6}
\mod{&(2z_{6},2y_{6},z_{1}z_{5},z_{2}z_{4},z^{2}_{3},2z^{2}_{1}z_{4},
z^{2}_{1}z^{2}_{2},z^{3}_{1}z_{3},z^{6}_{1},2z^{2}_{1}y_{4},\\
&2z_{1}z_{2}z_{3},2z_{2}y_{4},2z^{4}_{1}z_{2})},\\
S_{7}\Omega _{1} \equiv  \Phi _{3} \mod{&(U_{3}z^{2}_{1},
U_{2}z^{2}_{2},U_{2}z^{4}_{1})}.
\end{align*}
Hence,
$d_{3}(z_{6}) = U^{2}_{1}\Phi _{3} \equiv
(1 + \beta _{4})U^{2}_{1}\phi _{3} +
(\beta _{15} + \beta _{17})U_{1}U_{2}(U_{1}y_{4} + U_{2}z_{3})$.
So,
$\beta _{4} \equiv  0 \mod{2}$, $\beta _{15} + \beta _{17} \equiv 0
\mod{2}$.
Let us apply the operation $S_{6}$. There are the following relations:
$S_{6}z_{13} \equiv 0 \mod{(2z_{7},2y_{7},\ldots.)}$,
$S_{6}\Omega _{1} \equiv 0 \mod{(U_{1}z^{6}_{1},\ldots.)}$.
It follows from these conditions:
\begin{align*}
\beta _{7}U_{1}U^{2}_{3} + \beta _{9}U^{2}_{2}(U_{1}y_{4} +
U_{2}z_{3}) + \beta _{12}U_{1}U^{2}_{3} +
\beta _{14}U_{1}U_{2}\Phi _{3} +\\
+ \beta _{15}U^{2}_{2}(U_{1}y_{4}+ U_{2}z_{3}) +
\beta _{16}U_{1}U_{2}\Phi _{3} + \beta _{17}U_{1}U_{2}\Phi _{3} +\\
+ \beta _{18}U^{2}_{2}(U_{1}y_{4}+ U_{2}z_{3}) \equiv  0
\mod{(U_{1}z^{6}_{1},\ldots .)}.
\end{align*}
So, we have the following: $\beta _{7} + \beta _{12} \equiv 0 \mod{2}$,
$\beta _{14} + \beta _{16} + \beta _{17} \equiv  0 \mod{2}$,
$\beta _{9} + \beta _{15} + \beta _{18} \equiv 0 \mod{2}$.
From these and previous relations it follows that
$\beta _{18} \equiv 0 \mod{2}$.
Applying $S_{2,2,2}$ and using the relations:
$S_{2,2,2}z_{13} \equiv  0 \mod{(2z_{7},2y_{7},\ldots )}$,
$S_{2,2,2}\Omega _{1} \equiv 0 \mod{((U_{1}z^{6}_{1}, \ldots )}$,
we obtain:
\begin{multline*}
0 \equiv (\beta _{7} + \beta _{12} + \beta _{13})U_{1}U^{2}_{3} +
(\beta _{13}+ \beta _{14} + \beta _{15} +
\beta _{16})U_{1}U_{2}\Phi _{3} +\\
+ (\beta _{9} + \beta _{10} + \beta _{11} + \beta _{12} +
\beta _{13} + \beta _{14})U^{2}_{2}(U_{1}y_{4} + U_{2}z_{3}).
\end{multline*}
Hence,
$\beta _{9} + \beta _{10} + \beta _{11} + \beta _{12} + \beta _{13} +
\beta _{14} \equiv 0 \mod{2}$,
$\beta _{13} + \beta _{14} + \beta _{15} + \beta _{16} \equiv  0
\mod{2}$, $\beta _{7} + \beta _{12} + \beta _{13} \equiv  0 \mod{2}$.
From these and previous conditions it follows that:
$\beta _{13} \equiv 0 \mod{2}$, $\beta _{16} \equiv  0 \mod{ 2}$.
From the action of the operation $S_{2,2}$:
$S_{2,2}z_{13} \equiv  0 \mod{(F^{2}E^{0,36}_{2})}$,
$S_{2,2}\Omega _{1} \equiv  0 \mod{(F^{2}E^{1,35}_{2})}$,
we have:
\begin{multline*}
(\beta _{8} + \beta _{17})U_{1}U_{2}(U_{2}y_{6} + U_{3}y_{4}) +
(\beta _{10} + \beta _{12} + \beta _{14})U^{2}_{2}(U_{1}y_{6} +
U_{3}z_{3}) +\\
+ (\beta _{11} + \beta _{14})U_{2}U_{3}(U_{1}y_{4} + U_{2}z_{3}) +
(\beta _{15} + \beta _{17})U_{1}U_{2}U_{4} \equiv  0
\mod{((U_{1}z^{8}_{1}, \ldots)}.
\end{multline*}
Hence,
$\beta _{8} + \beta _{17} \equiv  \beta _{10} + \beta _{12} +
\beta _{14} \equiv  \beta _{11} + \beta _{14} \mod{2}$,
$\beta _{15} \equiv  \beta _{17} \mod{2}$. From this relation and from
previous ones we obtain:
$\beta _{12} \equiv  \beta _{7} \equiv 0 \mod{2}$,
$\beta _{10} \equiv  \beta _{11} \mod{2}$,
$\beta _{5} \equiv  \beta _{14}  \mod{2}$.
Finally using the operation $S_{1,1,1,1}$ and relations:
$S_{1,1,1,1}z_{13} \equiv  z_{1}y^{2}_{4} \mod{(F^{4}E^{0,36}_{2})}$,
$S_{1,1,1,1}\Omega _{1} \equiv U_{1}y^{2}_{4} \mod{(F^{2}E^{1,35}_{2})}$
we obtain:
\begin{align*}
d_{3}(z_{1}y^{2}_{4}) &= U^{3}_{1}y^{2}_{4} \equiv
(1 + \beta _{5} + \beta _{8} + \beta _{9} + \beta _{10} +
\beta _{17})U^{3}_{1}y^{2}_{4} +\\
&+ (\beta _{8} + \beta _{17})U_{1}U_{2}(U_{2}y_{6} + U_{3}y_{4}) +
(\beta _{9} + \beta _{10})U^{2}_{2}(U_{1}y_{6} + U_{3}z_{3})+\\
&+ (\beta _{11}+ \beta _{15})U_{2}U_{3}(U_{1}y_{4} + U_{2}z_{3}) +
(\beta _{10}+ \beta _{11})U^{2}_{1}(U_{1}y_{8} +\\
&+ U_{2}z_{7}) \mod{(F^{2}E^{1,35}_{2})}.
\end{align*}
Hence,
$\beta _{5} + \beta _{8} + \beta _{9} + \beta _{10} + \beta _{17}
\equiv 0 \mod{2}$, $\beta _{8} + \beta _{17}
\equiv  \beta _{9} + \beta _{10} \equiv  \beta _{11} + \beta _{15}
\mod{2}$, $\beta _{10} \equiv  \beta _{11} \mod{2}$.
From the previous equalities we determine step by step
the meanings of the rest $\beta _{i}$. All of them are nonzero.
So, we have:
\[
d_{3}(z_{13}) = U^{2}_{1}\Omega _{1}.
\]

44. An element $y_{13} \in  E^{0,52}_{2}$ we choose so that
$\pi ^{2}_{2}(y_{13}) = b_{13}$. From the condition:
$$S_{6}(y_{13} + \text{any element of this dimension non-equal to} \
y_{13}) = y_{9} + \ldots $$
it follows that $d_{3}(y_{13}) \neq  0$, $d_{3}(y_{13} + \ldots ) \neq  0$.
Hence we can choose $y_{13}$ so that:
\[
d_{3}(y_{13}) = U_{1}U_{4}\Phi _{3} + \beta _{1}U_{1}U_{2}\Phi _{6} +
\beta _{2}U_{1}U_{3}\Phi _{5} + \beta _{3}U^{2}_{1}\Omega _{1} + \ldots ,
\]
where the dots denote summands starting with the coefficient
$\beta _{5}$ as in the item~43. Using the same operations as in the
item~42, we get analogous relations for the coefficients $\beta _{i}$
excluding the following relations. Using the operation $S_{11}$ gives
the relation: $\beta _{1} \equiv  \beta _{3} \mod{2}$, the operation
$S_{10}$ gives in addition to the equality of the item~43:
$\beta _{1} \equiv  \beta _{2} \mod{2}$. As a result of the action of
the rest of the operations there will be the same values a in the
item~43. The same way we get:
\[
d_{3}(y_{13}) = U_{1}U_{4}\Phi _{3}.
\]

45. An element $y^{*}_{13} \in  E^{0,52}_{2}$ we choose so that
$\pi ^{2}_{2}(y^{*}_{13}) = f_{13} + a_{5}e_{8} + a_{3}c_{10}$.
From the condition:
$$S_{8}(y^{*}_{13} + \text{any element of this dimension non-equal to}
\ y^{*}_{13}) = z_{5} + \ldots$$
we get, that
$d_{3}(y^{*}_{13})\neq 0$, $d_{3}(y^{*}_{13}+\ldots )\neq 0$.
Considerations analogous to those given in the items~42 and 43, show
that it is possible to choose the element $y^{*}_{13}$, so that
the equality is fulfilled:
\[
d_{3}(y^{*}_{13}) = U_{1}U_{2}\Phi _{6}.
\]

The results of the calculations are given in Tables~15~-~17. In
Table~17 the action of the differential $d_{3}$ is given for the
generators of dimension not bigger than 52 in the term
$E_{3} \simeq  E_{2} \simeq  {\rm Ext}_{A}(BP^{*}(MSp),BP^{*})$
of the Adams-Novikov spectral sequence for the spectrum $MSp$, and
in Table~16 there are given relation between generators in the given
dimensions (part of the ''integer" are given only modulo filtration
corresponding to the MASS), which continue the analogous Table from
the work \cite{V2}. Vertical lines in Table~15 denote the multiplication by
the element $U_{1}$ in the term $E_{2}$. Because of the fact
$\pi _{*}(MSp)\otimes {\Bbb Z}_{(p)} \cong
{\Bbb Z}_{(p)}[z_{1},\ldots ,z_{k},\ldots ]$
for all $p >$ 2, Tables~15~-~17 describe the structure of the initial term
and the action of the differential $d_{3}$ in the integer case of the
Adams-Novikov spectral sequence. In this case we mast consider that
the zero line consists of free abelian groups and not of free
${\Bbb Z}_{(2)}$-modules. From Tables~15 and 17 it is possible to see
that in the considering dimensions $E_{4} \simeq  E_{\infty }$ and
the extension problem from $E_{\infty }$ to $\pi _{*}(MSp)$ is trivial.
So, we get a description of the symplectic cobordism ring (not
complete because not all the relation among ''free" generators are
known) up to dimension 52. Table~18 describes the ring $\pi _{*}(MSp)$
as a subring of ${\rm Ext}_{A}U(MU^{*}(MSp),MU^{*})$. To diminish the
volume there is a familiarity in description which we hope does not
lead to ambiguity, for example, many evident relations are not
shown.

\section[Differential $d_{3}$ of the Adams-Novikov s.s.]
{Computations of the action of the differential
$d_{3}$ of the Adams-Novikov spectral sequence}

The aim of the present section is a calculation of the differential
$d_{3}$ on the generators $y_{26}, y^{*}_{26} \in  E^{0,104}_{2}$
of the Adams-Novikov spectral sequence modulo $\theta _{1}$
and elements having $F$-filtration (corresponding to MASS) strictly
greater than zero, $E^{*,*}_{2} \cong
{\rm Ext}_{A}(BP^{*}(MSp),BP^{*})$,  $A= A^{BP}$.
These spectral sequences were described in particular in the
works of the second author \cite{V1, V2, V3, V4}.
The term $E_{\infty }$ of the MASS up to dimension 106 is described in
Chapter~2, and the action of Landweber-Novikov operations on
the generators $c_{j}$ of the MASS is described in Chapter~1.

1. All subsequent calculations are done modulo 2, and images of the
differential $d_{3}$ are expressions considered modulo triple
products which either contain $\theta _{1}$, or  have $F$-filtration
(corresponding to MASS) strictly greater than zero. The scheme of
the calculations will be the same for all dimensions: we write down
an image of the differential $d_{3}$ on a given element as a linear
combination of generators in given dimension with unknown coefficients.
Applying subsequently various Landweber-Novikov operations we define
all the coefficients. All notations dealing with dimensions less
than 32 coincide with the notations of mentioned works of the second
author. The only exclusions are the changes: $V_{i}$ by $U_{i}$ and
$c_{2i-1}$ by $h^{2}_{i}$. This  is true for notations in MASS, in
the Adams-Novikov spectral sequence as well as in the symplectic
cobordism ring $MSp_{*}$. By $\pi ^{2}_{0}(x)$ we denote the
projection of an element $x \in  E^{0,*}_{2}$ of the Adams-Novikov
spectral sequence into the term $E_{\infty }$ of the MASS.
\begin{remarka}
Our notation $y_{6}$ corresponds to $z_{6}$ from the works of
the second author.
\end{remarka}

2. Choose elements $y_{10},y^{*}_{10}\in  E^{0,40}_{2}$ so that
$\pi ^{2}_{0}(y_{10}) = c_{10}, \pi ^{2}_{0}(y^{*}_{10}) =
c_{10}+ c^{2}_{5}+ c^{2}_{2}c_{6}$ and also that:
\begin{equation}\label{eq:c32fi}
d_{3}(y_{10}) = \beta _{1}U^{2}_{2}U_{4} +
\beta _{2}U_{2}U_{3}\Phi _{3} + \beta _{3}U^{3}_{3} +
\beta _{4}U^{2}_{2}\tau_{3}.
\end{equation}
In the analogous equality for $y^{*}_{10}$, $\beta _{i}$ are changed by
$\beta ^{*}_{i}$. To determine coefficients $\beta _{i}$ let us use
Landweber-Novikov operations $S_{\omega }$.

\medskip
\begin{longtable}[c]{|c|c|c|c|c|}
\caption{Calculation of the action of differential $d_3$ on
$y_{10}$, $y_{10}^{*}$}\label{t1:LongTable}                                     \\  \hline
Operation    &  & $S_{\omega }$ & $S_{\omega }$ on & Relation among \\
$S_{\omega }$ & $y_{j}$ & on & the right part & the elements $\beta _{i}$.            \\
    &        & $y_{j}$      & of the formula (\ref{eq:c32fi}) & \\  \hline
             \endfirsthead  \hline
\multicolumn{5}{|c|}{\slshape (continuation)}            \\  \hline
                                                                                                          \endhead                      \hline
\multicolumn{5}{|r|}{\slshape to be continued }                                                                                 \\  \hline
                                                                                                          \endfoot                      \hline
                                                                                                          \endlastfoot
  $S_{6}$    &$y_{10}$    &          $0$         &$(\beta _{1} + \beta _{4})U^{3}_{2}$        &$\beta _{1} + \beta _{4} = 0$        \\  \hline
  $S_{6}$    &$y^{*}_{10}$&         $y_{4}$      &$U^{3}_{2}(\beta ^{*}_{1} + \beta ^{*}_{4})$&$\beta ^{*}_{1} + \beta ^{*}_{4} = 1$\\  \hline
  $S_{3,3}$  &$y_{10}$    &          $0$         &$\beta _{1}U^{3}_{2}$                       &$\beta _{1} = 0$                     \\  \hline
  $S_{3,3}$  &$y^{*}_{10}$&         $y_{4}$      &$\beta ^{*}_{1}U^{3}_{2}$                   &$\beta ^{*}_{1} = 1$                 \\  \hline
 $S_{2,2,2}$ &$y_{10}$    &         $y_{4}$      &$\beta _{3}U^{3}_{2}$                       &$\beta _{3} = 1$                     \\  \hline
 $S_{2,2,2}$ &$y^{*}_{10}$&         $y_{4}$      &$\beta ^{*}_{3}U^{3}_{2}$                   &$\beta ^{*}_{3} = 1$                 \\  \hline
  $S_{4}$    &$y_{10}$    &          $0$         &$\beta _{2}U^{2}_{2}U_{3}$                  &$\beta _{2} = 0$                     \\  \hline
  $S_{4}$    &$y^{*}_{10}$&          $0$         &$(1 + \beta ^{*}_{2})U^{2}_{2}U_{3}$        &$\beta ^{*}_{2} = 1$                 \\  \hline
\end{longtable}

It follows from this table that
\[
d_{3}(y^{*}_{10}) = U^{2}_{2}U_{4} + U_{2}U_{3}\Phi _{3} + U^{3}_{3},
\]
\[
d_{3}(y_{10}) = U^{3}_{3}.
\]

3. Choose an element $y_{12} \in  E^{0,48}_{2}$ so that
$\pi ^{2}_{0}(y_{12}) = c_{12}$ and also that:
\addtocounter{equation}{1}
\begin{align*}\label{eq:c32fii}
d_{3}(y_{12}) &= \beta _{1}U^{2}_{2}\Phi _{5} +
\beta _{2}U_{2}U_{3}U_{4} + \beta _{3}U^{2}_{3}\Phi _{3} +
\beta _{4}U_{2}\Phi ^{2}_{3} +  \\
&+ \beta _{5}U^{2}_{2}\tau_{5} + \beta _{6}U^{3}_{2}y^{2}_{4}. \tag{\theequation}
\end{align*}
To determine the coefficients $\beta _{i}$ we use operations $S_{\omega }$.

\medskip
\begin{longtable}[c]{|c|c|c|c|c|}
\caption{Calculation of the action of the differential $d_3$ on
 $y_{12}$}\label{t2:LongTable}                                    \\ \hline
Operation & &$S_{\omega }$ & $S_{\omega }$ on & Relation among \\
$S_{\omega }$&$y_{j}$ & on & the right part & the elements $\beta _{i}$ \\
& & $y_{j}$ & of the formula (\ref{eq:c32fii}) & \\  \hline
\endfirsthead   \hline
\multicolumn{5}{|c|}{\slshape (continuation)} \\ \hline
\endhead \hline
\multicolumn{5}{|r|}{\slshape to be continued} \\ \hline
\endfoot                      \hline
\endlastfoot
$S_{8}$ &$y_{12}$ &$0$ &$\beta _{1}U^{3}_{2}$ &$\beta _{1} = 0$ \\ \hline
$S_{4,4}$ &$y_{12}$ &$0$ &$(\beta _{4} + \beta _{5})U^{3}_{2}$ &$\beta _{4} = \beta _{5}$ \\  \hline
$S_{6}$      &$y_{12}$    &$y_{6}$ &$(\beta _{2} + \beta _{5})U^{2}_{2}U_{3}$ &$\beta _{2} + \beta _{5} = 1$ \\ \hline
$S_{3,3}$ &$y_{12}$ &$y_{6}$ &$\beta _{2}U^{2}_{2}U_{3}$ &$\beta _{2} = 1$ \\ \hline
$S_{2,2,2,2}$&$y_{12}$    &$y_{4}$ &$(\beta _{3} + \beta _{6})U^{3}_{2}$ &$\beta _{3} + \beta _{6} = 1$ \\ \hline
$S_{4}$      &$y_{12}$    &$y^{2}_{4}$ &$(\beta _{3} + 1)U_{2}U^{2}_{3}$ &$\beta _{3} = 1$ \\ \hline
\end{longtable}

Solving the system of equations on the coefficients $\beta_{i}$ we
come to the formula:
\[
d_{3}(y_{12}) = U_{2}U_{3}U_{4} + U^{2}_{3}\Phi _{3}.
\]

4. Choose an element $y_{14} \in  E^{0,56}_{2}$ so that
$\pi ^{2}_{0}(y_{14}) = c_{14}$ and also that:
\addtocounter{equation}{1}
\begin{align*}\label{eq:c32fiii}
d_{3}(y_{14}) &= \beta _{1}U^{2}_{2}\Phi _{6} +
\beta _{2}U_{2}U_{3}\Phi _{5} + \beta _{3}U_{2}\Phi _{3}U_{4} +
\beta _{4}U^{2}_{3}U_{4} +  \\
&+ \beta _{5}U_{3}\Phi ^{2}_{3} + \beta _{6}U^{2}_{2}\tau^{*}_{7} +
\beta _{7}U^{2}_{3}\tau_{3} + \beta _{8}U_{2}\Phi _{3}\tau_{3} +
\beta _{9}U^{2}_{2}U_{3}y^{2}_{4}. \tag{\theequation}
\end{align*}
To determine the coefficients $\beta _{i}$ we again use operations
$S_{\omega }$.

\medskip
\begin{longtable}[c]{|c|c|c|c|c|}
\caption{Calculation of the action of the differential $d_3$ on
$y_{14}$             }\label{t3:LongTable} \\ \hline
Operation & &$S_{\omega }$   &  $S_{\omega }$ on & Relation among \\
$S_{\omega }$ & $y_{j}$ & on & of the right part & the elements $\beta _{i}$ \\
 & & $y_{j}$ & of the formula (\ref{eq:c32fiii}) & \\ \hline
 \endfirsthead                 \hline
\multicolumn{5}{|c|}{\slshape (continuation)} \\ \hline
\endhead                      \hline
\multicolumn{5}{|r|}{\slshape continuation follows } \\  \hline
\endfoot                      \hline
\endlastfoot
$S_{10}$     &$y_{14}$    &$y_{4}$ &$(\beta _{1}+\beta _{6})U^{3}_{2}$ &$\beta _{1} + \beta _{6} = 1$ \\ \hline
$S_{5,5}$    &$y_{14}$    &$y_{4}$ &$ \beta _{1}U^{3}_{2}$ &$\beta _{1} = 1$ \\ \hline
$S_{8}$      &$y_{14}$    &$0$ &$(\beta _{1}+\beta _{2})U^{2}_{2}U_{3}$ &$\beta _{2} = 1$ \\ \hline
$S_{4,4}$ &$y_{14}$ &$0$ &$(\beta _{2}+\beta _{3}+\beta _{5})U^{2}_{2}U_{3}$&$\beta _{3} + \beta _{5} = 1$ \\ \hline
$S_{6}$ &$y_{14}$ &$y_{8}$ &$(\beta _{3}+\beta _{8}+1)U^{2}_{2}\Phi _{3}+$ &$\beta _{3}+\beta _{8}=1$ \\
 & & &$+(1+\beta _{4}+\beta _{7})U_{2}U^{2}_{3}$ &$\beta _{4}\beta _{7}=0$ \\  \hline
$S_{3,3}$    &$y_{14}$    &$0$ &$(1+\beta _{3})U^{2}_{2}\Phi _{3}+$ &$\beta _{3}=1, \beta _{4}=0$   \\
 & & &$+ \beta _{4}U_{2}U^{2}_{3}$ & \\  \hline
$S_{2}$ &$y_{14}$ &$y_{4}y_{8}$ &$U_{2}\Phi ^{2}_{3}+\beta _{9}U^{3}_{2}y^{2}_{4}$ &$\beta _{9} = 0$ \\ \hline
\end{longtable}

Solving the system of equations on the coefficients $\beta_{i}$ we
come to the formula:
\[
d_{3}(y_{14}) = U^{2}_{2}\Phi _{6} + U_{2}U_{3}\Phi _{5} +
U_{2}\Phi _{3}U_{4}.
\]

5. Let us choose $y_{16} \in  E^{0,64}_{2}$ so that
$\pi ^{2}_{0}(y_{16}) = c^{2}_{8}$ and also that:
\addtocounter{equation}{1}
\begin{align*}\label{eq:c32fiv}
d_{3}(y_{16}) &= \beta _{1}U^{2}_{2}\Phi _{7} +
\beta _{2}U_{2}U_{3}\Phi _{6} + \beta _{3}U_{2}\Phi _{3}\Phi _{5} +
\beta _{4}U^{2}_{3}\Phi _{5} + \beta _{5}U_{2}U^{2}_{4} +\\
&+ \beta _{11}U_{2}U_{4}(U_{2}y_{6} + U_{3}y_{4}) +
\beta _{12}U_{2}U^{2}_{3}y^{2}_{4} + \beta _{13}U^{3}_{2}y^{2}_{6} +
\beta _{14}U^{2}_{2}\Phi _{3}y^{2}_{4} +\\
&+ \beta _{6}U_{3}\Phi _{3}U_{4} + \beta _{7}\Phi ^{3}_{3} +
\beta _{8}U_{2}U_{3}(U_{2}(y_{10} + y^{*}_{10}) + \Phi _{3}y_{6} +
U_{4}y_{4}) + \tag{\theequation} \\
&+ \beta _{9}U_{3}\Phi _{3}(U_{2}y_{6} + U_{3}y_{4})+
\beta _{10}U^{2}_{3}(U_{2}y_{6} + U_{3}y_{4}).
\end{align*}

To determine the coefficients $\beta _{i}$ we again use operations
$S_{\omega }$.

\medskip
\begin{longtable}[c]{|c|c|c|c|c|}
\caption{Calculation of the action of the differential $d_3$ on
$y_{16}$             }\label{t3:LongTable} \\ \hline
Operation & &$S_{\omega }$   &  $S_{\omega }$ on & Relation among \\
$S_{\omega }$ & $y_{j}$ & on & of the right part & the elements $\beta _{i}$ \\
 & & $y_{j}$ & of the formula (\ref{eq:c32fiv}) & \\ \hline
 \endfirsthead                 \hline
\multicolumn{5}{|c|}{\slshape (continuation)} \\ \hline
\endhead                      \hline
\multicolumn{5}{|r|}{\slshape continuation follows } \\  \hline
\endfoot                      \hline
\endlastfoot
$S_{12}$     & $y_{16}$   &$0$                   &$ \beta _{1}U^{3}_{2}$                      &$\beta _{1} = 0$                     \\  \hline
$S_{10}$     & $y_{16}$   &$0$                   &$(\beta _{2}+\beta _{8})U^{2}_{2}U_{3}$     &$\beta _{2} = \beta _{8}$            \\  \hline
$S_{5,5}$    & $y_{16}$   &$0$                   &$\beta _{2}U^{2}_{2}U_{3}$                  &$\beta _{2} = 0$                     \\  \hline
$S_{6,6}$    & $y_{16}$   &$y_{4}$               &$(\beta _{5}+\beta _{11}+\beta _{13})U^{3}_{2}$&$\beta _{5}+\beta _{11}+\beta _{13}=1$ \\  \hline
$S_{3,3,3,3}$& $y_{16}$   &$y_{4}$               &$\beta _{5}U^{3}_{2}$                       &$\beta _{5} = 1$                     \\  \hline
$S_{4,4,4}$  & $y_{16}$   &$0$                   &$(\beta _{3}+\beta _{7})U^{3}_{2}$          &$\beta _{3} = \beta _{7}$            \\  \hline
$S_{8}$      & $y_{16}$   &$0$                   &$\beta _{3}U^{2}_{2}\Phi _{3}+ \beta _{4}U_{2}U^{2}_{3}$&$\beta _{3}= 0, \beta _{4}= 0$  \\  \hline
$S_{4,4}$    & $y_{16}$   &$y_{8}$               &$(\beta _{5}+\beta _{6}+\beta _{10})U_{2}U^{2}_{3}$&$\beta _{5}+\beta _{6}+\beta _{10}=1$ \\  \hline
$S_{6}$      & $y_{16}$   &$0$                   &$(\beta _{6}+\beta _{9})U_{2}U_{3}\Phi _{3}+$&$\beta _{6}=\beta _{9}, \beta _{10}=0$  \\
             &            &                      &$+\beta _{10}U^{3}_{3}+\beta _{11}U^{2}_{2}U_{4}$&$\beta _{11}=0$                 \\  \hline
$S_{4}$      & $y_{16}$   &$0$                   &$\beta _{14}U^{3}_{2}y_{4}$                 &$\beta _{14}=0$                      \\  \hline
$S_{2,2}$    & $y_{16}$   &$y_{4}y_{8}$          &$U_{2}\Phi ^{2}_{3} + \beta _{12}U^{3}_{2}y^{2}_{4}$&$\beta _{12}=0$              \\  \hline
\end{longtable}

Solving the linear system we obtain:
\[
d_{3}(y_{16}) = U_{2}U^{2}_{4}.
\]

6. Elements $y_{18},y^{*}_{18} \in  E^{0,72}_{2}$ we choose so that
$\pi ^{2}_{0}(y_{18})=c_{18}$,
$\pi ^{2}_{0}(y^{*}_{18}) = c^{2}_{9} + c^{2}_{2}c_{14} +
c^{2}_{4}(c_{10} + c^{2}_{5}) + c^{3}_{6}$, and also that:
\addtocounter{equation}{1}
\begin{align*}\label{eq:c32fv}
d_{3}(y_{18}) &= \beta _{1}U^{2}_{2}U_{5} +
\beta _{2}U_{2}U_{3}\Phi _{7} + \beta _{3}U_{2}\Phi _{3}\Phi _{6} +
\beta _{4}U_{2}U_{4}\Phi _{5} + \beta _{5}U^{2}_{3}\Phi _{6} +\\
&+ \beta _{7}U_{3}U^{2}_{4} + \beta _{8}\Phi ^{2}_{3}U_{4} +
\beta _{9}U^{2}_{2}(U_{2}y_{14} + U_{3}y_{4}y_{8} +
\Phi _{3}(y_{10}+ y^{*}_{10})+\\
&+ \Phi _{5}y_{6}+ \Phi _{6}y_{4}) +
\beta _{10}U_{2}\Phi _{3}(U_{2}(y_{10}+y^{*}_{10})+
\Phi _{3}y_{6}+ U_{4}y_{4})+\\
&+ \beta _{11}U^{2}_{3}(U_{2}y_{6}+U_{3}y_{4})+
\beta _{12}U^{2}_{3}(U_{2}(y_{10}+y^{*}_{10})+
\Phi _{3}y_{6}+U_{4}y_{4})+ \tag{\theequation} \\
&+ \beta _{13}U_{3}\Phi _{3}(U_{2}y_{8}+ U_{3}y_{6}) +
\beta _{14}U_{2}\Phi _{5}(U_{2}y_{6}+ U_{3}y_{4})+\\
&+ \beta _{15}U_{3}U_{4}(U_{2}y_{6}+ U_{3}y_{4}) +
\beta _{16}\Phi ^{2}_{3}(U_{2}y_{6}+ U_{3}y_{4}) +\\
&+ \beta _{17}U^{2}_{2}U_{3}y^{2}_{6}+
\beta _{18}U^{2}_{2}U_{4}y^{2}_{4} +
\beta _{19}U_{2}U_{3}\Phi _{3}y^{2}_{4} +
\beta _{20}U^{3}_{3}y^{2}_{4} +\\
&+ \beta _{21}U^{2}_{2}(U_{2}y_{6} + U_{3}y_{4})y^{2}_{4}+
\beta _{6}U_{3}\Phi _{3}\Phi _{5}.
\end{align*}
In analogous equality for $y^{*}_{18}$, coefficients $\beta _{i}$
are changed by $\beta ^{*}_{i}$).

To determine the coefficients $\beta _{i}$ we  use operations
$S_{\omega }$.

\medskip
\begin{longtable}[c]{|c|c|c|c|c|}
\caption{Calculation of the action of the differential $d_3$ on
$y_{18}$,$y^{*}_{18}$       }\label{t3:LongTable} \\ \hline
Operation & &$S_{\omega }$   &  $S_{\omega }$ on & Relation among \\
$S_{\omega }$ & $y_{j}$ & on & of the right part & the elements $\beta _{i}$ \\
 & & $y_{j}$ & of the formula (\ref{eq:c32fv}) & \\ \hline
                                                                                                         \endfirsthead                 \hline
\multicolumn{5}{|c|}{\slshape (continuation)}                                                                                        \\  \hline
                                                                                                          \endhead                      \hline
\multicolumn{5}{|r|}{\slshape to be continued }                                                                                 \\  \hline
                                                                                                          \endfoot                      \hline
                                                                                                          \endlastfoot
$S_{14}$     &$y_{18}$    &$0$                   &$(\beta _{1}+\beta _{9})U^{3}_{2}$          &$\beta _{1} = \beta _{9}$            \\  \hline
$S_{14}$     &$y^{*}_{18}$&$y_{4}$               &$(\beta ^{*}_{1}+\beta ^{*}_{9})U^{3}_{2}$  &$\beta ^{*}_{1} + \beta ^{*}_{9} = 1$\\  \hline
$S_{7,7}$    &$y_{18}$    &$0$                   &$\beta _{1}U^{3}_{2}$                       &$\beta _{1} = 0$                     \\  \hline
$S_{7,7}$    &$y^{*}_{18}$&$y_{4}$               &$\beta _{1}U^{3}_{2}$                       &$\beta _{1} = 1$                     \\  \hline
$S_{12}$     &$y_{18}$    &$0$                   &$\beta _{2}U^{3}_{2}$                       &$\beta _{2} = 0$                     \\  \hline
$S_{12}$     &$y^{*}_{18}$&$0$                   &$(\beta ^{*}_{1}+\beta ^{*}_{2})U^{2}_{2}U_{3}$&$\beta ^{*}_{2} = 1$              \\  \hline
$S_{6,6}$    &$y_{18}$    &$y_{6}$               &$(\beta _{4}+\beta _{7}+\beta _{14}+\beta _{15}+$&$\beta _{4}+\beta _{7}+\beta _{14}+\beta _{15}+$\\
             &            &                      &$+\beta _{17})U^{2}_{2}U_{3}$               &$+\beta _{17}=1$                     \\  \hline
$S_{6,6}$    &$y^{*}_{18}$&$y_{6}$               &$(\beta ^{*}_{4}+\beta ^{*}_{7}+\beta ^{*}_{14}+\beta ^{*}_{15}+$&$\beta ^{*}_{4}+\beta ^{*}_{7}+\beta ^{*}_{14}+\beta ^{*}_{15}$+\\
             &            &                      &$+\beta_{17}U^{2}_{2}U_{3}$                 &$+\beta^{*}_{17}=0$                  \\  \hline
$S_{3,3,3,3}$&$y_{18}$    &$y_{6}$               &$\beta _{7}U^{2}_{2}U_{3}$                  &$\beta _{7} = 1$                     \\  \hline
$S_{3,3,3,3}$&$y^{*}_{18}$&$0$                   &$(\beta ^{*}_{7}+1)U^{2}_{2}U_{3}$          &$\beta ^{*}_{7} = 1$                 \\  \hline
$S_{10}$     &$y_{18}$    &$0$                   &$(\beta _{5}+\beta _{11}+\beta _{12})U_{2}U^{2}_{3}+$&$\beta _{5}+\beta _{11}+\beta _{12}=0$\\
             &            &                      &$+(\beta _{3}+\beta _{10})U^{2}_{2}\Phi _{3}$&$\beta _{3} = \beta _{10}$          \\  \hline
$S_{10}$     &$y^{*}_{18}$&$y_{8}$               &$(1+\beta ^{*}_{5}+\beta ^{*}_{11}+\beta ^{*}_{12})U_{2}U^{2}_{3}$&$\beta ^{*}_{5}+\beta ^{*}_{11}+\beta ^{*}_{12}=1$\\
             &            &                      &$+(1+\beta ^{*}_{3}+\beta ^{*}_{10})U^{2}_{2}\Phi _{3}$&$\beta ^{*}_{3} + \beta ^{*}_{10}= 1$\\  \hline
$S_{5,5}$    &$y_{18}$    &$0$                   &$(\beta _{5}+\beta _{11} )U_{2}U^{2}_{3}+$  &$\beta _{5} = \beta _{11}$\\
             &            &                      &$+ \beta _{3}U^{2}_{2}\Phi _{3}$            &$\beta _{3} = 0$                     \\  \hline
$S_{5,5}$    &$y^{*}_{18}$&$y_{8}$               &$(1+\beta ^{*}_{5}+\beta ^{*}_{11})U_{2}U^{2}_{3}+$&$\beta ^{*}_{5} = \beta ^{*}_{11}$\\
             &            &                      &$+ (1+\beta ^{*}_{3})U^{2}_{2}\Phi _{3}$    &$\beta ^{*}_{3} = 1$                 \\  \hline
$S_{8}$      &$y_{18}$    &$0$                   &$\beta _{4}U^{2}_{2}U_{4}+\beta _{5}U^{3}_{3}+$&$\beta _{4},\beta _{5},\beta _{6}= 0$ \\
             &            &                      &$+ \beta _{6}U_{2}U_{3}\Phi _{3}$           &                                     \\  \hline
$S_{8}$      &$y^{*}_{18}$&$0$                   &$(\beta ^{*}_{4}+1)U^{2}_{2}U_{4}+\beta ^{*}_{5}U^{3}_{3}+$&$\beta ^{*}_{5},\beta ^{*}_{6}=0$\\
             &            &                      &$+ \beta ^{*}_{6}U_{2}U_{3}\Phi _{3}$       &$\beta ^{*}_{4} = 1$                 \\  \hline
$S_{4,4}$    &$y_{18}$    &$y_{10}$              &$(1+\beta _{11})U^{3}_{3}+\beta _{8}U^{2}_{2}U_{4}+$&$\beta _{11},\beta _{8},\beta _{13},\beta _{14}=0$\\
             &            &                      &$+\beta _{13}U_{2}U_{3}\Phi _{3}+\beta _{14}U^{2}_{2}\tau_{3}$&                   \\  \hline
$S_{4,4}$    &$y^{*}_{18}$&$y_{10}$              &$(1+\beta ^{*}_{11})U^{3}_{3}+\beta ^{*}_{8}U^{2}_{2}U_{4}+$&$\beta _{11},\beta _{8},\beta _{13},\beta _{14}=0$ \\
             &            &                      &$+\beta ^{*}_{13}U_{2}U_{3}\Phi _{3}+\beta ^{*}_{14}U^{2}_{2}\tau_{3}$&           \\  \hline
$S_{6}$      &$y_{18}$    &$0$                   &$\beta _{15}U_{2}U_{3}U_{4}+\beta _{16}U_{2}\Phi ^{2}_{3}+$&$\beta _{15},\beta _{16}=0$ \\
             &            &                      &$+\beta _{18}U^{3}_{2}y^{2}_{4}+\beta _{21}U^{3}_{2}y^{2}_{4}+$&$\beta _{18} = \beta _{21}$ \\
             &            &                      &$+ \beta _{15}U_{2}U_{3}\tau_{3}$           &                                     \\  \hline
$S_{6}$      &$y^{*}_{18}$&$y_{4}y_{8}$          &$\beta ^{*}_{15}U_{2}U_{3}U_{4}+\beta ^{*}_{16}U_{2}\Phi ^{2}_{3}+$&$\beta ^{*}_{15},\beta ^{*}_{16}=0$\\
             &            &                      &$+\beta ^{*}_{18}U^{3}_{2}y^{2}_{4}+\beta ^{*}_{21}U^{3}_{2}y^{2}_{4}+$&$\beta ^{*}_{18} = \beta ^{*}_{21}$\\
             &            &                      &$+ \beta ^{*}_{15}U_{2}U_{3}\tau_{3}+U_{2}\Phi ^{2}_{3}$ &                        \\  \hline
$S_{3,3}$    &$y_{18}$    &$0$                   &$\beta _{18}U^{3}_{2}y^{2}_{4}$             &$\beta _{18} = 0$                    \\  \hline
$S_{3,3}$    &$y^{*}_{18}$&$y_{4}y_{8}$          &$\beta ^{*}_{18}U^{3}_{2}y^{2}_{4} + U_{2}\Phi ^{2}_{3}$&$\beta ^{*}_{18} = 0$    \\  \hline
$S_{4}$      &$y_{18}$    &$0$                   &$\beta _{19}U^{2}_{2}U_{3}y^{2}_{4}$        &$\beta _{19} = 0$                    \\  \hline
$S_{4}$      &$y^{*}_{18}$&$0$                   &$\beta ^{*}_{19}U^{2}_{2}U_{3}y^{2}_{4}$    &$\beta ^{*}_{19} = 0$                \\  \hline
$S_{2,2,2}$  &$y_{18}$    &$y_{4}y_{8}$          &$U_{2}\Phi ^{2}_{3} + \beta _{20}U^{3}_{2}y^{2}_{4}$&$\beta _{20} = 0$            \\  \hline
$S_{2,2,2}$  &$y^{*}_{18}$&$0$                   &$\beta ^{*}_{20}U^{3}_{2}y^{2}_{4}$         &$\beta ^{*}_{20} = 0$                \\  \hline
\end{longtable}

Solving the linear system we get the following relations:
\[
d_{3}(y^{*}_{18}) = U_{3}U^{2}_{4} + U^{2}_{2}U_{5} +
U_{2}U_{3}\Phi _{7} + U_{2}\Phi _{3}\Phi _{6} + U_{2}U_{4}\Phi _{5},
\]
\[
d_{3}(y_{18}) = U_{3}U^{2}_{4}.
\]

7. Choose the element $y_{20}$ so that $\pi ^{2}_{0}(y_{20}$) = $c_{20}$
and so that:
\addtocounter{equation}{1}
\begin{align*}\label{eq:c32fvi}
d_{3}(y_{20}) &= \beta _{1}U^{2}_{2}\Phi _{9}+
\beta _{2}U_{2}U_{3}U_{5}+ \beta _{3}U_{2}\Phi _{3}\Phi _{7}+
\beta _{4}U_{2}U_{4}\Phi _{6}+ \beta _{5}U_{2}\Phi ^{2}_{5}+ \\
&+ \beta _{6}U^{2}_{3}\Phi _{7}+ \beta _{7}U_{3}\Phi _{3}\Phi _{6}+
\beta _{8}U_{3}U_{4}\Phi _{5}+ \beta _{9}\Phi ^{2}_{3}\Phi _{5}+\\
&+ \beta _{10}\Phi _{3}U^{2}_{4}+\beta _{11}U_{2}U_{3}[U_{2}y_{14}+
U_{3}y_{4}y_{8}+ \Phi _{3}(y_{10}+ y^{*}_{10})+\\
&+ \Phi _{5}y_{6}+ \Phi _{6}y_{4}] + \beta _{12}U^{2}_{3}[U_{2}y_{12}+
\Phi _{3}y_{8}+ U_{4}y_{6}]+\\
&+ \beta _{13}U_{2}U_{4}[U_{2}(y_{10}+ y^{*}_{10})+
\Phi _{3}y_{6}+U_{4}y_{4}]+\beta _{14}U_{3}\Phi _{3}[U_{2}(y_{10}+y^{*}_{10})+\\
&+ \Phi _{3}y_{6}+ U_{4}y_{4}]+\beta _{15}U_{3}\Phi _{3}[U_{2}y_{10}+
U_{3}y_{8}]+\beta _{16}U_{3}U_{4}[U_{2}y_{8} + \\
&+ U_{3}y_{6}] +\beta _{17}\Phi ^{2}_{3}[U_{2}y_{8}+ U_{3}y_{6}]+
\beta _{18}U_{2}\Phi _{6}[U_{2}y_{6}+ \tag{\theequation}\\
&+ U_{3}y_{4}] + \beta _{19}U_{3}\Phi _{5}[U_{2}y_{6}+ U_{3}y_{4}]+
\beta _{20}U_{4}\Phi _{3}[U_{2}y_{6}+ U_{3}y_{4}]+\\
&+ \beta _{21}U_{2}[U_{2}y_{6}+U_{3}y_{4}][U_{2}(y_{10}+y^{*}_{10})+
\Phi _{3}y_{6}+U_{4}y_{4}]+\\
&+ \beta _{22}U^{3}_{2}y^{2}_{8}+
\beta _{23}U^{2}_{2}\Phi _{5}y^{2}_{4}+
\beta _{24}U^{2}_{2}\Phi _{3}y^{2}_{6}+\\
&+ \beta _{25}U_{2}U_{3}U_{4}y^{2}_{4}+
\beta _{26}U_{2}\Phi ^{2}_{3}y^{2}_{4}+
\beta _{27}U^{2}_{3}\Phi _{3}y^{2}_{4}+\\
&+ \beta _{28}U^{2}_{2}[U_{2}y_{8}+U_{3}y_{6}]y^{2}_{4}+
\beta _{29}U_{2}U^{2}_{3}y^{2}_{6}+\beta _{30}U^{3}_{2}y^{4}_{4}.
\end{align*}
To determine the coefficients $\beta _{i}$ we  use operations
$S_{\omega }$.

\medskip
\begin{longtable}[c]{|c|c|c|c|c|}
\caption{Calculation of the action of the differential $d_3$ on
$y_{20}$       }\label{t3:LongTable} \\ \hline
Operation & &$S_{\omega }$   &  $S_{\omega }$ on & Relation among \\
$S_{\omega }$ & $y_{j}$ & on & of the right part & the elements $\beta _{i}$ \\
 & & $y_{j}$ & of the formula (\ref{eq:c32fvi}) & \\ \hline
                                                                                                          \endfirsthead                 \hline
\multicolumn{5}{|c|}{\slshape (continuation)}                                                                                        \\  \hline
                                                                                                          \endhead                      \hline
\multicolumn{5}{|r|}{\slshape to be continued }                                                                                 \\  \hline
                                                                                                          \endfoot                      \hline
                                                                                                          \endlastfoot
$S_{16}$     &$y_{20}$    & $0$                  &$\beta _{1}U^{3}_{2}$                       &$\beta _{1} = 0$                     \\  \hline
$S_{14}$     &$y_{20}$    & $y_{6}$              &$(\beta _{2}+\beta _{11})U^{2}_{2}U_{3}$    &$1 + \beta _{2} = \beta _{11}$       \\  \hline
$S_{7,7}$    &$y_{20}$    & $y_{6}$              &$\beta _{2}U^{2}_{2}U_{3}$                  &$\beta _{2} = 1$                     \\  \hline
$S_{12}$     &$y_{20}$    & $y_{8}$              &$(1+\beta _{6}+\beta _{12})U_{2}U^{2}_{3}+$ &$\beta _{6}=\beta _{12}, \beta _{3}= 0$ \\
             &            &                      &$+ \beta _{3}U^{2}_{2}\Phi _{3}$            &                                     \\  \hline
$S_{4,4,4}$  &$y_{20}$    & $y_{8}$              &$(\beta_{6}+\beta_{8}+\beta_{10}+\beta_{15}+$&$(\beta _{6}+\beta _{8}+\beta _{10}+\beta _{15}+$ \\
             &            &                      &$+\beta_{16})U_{2}U^{2}_{3}+\beta_{9}U^{2}_{2}\Phi_{3}$&$+\beta_{16})=1,\beta_{9}= 0$ \\  \hline
$S_{8,8}$    &$y_{20}$    & $0$                  &$\beta _{5}U^{3}_{2}$                       &$\beta _{5} = 0$                     \\  \hline
$S_{4,4,4,4}$&$y_{20}$    & $0$                  &$(\beta _{9}+\beta_{17}+\beta_{22})U^{3}_{2}$&$\beta _{9}+\beta _{17}+\beta _{22}= 0$ \\  \hline
$S_{3,3,3,3}$&$y_{20}$    & $y_{8}$              &$U_{2}U^{2}_{3}+(\beta_{4}+$                &$\beta _{4} = \beta _{10}$           \\
             &            &                      &$+\beta_{10})U^{2}_{2}\Phi_{3}$             &                                     \\  \hline
$S_{6,6}$    &$y_{20}$    & $0$                  &$(\beta _{6}+\beta _{8}+\beta _{12}+\beta _{16}+$&$\beta _{6}+ \beta _{8}+ \beta _{16}+ \beta _{19}+$ \\
             &            &                      &$+\beta _{19}+\beta _{29})U_{2}U^{2}_{3}+$  &$+ \beta _{12}+ \beta _{29}= 0$      \\
             &            &                      &$+(\beta _{4}+\beta _{10}+\beta _{13}+$     &$\beta _{4}+ \beta _{10}+\beta _{13}+\beta _{18}+$ \\
             &            &                      &$+\beta _{18}+\beta _{20}+\beta _{21}+$     &$+ \beta _{20}+ \beta _{21}=\beta _{24}$ \\
             &            &                      &$\beta _{24})U^{2}_{2}\Phi _{3}$            &                                     \\  \hline
$S_{10}$     &$y_{20}$    &$y_{10}+$             &$(1+\beta _{7}+\beta _{14}+\beta _{15})\times$&$\beta _{7}+ \beta _{14}+ \beta _{15}= 0$ \\
             &            &$+y^{*}_{10}$         &$\times U_{2}U_{3}\Phi _{3}+(\beta _{4}+$   &$\beta _{4}+ \beta _{13} = 1$        \\
             &            &                      &$+\beta _{13})U^{2}_{2}U_{4}+\beta _{6}U^{3}_{3}+$&$\beta _{6} = 0$               \\
             &            &                      &$+(\beta _{21}+\beta _{18})U^{2}_{2}\tau_{3}$&$\beta _{21} = \beta _{18}$         \\  \hline
$S_{5,5}$    &$y_{20}$    &$y_{10}+$             &$(1+\beta _{7}+\beta _{15})U_{2}U_{3}\times$&$\beta _{7} + \beta _{15} = 0$       \\
             &            &$+y^{*}_{10}$         &$\times \Phi _{3}+\beta _{4}U^{2}_{2}U_{4}+$&$\beta _{4} = 1$                     \\
             &            &                      &$+\beta _{18}U^{2}_{2}\tau_{3}$             &$\beta _{18} = 0$                    \\  \hline
$S_{8}$      &$y_{20}$    &$0$                   &$\beta _{7}U^{2}_{3}\Phi _{3}+\beta _{23}U^{2}_{2} \times$ & $\beta _{7} = 0 \beta _{23}= 0$ \\
             &            &                      &$\times U_{3}y^{2}_{4}+\beta _{19}U_{2}U_{3}\tau_{3}+$& $\beta _{19}= 0 \beta _{8} = 0$ \\
             &            &                      &$+\beta _{8}U_{2}U_{3}U_{4}$                &                                     \\  \hline
$S_{4,4}$    &$y_{20}$    &$y_{12}$              &$U_{2}U_{3}U_{4}+ \beta _{17}U_{2}\Phi ^{2}_{3}+$ & $\beta _{17} = \beta _{20}$   \\
             &            &                      &$+U^{2}_{3}\Phi _{3}+ \beta _{20}U_{2}U_{3}\tau_{3}+$ & $\beta _{26} = 0$         \\
             &            &                      &$+\beta _{17}U^{2}_{2}\tau_{5}+\beta _{26}U^{3}_{2}y^{2}_{4}$ &                   \\  \hline
$S_{6}$      &$y_{20}$    &$y_{14}+$             &$U^{2}_{2}\Phi _{6}+ \beta _{20}U_{2}\Phi _{3}\tau_{3}+$&$\beta _{17},\beta _{20} = 0$        \\
             &            &$+y_{6}y^{2}_{4}$     &$+U_{2}U_{3}\Phi _{5}+ \beta _{17}U_{3}\Phi ^{2}_{3}+$&$\beta _{25}+\beta _{28}=1$\\
             &            &                      &$+(\beta _{25}+\beta _{28})U^{2}_{2}U_{3}y^{2}_{4}+$&                             \\
             &            &                      &$+(1 + \beta _{20})U_{2}\Phi _{3}U_{4}$     &                                     \\  \hline
$S_{3,3}$    &$y_{20}$    &$y_{14}+$             &$U^{2}_{2}\Phi _{6}+ \beta _{25}U^{2}_{2}U_{3}y^{2}_{4}+$&$\beta _{25} = 1$                   \\
             &            &$+y_{6}y^{2}_{4}$     &$+U_{2}U_{3}\Phi _{5}+ U_{2}\Phi _{3}U_{4}$ &                                     \\  \hline
$S_{4}$      &$y_{20}$    &$y_{8}y^{2}_{4}$      &$(1 + \beta _{27})U_{2}U^{2}_{3}y^{2}_{4}$  &$\beta _{27} = 0$                    \\  \hline
\end{longtable}

Finally applying the operation $S_{2,2,2,2,2,2,2,2}$ to the equality (vi)
we obtain 0 = $\beta _{30}U^{3}_{2}$, from where it follows
$\beta _{30} = 0$. Solving the linear system on the coefficients $\beta_i$
we obtain:
\[
d_{3}(y_{20}) = U_{2}U_{3}U_{5}+ U_{2}U_{4}\Phi _{6}+
\Phi _{3}U^{2}_{4}+ U_{2}U_{3}U_{4}y^{2}_{4}.
\]

8. Choose elements $y_{22},y^{*}_{22} \in  E^{0,88}_{2}$ so that
$\pi ^{2}_{0}(y_{22})= c_{22}$,
\[
\pi ^{2}_{0}(y^{*}_{22})= c^{2}_{11}+ c_{14}(c^{2}_{4}+ c^{4}_{2})+
c_{10}(c_{12}+ c^{2}_{2}c^{2}_{4}+ c^{6}_{2})+
c_{6}(c^{4}_{2}c^{2}_{4}+c^{2}_{8}+c^{8}_{2}),
\]
and so that:
\addtocounter{equation}{1}
\begin{align*}\label{eq:c32fvii}
d_{3}(y_{22}) &= \beta _{1}U^{2}_{2}\Phi _{10} +
\beta _{2}U_{2}U_{3}\Phi _{9} + \beta _{3}U_{2}\Phi _{3}U_{5} +
\beta _{4}U_{2}U_{4}\Phi _{7} + \beta _{5}U_{2}\Phi _{5}\Phi _{6} +\\
&+ \beta _{6}U^{2}_{3}U_{5} + \beta _{7}U_{3}\Phi _{3}\Phi _{7} +
\beta _{8}U_{3}U_{4}\Phi _{6} + \beta _{9}U_{3}\Phi ^{2}_{5} +
\beta _{10}\Phi ^{2}_{3}\Phi _{6} +\\
&+ \beta _{11}\Phi _{3}U_{4}\Phi _{5} + \beta _{12}U^{3}_{4} +
\beta _{13}U^{2}_{2}[U_{2}(y_{18}+ y^{*}_{18}) + \Phi _{3}y_{14}+
U_{4}y_{4}y_{8} +\\
&+ \Phi _{5}(y_{10} + y^{*}_{10}) + \Phi _{7}y_{6} + U_{5}y_{4}] +
\beta _{14}U_{2}\Phi _{3}[U_{2}y_{14}+ U_{3}y_{4}y_{8}+\\
&+ \Phi _{3}(y_{10}+ y^{*}_{10})+ \Phi _{5}y_{6}+ \Phi _{6}y_{4}] +
\beta _{15}U^{2}_{3}[U_{3}y_{12}+\Phi _{3}y_{10}+ U_{4}y_{8}] +\\
&+ \beta _{16}U_{2}U_{4}[U_{2}y_{12}+U_{3}(y_{10}+ y^{*}_{10})]+
\beta _{17}U_{3}\Phi _{3}[U_{2}y_{12}+ U_{3}(y_{10}+y^{*}_{10})]+\\
&+ \beta _{18}U_{2}\Phi _{5}[U_{2}(y_{10}+ y^{*}_{10}) +
\Phi _{3}y_{6}+ U_{4}y_{4}]+ \beta _{19}U_{3}U_{4}[U_{2}y_{10}+
U_{3}y_{8}]+\\
&+ \beta _{20}U_{3}U_{4}[U_{2}(y_{10}+y^{*}_{10}) + \Phi _{3}y_{6} +
U_{4}y_{4}]+ \beta _{21}\Phi ^{2}_{3}[U_{2}y_{10}+ U_{3}y_{8}]+\\
&+ \beta _{22}\Phi ^{2}_{3}[U_{2}(y_{10}+ y^{*}_{10}) + \Phi _{3}y_{6}+
U_{4}y_{4}]+ \beta _{23}\Phi _{3}U_{4}[U_{2}y_{10}+ U_{3}y_{8}] \tag{\theequation}\\
&+ \beta _{24}U_{2}\Phi _{7}[U_{2}y_{6}+ U_{3}y_{4}]+
\beta _{25}U_{3}\Phi _{6}[U_{2}y_{6}+U_{3}y_{4}]+
\beta _{26}\Phi _{3}\Phi _{5}[U_{2}y_{6}+\\
&+ U_{3}y_{4}]+\beta _{27}U^{2}_{4}[U_{2}y_{6}+ U_{3}y_{4}]+
\beta _{28}U_{2}[U_{2}y_{6}+ U_{3}y_{4}][U_{2}y_{12}+
\Phi _{3}y_{8}+\\
&+ U_{4}y_{6}]+\beta _{29}U_{3}[U_{2}y_{6}+ U_{3}y_{4}][U_{2}y_{10}+
U_{3}y_{8}] + \beta _{30}U_{3}[U_{2}y_{6}+ U_{3}y_{4}]\times \\
&\times [U_{2}(y_{10}+y^{*}_{10})+ \Phi _{3}y_{6}+ U_{4}y_{4}]+
\beta _{31}\Phi _{3}[U_{2}y_{6}+U_{3}y_{4}][U_{2}y_{8}+ U_{3}y_{6}]+\\
&+ \beta _{32}U^{2}_{2}U_{3}y^{2}_{8} +
\beta _{33}U^{2}_{2}U_{4}y^{2}_{6}+
\beta _{34}U_{2}U_{3}\Phi _{3}y^{2}_{6}+ \beta _{35}U^{3}_{3}y^{2}_{6}+
\beta _{36}U^{2}_{2}[U_{2}y_{6}+\\
&+ U_{3}y_{4}]+ \beta _{37}U^{2}_{2}\Phi _{6}y^{2}_{4}+
\beta _{38}U_{2}U_{3}\Phi _{5}y^{2}_{4}+
\beta _{39}U_{2}\Phi _{3}U_{4}y^{2}_{4}+
\beta _{40}U^{2}_{3}U_{4}y^{2}_{4}+\\
&+ \beta _{41}U_{3}\Phi ^{2}_{3}y^{2}_{4}+
\beta _{42}U^{2}_{2}[U_{2}(y_{10}+ y^{*}_{10})+
\Phi _{3}y_{6}+ U_{4}y_{4}]y^{2}_{4}+\\
&+ \beta _{43}U^{2}_{2}U_{3}y^{4}_{4}+ \beta _{44}U^{2}_{3}[U_{2}y_{6}+
U_{3}y_{4}]y^{2}_{4}.
\end{align*}
For the image of the differential $d_{3}$ on the element $y^{*}_{22}$
there is analogous equality with the change of $\beta _{i}$ on
$\beta ^{*}_{i}$). Let us determine the unknown coefficients
applying operations $S_{\omega }$ to equality (vii). For almost all
operations except $S_{8,8}$, $S_{4,4,4,4}$, $S_{6,6}$, $S_{4,4}$ and
$S_{2,2,2,2,2,2,2,2}$ their action on $y_{22}$ and $y^{*}_{22}$ is
the same. Analogously the action of almost all operations except mentioned
on the right part of the formula (vii). Because of this fact only the
action of these operations on the element $y_{22}$ will be given in the
Table and for the relations we give only for $\beta _{i}$, except the
mentioned operations where two cases are given.

\medskip
\begin{longtable}[c]{|c|c|c|c|c|}
\caption{Calculation of the action of the differential $d_3$ on
$y_{22}$ and $y^{*}_{22}$       }\label{t3:LongTable} \\ \hline
Operation & &$S_{\omega }$   &  $S_{\omega }$ on & Relation among \\
$S_{\omega }$ & $y_{j}$ & on & of the right part & the elements $\beta _{i}$ \\
 & & $y_{j}$ & of the formula (\ref{eq:c32fvii}) & \\ \hline
                                                                                                          \endfirsthead                 \hline
\multicolumn{5}{|c|}{\slshape (continuation)}                                                                                        \\  \hline
                                                                                                          \endhead                      \hline
\multicolumn{5}{|r|}{\slshape to be continued }                                                                                 \\  \hline
                                                                                                          \endfoot                      \hline
                                                                                                          \endlastfoot
$S_{18}$     &$y_{22}$    &$0$                   &$(\beta _{1}+ \beta _{13})U^{3}_{2}$        &$\beta _{1} = \beta _{13}$           \\  \hline
$S_{9,9}$    &$y_{22}$    &$0$                   &$\beta _{1}U^{3}_{2}$                       &$\beta _{1} = 0$                     \\  \hline
$S_{6,6,6}$  &$y_{22}$    &$y_{4}$               &$ (\beta _{4}+ \beta _{12}+ \beta _{16}+$   &$\beta _{4}+ \beta _{12}+ \beta _{16}+$ \\
             &            &                      &$+\beta _{27}+ \beta _{28}+ \beta _{33}+$   &$+\beta _{27}+ \beta _{28}+ \beta _{33}+$ \\
             &            &                      &$+ \beta _{24}+ \beta _{36})U^{3}_{2}$      &$+ \beta _{24}+ \beta _{36} = 1$     \\  \hline
$S_{16}$     &$y_{22}$    &$0$                   &$\beta _{2}U^{3}_{2}$                       &$\beta _{2} = 0$                     \\  \hline
$S_{8,8}$    &$y_{22}$    &$0$                   &$(\beta _{2}+ \beta _{5}+ \beta _{9})U^{2}_{2}U_{3}$ &$\beta _{2}+ \beta _{5}+ \beta _{9} = 0$ \\  \hline
$S_{8,8}$    &$y^{*}_{22}$&$y_{6}$               &$(\beta ^{*}_{2}+ \beta ^{*}_{5}+ \beta ^{*}_{9})U^{2}_{2}U_{3}$ &$\beta ^{*}_{2}+ \beta ^{*}_{5}+ \beta ^{*}_{9} = 1$ \\  \hline
$S_{7,7}$    &$y_{22}$    &$y_{8}$               &$U_{2}U^{2}_{3}+ \beta _{3}U^{2}_{2}\Phi _{3}$ &$\beta _{3} = 0$                  \\  \hline
$S_{4,4,4,4}$&$y_{22}$    &$0$                   &$(\beta _{2}+ \beta _{4}+ \beta _{7}+ \beta _{9}+$ &$\beta _{2}+ \beta _{4}+ \beta _{7}+ \beta _{9}+$ \\
             &            &                      &$+ \beta _{11}+ \beta _{16}+ \beta _{17}+$  &$+ \beta _{11}+ \beta _{16}+ \beta _{17}+$ \\
             &            &                      &$+\beta _{21}+\beta _{23}+\beta _{32})U^{2}_{2}U_{3}$ &$+\beta _{21}+\beta _{23}+\beta _{32} = 0$ \\  \hline
$S_{4,4,4,4}$&$y^{*}_{22}$&$y_{6}$               &$(\beta ^{*}_{2}+ \beta ^{*}_{4}+ \beta ^{*}_{7}+ \beta ^{*}_{9}+$ &$(\beta ^{*}_{2}+ \beta ^{*}_{4}+ \beta ^{*}_{7}+ \beta ^{*}_{9}+$ \\
             &            &                      &$+ \beta ^{*}_{11}+ \beta ^{*}_{16}+ \beta ^{*}_{17}+$ &$+ \beta ^{*}_{11}+ \beta ^{*}_{16}+ \beta ^{*}_{17}+$ \\
             &            &                      &$+\beta ^{*}_{21}+\beta ^{*}_{23}+\beta ^{*}_{32})U^{2}_{2}U_{3}$ &$+\beta ^{*}_{21}+\beta ^{*}_{23}+\beta ^{*}_{32} = 1$ \\  \hline
$S_{14}$     &$y_{22}$    &$y_{8}$               &$(\beta _{2}+ \beta _{6})U_{2}U^{2}_{3} +$  &$\beta _{2} = \beta _{6}$            \\
             &            &                      &$+ (\beta _{3}+ \beta _{14})U^{2}_{2}\Phi _{3}$ &$\beta _{3} = \beta _{14}$       \\  \hline
$S_{12}$     &$y_{22}$    &$y_{10}$              &$(\beta _{4}+ \beta _{16})U^{2}_{2}U_{4}+$  &$\beta _{4} = \beta _{16}$           \\
             &            &                      &$+ (\beta _{2}+ \beta _{3}+ \beta _{7} +$   &$\beta _{2} + \beta _{3} =$          \\
             &            &                      &$+ \beta _{17})U_{2}U_{3}\Phi _{3} +$       &$ =\beta _{7}+ \beta _{17}$          \\
             &            &                      &$+ (\beta _{6}+ \beta _{15})U^{3}_{3} +$    &$\beta _{6} = \beta _{15}$           \\
             &            &                      &$+ (\beta_{24}+\beta_{28})U^{2}_{2}\tau_{3}$&$\beta _{24}= \beta _{28}$           \\  \hline
$S_{3,3,3,3}$&$y_{22}$    &$y^{*}_{10}$          &$U^{2}_{2}U_{4}+\beta_{8}U_{2}U_{3}\Phi_{3}+$&$\beta _{8} = 1$                    \\
             &            &                      &$+(\beta_{16}+\beta_{27})U^{2}_{2}\tau_{3}+$&$\beta _{16}= \beta _{27}$           \\
             &            &                      &$+U^{3}_{3}$                                &                                     \\  \hline
$S_{6,6}$    &$y_{22}$    &$y_{10} +$            &$(\beta_{12}+\beta_{24}+\beta_{28}+$        &$\beta _{12}+\beta _{24}+\beta _{28}+$\\
             &            &$+y^{*}_{10}$         &$+\beta_{33})U^{2}_{2}U_{4}+(\beta _{5}+$   &$+\beta _{33}=1, \beta _{5}+$        \\
             &            &                      &$+\beta _{7}+\beta _{8}+\beta _{11}+$       &$+\beta _{7}+\beta _{8}+\beta _{11}+$\\
             &            &                      &$+\beta _{18}+\beta _{20}+\beta _{23}+$     &$+\beta _{18}+\beta _{20}+\beta _{23}+$\\
             &            &                      &$+\beta _{25}+\beta _{26}+\beta _{30}+$     &$+\beta _{25}+\beta _{26}+\beta _{30}+$ \\
             &            &                      &$+\beta _{31}+\beta_{34})U_{2}U_{3}\Phi_{3}+$&$+\beta _{31}+\beta _{34} = 1$      \\
             &            &                      &$+(\beta_{9}+\beta_{35})U^{3}_{3}+$         &$\beta _{9} = \beta _{35}$           \\
             &            &                      &$+(\beta_{16}+\beta_{24}+\beta _{27}+$      &$\beta _{16}+\beta _{24}+\beta _{27}+$\\
             &            &                      &$+\beta _{28}+\beta _{36})U^{2}_{2}\tau_{3}$&$+\beta _{28}+\beta _{36}=0$         \\  \hline
$S_{6,6}$    &$y^{*}_{22}$&$y_{10} +$            &$(\beta^{*}_{12}+\beta^{*}_{24}+\beta^{*}_{28}+$        &$\beta^{*} _{12}+\beta^{*} _{24}+\beta^{*} _{28}+$\\
             &            &$+y^{*}_{10}$         &$+\beta^{*}_{33})U^{2}_{2}U_{4}+(\beta^{*} _{5}+$   &$+\beta^{*} _{33}=1, \beta^{*} _{5}+$        \\
             &            &                      &$+\beta^{*} _{7}+\beta^{*} _{8}+\beta^{*} _{11}+$       &$+\beta^{*} _{7}+\beta^{*} _{8}+\beta^{*} _{11}+$\\
             &            &                      &$+\beta^{*} _{18}+\beta^{*} _{20}+\beta^{*} _{23}+$     &$+\beta^{*} _{18}+\beta^{*} _{20}+\beta^{*} _{23}+$\\
             &            &                      &$+\beta^{*} _{25}+\beta^{*} _{26}+\beta^{*} _{30}+$     &$+\beta^{*} _{25}+\beta^{*} _{26}+\beta^{*} _{30}+$ \\
             &            &                      &$+\beta^{*} _{31}+\beta^{*}_{34})U_{2}U_{3}\Phi_{3}+$&$+\beta^{*} _{31}+\beta^{*} _{34} = 1$      \\
             &            &                      &$+(\beta^{*}_{9}+\beta^{*}_{35}+1)U^{3}_{3}+$       &$\beta^{*} _{9} = \beta^{*} _{35}+1$         \\
             &            &                      &$+(\beta^{*}_{16}+\beta^{*}_{24}+\beta^{*} _{27}+$      &$\beta^{*} _{16}+\beta^{*} _{24}+\beta^{*} _{27}+$\\
             &            &                      &$+\beta^{*} _{28}+\beta^{*} _{36})U^{2}_{2}\tau_{3}$&$+\beta^{*} _{28}+\beta^{*} _{36}=0$         \\  \hline
$S_{4,4,4}$  &$y_{22}$    &$y_{10}$              &$(\beta _{4}+\beta _{5}+\beta _{10}+$       &$\beta _{4}+\beta _{5}+\beta _{10}+$ \\
             &            &                      &$+\beta _{11}+\beta _{16}+\beta _{23})\times$&$+\beta _{11}+\beta _{16}+\beta _{23}=0$ \\
             &            &                      &$\times U^{2}_{2}U_{4}+(\beta _{4}+\beta _{17}+$&$ \beta _{19} = 0$               \\
             &            &                      &$+\beta _{23})U_{2}U_{3}\Phi _{3}+$         &$\beta_{4}+\beta_{17}+\beta_{23}=0$  \\
             &            &                      &$+(\beta _{19}+1)U^{3}_{3}+(\beta _{18}+$   &$\beta _{18}+\beta _{22}+\beta _{24}+$ \\
             &            &                      &$+\beta _{22}+\beta _{24}+\beta _{26}+$     &$+\beta _{26}+\beta _{37}=0$         \\
             &            &                      &$+\beta _{37})U^{2}_{2}\tau_{3}$            &                                     \\  \hline
$S_{10}$     &$y_{22}$    &$y_{12}$              &$(\beta _{4}+\beta _{16}+\beta _{19}+$      &$\beta _{4}+\beta _{16}+\beta _{19}+$\\
             &            &                      &$+\beta _{20}+1)U_{2}U_{3}U_{4}+$           &$+\beta _{20} = 0 $                 \\
             &            &                      &$+(\beta _{7}+1+\beta_{17})U^{2}_{3}\Phi_{3}+$&$\beta _{7}=\beta _{17}$            \\
             &            &                      &$+(\beta_{5}+\beta_{18})U^{2}_{2}\Phi_{5}+$ &$\beta _{5} = \beta _{18}$           \\
             &            &                      &$+U_{2}\Phi^{2}_{3}(\beta_{10}+\beta_{21}+$ &$\beta_{10}+\beta_{21}+\beta_{22}=0$ \\
             &            &                      &$+\beta_{22})+(\beta _{24}+\beta _{25}+$    &$\beta _{24}+\beta_{25}+\beta _{29}+$\\
             &            &                      &$+\beta _{29}+\beta_{30})U_{2}U_{3}\tau_{3}+$&$+\beta _{30}=0$                    \\
             &            &                      &$+(\beta_{37}+\beta_{42})U^{3}_{2}y^{2}_{4}$&$\beta _{37} = \beta _{42}$          \\  \hline
$S_{5,5}$    &$y_{22}$    &$y_{12}$              &$(1+\beta _{19})U_{2}U_{3}U_{4}+$           &$\beta _{5} = 0$                     \\
             &            &                      &$+U^{2}_{3}\Phi_{3}+\beta_{5}U^{2}_{2}\Phi_{5}+$&$\beta _{19} = 0$                \\
             &            &                      &$+(\beta_{10}+\beta_{21})U_{2}\Phi^{2}_{3}+$&$\beta _{10} = \beta _{21}$          \\
             &            &                      &$+(\beta_{25}+\beta_{29})U_{2}U_{3}\tau_{3}+$&$\beta _{37} = 0$                   \\
             &            &                      &$ + \beta _{37}U^{3}_{2}y^{2}_{4} +$        &$\beta _{25} = \beta _{29}$          \\  \hline
$S_{8}$      &$y_{22}$    &$0$                   &$\beta _{5}U^{2}_{2}\Phi _{6}+(\beta _{11}+$&$\beta _{11} = \beta _{4}$           \\
             &            &                      &$+\beta _{4})U_{2}U_{4}\Phi_{3}+(\beta_{7}+$&$\beta_{18} = 0$                     \\
             &            &                      &$+\beta_{10})U_{3}\Phi^{2}_{3}+\beta_{6}\times$&$\beta _{10} = \beta _{7}$        \\
             &            &                      &$\times U_{2}U_{3}\Phi_{5}+\beta_{18}U^{2}_{2}\tau^{*}_{7}+$&$\beta_{24}=\beta_{26}$ \\
             &            &                      &$+(\beta_{24}+\beta_{26})U_{2}\Phi_{3}\tau_{3}+$&$\beta _{37}= \beta _{38}$       \\
             &            &                      &$+(\beta_{37}+\beta_{38})U^{2}_{2}U_{3}y^{2}_{4}+$&$\beta_{5} = 0 $               \\
             &            &                      &$+\beta _{25}U^{2}_{3}\tau_{3}+$            &$\beta _{6} = 0 \beta _{25} = 0$     \\  \hline
$S_{4,4}$    &$y_{22}$    &$0$                   &$\beta_{10}U^{2}_{2}\Phi_{6}+\beta_{21}U^{2}_{2}\tau_{7}+$&                       \\
             &            &                      &$(\beta _{7}+\beta _{21})U_{3}\Phi^{2}_{3}+$&                                     \\
             &            &                      &$ +\beta _{23}U_{2}U_{4}\Phi _{3}+$         &$\beta _{7} = \beta _{41}$           \\
             &            &                      &$+\beta _{7}U_{2}U_{3}\Phi _{5}+$           &                                     \\
             &            &                      &$+\beta _{23}U_{2}U_{3}\tau_{5}+$           &$\beta _{31} = 0$                    \\
             &            &                      &$+\beta _{41}U^{2}_{2}U_{3}y^{2}_{4}+$      &                                     \\
             &            &                      &$+\beta _{31}U_{2}\Phi _{3}\tau_{3}$        &                                     \\  \hline
$S_{6}$      &$y_{22}$    &$y_{16}+$             &$(\beta _{4}+\beta_{24})U^{2}_{2}\Phi _{7}+$&$\beta _{4} + \beta _{24} =$         \\
             &            &$+y_{6}y_{10}$        &$+(\beta _{25}+1)U_{2}U_{3}\Phi _{6}+ $     &$= \beta _{11}+ \beta _{26} =$       \\
             &            &                      &$+U_{2}\Phi_{3}\Phi_{5}(\beta_{11}+\beta_{26})+$&$= \beta _{16} = \beta _{39}$    \\
             &            &                      &$+(1+\beta _{27}+\beta _{4})U_{2}U^{2}_{4}+$&                                     \\
             &            &                      &$+U_{3}\Phi _{3}U_{4}(1+\beta _{7}+$        &$\beta _{40} + \beta _{44} = 1$      \\
             &            &                      &$+\beta_{11}+\beta_{23})+U^{2}_{3}\Phi_{5}+$&$\beta _{17}+ \beta _{26} +$         \\
             &            &                      &$+\beta _{10}\Phi ^{3}_{3}+\beta _{30}U_{2}U_{3}\tau^{*}_{7}+$&$+ \beta _{30} = \beta _{10}$ \\
             &            &                      &$+\beta _{16}U^{2}_{2}\tau^{*}_{9}+\beta _{28}U^{2}_{2}\tau_{9}+$&$\beta _{33} = 0$ \\
             &            &                      &$+\beta _{33}U^{3}_{2}y^{2}_{6}+U_{2}U_{4}\tau_{3}\times $&$\beta _{28} = 0$      \\
             &            &                      &$\times (\beta_{16}+\beta_{24}+\beta _{28})+$&                                    \\
             &            &                      &$+\beta_{23}U_{2}\Phi_{3}\tau_{5}+(\beta_{17}+$&                                  \\
             &            &                      &$+\beta_{26}+\beta_{30})U_{3}\Phi_{3}\tau_{3}+$&                                  \\
             &            &                      &$+(\beta_{40}+\beta_{44})U_{2}U^{2}_{3}y^{2}_{4}+$&                               \\
             &            &                      &$+\beta _{39}U^{2}_{2}\Phi _{3}y^{2}_{4}$   &                                     \\
             &            &                      &$+\beta _{29}U_{2}U_{3}\tau_{7}$            &                                     \\  \hline
$S_{3,3}$    &$y_{22}$    &$y_{16}+$             &$U_{2}U_{3}\Phi _{6}+U^{2}_{3}\Phi _{5}+$   &$\beta _{40} = 1$                    \\
             &            &$+y_{6}y_{10}$        &$+U_{3}\Phi_{3}U_{4}+\beta_{10}\Phi^{3}_{3}+$&                                    \\
             &            &                      &$+\beta _{23}U_{2}\Phi _{3}\tau_{5}+$       &                                     \\
             &            &                      &$+\beta _{17}U_{3}\Phi _{3}\tau_{3}+$       &                                     \\
             &            &                      &$++\beta_{40}U_{2}U^{2}_{3}y^{2}_{4}$       &                                     \\
             &            &                      &$+U_{2}U^{2}_{4}$                           &                                     \\  \hline
$S_{4}$      &$y_{22}$    &$y_{10}y^{2}_{4}$     &$\beta _{7}(U_{2}U_{3}\Phi _{7}+$           &$\beta _{7} = 0$                     \\
             &            &                      &$+U_{3}\Phi_{3}\Phi_{5})+U^{3}_{3}y^{2}_{4}+$&$\beta _{17}= 0$                    \\
             &            &                      &$+\beta _{10}\Phi ^{2}_{3}U_{4}+$           &$\beta _{10}= 0$                     \\
             &            &                      &$+\beta _{23}(U_{2}U_{4}+$                  &$\beta _{23}= 0$                     \\
             &            &                      &$+U_{3}\Phi _{3})\tau_{5}+$                 &                                     \\
             &            &                      &$\beta _{17}U_{2}U_{3}\tau^{*}_{9}$         &                                     \\  \hline
\end{longtable}

Applying the operation $S_{2,2,2,2,2,2,2,2}$ to the considering
equality we get the relation: $\beta _{43}U^{2}_{2}U_{3} = 0$, or
$\beta _{43}= 0$. Applying the operation $S_{3,3,3,3,3,3}$ we get
the relation: $d_{3}y_{4}=\beta _{12}U^{3}_{2}$, or $\beta _{12}= 1$.
Solving the corresponding linear system we have:
\begin{align*}
d_{3}(y_{22}) &= U^{2}_{3}U_{5} + U_{3}U_{4}\Phi _{6} + U^{3}_{4} +
U^{2}_{3}U_{4}y^{2}_{4},\\
d_{3}(y^{*}_{22}) &= U^{2}_{3}U_{5} + U_{3}U_{4}\Phi _{6} +
U^{3}_{4} + U^{2}_{3}U_{4}y^{2}_{4} + U_{3}\Phi ^{2}_{5}.
\end{align*}
9. Choose elements $y_{26},y^{*}_{26} \in  E^{0,104}_{2}$ so that
$\pi^{2}_{0}(y_{26})= c_{26} + c_{10}c^{8}_{2}$,
$\pi ^{2}_{0}(y^{*}_{26}) = c^{2}_{13}+ c^{2}_{2}c^{2}_{11} +
c^{2}_{4}c^{2}_{9} + c^{2}_{5}c^{2}_{8}$
and also such that:
\addtocounter{equation}{1}
\begin{align*}\label{eq:c32fviii}
d_{3}(y_{26}) &= \beta _{1}U^{2}_{2}\Phi _{12} +
\beta _{2}U_{2}U_{3}\Phi _{11} + \beta _{3}U_{2}\Phi _{3}\Phi _{10} +
\beta _{4}U^{2}_{3}\Phi _{10}+ \beta _{5}U_{2}U_{4}\Phi _{9}+\\
&+ \beta _{6}U_{3}\Phi _{3}\Phi _{9}+ \beta _{7}U_{2}\Phi _{5}U_{5}+
\beta _{8}U_{3}U_{4}U_{5}+ \beta _{9}\Phi ^{2}_{3}U_{5}+
\beta _{10}U_{2}\Phi _{6}\Phi _{7}+ \\
&+ \beta _{11}U_{3}\Phi _{5}\Phi _{7}+
\beta _{12}\Phi _{3}U_{4}\Phi _{7}+ \beta _{13}U_{3}\Phi ^{2}_{6}+
\beta _{14}\Phi _{3}\Phi _{5}\Phi _{6}+ \beta _{15}U^{2}_{4}\Phi _{6} + \\
&+ \beta _{16}U_{4}\Phi ^{2}_{5} +
\beta _{17}U_{2}\Phi _{3}[U_{2}(y_{18}+ y^{*}_{18})+
\Phi _{3}y_{14}+ U_{4}y_{4}y_{8}+ \Phi _{5}(y_{10}+ \\
&+ y^{*}_{10})+ \Phi _{7}y_{6}+ U_{5}y_{4}]+
\beta _{19}U^{2}_{3}[U_{2}(y_{18}+ y^{*}_{18})+
\Phi _{3}y_{14}+ U_{4}y_{4}y_{8}+ \\
&+ \Phi _{5}(y_{10}+ y^{*}_{10})+ \Phi _{7}y_{6}+ U_{5}y_{4}] +
\beta _{18}U^{2}_{3}[U_{2}y_{18}+ U_{3}y_{16}]+ \\
&+ \beta _{20}U_{2}\Phi _{5}[U_{2}y_{14}+ U_{3}y_{4}y_{8}+
\Phi _{3}(y_{10}+ y^{*}_{10}) + \Phi _{5}y_{6}+ \Phi _{6}y_{4}]+ \\
&+ \beta _{21}U_{3}\Phi _{3}[U_{2}(y_{10}y_{6}+ y_{8}y^{2}_{4})+
U_{3}y_{14}]+ \beta _{22}U_{3}U_{4}[U_{3}y_{12}+ \Phi _{3}y_{10}+\\
&+ U_{4}y_{8}]+ \beta _{23}U_{3}U_{4}[U_{2}y_{14}+ U_{3}y_{4}y_{8} +
\Phi _{3}(y_{10}+ y^{*}_{10}) + \Phi _{5}y_{6} +\Phi _{6}y_{4}] +\\
&+ \beta _{24}U_{3}\Phi _{6}[U_{2}y_{10}+ U_{3}y_{8}] +
\beta _{25}U_{3}\Phi _{6}[U_{2}(y_{10}+ y^{*}_{10})+ \Phi _{3}y_{6} +\\
&+ U_{4}y_{4}] + \beta _{26}\Phi _{3}\Phi _{5}[U_{2}y_{10}+
U_{3}y_{8}] + \beta _{27}\Phi _{3}\Phi _{5}[U_{2}(y_{10}+ y^{*}_{10})+ \\
&+ \Phi _{3}y_{6} +U_{4}y_{4}]+ \beta _{28}U^{2}_{4}[U_{2}y_{10}+
U_{3}y_{8}]+ \beta _{29}U^{2}_{4}[U_{2}(y_{10}+ y^{*}_{10})+ \\
&+ \Phi _{3}y_{6}+  U_{4}y_{4}]+
\beta _{30}\Phi _{3}\Phi _{6}[U_{2}y_{8}+U_{3}y_{6}]+
\beta _{31}U_{4}\Phi _{5}[U_{2}y_{8}+U_{3}y_{6}]+\\
&+ \beta _{32}U_{2}\Phi _{9}[U_{2}y_{6}+ U_{3}y_{4}]+
\beta _{33}U_{3}U_{5}[U_{2}y_{6}+  U_{3}y_{4}]+
\beta _{34}U_{4}\Phi _{6}[U_{2}y_{6}+\\
&+ U_{3}y_{4}]+ \beta _{35}U_{2}[U_{2}y_{6}+
U_{3}y_{4}][U_{2}y_{16} +  U_{3}(y_{14}+ y_{4}y_{10}+
y_{6}(y_{8}+ y^{2}_{4})) + \\
&+ \Phi _{3}y_{12}+ U_{4}(y_{10}+ y^{*}_{10})] +
\beta _{36}U_{2}(U_{2}y_{8}+  U_{3}y_{6})[U_{2}y_{14}+
U_{3}y_{4}y_{8}+ \\
&+ \Phi _{3}(y_{10}+ y^{*}_{10})+ \Phi _{5}y_{6}+ \Phi _{6}y_{4}]+
\beta _{37}U_{2}[U_{2}y_{10}+  U_{3}y_{8}][U_{2}y_{12}+ \\
&+ U_{3}(y_{10}+ y^{*}_{10})]+
\beta _{38}U_{2}[U_{2}(y_{10}+y^{*}_{10})+ \Phi _{3}y_{6}+
U_{4}y_{4}][U_{2}y_{12}+  \\
&+ U_{3}(y_{10}+ y^{*}_{10})]+ \beta _{39}U_{3}[U_{2}y_{6}+
U_{3}y_{4}][U_{3}y_{12} + \Phi _{3}y_{10}+ U_{4}y_{8}]+ \\
&+ \beta _{40}U_{3}[U_{2}y_{6}+ U_{3}y_{4}][U_{2}y_{14}+
U_{3}y_{4}y_{8}+ \Phi _{3}(y_{10}+y^{*}_{10})+ \Phi _{5}y_{6}+ \\
&+ \Phi _{6}y_{4}] +
\beta _{41}U_{3}[U_{2}y_{8}+ U_{3}y_{6}][U_{2}y_{12}+
\Phi _{3}y_{8}+ U_{4}y_{6}] + \tag{\theequation}\\
&+ \beta _{42}U_{3}[U_{2}y_{8} + U_{3}y_{6}][U_{2}y_{12} +
U_{3}(y_{10}+ y^{*}_{10})] + \beta _{43}\Phi _{3}[U_{2}y_{6} +
U_{3}y_{4}] \times \\
&\times [U_{2}y_{12}+ \Phi _{3}y_{8}+ U_{4}y_{6}]+
\beta _{44}\Phi _{3}[U_{2}y_{6}+  U_{3}y_{4}][U_{2}y_{12}+
U_{3}(y_{10}+ \\
&+ y^{*}_{10})]+ \beta _{45}\Phi _{3}[U_{2}y_{8}+
U_{3}y_{6}][U_{2}y_{10}+ U_{3}y_{8}]+
\beta _{46}\Phi _{3}[U_{2}(y_{10}+ y^{*}_{10}) + \\
&+ \Phi _{3}y_{6} + U_{4}y_{4}][U_{2}y_{8}+ U_{3}y_{6}] +
\beta _{47}U_{4}[U_{2}y_{6}+ U_{3}y_{4}][U_{2}y_{10}+ U_{3}y_{8}] + \\
&+ \beta _{48}U_{4}[U_{2}y_{6}+ U_{3}y_{4}][U_{2}(y_{10}+ y^{*}_{10}) +
\Phi _{3}y_{6}+ U_{4}y_{4}] + \beta _{49}\Phi _{5}[U_{2}y_{6}+ \\
&+ U_{3}y_{4}][U_{3}y_{8}+ U_{4}y_{6}]+
\beta _{50}U^{2}_{2}U_{3}y^{2}_{10}+
\beta _{51}U^{2}_{2}U_{3}(y^{*}_{10})^{2}+
\beta _{52}U^{2}_{2}U_{4}y^{2}_{8}+ \\
&+ \beta _{53}U_{2}U_{3}\Phi _{3}y^{2}_{8}+
\beta _{54}U^{3}_{3}y^{2}_{8}+ \beta _{55}U^{2}_{2}[U_{2}y_{6}+
U_{3}y_{4}]y^{2}_{8}+ \beta _{56}U_{2}U_{3}\Phi _{5}y^{2}_{6} +\\
&+ \beta _{57}U_{2}\Phi _{3}U_{4}y^{2}_{6}+
\beta _{58}U^{2}_{3}U_{4}y^{2}_{6} +
\beta _{59}U^{2}_{2}[U_{2}(y_{10}+ y^{*}_{10})+
\Phi _{3}y_{6}+ U_{4}y_{4}]\times \\
&\times y^{2}_{6} + \beta _{60}U^{2}_{2}y^{6}_{6}[U_{2}y_{10}+
U_{3}y_{8}] + \beta _{61}U_{2}U_{3}[U_{2}y_{8}+ U_{3}y_{6}] +
\beta _{62}U_{2}\Phi _{3}\times \\
&\times [U_{2}y_{6}+ U_{3}y_{4}]y^{2}_{6}+
\beta _{63}U^{2}_{3}[U_{2}y_{6}+ U_{3}y_{4}] +
\beta _{64}U^{2}_{2}U_{3}y^{2}_{4}y^{2}_{6} +
\beta _{65}U^{2}_{2}U_{5}\times\\
&\times y^{2}_{4} + \beta _{66}U_{2}U_{4}\Phi _{5}y^{2}_{4} +
\beta _{67}U^{2}_{3}\Phi _{6}y^{2}_{4}+
\beta _{68}U_{3}\Phi _{3}\Phi _{5}+ \beta _{69}U_{3}U^{2}_{4}y^{2}_{4}+\\
&+ \beta _{70}U^{2}_{2}[U_{2}y_{14} + U_{3}y_{4}y_{8} +
\Phi _{3}(y_{10}+ y^{*}_{10})+ \Phi _{5}y_{6}+
\Phi _{6}y_{4}]y^{2}_{4} + \\
&+ \beta _{71}U_{2}U_{3}[U_{2}y_{12}+ U_{3}(y_{10}+
y^{*}_{10})]y^{2}_{4} + \beta _{72}U_{2}\Phi _{3}[U_{2}y_{10}+
U_{3}y_{8}]y^{2}_{4}+\\
&+ \beta _{73}U_{2}\Phi _{3}[U_{2}(y_{10}+ y^{*}_{10})+
\Phi _{3}y_{6}+ U_{4}y_{4}]y^{2}_{4}+ \beta _{74}U^{2}_{3}[U_{2}y_{10}+
U_{3}y_{8}]y^{2}_{4}+ \\
&+ \beta _{75}U^{2}_{3}[U_{2}(y_{10}+ y^{*}_{10})+ \Phi _{3}y_{6}+
U_{4}y_{4}]+ \beta _{76}U_{2}U_{4}[U_{2}y_{8}+ U_{3}y_{6}]y^{2}_{4}+ \\
&+ \beta _{77}U_{3}\Phi _{3}[U_{2}y_{8}+ U_{3}y_{6}]y^{2}_{4}+
\beta _{78}U_{2}\Phi _{5}[U_{2}y_{6}+U_{3}y_{4}]y^{2}_{4}+
\beta _{79}U_{3}U_{4}\times \\
&\times [U_{2}y_{6}+ U_{3}y_{8}]y^{2}_{4}+
\beta _{80}U^{2}_{2}U_{4}y^{4}_{4}+
\beta _{81}U_{2}U_{3}\Phi _{3}y^{4}_{4}+
\beta _{82}U^{3}_{3}y^{4}_{4} + \beta _{83}U^{2}_{2}\times\\
&\times [U_{2}y_{6} + U_{3}y_{4}]y^{4}_{4} +
\beta _{84}U_{2}[U_{2}y_{6} + U_{3}y_{4}][U_{2}y_{8} +
U_{3}y_{6}]y^{2}_{4} + \beta _{85}U_{2}\Omega ^{2}_{1}.
\end{align*}
To determine the coefficients $\beta _{i}$ we use operations
$S_{\omega }$. When the actions of the operation $S_{\omega }$ on
$y_{26}$ and $y^{*}_{26}$ coincide then in the table this action
is described only for $y_{26}$.

9. Choose elements $y_{26},y^{*}_{26} \in  E^{0,104}_{2}$ so that
$\pi^{2}_{0}(y_{26})= c_{26} + c_{10}c^{8}_{2}$,
$\pi ^{2}_{0}(y^{*}_{26}) = c^{2}_{13}+ c^{2}_{2}c^{2}_{11} +
c^{2}_{4}c^{2}_{9} + c^{2}_{5}c^{2}_{8}$
and also such that:
\addtocounter{equation}{1}
\begin{align*}\label{eq:c32fviii}
d_{3}(y_{26}) &= \beta _{1}U^{2}_{2}\Phi _{12} +
\beta _{2}U_{2}U_{3}\Phi _{11} + \beta _{3}U_{2}\Phi _{3}\Phi _{10} +
\beta _{4}U^{2}_{3}\Phi _{10}+ \beta _{5}U_{2}U_{4}\Phi _{9}+\\
&+ \beta _{6}U_{3}\Phi _{3}\Phi _{9}+ \beta _{7}U_{2}\Phi _{5}U_{5}+
\beta _{8}U_{3}U_{4}U_{5}+ \beta _{9}\Phi ^{2}_{3}U_{5}+
\beta _{10}U_{2}\Phi _{6}\Phi _{7}+ \\
&+ \beta _{11}U_{3}\Phi _{5}\Phi _{7}+
\beta _{12}\Phi _{3}U_{4}\Phi _{7}+ \beta _{13}U_{3}\Phi ^{2}_{6}+
\beta _{14}\Phi _{3}\Phi _{5}\Phi _{6}+ \beta _{15}U^{2}_{4}\Phi _{6} + \\
&+ \beta _{16}U_{4}\Phi ^{2}_{5} +
\beta _{17}U_{2}\Phi _{3}[U_{2}(y_{18}+ y^{*}_{18})+
\Phi _{3}y_{14}+ U_{4}y_{4}y_{8}+ \Phi _{5}(y_{10}+ \\
&+ y^{*}_{10})+ \Phi _{7}y_{6}+ U_{5}y_{4}]+
\beta _{19}U^{2}_{3}[U_{2}(y_{18}+ y^{*}_{18})+
\Phi _{3}y_{14}+ U_{4}y_{4}y_{8}+ \\
&+ \Phi _{5}(y_{10}+ y^{*}_{10})+ \Phi _{7}y_{6}+ U_{5}y_{4}] +
\beta _{18}U^{2}_{3}[U_{2}y_{18}+ U_{3}y_{16}]+ \\
&+ \beta _{20}U_{2}\Phi _{5}[U_{2}y_{14}+ U_{3}y_{4}y_{8}+
\Phi _{3}(y_{10}+ y^{*}_{10}) + \Phi _{5}y_{6}+ \Phi _{6}y_{4}]+ \\
&+ \beta _{21}U_{3}\Phi _{3}[U_{2}(y_{10}y_{6}+ y_{8}y^{2}_{4})+
U_{3}y_{14}]+ \beta _{22}U_{3}U_{4}[U_{3}y_{12}+ \Phi _{3}y_{10}+\\
&+ U_{4}y_{8}]+ \beta _{23}U_{3}U_{4}[U_{2}y_{14}+ U_{3}y_{4}y_{8} +
\Phi _{3}(y_{10}+ y^{*}_{10}) + \Phi _{5}y_{6} +\Phi _{6}y_{4}] +\\
&+ \beta _{24}U_{3}\Phi _{6}[U_{2}y_{10}+ U_{3}y_{8}] +
\beta _{25}U_{3}\Phi _{6}[U_{2}(y_{10}+ y^{*}_{10})+ \Phi _{3}y_{6} +\\
&+ U_{4}y_{4}] + \beta _{26}\Phi _{3}\Phi _{5}[U_{2}y_{10}+
U_{3}y_{8}] + \beta _{27}\Phi _{3}\Phi _{5}[U_{2}(y_{10}+ y^{*}_{10})+ \\
&+ \Phi _{3}y_{6} +U_{4}y_{4}]+ \beta _{28}U^{2}_{4}[U_{2}y_{10}+
U_{3}y_{8}]+ \beta _{29}U^{2}_{4}[U_{2}(y_{10}+ y^{*}_{10})+ \\
&+ \Phi _{3}y_{6}+  U_{4}y_{4}]+
\beta _{30}\Phi _{3}\Phi _{6}[U_{2}y_{8}+U_{3}y_{6}]+
\beta _{31}U_{4}\Phi _{5}[U_{2}y_{8}+U_{3}y_{6}]+\\
&+ \beta _{32}U_{2}\Phi _{9}[U_{2}y_{6}+ U_{3}y_{4}]+
\beta _{33}U_{3}U_{5}[U_{2}y_{6}+  U_{3}y_{4}]+
\beta _{34}U_{4}\Phi _{6}[U_{2}y_{6}+\\
&+ U_{3}y_{4}]+ \beta _{35}U_{2}[U_{2}y_{6}+
U_{3}y_{4}][U_{2}y_{16} +  U_{3}(y_{14}+ y_{4}y_{10}+
y_{6}(y_{8}+ y^{2}_{4})) + \\
&+ \Phi _{3}y_{12}+ U_{4}(y_{10}+ y^{*}_{10})] +
\beta _{36}U_{2}(U_{2}y_{8}+  U_{3}y_{6})[U_{2}y_{14}+
U_{3}y_{4}y_{8}+ \\
&+ \Phi _{3}(y_{10}+ y^{*}_{10})+ \Phi _{5}y_{6}+ \Phi _{6}y_{4}]+
\beta _{37}U_{2}[U_{2}y_{10}+  U_{3}y_{8}][U_{2}y_{12}+ \\
&+ U_{3}(y_{10}+ y^{*}_{10})]+
\beta _{38}U_{2}[U_{2}(y_{10}+y^{*}_{10})+ \Phi _{3}y_{6}+
U_{4}y_{4}][U_{2}y_{12}+  \\
&+ U_{3}(y_{10}+ y^{*}_{10})]+ \beta _{39}U_{3}[U_{2}y_{6}+
U_{3}y_{4}][U_{3}y_{12} + \Phi _{3}y_{10}+ U_{4}y_{8}]+ \\
&+ \beta _{40}U_{3}[U_{2}y_{6}+ U_{3}y_{4}][U_{2}y_{14}+
U_{3}y_{4}y_{8}+ \Phi _{3}(y_{10}+y^{*}_{10})+ \Phi _{5}y_{6}+ \\
&+ \Phi _{6}y_{4}] +
\beta _{41}U_{3}[U_{2}y_{8}+ U_{3}y_{6}][U_{2}y_{12}+
\Phi _{3}y_{8}+ U_{4}y_{6}] + \tag{\theequation}\\
&+ \beta _{42}U_{3}[U_{2}y_{8} + U_{3}y_{6}][U_{2}y_{12} +
U_{3}(y_{10}+ y^{*}_{10})] + \beta _{43}\Phi _{3}[U_{2}y_{6} +
U_{3}y_{4}] \times \\
&\times [U_{2}y_{12}+ \Phi _{3}y_{8}+ U_{4}y_{6}]+
\beta _{44}\Phi _{3}[U_{2}y_{6}+  U_{3}y_{4}][U_{2}y_{12}+
U_{3}(y_{10}+ \\
&+ y^{*}_{10})]+ \beta _{45}\Phi _{3}[U_{2}y_{8}+
U_{3}y_{6}][U_{2}y_{10}+ U_{3}y_{8}]+
\beta _{46}\Phi _{3}[U_{2}(y_{10}+ y^{*}_{10}) + \\
&+ \Phi _{3}y_{6} + U_{4}y_{4}][U_{2}y_{8}+ U_{3}y_{6}] +
\beta _{47}U_{4}[U_{2}y_{6}+ U_{3}y_{4}][U_{2}y_{10}+ U_{3}y_{8}] + \\
&+ \beta _{48}U_{4}[U_{2}y_{6}+ U_{3}y_{4}][U_{2}(y_{10}+ y^{*}_{10}) +
\Phi _{3}y_{6}+ U_{4}y_{4}] + \beta _{49}\Phi _{5}[U_{2}y_{6}+ \\
&+ U_{3}y_{4}][U_{3}y_{8}+ U_{4}y_{6}]+
\beta _{50}U^{2}_{2}U_{3}y^{2}_{10}+
\beta _{51}U^{2}_{2}U_{3}(y^{*}_{10})^{2}+
\beta _{52}U^{2}_{2}U_{4}y^{2}_{8}+ \\
&+ \beta _{53}U_{2}U_{3}\Phi _{3}y^{2}_{8}+
\beta _{54}U^{3}_{3}y^{2}_{8}+ \beta _{55}U^{2}_{2}[U_{2}y_{6}+
U_{3}y_{4}]y^{2}_{8}+ \beta _{56}U_{2}U_{3}\Phi _{5}y^{2}_{6} +\\
&+ \beta _{57}U_{2}\Phi _{3}U_{4}y^{2}_{6}+
\beta _{58}U^{2}_{3}U_{4}y^{2}_{6} +
\beta _{59}U^{2}_{2}[U_{2}(y_{10}+ y^{*}_{10})+
\Phi _{3}y_{6}+ U_{4}y_{4}]\times \\
&\times y^{2}_{6} + \beta _{60}U^{2}_{2}y^{6}_{6}[U_{2}y_{10}+
U_{3}y_{8}] + \beta _{61}U_{2}U_{3}[U_{2}y_{8}+ U_{3}y_{6}] +
\beta _{62}U_{2}\Phi _{3}\times \\
&\times [U_{2}y_{6}+ U_{3}y_{4}]y^{2}_{6}+
\beta _{63}U^{2}_{3}[U_{2}y_{6}+ U_{3}y_{4}] +
\beta _{64}U^{2}_{2}U_{3}y^{2}_{4}y^{2}_{6} +
\beta _{65}U^{2}_{2}U_{5}\times\\
&\times y^{2}_{4} + \beta _{66}U_{2}U_{4}\Phi _{5}y^{2}_{4} +
\beta _{67}U^{2}_{3}\Phi _{6}y^{2}_{4}+
\beta _{68}U_{3}\Phi _{3}\Phi _{5}+ \beta _{69}U_{3}U^{2}_{4}y^{2}_{4}+\\
&+ \beta _{70}U^{2}_{2}[U_{2}y_{14} + U_{3}y_{4}y_{8} +
\Phi _{3}(y_{10}+ y^{*}_{10})+ \Phi _{5}y_{6}+
\Phi _{6}y_{4}]y^{2}_{4} + \\
&+ \beta _{71}U_{2}U_{3}[U_{2}y_{12}+ U_{3}(y_{10}+
y^{*}_{10})]y^{2}_{4} + \beta _{72}U_{2}\Phi _{3}[U_{2}y_{10}+
U_{3}y_{8}]y^{2}_{4}+\\
&+ \beta _{73}U_{2}\Phi _{3}[U_{2}(y_{10}+ y^{*}_{10})+
\Phi _{3}y_{6}+ U_{4}y_{4}]y^{2}_{4}+ \beta _{74}U^{2}_{3}[U_{2}y_{10}+
U_{3}y_{8}]y^{2}_{4}+ \\
&+ \beta _{75}U^{2}_{3}[U_{2}(y_{10}+ y^{*}_{10})+ \Phi _{3}y_{6}+
U_{4}y_{4}]+ \beta _{76}U_{2}U_{4}[U_{2}y_{8}+ U_{3}y_{6}]y^{2}_{4}+ \\
&+ \beta _{77}U_{3}\Phi _{3}[U_{2}y_{8}+ U_{3}y_{6}]y^{2}_{4}+
\beta _{78}U_{2}\Phi _{5}[U_{2}y_{6}+U_{3}y_{4}]y^{2}_{4}+
\beta _{79}U_{3}U_{4}\times \\
&\times [U_{2}y_{6}+ U_{3}y_{8}]y^{2}_{4}+
\beta _{80}U^{2}_{2}U_{4}y^{4}_{4}+
\beta _{81}U_{2}U_{3}\Phi _{3}y^{4}_{4}+
\beta _{82}U^{3}_{3}y^{4}_{4} + \beta _{83}U^{2}_{2}\times\\
&\times [U_{2}y_{6} + U_{3}y_{4}]y^{4}_{4} +
\beta _{84}U_{2}[U_{2}y_{6} + U_{3}y_{4}][U_{2}y_{8} +
U_{3}y_{6}]y^{2}_{4} + \beta _{85}U_{2}\Omega ^{2}_{1}.
\end{align*}
To determine the coefficients $\beta _{i}$ we use operations
$S_{\omega }$. When the actions of the operation $S_{\omega }$ on
$y_{26}$ and $y^{*}_{26}$ coincide then in the table this action
is described only for $y_{26}$.

\chapter[Elements of order 4 in ${ MSp_{*}}$]{Construction of the
elements of order four in ${ MSp_{*}}$}

Let us show that that the product
$U_{1}\Phi _{7}\Omega _{1} \in  E^{3,103}_{2} \simeq  E^{3,103}_{3}$
lives to infinity and not equal to zero in $E_{\infty}$ of the
Adams-Novikov spectral sequence. Each factor in this product is a
nonzero infinite cycle. So, it is sufficient to prove that there is no
element in $E^{0,104}_{2} \simeq  E^{0,104}_{3}$ that kills
$U_{1}\Phi _{7}\Omega _{1}$ by the action of the differential $d_{3}$,
because all higher differentials are equal to zero by the dimension
arguments.

From the conditions:
$S_{13}(U_{1}\Phi _{7}\Omega _{1}) = U^{2}_{1}\Omega _{1}$
and $d_{3}(z_{13}) = U^{2}_{1}\Omega _{1}$, $z_{13}$ has the
$F$-filtration, corresponding to MASS equal to 2, operations
$S_{\omega }$ conserve this filtration, it follows that the killing
element can't have the $F$-filtration greater than 2.
If there exists an element $X$ such that:
$X \in  F^{2}(E^{0,104}_{2})$, $X \not\in  F^{3}(E^{0,104}_{2})$
and $d_{3}(X) = U_{1}\Phi _{7}\Omega _{1}$, then for the projection
$x$ of this element in MASS there must be the relation
$S_{13}x = u_{1}\varphi _{7}\omega _{1}$ and $x \in
E^{0,1,106}_{\infty }$ in MASS. However from the description of
the $E_{\infty }$ it follows that there is no such an element in the
given dimension. Hence the only possible candidate for killing of this
product is a multiplicatively nondecomposable element of zero
filtration in $E^{0,104}_{2}$ of the Adams-Novikov spectral sequence.
There are two such elements: $y_{26}$ and $y^{*}_{26}$ and we proved two
following relations for them:
\[
d_{3}(y_{26}) \equiv  U_{3}\Phi ^{2}_{6} \mod{(F^{2}E^{3,103}_{3}
\cup (\text{elements containing}\  U_{1}) )},
\]
\[
d_{3}(y^{*}_{26}) \equiv  U_{2}\Omega ^{2}_{1}
\mod{(F^{2}E^{3,103}_{3} \cup (\text{elements containing} \ U_{1}) )}.
\]
Hence, if the element $y^{*}_{26}$ (for $y_{26}$ this is analogous)
kills $U_{1}\Phi _{7}\Omega _{1}$, then there must exist another
element $\eta$, different from $y^{*}_{26}$, and such that:
$d_{3}(\eta ) = U_{2}\Omega ^{2}_{1}$. Then from the action of the
operation $S_{11,11}$: $S_{11,11}(U_{2}\Omega ^{2}_{1}) = U^{3}_{2}$
and the action of the differential $d_{3}$, we conclude that
$\eta  = y_{4}y^{*}_{22} + \ldots $. However because of multiplicativity
of the action of $d_{3}$ and the formula
$d_{3}(y^{*}_{22}) = U^{2}_{3}U_{5} + \ldots $,
it follows the necessity of eliminating of the summand
$y_{4}U^{2}_{3}U_{5}$, which is deleted either  with the help of
$d_{3}(y_{4}y_{22})$, (and then there appears $U^{3}_{2}y_{22}$ which is
also undeletable), or again by $d_{3}(y_{4}y^{*}_{22})$. Hence such
$\eta$ does not exist. So, $U_{1}\Phi _{7}\Omega _{1}$ not equal to zero
in $E_{\infty }$ of the Adams-Novikov spectral sequence and hence in
the ring $MSp_{*}$.

Let us see the connection between the elements
$<\Phi _{7},2,\Omega _{1}>$ and $\theta _{1}\Phi _{7}\Omega _{1}$ of
the ring $MSp_{*}$.

\begin{teore} Let $m_{1},m_{2} \in \operatorname{Tors} MSp_{*}$ are 
elements of the order two. Then we have a relation:
\[
\theta _{1}m_{1}m_{2} \in  2<m_{1},2,m_{2}>
\]
where $<m_{1},2,m_{2}>$ denotes a triple Massey products in the theory
of symplectic cobordisms and $\theta _{1} \in MSp_{2}$ is the generator.
\begin{proof}
Let us consider inclusions of manifolds
$i_{j} : M^{n_{j}}_{j} \rightarrow  {\Bbb R}^{n_{j}+r_{j}}$, $j = 1, 2$
which together with the normal bundles of these inclusions define on
manifolds $M_{1}$ and $M_{2}$ some $(B,f)$-structures which represent
the elements $m_{1}$ and $m_{2}$ respectively. Let $MSp$-manifolds
$X_{j}, (j = 1,2)$ are included in ${\Bbb R}^{n_{j}+r_{j}+1}_{+}$
so that $\partial X_{j} \cong 2M_{j}$ and a collar neighborhood of
the boundary in $X_{j}$ has the form
$\partial X_{j}\times [0,1), \partial X_{j}\subset
{\Bbb R}^{n_j+r_j} \times {0}$ and $X_{j}$ meets the boundary
transversally. Let us consider the products together with injections
$M_{1}\times X_{2} \subset {\Bbb R}^{n_{1}+r_{1}+n_{2}+r_{2}+1}_{+}$
and
$M_{2}\times X_{1} \subset {\Bbb R}^{n_{1}+r_{1}+n_{2}+r_{2}+1}_{-}$
such that under restrictions of both injections on the neighborhoods
of each copy of $M_{1}\times M_{2}$ their images coincide.
Let us glue the products $M_{1}\times X_{2}$ and $X_{1}\times M_{2}$
by the given identification $\partial (M_{1}\times X_{2})$ with
$\partial (X_{1}\times M_{2})$. A manifold that we have obtained
$Y \cong  (M_{1}\times X_{2}\cup X_{1}\times M_{2})/(M_{1}\times
\partial X_{2} \cong  \partial X_{1}\times M_{2})$ with the
$MSp$-structure induced by this inclusion
$Y \subset  {\Bbb R}^{n_{1}+r_{1}+n_{2}+r_{2}+1}$ depicts an element from
$< m_{1}, 2, m_{2} >$ in the ring $MSp_{*}$. Let an interval $I = [0,1]$
be included in ${\Bbb R}^{1+r}$ with the trivial framing. Let us take two
copies of the manifold with a boundary
$Y\times I \subset {\Bbb R}^{n_{1}+r_{1}+n_{2}+r_{2}+r+2}$.
Let us glue the upper boundary $Y\times \{1\}$ of the given manifolds by
the following rule. We choose in the manifold $Y$ one of the copies
of $M_{1}\times M_{2}$ by the gluing along which the manifold $Y$ was
constructed. A normal boundary of $M_{1}\times M_{2}$ in $Y$ has the
form: $M_{1}\times M_{2}\times (-1,1)$. Let us identify two copies of
$Y\times \{1\}$ by the identical map out of the boundary
$M_{1}\times M_{2}\times (-1/2,1/2)$. We obtained a manifold whose
lower boundary consists of two copies of $Y$, upper boundary has the
form:
$M_{1}\times M_{2}\times S^{1}$, where $S^{1}$ is included in
${\Bbb R}^{2+r}$ in the form of the figure ''eight". With the help
of such inclusion the framing induced on $S^{1}$  gives
the element $\theta _{1}$ in $MSp_{1}$. So, we constructed a cobordism
between the elements representing the elements $\theta _{1}m_{1}m_{2}$
and $2<m_{1},2,m_{2}>$ respectively. The codimensions that we have chosen
are sufficiently large, so the result does not depend from the choice
of concrete inclusions.
\end{proof}
\end{teore}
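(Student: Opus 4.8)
The plan is to argue geometrically, via the Pontryagin--Thom description of $MSp_{*}$ as the bordism ring of manifolds carrying an $Sp$-structure (a $(B,f)$-structure with $B = BSp$) on their stable normal bundle. First I would fix closed $Sp$-manifolds $M_{1}, M_{2}$ together with embeddings $i_{j}\colon M_{j}^{n_{j}} \hookrightarrow \R^{n_{j}+r_{j}}$ whose normal $Sp$-structures represent $m_{1}$ and $m_{2}$; the codimensions $r_{j}$ will be chosen large and the embeddings standardized near every submanifold that is later glued, so that the final bordism class does not depend on these choices. The strategy is then to write down one explicit closed $Sp$-manifold $Y$ representing an element of $\langle m_{1}, 2, m_{2}\rangle$, together with one explicit $Sp$-cobordism showing $2[Y] = \theta_{1} m_{1} m_{2}$; combining the two gives the asserted containment $\theta_{1} m_{1} m_{2} \in 2\langle m_{1}, 2, m_{2}\rangle$.

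For the bracket representative I would use the hypothesis $2m_{1} = 2m_{2} = 0$: choose compact $Sp$-manifolds $X_{j}$ embedded in half-spaces $\R^{n_{j}+r_{j}+1}_{+}$ with $\partial X_{j} \cong M_{j} \sqcup M_{j}$ lying in the bounding hyperplane (transversally, with a product collar). Now clutch: $M_{1} \times X_{2}$ and $X_{1} \times M_{2}$ are manifolds each of whose boundary is two copies of $M_{1} \times M_{2}$; place them in complementary half-spaces of a common Euclidean space so that their boundary collars match identically, and glue to obtain a closed manifold $Y = (M_{1} \times X_{2}) \cup_{(M_{1}\times M_{2}) \sqcup (M_{1}\times M_{2})} (X_{1} \times M_{2})$ with the $Sp$-structure induced from the product $Sp$-structures on the two pieces (which agree on the gluing locus by construction). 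By the usual identification of Toda brackets with such clutching constructions in bordism theories, $[Y] \in \langle m_{1}, 2, m_{2}\rangle$.

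For the cobordism I would take two copies of $Y \times [0,1]$, embedded in $\R^{n_{1}+r_{1}+n_{2}+r_{2}+r+2}$ using a trivially framed interval $I = [0,1] \subset \R^{1+r}$, and glue them along their top faces $Y \times \{1\}$ by the identity map outside a bicollar $N \times (-\tfrac12,\tfrac12)$, where $N = M_{1} \times M_{2}$ is one of the two copies of $M_{1}\times M_{2}$ along which $Y$ was assembled; its normal bundle in $Y$ is trivial, with collar $N \times (-1,1)$. The result is an $Sp$-cobordism whose lower boundary is $Y \sqcup Y$ and whose upper boundary is $N \times S^{1} = M_{1} \times M_{2} \times S^{1}$, the circle being formed from the two arcs $[-\tfrac12,\tfrac12]$ of the two collars joined at their endpoints; under the ambient embedding this $S^{1}$ sits in $\R^{2+r}$ as a figure eight, and the framing it inherits is exactly the one representing $\theta_{1} \in MSp_{1}$. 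Hence $2[Y]$ and $\theta_{1} m_{1} m_{2}$ are $Sp$-cobordant, which finishes the argument.

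The routine but delicate part is the bookkeeping of $Sp$-structures and framings: verifying that the product and clutching $Sp$-structures glue coherently so that $Y$ and the cobordism really are $Sp$-manifolds; checking that the collar and bicollar conventions make all gluings well defined and smooth; confirming that the figure-eight circle with its ambient framing is genuinely $\theta_{1}$ and not the trivially framed $S^{1}$ (which would yield $0$); and choosing $r_{j}$ and $r$ large enough that nothing depends on the auxiliary embeddings. I expect this to be the main obstacle, but it is only bookkeeping. It is also worth noting that since the statement is a containment into $2\langle m_{1}, 2, m_{2}\rangle$, exhibiting the single representative $[Y]$ and the single cobordism suffices, so the indeterminacy of the Massey product need not be analyzed.
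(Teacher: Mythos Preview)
Your proposal is correct and follows essentially the same route as the paper: build the Toda-bracket representative $Y$ by clutching $M_{1}\times X_{2}$ and $X_{1}\times M_{2}$ along the two copies of $M_{1}\times M_{2}$, then take two copies of $Y\times I$ and glue their top faces by the identity outside a bicollar of one $M_{1}\times M_{2}$, producing an $Sp$-cobordism from $Y\sqcup Y$ to $M_{1}\times M_{2}\times S^{1}$ with the figure-eight framing giving $\theta_{1}$. The paper's proof is exactly this construction, with the same emphasis on choosing large codimensions so that the result is independent of the auxiliary embeddings.
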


Let $\Gamma _{1}$ be an element belonging to the Massey product
$<\Phi _{7},2,\Omega _{1}>$. From Theorem~4.1 it follows that
$2\Gamma _{1} = \theta _{1}\Phi _{7}\Omega _{1} \neq 0$, hence, the
element $\Gamma _{1}$ has the order four.

We denote by $\Gamma _{i}, (i = 3, 4, \ldots )$ a series of elements
belonging (for each $i$ respectively) to the following Massey product:
$<\Phi _{6+i},2,\Omega _{1}>, i = 3,4,\ldots $. We have
$S_{4}\Omega _{1} = 0$ and $S_{6}\Omega _{1} = 0$, because in the MASS
there are the formulas $S_{4}\omega _{1} = 0$ and $S_{6}\omega _{1}= 0$,
and  $MSp_{41}$ and $MSp_{37}$ consist of elements having projections
in the term $E_{2}$ with filtration of the MASS equal to zero.
For $i = 5, 6, \ldots$, $S_{2(i-1)}\Omega _{1} = 0$ by the dimensional
reasons. Also there are the formulas $S_{2(i-1)}\Phi _{6+i} = \Phi _{7}$,
so we obtain: $2\Gamma _{i} = \theta _{1}\Phi _{6+i}\Omega _{1} \neq  0$,
$i = 3, 4, \ldots $, and hence the elements of the series $\Gamma _{i}$
have the order four in MSp$_{*}$.
\clearpage

\chapter{Tables}
Table 9. The action of Landweber-Novikov operations on the Ray's
elements $\Phi _{i}$ (nonzero values)
\begin{align*}
&S_{k}\Phi _{m}= \Phi _{m-k}, m >k; S_{2m-1}\Phi _{m}= \theta _{1}; S_{k,k}\Phi _{m}= (m-k)\Phi _{m-k}, m >k; \\
&S_{k,k,k}\Phi _{m}= (m-k)\Phi _{m-3s}, k= 2s, m >3s. \\
&S_{2,2,2,2}\Phi _{5}= \Phi _{1}. \\
&S_{2,2,2,2}\Phi _{6}= \Phi _{2}, S_{2,2,2,2,2}\Phi _{6}= \Phi _{1}.\\
&S_{3,3,3,3}\Phi _{8}= \Phi _{2}.\\
&S_{2,2,2,2}\Phi _{9}= \Phi _{5}, S_{2,2,2,2,2}\Phi _{9}= \Phi _{4}, S_{2,2,2,2,2,2}\Phi _{9}= \Phi _{3}, S_{3,3,3,3}\Phi _{9}= \Phi _{3},\\
&S_{2,2,2,2,2,2,2}\Phi _{9}= \Phi _{2}, S_{4,4,4,4}\Phi _{9}= \Phi _{1}, S_{2,2,2,2,2,2,2,2}\Phi _{9}= \Phi _{1}.\\
&S_{2,2,2,2}\Phi _{10}= \Phi _{6}, S_{2,2,2,2,2}\Phi _{10}= \Phi _{5},S_{4,4,4,4}\Phi _{10}= \Phi _{2}, S_{2,2,2,2,2,2,2,2}\Phi _{10}= \Phi _{2},\\
&S_{2,2,2,2,2,2,2,2,2}\Phi _{10}= \Phi _{1}.\\
&S_{2,2,2,2,2,2,2,2}\Phi _{11}= \Phi _{3}, S_{2,2,2,2,2,2,2,2,2}\Phi _{11}= \Phi _{2}.\\
&S_{3,3,3,3}\Phi _{12}= \Phi _{6}, S_{2,2,2,2,2,2,2,2}\Phi _{12}= \Phi _{4}, S_{5,5,5,5}\Phi _{12}= \Phi _{2}, S_{3,3,3,3,3,3}\Phi _{12}= \Phi _{3}.\\
&S_{2,2,2,2,2,2,2,2,2}\Phi _{12}= \Phi _{3},\\
&S_{2,2,2,2}\Phi _{13}= \Phi _{9}, S_{2,2,2,2,2}\Phi _{13}= \Phi _{8}, S_{2,2,2,2,2,2}\Phi _{13}= \Phi _{7}, S_{3,3,3,3}\Phi _{13}= \Phi _{7},\\
&S_{2,2,2,2,2,2,2}\Phi _{13}= \Phi _{6}, S_{4,4,4,4}\Phi _{13}= \Phi _{5}, S_{5,5,5,5}\Phi _{13}= \Phi _{3}, S_{4,4,4,4,4}\Phi _{13}= \Phi _{3},\\
&S_{6,6,6,6}\Phi _{13}= \Phi _{1}, S_{4,4,4,4,4,4}\Phi _{13}= \Phi _{1}.\\
&S_{2,2,2,2}\Phi _{14}= \Phi _{2}, S_{2,2,2,2,2}\Phi _{14}= \Phi _{9}, S_{4,4,4,4}\Phi _{14}= \Phi _{6}, S_{4,4,4,4,4}\Phi _{14}= \Phi _{4},\\
&S_{6,6,6,6}\Phi _{14}= \Phi _{2}.
\end{align*}

Table 10. The action of Landweber-Novikov operations on the elements
$c_{i}$ of the MASS
\begin{align*}
&S_{2}c_{2}= 1, S_{1,1}c_{2}= 0.\\
&S_{2}c_{4}= c_{2}, S_{1,1}c_{4}= c_{2}, S_{4}c_{4}= 1, S_{2,2}c_{4}= 0.\\
&S_{1}c_{5}= c_{4}, S_{3}c_{5}= c_{2}, S_{5}c_{5}= 1.\\
&S_{2}c_{6}= c^{2}_{2}, S_{1,1}c_{6}= c^{2}_{2}, S_{6}c_{6}= 1, S_{3,3}c_{6}= 0, S_{2,2,2}c_{6}= 1.\\
&S_{2}c_{8}= c_{2}c_{4}+ c_{6}, S_{1,1}c_{8}= c_{2}c_{4}+ c_{6}, S_{4}c_{8}= c_{4}, S_{2,2}c_{8}= c^{2}_{2}, S_{6}c_{8}= c_{2}, \\
&S_{3,3}c_{8}= c_{2}, S_{2,2,2}c_{8}= 0, S_{8}c_{8}= 1, S_{4,4}c_{8}= 1, S_{2,2,2,2}c_{8}= 0.\\
\end{align*}

Table 10. (continuation)
\begin{align*}
&S_{1}c_{9}= c_{8}, S_{2}c_{9}=c_{2}c_{5}, S_{1,1}c_{9}= c_{2}c_{5}, S_{3}c_{9}= c_{6}, S_{4}c_{9}= c_{5},S_{5}c_{9} = 0, S_{2,2}c_{9}= 0,\\
&S_{7}c_{9}= c_{2}, S_{9}c_{9}= 1, S_{3,3,3}c_{9}= 0.\\
&S_{2}c_{10}= c^{2}_{4}+ c^{4}_{2}, S_{1,1}c_{10}= c^{2}_{4}+ c^{4}_{2}, S_{4}c_{10}= 0, S_{2,2}c_{10}= c_{6}, S_{3,3}c_{10}= 0,\\
&S_{2,2,2}c_{10}= c^{2}_{2}, S_{10}c_{10}= 1, S_{5,5}c_{10}= 1, S_{2,2,2,2,2}c_{10}= 0, S_{6}c_{10}= 0.\\
&S_{1}c_{11}=c_{10}, S_{2}c_{11}= c_{9}+ c_{4}c_{5}, S_{1,1}c_{11}= c_{9}+ c_{4}c_{5}, S_{3}c_{11}=c_{8}+c^{4}_{2}, \\
&S_{2,2}c_{11}= c_{2}c_{5}, S_{4}c_{11}= 0, S_{5}c_{11}= 0, S_{6}c_{11}= c_{5}, S_{3,3}c_{11}= c_{5}, S_{7}c_{11}= c_{4}, \\
&S_{3,3,3}c_{11}= c_{2}, S_{11}c_{11}= 1.\\
&S_{2}c_{12}= c^{2}_{5}+ c^{2}_{2}c_{6}, S_{1,1}c_{12}= c_{10}+ c^{2}_{5}+ c^{2}_{2}c_{6}, S_{4}c_{12}= c^{4}_{2}, S_{2,2}c_{12}= c^{2}_{4}+ c^{4}_{2},\\
&S_{6}c_{12}= c_{6}, S_{3,3}c_{12}= c_{6}, S_{2,2,2}c_{12}= c_{6}, S_{8}c_{12}= 0, S_{4,4}c_{12}= 0, S_{2,2,2,2}c_{12}= c^{2}_{2}, \\
&S_{6,6}c_{12}= 0,S_{12}c_{12}=1, S_{4,4,4}c_{12}= 1, S_{3,3,3,3}c_{12}= 0, S_{2,2,2,2,2,2}c_{12}= 0.\\
&S_{1}c_{13}= c_{12}+ c_{4}c_{8}, S_{2}c_{13}= c_{11}+ c_{2}c_{4}c_{5}+ c_{5}c_{6}, S_{1,1}c_{13}=c_{2}c_{4}c_{5} + c_{5}c_{6}, \\
&S_{3}c_{13}= c_{2}c_{8}+ c_{10}+ c^{2}_{5}+ c^{2}_{2}c_{6}, S_{4}c_{13}= c_{9}, S_{2,2}c_{13}= c_{9} + c_{4}c_{5}+ c^{2}_{2}c_{5},\\
&S_{5}c_{13}= c^{4}_{2}+ c^{2}_{4}, S_{6}c_{13}= 0,S_{3,3}c_{13}= 0,S_{2,2,2}c_{13} = c_{2}c_{5}, S_{7}c_{13}= c_{2}c_{4}+ c_{6}, \\
&S_{8}c_{13}= c_{5}, S_{4,4}c_{13}= 0,S_{3,3,3}c_{13}=0, S_{2,2,2,2}c_{13}= 0, S_{13}c_{13}= 1.\\
&S_{2}c_{14}= c^{2}_{6}+ c^{2}_{2}c^{2}_{4}, S_{1,1}c_{14}= c^{2}_{6}+ c^{2}_{2}c^{2}_{4}, S_{4}c_{14} = 0, S_{2,2}c_{14}= 0, S_{6}c_{14}= c^{2}_{4},\\
&S_{3,3}c_{14}= 0, S_{2,2,2}c_{14}= c^{2}_{4}, S_{8}c_{14}= 0, S_{4,4}c_{14}= 0, S_{2,2,2,2}c_{14}= c_{6}, S_{10}c_{14}= c^{2}_{2}, \\
&S_{5,5}c_{14}= c^{2}_{2}, S_{2,2,2,2,2}c_{14}= 0, S_{14}c_{14}= 1, S_{7,7}c_{14}= 0, S_{2,2,2,2,2,2,2}c_{14}= 0.\\
&S_{2}c_{16}= c_{14}+ c_{2}c_{12}+ c_{4}c_{10}+ c_{6}c_{8}+ c_{2}c_{4}c_{8}+ c^{2}_{2}c_{10} + c_{4}c^{2}_{5} + c^{2}_{2}c_{4}c_{6} + c^{5}_{2}c_{4} + \\
& \qquad \; \: + c_{2}c^{3}_{4},\\
&S_{1,1}c_{16}= c_{14}+ c_{2}c_{12}+ c_{4}c_{10}+ c_{6}c_{8}+ c_{2}c_{4}c_{8}+ c^{2}_{2}c_{10} + c_{4}c^{2}_{5} + c^{2}_{2}c_{4}c_{6} + c^{5}_{2}c_{4} +\\
& \qquad \quad + c_{2}c^{3}_{4}, \\
&S_{4}c_{16}= c_{12}+ c_{2}c_{10}+ c_{4}c_{8}+ c_{4}c^{4}_{2}, S_{2,2}c_{16}= c^{2}_{6}+ c^{2}_{2}c^{2}_{4}, S_{2,2,2}c_{16}= 0,\\
&S_{6}c_{16}= c_{2}c_{8}+ c_{10}+ c^{2}_{5}+ c^{2}_{2}c_{6}+ c_{4}c_{6}, S_{3,3}c_{16}= c_{2}c_{8}+ c_{10}+ c^{2}_{5}+ c^{2}_{2}c_{6}+ c_{4}c_{6},\\
&S_{8}c_{16}= c^{2}_{4}+ c_{8}, S_{4,4}c_{16}= c^{2}_{4}, S_{2,2,2,2}c_{16}= 0, S_{3,3,3}c_{16}= 0, S_{10}c_{16}= c_{2}c_{4}+ c_{6},\\
&S_{5,5}c_{16}= c_{2}c_{4}+ c_{6}, S_{2,2,2,2,2}c_{16}= 0, S_{12}c_{16}= c_{4},S_{6,6}c_{16}=c^{2}_{2}, S_{4,4,4}c_{16}= 0, \\
&S_{3,3,3,3}c_{16}= c_{4}+ c^{2}_{2}, S_{2,2,2,2,2,2}c_{16}= 0, S_{14}c_{16}= c_{2}, S_{7,7}c_{16}= c_{2}, \\
&S_{2,2,2,2,2,2,2}c_{16}= 0, S_{16}c_{16}= 1, S_{8,8}c_{16}= 1, S_{4,4,4,4}c_{16}= 0,\\
&S_{2,2,2,2,2,2,2,2}c_{16}= 0.\\
&S_{2}c_{17}= c_{2}c_{13}+ c_{5}c_{10}+ c_{6}c_{9}+ c^{2}_{2}c_{11}+ c^{3}_{5}+ c^{2}_{2}c_{5}c_{6}+ c_{2}c_{5}c^{2}_{4}+ c_{5}c^{5}_{2},\\
&S_{1,1}c_{17}= c_{2}c_{13}+ c_{5}c_{10}+ c_{6}c_{9}+ c^{2}_{2}c_{11}+ c^{3}_{5}+ c^{2}_{2}c_{5}c_{6}+c_{2}c_{5}c^{2}_{4}+c_{5}c^{5}_{2},\\
&S_{1}c_{17}= c_{16}, S_{3}c_{17}= c_{14}, S_{4}c_{17}= c_{13}+ c_{2}c_{11}+ c_{5}c^{4}_{2}, S_{2,2}c_{17}=0, S_{5}c_{17}= 0, \\
&S_{6}c_{17}= c_{2}c_{9}+ c_{5}c_{6}, S_{3,3}c_{17}= c_{2}c_{9}+ c_{5}c_{6}, S_{2,2,2}c_{17}= 0, S_{7}c_{17}= c_{10}+ c^{2}_{5}+ c^{2}_{2}c_{6}\\
&S_{8}c_{17}= c_{9}, S_{4,4}c_{17}= 0, S_{2,2,2,2}c_{17}= 0, S_{9}c_{17}= c^{2}_{4}, S_{3,3,3}c_{17}= 0, S_{10}c_{17}= c_{2}c_{5},\\
&S_{5,5}c_{17}= c_{2}c_{5}, S_{2,2,2,2,2}c_{17}= 0, S_{11}c_{17}= c_{6}, S_{12}c_{17}= c_{5}, S_{6,6}c_{17}= 0, \\
&S_{4,4,4}c_{17}=0, S_{3,3,3,3}c_{17}= c_{5},S_{2,2,2,2,2,2}c_{17}=0, S_{15}c_{17}= c_{2}, S_{5,5,5}c_{17}= c_{2}, \\
\end{align*}

Table 10. (continuation)
\begin{align*}
&S_{3,3,3,3,3}c_{17}= c_{2}, S_{17}c_{17}= 1.\\
&S_{2}c_{18}= c^{2}_{8}, S_{1,1}c_{18}= c^{2}_{8}, S_{4}c_{18}= 0, S_{2,2}c_{18}= c_{6}(c^{2}_{4}+ c^{4}_{2}) + c_{14}+ c^{2}_{2}c_{10}, \\
&S_{6}c_{18}= 0, S_{3,3}c_{18}= 0, S_{2,2,2}c_{18}= c^{2}_{6}+ c^{2}_{2}c^{2}_{4}, S_{8}c_{18}= 0, S_{4,4}c_{18}= c_{10},
&S_{2,2,2,2}c_{18}= 0, S_{10}c_{18}= c^{4}_{2}, S_{5,5}c_{18}= c^{4}_{2}, S_{2,2,2,2,2}c_{18}= 0, S_{12}c_{18}= 0,\\
&S_{6,6}c_{18}= c_{6}, S_{4,4,4}c_{18}= 0, S_{3,3,3,3}c_{18}= c_{6}, S_{14}c_{18}= 0, S_{7,7}c_{18}= 0,\\
&S_{2,2,2,2,2,2}c_{18}= 0, S_{2,2,2,2,2,2,2}c_{18}= 0, S_{18}c_{18}= 1, S_{9,9}c_{18}= 1, S_{6,6,6}c_{18}= 0,\\
&S_{3,3,3,3,3,3}c_{18}= 0, S_{2,2,2,2,2,2,2,2,2}c_{18}= 0.\\
&S_{1}c_{19}= c_{18}, S_{2}c_{19}= c_{17}+ c_{9}c^{2}_{4}+ c_{11}c_{6}+ c_{13}c_{4}+ c_{5}c^{3}_{4}+ c_{5}c_{12}+ c_{4}c_{5}c^{4}_{2},\\
&S_{1,1}c_{19}= c_{17}+ c_{9}c^{2}_{4}+ c_{11}c_{6}+ c_{13}c_{4}+ c_{5}c^{3}_{4}+ c_{5}c_{12}+ c_{4}c_{5}c^{4}_{2},\\
&S_{3}c_{19}= c_{16}+ c^{2}_{8}+ c^{4}_{4}+ c_{6}c_{10}+ c^{2}_{4}c^{4}_{2}, S_{4}c_{19}= c_{5}c_{10}+ c_{4}c_{11},\\
&S_{2,2}c_{19}= c_{2}c_{13}+ c_{5}(c_{10}+ c^{2}_{5}+ c^{2}_{2}c_{6})+ c_{2}c_{5}(c^{2}_{4}+ c^{4}_{2})+ c_{6}c_{9}+c^{2}_{2}c_{11}, S_{5}c_{19}=0,\\
&S_{6}c_{19}= c_{13}+ c_{4}c_{9},S_{2,2,2}c_{19}=0, S_{3,3}c_{19}= c_{13}+ c_{4}c_{9}, S_{7}c_{19}= c_{12}, S_{8}c_{19}= c_{11},\\
&S_{4,4}c_{19}= 0,S_{2,2,2,2}c_{19}=0,S_{9}c_{19}= c_{10},S_{3,3,3}c_{19}=c_{10}+ c^{2}_{5}+ c^{2}_{2}c_{6}+c_{2}c_{8}+c_{4}c_{6}\\
&S_{10}c_{19}= c_{4}c_{5}, S_{5,5}c_{19}= c_{4}c_{5}, S_{2,2,2,2,2}c_{19}= 0, S_{12}c_{19}= 0, S_{11}c_{19}= c^{2}_{4}+ c^{4}_{2},\\
&S_{6,6}c_{19}= 0, S_{3,3,3,3}c_{19}= 0, S_{2,2,2,2,2,2}c_{19}= 0, S_{13}c_{19}= 0, S_{14}c_{19}= c_{5}, \\
&S_{7,7}c_{19}= c_{5}, S_{2,2,2,2,2,2,2}c_{19}= 0, S_{15}c_{19}= c_{4}, S_{3,3,3,3,3}c_{19}= c_{4}, S_{5,5,5}c_{19}= c_{4}, \\
&S_{17}c_{19}= 0, S_{19}c_{19}= 1.\\
&S_{2}c_{20}= c^{2}_{9}+ c^{2}_{2}c_{14}+ c^{2}_{4}(c_{10}+ c^{2}_{5})+ c^{4}_{2}(c_{10}+ c^{2}_{5}+ c^{2}_{2}c_{6})+ c^{3}_{6},\\
&S_{1,1}c_{20}= c_{18}+ c^{2}_{9}+ c^{2}_{2}c_{14}+ c^{2}_{4}(c_{10}+ c^{2}_{5})+ c^{4}_{2}(c_{10}+ c^{2}_{5}+ c^{2}_{2}c_{6})+c^{3}_{6},\\
&S_{4}c_{20}= c^{4}_{4}+ c^{4}_{2}c^{2}_{4}, S_{2,2}c_{20}= c^{2}_{8}+ c^{2}_{2}c_{12}+ c_{6}(c_{10}+ c^{2}_{5}+ c^{2}_{2}c_{6}), S_{6}c_{20}= c_{14}+ c^{4}_{2}c_{6}\\
&S_{3,3}c_{20}= c_{14}+ c^{4}_{2}c_{6}, S_{2,2,2}c_{20}= c_{14}+c^{2}_{2}c_{10} + c_{6}(c^{2}_{4}+ c^{4}_{2}), S_{8}c_{20}= 0, \\
&S_{4,4}c_{20}= c_{12}, S_{2,2,2,2}c_{20}= c^{2}_{6}+c^{2}_{2}c^{2}_{4}, S_{10}c_{20}= c_{10}+ c^{2}_{5}+ c^{2}_{2}c_{6}, \\
&S_{5,5}c_{20}= c_{10}+ c^{2}_{5}+ c^{2}_{2}c_{6}, S_{6,6}c_{20}= 0, S_{2,2,2,2,2}c_{20}= 0, S_{12}c_{20}= c^{2}_{4}+ c^{4}_{2},\\
&S_{4,4,4}c_{20}= c^{2}_{4}, S_{3,3,3,3}c_{20}= c^{2}_{4}, S_{2,2,2,2,2,2}c_{20}= 0, S_{14}c_{20}= c_{6},S_{7,7}c_{20}=c_{6},\\
&S_{2,2,2,2,2,2,2}c_{20}= 0, S_{16}c_{20}= 0, S_{8,8}c_{20}= 0, S_{4,4,4,4}c_{20}=0, S_{2,2,2,2,2,2,2,2}c_{20}= 0, \\
&S_{20}c_{20}= 1, S_{10,10}c_{20}= 0,\\
&S_{1}c_{21}= c_{20}+ c_{4}c_{16}, \\
&S_{2}c_{21}= c_{19}+c_{2}c_{4}c_{13}+c_{6}c_{13}+c_{2}c_{4}c_{5}(c^{4}_{2}+c^{2}_{4})+c_{5}(c_{14}+c^{2}_{2}c_{10}),\\
&S_{1,1}c_{21}=c_{2}c_{4}c_{13}+ c_{6}c_{13}+ c_{2}c_{4}c_{5}(c^{4}_{2}+ c^{2}_{4})+ c_{5}(c_{14}+ c^{2}_{2}c_{10}), \\
&S_{3}c_{21}= c_{2}c_{16}+ c_{18}+ c^{2}_{9}+ c^{2}_{2}(c_{14}+c^{2}_{4}c_{6})+ c^{4}_{2}(c_{10}+ c^{2}_{5}+ c^{2}_{2}c_{6})+ c_{6}(c_{12}+ c^{2}_{6}),\\
&S_{4}c_{21}= c_{17}+ c_{2}c_{4}c_{11}+c^{2}_{4}c_{9}+ c_{5}c_{12}, \\
&S_{2,2}c_{21}= c_{17}+ c^{2}_{4}c_{9}+ c_{6}c_{11}+ c_{4}c_{13}+ c_{4}c_{5}(c^{4}_{2}+ c^{2}_{4})+ c_{5}(c_{12}+ c^{2}_{6}+ c^{2}_{2}c^{2}_{4}),\\
&S_{5}c_{21}= c^{4}_{4}, S_{6}c_{21}= c_{2}c_{4}c_{9}+ c^{2}_{2}c_{11}+ c_{5}(c_{10}+ c^{2}_{5}+ c^{2}_{2}c_{6}), \\
&S_{3,3}c_{21}= c_{2}c_{4}c_{9}+ c^{2}_{2}c_{11}+ c_{5}(c_{10}+ c^{2}_{5}+ c^{2}_{2}c_{6}), \\
&S_{2,2,2}c_{21}= c_{2}c_{13}+ c^{2}_{2}c_{11}+ c_{6}c_{9}+ c_{5}(c_{10}+ c^{2}_{5}+ c^{2}_{2}c_{6})+ c_{2}c_{5}(c^{4}_{2}+ c^{2}_{4}), \\
&S_{8}c_{21}= c_{13}+ c_{5}c^{2}_{4}, S_{4,4}c_{21}= c_{13}+ c_{2}c_{11}+ c_{5}(c^{4}_{2}+ c^{2}_{4}), S_{2,2,2,2}c_{21}= 0,\\
\end{align*}

Table 10. (continuation)
\begin{align*}
&S_{7}c_{21}= c_{14}+ c^{2}_{2}c_{10}+ c_{6}(c^{4}_{2}+c^{2}_{4}), S_{9}c_{21}= c_{12}, \\
&S_{3,3,3}c_{21}= c_{2}(c_{10}+ c^{2}_{5}+ c^{2}_{2}c_{6})+ c^{6}_{2}+ c_{2}c_{4}c_{6}+ c^{2}_{2}c_{8}, S_{10}c_{21}= c_{2}c_{4}c_{5}+ c_{5}c_{6}, \\
&S_{5,5}c_{21}= c_{2}c_{4}c_{5}+c_{5}c_{6},S_{2,2,2,2,2}c_{21}=0, S_{12}c_{21}= 0, S_{6,6}c_{21}= 0, S_{4,4,4}c_{21}= 0, \\
&S_{3,3,3,3}c_{21}= c_{5}c^{2}_{2}, S_{2,2,2,2,2,2}c_{21}= 0, S_{11}c_{21}= c_{10}+ c^{2}_{5}+ c^{2}_{2}c_{6}, S_{13}c_{21}= c^{4}_{2},\\
&S_{14}c_{21}= 0, S_{7,7}c_{21}= 0, S_{2,2,2,2,2,2,2}c_{21}= 0,S_{15}c_{21}=c_{2}c_{4}+c_{6},\\
&S_{5,5,5}c_{21}= c_{2}c_{4}+ c_{6}, S_{3,3,3,3,3}c_{21}= c^{3}_{2}, S_{16}c_{21}= c_{5}, S_{8,8}c_{21}= c_{5}, \\
&S_{4,4,4,4}c_{21}= 0, S_{2,2,2,2,2,2,2,2}c_{21}= 0.\\
&S_{2}c_{22}= c_{12}(c^{2}_{4}+ c^{4}_{2})+ c_{10}(c_{10}+ c^{2}_{5}+ c^{2}_{2}c_{6})+ c^{4}_{5}, S_{4}c_{22}= c^{4}_{2}c_{10},\\
&S_{1,1}c_{22}= c_{12}(c^{2}_{4}+ c^{4}_{2})+ c_{10}(c_{10}+ c^{2}_{5}+ c^{2}_{2}c_{6})+ c^{4}_{5}, S_{6}c_{22}= c^{2}_{8}+ c^{8}_{2}+ c_{6}c_{10}, \\
&S_{2,2}c_{22}=c_{18}+ c^{2}_{9}+ c^{2}_{2}c_{14}+c_{6}(c_{12}+ c^{2}_{6}+ c^{2}_{2}c^{2}_{4})+c^{4}_{2}(c_{10}+ c^{2}_{5}+ c^{2}_{2}c_{6}),\\
&S_{3,3}c_{22}= c^{2}_{8}+ c^{8}_{2}+ c_{6}c_{10}, S_{8}c_{22}= 0, S_{2,2,2}c_{22}= c^{2}_{2}c_{12}+ c_{6}(c_{10}+ c^{2}_{5}+ c^{2}_{2}c_{6}),\\
&S_{4,4}c_{22}= 0, S_{2,2,2,2}c_{22}= 0, S_{10}c_{22}= c_{12}, S_{5,5}c_{22}= c_{12}, S_{2,2,2,2,2}c_{22}=0,\\
&S_{12}c_{22}= c_{10}, S_{6,6}c_{22}= c_{10}+ c^{2}_{5}+ c^{2}_{2}c_{6}, S_{4,4,4}c_{22}= c_{10}, S_{3,3,3,3}c_{22}= c^{2}_{5}+ c^{2}_{2}c_{6}, \\
&S_{2,2,2,2,2,2}c_{22}= 0, S_{14}c_{22}= c^{2}_{4}, S_{7,7}c_{22}= c^{2}_{4}, S_{2,2,2,2,2,2,2}c_{22}= 0, S_{16}c_{22}= 0, \\
&S_{8,8}c_{22}= 0, S_{4,4,4,4}c_{22}= 0, S_{2,2,2,2,2,2,2,2}c_{22}= 0, S_{18}c_{22}=0, S_{9,9}c_{22}=0, \\
&S_{6,6,6}c_{22}= c^{2}_{2}, S_{3,3,3,3,3,3}c_{22}= c^{2}_{2},S_{2,2,2,2,2,2,2,2,2}c_{22}=0, S_{22}c_{22}= 1, \\
&S_{11,11}c_{22}= 1, S_{2,2,2,2,2,2,2,2,2,2,2}c_{22}= 0.\\
&S_{2}c_{23} = c_{21}+ c_{5}c_{16}+ c_{9}c_{12}+ c_{11}(c_{10}+ c^{2}_{5}+ c^{2}_{2}c_{6})+ c_{8}c_{13}+ c_{5}c^{2}_{8} + (c_{13}+ \\
& \qquad \;  +c_{5}c_{8})(c^{2}_{4}+ c^{4}_{2}), S_{1}c_{23}= c_{22}, S_{3}c_{23}= c_{20}+ c^{4}_{5},\\
&S_{1,1}c_{23} = c_{21}+ c_{5}c_{16}+ c_{9}c_{12}+ c_{11}(c_{10}+ c^{2}_{5}+ c^{2}_{2}c_{6})+c_{8}c_{13}+ c_{5}c^{2}_{8}+(c_{13}+\\
& \quad \quad \quad + c_{5}c_{8})(c^{2}_{4}+c^{4}_{2}), S_{4}c_{23}= c_{19}+ c_{8}c_{11}+ c_{9}c_{10}+ c_{11}c^{4}_{2}, S_{5}c_{23}=c_{18},\\
&S_{2,2}c_{23}= c_{2}c_{4}(c_{13}+ c_{5}c_{8})+ c_{2}c_{5}c_{12}+ c_{4}c_{5}(c_{10}+ c^{2}_{5}+ c^{2}_{2}c_{6})+ c_{5}c_{8}c_{6}, \\
&S_{6}c_{23}= c_{17}+ c_{8}c_{9}+ c_{11}c_{6},S_{3,3}c_{23}= c_{17}+ c_{8}c_{9}+c_{11}c_{6},S_{2,2,2}c_{23}= 0,\\
&S_{7}c_{23}= c^{8}_{2}, S_{8}c_{23}= 0, S_{4,4}c_{23}= c_{4}c_{11}+ c_{5}c_{10}, S_{2,2,2,2}c_{23}= 0, S_{9}c_{23}= 0, \\
&S_{3,3,3}c_{23}= c_{14}+ c^{4}_{2}c_{6}, S_{10}c_{23}= c_{13}+ c_{5}c_{8}, S_{5,5}c_{23}= c_{13}+ c_{5}c_{8}, S_{2,2,2,2,2}c_{23}= 0\\
&S_{11}c_{23}= 0, S_{12}c_{23}= c_{11}, S_{6,6}c_{23}= c_{2}c_{9}+ c_{5}c_{6}, S_{4,4,4}c_{23}= 0, S_{2,2,2,2,2,2}c_{23}= 0, \\
&S_{3,3,3,3}c_{23}= c_{2}c_{9}+ c_{5}c_{6},S_{13}c_{23}= 0,S_{14}c_{23}= c_{9},S_{7,7}c_{23}= c_{9},S_{15}c_{23}= c_{8}+ c^{2}_{4},\\
&S_{2,2,2,2,2,2,2}c_{23}= 0, S_{5,5,5}c_{23}= c_{8}+ c^{2}_{4}+ c^{4}_{2}, S_{3,3,3,3,3}c_{23}= c_{8}+ c^{4}_{2},S_{16}c_{23}= 0, \\
&S_{8,8}c_{23}= 0, S_{4,4,4,4}c_{23}=0,S_{2,2,2,2,2,2,2,2}c_{23}=0, S_{18}c_{23}= 0, S_{9,9}c_{23}= 0, \\
&S_{6,6,6}c_{23}= 0, S_{3,3,3,3,3,3}c_{23}= c_{5}.\\
&S_{2}c_{24}= c^{2}_{11}+ c^{2}_{2}c_{18}+ c_{6}(c^{2}_{8}+ c^{4}_{4})+ c^{2}_{4}c_{14}+ c_{12}(c^{2}_{5}+ c^{2}_{2}c_{6})+ c^{2}_{6}c_{10},\\
&S_{1,1}c_{24}=c^{2}_{11}+ c^{2}_{2}c_{18}+ c_{6}(c^{2}_{8}+ c^{4}_{4})+ c^{2}_{4}c_{14}+ c_{12}(c^{2}_{5}+ c^{2}_{2}c_{6})+c^{2}_{6}c_{10}+ c_{22}, \\
&S_{4}c_{24}= c^{4}_{2}c_{12}+ c^{4}_{5}, S_{2,2}c_{24}= c_{6}c_{14}+ c^{2}_{4}(c_{12}+ c^{6}_{2})+c^{2}_{5}c_{10}+ c^{4}_{2}(c_{12}+ c^{6}_{2})+ c^{10}_{2}, \\
&S_{6}c_{24}= c_{18}+ c^{2}_{9}+ c^{2}_{2}c_{14}+ c_{6}c_{12}+c^{2}_{2}c^{2}_{4}c_{6}+c^{3}_{6}, S_{4,4}c_{24}= c^{2}_{8}+ c^{4}_{4}+ c^{8}_{2},\\
&S_{3,3}c_{24}= c_{18}+ c^{2}_{9}+ c^{2}_{2}c_{14}+ c_{6}c_{12}+ c^{2}_{2}c^{2}_{4}c_{6}+ c^{3}_{6},S_{8}c_{24}= c^{8}_{2},S_{2,2,2,2,2}c_{24}= 0, \\
\end{align*}

Table 10. (continuation)
\begin{align*}
&S_{2,2,2}c_{24}= c_{18}+ c^{2}_{9}+ c_{6}(c_{12}+ c^{2}_{2}c^{2}_{4})+ c^{4}_{2}c^{2}_{5},S_{10}c_{24}= c_{14}+ c^{2}_{2}c_{10}+ c^{2}_{4}c_{6},  \\
&S_{2,2,2,2}c_{24}= c^{2}_{2}(c_{12}+ c^{2}_{2}c^{2}_{4}+ c^{2}_{6})+ c_{6}(c_{10}+ c^{2}_{5}+ c^{2}_{2}c_{6}),S_{12}c_{24}= c_{12}, \\
&S_{5,5}c_{24}= c_{14}+ c^{2}_{2}c_{10}+c^{2}_{4}c_{6}, S_{6,6}c_{24}= c^{2}_{6}, S_{4,4,4}c_{24}= c_{12}, S_{3,3,3,3}c_{24}= c_{12}+ c^{2}_{6},\\
&S_{2,2,2,2,2,2}c_{24}= 0, S_{14}c_{24}= c_{10}+ c^{2}_{5}+ c^{2}_{2}c_{6}, S_{7,7}c_{24}= c_{10}+ c^{2}_{5}+ c^{2}_{2}c_{6},\\
&S_{2,2,2,2,2,2,2}c_{24}= 0, S_{16}c_{24}= 0,S_{8,8}c_{24}= c^{2}_{4}, S_{2,2,2,2,2,2,2,2}c_{24}= 0,\\
&S_{4,4,4,4}c_{24}= c^{2}_{4}, S_{18}c_{24}= c_{6}, S_{9,9}c_{24}= c_{6}, S_{6,6,6}c_{24}= 0, S_{3,3,3,3,3,3}c_{24}= c_{6},\\
&S_{2,2,2,2,2,2,2,2,2}c_{24}= 0, S_{20}c_{24}= 0, S_{10,10}c_{24}= c^{2}_{2}, S_{2,2,2,2,2,2,2,2,2,2}c_{24}= 0,\\
&S_{5,5,5,5}c_{24}= c^{2}_{2}, S_{4,4,4,4,4}c_{24}= 0, S_{12,12}c_{24}= 1, S_{8,8,8}c_{24}= 1, S_{6,6,6,6}c_{24}= 0,\\
&S_{4,4,4,4,4,4}c_{24}= 0, S_{3,3,3,3,3,3,3,3}c_{24}= 0, S_{2,2,2,2,2,2,2,2,2,2,2,2}c_{24}= 0.\\
&S_{2}c_{25}= c_{23}+ c_{2}c_{5}c_{16}+c_{2}c_{8}c_{13}+c_{19}c^{2}_{2}+c_{6}c_{17}++ c_{2}c_{5}c_{8}(c^{2}_{4}+ c^{4}_{2})+ c_{13}(c_{10}+\\
& \qquad \; + c^{2}_{5}+ c^{2}_{2}c_{6})+ c_{11}(c^{2}_{2}c^{2}_{4}+ c^{6}_{2})+c_{9}(c_{14}+ c^{2}_{2}c_{10}),S_{1}c_{25}= c_{24}+ c_{8}c_{16},\\
&S_{1,1}c_{25}=c_{2}c_{5}c_{16}+c_{2}c_{8}c_{13}+c_{19}c^{2}_{2}+c_{6}c_{17}+c_{2}c_{5}c_{8}(c^{2}_{4}+c^{4}_{2})+ c_{13}(c_{10}+ c^{2}_{5}+\\
&\qquad \quad + c^{2}_{2}c_{6})+ c_{11}(c^{2}_{2}c^{2}_{4}+ c^{6}_{2})+ c_{9}(c_{14}+ c^{2}_{2}c_{10}),\\
&S_{3}c_{25}= c_{22}+ c^{2}_{11}+ c^{2}_{4}c_{14}+ c_{10}(c_{12}+ c^{2}_{6}+ c^{2}_{2}c^{2}_{4}+ c^{6}_{2})+ c_{6}c^{4}_{4}, \\
&S_{4}c_{25}=c_{21}+ c_{2}c_{8}c_{11}+ c_{13}c^{4}_{2}+ c_{9}c_{12}+ c_{5}c^{2}_{8},\\
&S_{2,2}c_{25}= c_{21}+ c_{5}c_{16}+ c_{8}c_{13}+ c^{2}_{2}(c_{17}+ c_{4}c_{13}+ c_{4}c_{5}c_{8})+ (c_{2}c_{13}+ c_{2}c_{5}c_{8})c_{6}+\\
& \qquad \quad + (c_{13}+c_{5}c_{8})(c^{2}_{4}+ c^{4}_{2})+ (c_{11}+ c_{2}c_{4}c_{5})(c_{10}+ c^{2}_{5}+ c^{2}_{2}c_{6})+ c_{9}c_{12}+\\
& \qquad \quad + c_{2}c_{5}(c_{14}+ c^{2}_{2}c_{10}+ c_{6}(c^{4}_{2}+ c^{2}_{4}))+ c_{5}(c_{6}(c_{10}+ c^{2}_{5}+ c^{2}_{2}c_{6})+ c^{2}_{8}),\\
&S_{5}c_{25}= c_{20}+ c^{4}_{5}, S_{6}c_{25}= c_{2}c_{8}c_{9}+ c_{13}c_{6}+ c_{9}(c_{10}+ c^{2}_{5}+ c^{2}_{2}c_{6}),\\
&S_{3,3}c_{25}= c_{2}c_{8}c_{9}+ c_{13}c_{6}+ c_{9}(c_{10}+ c^{2}_{5}+ c^{2}_{2}c_{6}),\\
&S_{2,2,2}c_{25}= c_{2}c_{4}c_{13}+ c_{2}c_{4}c_{5}c_{8}+ c_{2}c_{13}c^{2}_{2}+ (c_{13}+ c_{5}c_{8})c_{6}+ c_{11}c^{4}_{2}+ c_{9}c^{2}_{2}c_{6}+\\
& \qquad \qquad + c_{4}c_{5}(c_{10}+ c^{2}_{5}+ c^{2}_{2}c_{6})+ c_{5}c^{2}_{2}(c_{10}+ c^{2}_{5}+ c^{2}_{2}c_{6})+ c_{2}c_{5}(c_{12}+ c^{2}_{6}+ c^{6}_{2}),\\
&S_{7}c_{25}= c_{18}+c_{2}c_{16}+ c^{2}_{9}+ c^{2}_{2}c_{14}+ c_{6}(c^{2}_{2}c^{2}_{4}+ c^{2}_{6}), S_{8}c_{25}= c_{17}+ c_{9}c^{2}_{4},\\
&S_{4,4}c_{25}=c_{5}c_{12}+ c_{17}+ c_{2}c_{4}c_{11}, S_{2,2,2,2}c_{25}= 0, S_{9}c_{25}= c^{2}_{8}+ c^{8}_{2}, S_{3,3,3}c_{25}=0,\\
&S_{10}c_{25}= c_{2}c_{5}c_{8}+ c_{11}c^{2}_{2}+ c_{9}c_{6}, S_{5,5}c_{25}= c_{2}c_{5}c_{8}+ c_{11}c^{2}_{2}+ c_{9}c_{6}, S_{2,2,2,2,2}c_{25}= 0, \\
&S_{11}c_{25}= c_{14}+ c^{2}_{4}c_{6}, S_{12}c_{25}= c_{13}, S_{6,6}c_{25} = c_{9}c^{2}_{2},  S_{3,3,3,3}c_{25}=c_{13} + c_{9}c^{2}_{2},\\
&S_{4,4,4}c_{25}= c_{13}+ c_{2}c_{11}+ c_{5}(c^{2}_{4}+ c^{4}_{2}),S_{2,2,2,2,2,2}c_{25}= 0, S_{13}c_{25}= 0, S_{14}c_{25}= 0, \\
&S_{7,7}c_{25}=0, S_{2,2,2,2,2,2,2}c_{25}= 0, S_{15}c_{25}= c_{10}+ c^{2}_{5}+ c^{2}_{2}c_{6}, S_{5,5,5}c_{25}= 0,\\
&S_{3,3,3,3,3}c_{25}= c_{10}+ c^{2}_{5}+ c^{2}_{2}c_{6}, S_{16}c_{25}= c_{9}, S_{8,8}c_{25}= 0,S_{4,4,4,4}c_{25}= 0,\\
&S_{2,2,2,2,2,2,2,2}c_{25}= 0, S_{17}c_{25}= 0,S_{18}c_{25}=0, S_{9,9}c_{25}= 0, S_{6,6,6}c_{25}= 0,\\
&S_{3,3,3,3,3,3}c_{25}= 0, S_{19}c_{25}= c_{6}, S_{2,2,2,2,2,2,2,2,2}c_{25}= 0, S_{20}c_{25}= 0, S_{10,10}c_{25}= 0,\\
&S_{21}c_{25}=0, S_{5,5,5,5}c_{25}= 0,S_{4,4,4,4,4}c_{25}= 0, S_{2,2,2,2,2,2,2,2,2,2}c_{25}=0,\\
&S_{7,7,7}c_{25}= 0, S_{3,3,3,3,3,3,3}c_{25}= 0, S_{23}c_{25}= 0, S_{25}c_{25}= 1, S_{5,5,5,5,5}c_{25}= 0.\\
&S_{2}c_{26}= c^{2}_{4}c^{2}_{8}+ c^{2}_{2}c^{2}_{10}+ c^{2}_{12}+ c^{4}_{2}c^{4}_{4}+ c^{4}_{6}+ c^{12}_{2}, \\
&S_{1,1}c_{26}= c^{2}_{4}c^{2}_{8} + c^{2}_{2}c^{2}_{10}+ c^{2}_{12}+ c^{4}_{2}c^{4}_{4}+ c^{4}_{6}+ c^{12}_{2}, S_{4}c_{26}= 0,\\
\end{align*}

Table 10. (continuation)
\begin{align*}
&S_{2,2}c_{26}= c_{22}+ c^{2}_{11} + c_{6}c^{2}_{8}+ c_{14}(c^{2}_{4}+ c^{4}_{2})+ c^{2}_{2}c^{2}_{4}(c_{10}+ c^{2}_{2}c_{6})+ c^{6}_{2}c_{10}+ c_{10}c_{12},\\
&S_{6}c_{26}= 0, S_{3,3}c_{26}= 0, S_{2,2,2}c_{26}= c^{2}_{2}c^{2}_{8}+ c^{2}_{6}(c^{2}_{4}+c^{4}_{2}) + c^{2}_{10}+ c^{4}_{5}+ c^{10}_{2}, \\
&S_{8}c_{26}= 0, S_{4,4}c_{26}= c_{18}, S_{2,2,2,2}c_{26}= 0, S_{10}c_{26}= c^{4}_{4}+ c^{8}_{2}, S_{5,5}c_{26}= c^{4}_{4}+ c^{8}_{2},\\
&S_{2,2,2,2,2}c_{26}= 0,S_{12}c_{26}=0, S_{6,6}c_{26}= c_{14}+ c^{2}_{4}c_{6}+ c^{4}_{2}c_{6}+ c^{2}_{2}c_{10}, S_{4,4,4}c_{26}= 0,\\
&S_{3,3,3,3}c_{26} = c_{14}+ c^{2}_{4}c_{6}+ c^{4}_{2}c_{6}+ c^{2}_{2}c_{10}, S_{2,2,2,2,2,2}c_{26}= 0, S_{14}c_{26}= 0,S_{7,7}c_{26}= 0\\
&S_{2,2,2,2,2,2,2}c_{26}= 0, S_{16}c_{26}= 0, S_{8,8}c_{26}= c_{10}, S_{4,4,4,4}c_{26}= 0, S_{18}c_{26}= c^{4}_{2},\\
&S_{2,2,2,2,2,2,2,2}c_{26}= 0, S_{9,9}c_{26} = c^{4}_{2}, S_{6,6,6}c_{26}= 0, S_{3,3,3,3,3,3}c_{26}= 0, S_{10,10}c_{26}= c_{6}\\
&S_{2,2,2,2,2,2,2,2,2}c_{26}= 0, S_{20}c_{26}= 0, S_{5,5,5,5}c_{26}= c_{6},  S_{2,2,2,2,2,2,2,2,2,2}c_{26}= 0,\\
&S_{4,4,4,4,4}c_{26}= 0, S_{22}c_{26}= 0, S_{11,11}c_{26}= 0, S_{2,2,2,2,2,2,2,2,2,2,2}c_{26}= 0, \\
&S_{13,13}c_{26}= 1, S_{2,2,2,2,2,2,2,2,2,2,2,2,2}c_{26}= 0.\\
&S_{2}c_{27}= c_{25}+ c_{4}c_{5}c_{16}+ c_{4}c_{8}c_{13}+ c_{19}c_{6}+ c_{17}c^{2}_{4} + c_{4}c_{5}c_{8}(c^{2}_{4}+ c^{4}_{2})+c_{13}c_{12}+ \\
& \qquad \; +c_{11}(c^{2}_{2}c_{10}+ c_{14})+ c_{9}(c^{4}_{2}c^{2}_{4}+ c^{4}_{4}),S_{1}c_{27}= c_{26}, \\
&S_{1,1}c_{27}= c_{25}+ c_{4}c_{5}c_{16}+ c_{4}c_{8}c_{13}+ c_{19}c_{6}+ c_{17}c^{2}_{4}+c_{4}c_{5}c_{8}(c^{2}_{4}+c^{4}_{2}) +c_{13}c_{12}+\\
&\qquad \quad +c_{11}(c^{2}_{2}c_{10}+c_{14})+ c_{9}(c^{4}_{2}c^{2}_{4}+c^{4}_{4}), \\
&S_{3}c_{27}=c_{8}c_{16}+c_{24}+c_{6}c_{18} + c_{10}c_{14}+ c^{4}_{6}+ c^{4}_{2}c^{4}_{4}+ c^{12}_{2},\\
&S_{4}c_{27}= c_{23}+ c_{4}c_{8}c_{11}+c_{13}c_{10}+c_{11}c_{12},\\
&S_{2,2}c_{27}= c_{2}c_{5}c_{16}+ c_{2}c_{8}c_{13}+ c_{6}(c_{17}+ c_{4}c_{13}+ c_{4}c_{5}c_{8})+ c_{2}c_{13}c^{2}_{4} + c_{13}(c_{10}+ \\
&\qquad \quad +c^{2}_{5} + c^{2}_{2}c_{6})+ c_{11}(c^{2}_{2}c^{2}_{4}+ c^{6}_{2})+ c_{2}c_{4}c_{5}c_{12}+ c_{5}c_{6}c_{12}+ c_{9}(c_{14}+c^{2}_{2}c_{10})+\\
&\qquad \quad + c_{4}c_{5}(c^{2}_{2}c_{10}+ c_{6}(c^{2}_{4}+ c^{4}_{2})), S_{5}c_{27}= c_{22},\\
&S_{6}c_{27}= c_{21}+ c_{4}c_{8}c_{9}+ c_{11}(c_{10}+ c^{2}_{5}+ c^{2}_{2}c_{6})+ c_{5}c^{2}_{8},\\
&S_{3,3}c_{27}= c_{21}+ c_{4}c_{8}c_{9}+ c_{11}(c_{10}+ c^{2}_{5}+ c^{2}_{2}c_{6})+ c_{5}c^{2}_{8},\\
&S_{2,2,2}c_{27}= c_{11}c^{2}_{2}c_{6}+ c^{2}_{2}(c_{17}+ c_{4}c_{5}c_{8})+ c_{6}(c_{2}c_{13}+ c_{2}c_{5}c_{8})+ c_{2}c_{4}c_{5}(c_{10}+c^{2}_{5}+ \\
&\qquad \quad +c^{2}_{2}c_{6})+ c^{2}_{2}c^{2}_{4}c_{9}+ c_{4}c_{5}(c^{2}_{6}+ c^{6}_{2})+ c_{2}c_{5}(c_{14}+ c^{2}_{2}c_{10}+ c_{6}(c^{2}_{4}+ c^{4}_{2}))+ \\
&\qquad \quad +c_{5}(c^{2}_{2}c_{12}+ c_{6}(c_{10}+ c^{2}_{5}+ c^{2}_{2}c_{6})), S_{4,4}c_{27}= c_{8}c_{11}+c_{11}(c^{2}_{4}+c^{4}_{2})+c_{9}c_{10},\\
&S_{7}c_{27}= c_{4}c_{16}+ c_{20}+ c_{10}(c_{10}+c^{2}_{5}+ c^{2}_{2}c_{6}), S_{8}c_{27}= c_{19}+c_{11}c^{2}_{4},\\
&S_{2,2,2,2}c_{27}= c^{3}_{2}c_{13}+ c^{4}_{2}c_{11}+ c^{2}_{2}c_{6}c_{9}+ c_{2}c_{5}(c^{2}_{6}+ c^{6}_{2})+ c^{2}_{2}c_{5}(c_{10} + c^{2}_{5}+ c^{2}_{2}c_{6}), \\
&S_{3,3,3}c_{27}= c_{2}c_{16}+ c_{4}c_{6}c_{8}+ c_{8}(c_{10}+ c^{2}_{5}+ c^{2}_{2}c_{6})+ c_{18}+ c^{2}_{9}+ c^{2}_{2}c_{14}+c_{6}(c_{12}+\\
&\qquad \qquad +c^{2}_{2}c^{2}_{4}+ c^{2}_{6}),S_{9}c_{27}= c_{18}+ c^{2}_{4}c_{10}, \\
&S_{10}c_{27}= c_{4}c_{5}c_{8}+ c_{6}c_{11}+ c^{2}_{4}c_{9}, S_{5,5}c_{27}= c_{4}c_{5}c_{8}+ c_{6}c_{11}+ c^{2}_{4}c_{9},S_{2,2,2,2,2}c_{27}= 0, \\
&S_{11}c_{27}= c_{6}c_{10}+ c^{8}_{2}+ c^{4}_{4}, S_{12}c_{27}= 0,S_{6,6}c_{27}= c_{2}c_{4}c_{9}+ c_{5}(c_{10}+ c^{2}_{5}+ c^{2}_{2}c_{6}), \\
&S_{4,4,4}c_{27}= 0, S_{3,3,3,3}c_{27}= c_{2}c_{4}c_{9}+ c_{5}(c_{10}+ c^{2}_{5}+ c^{2}_{2}c_{6}), S_{2,2,2,2,2,2}c_{27}= 0,\\
&S_{13}c_{27}= 0, S_{14}c_{27}= c_{13}, S_{7,7}c_{27}= c_{13}, S_{2,2,2,2,2,2,2}c_{27}= 0, S_{15}c_{27}= c_{4}c_{8}+ c_{12},\\
&S_{5,5,5}c_{27}= c_{4}c_{8}, S_{3,3,3,3,3}c_{27}= c^{2}_{2}c_{8}+ c_{2}c_{4}c_{6}+ c_{2}(c_{10}+ c^{2}_{5}+ c^{2}_{2}c_{6}), S_{16}c_{27}=c_{11},\\
&S_{8,8}c_{27}= 0, S_{4,4,4,4}c_{27}=0,S_{2,2,2,2,2,2,2,2}c_{27}= 0, S_{17}c_{27}= c_{10}, S_{18}c_{27}= 0, \\
&S_{9,9}c_{27}= 0,S_{6,6,6}c_{27}= c^{2}_{2}c_{5}, S_{3,3,3,3,3,3}c_{27}= c^{2}_{2}c_{5}, S_{19}c_{27}= 0,S_{20}c_{27}=0,\\
&S_{2,2,2,2,2,2,2,2,2}c_{27}= 0, S_{10,10}c_{27}= 0, S_{5,5,5,5}c_{27}= 0, S_{2,2,2,2,2,2,2,2,2,2}c_{27}= 0, \\
&S_{4,4,4,4,4}c_{27}= 0, S_{21}c_{27}= 0, S_{7,7,7}c_{27}= c_{2}c_{4}+ c_{6}, S_{3,3,3,3,3,3,3}c_{27}= c^{3}_{2},\\
&S_{22}c_{27}= 0,S_{11,11}c_{27}= 0, S_{2,2,2,2,2,2,2,2,2,2,2}c_{27}= 0, S_{23}c_{27}= 0,S_{25}c_{27}= 0,\\
&S_{5,5,5,5,5}c_{27}= 0.
\end{align*}
\clearpage

Table 11. The cell $E^{0,1,t}_{2}$ of the MASS for $t < 108$ (generators).
\begin{align*}
&106 \;\; \omega _{4} (= u_{3}c_{23}+ u_{4}c_{19}+ u_{5}c_{11}),\hbox{\qquad\qquad\qquad\qquad\qquad\qquad\qquad\qquad\qquad\quad}\\
&\qquad \psi _{10}(= u_{1}c_{9}c_{17}+ u_{2}(c_{25}+c_{8}c_{17}+ c_{9}c_{16})+ u_{4}c_{2}c_{17}+ u_{5}c_{2}c_{9}),\\
&\qquad \psi _{9}(= u_{1}c_{5}c_{21}+ u_{2}c_{4}(c_{21}+ c_{4}c_{17}+ c_{5}c_{16})+ u_{3}c_{2}(c_{21}+ c_{2}c_{19}+c_{5}c_{16})+\\
&\qquad\quad + u_{5}c_{2}c_{4}c_{5}).\\
&102 \;\; \tilde{\varphi }_{13}(= u_{1}c_{25}+ u_{2}c_{8}c_{16}+ u_{4}c_{2}c_{16}+ u_{5}c_{2}c_{8}).\\
&\;\;98  \;\; \omega _{3} (= u_{2}c_{23}+ u_{4}c_{17}+ u_{5}c_{9}),\\
&\qquad \psi _{8}(= u_{1}c_{5}c_{19}+ u_{2}c_{4}c_{19}+ u_{3}(c_{21}+ c_{2}c_{19}+ c_{5}c_{16})+ u_{5}c_{4}c_{5}),\\
&\qquad \psi _{7}(= u_{1}c_{11}c_{13}+ u_{2}c_{4}c_{8}c_{11}+ u_{3}c_{8}(c_{13}+ c_{2}c_{11}+ c_{5}c_{8})+\\
&\qquad\quad + u_{4}c_{4}(c_{13}+ c_{2}c_{11}+ c_{4}c_{9})).\\
&\;\;94 \;\; \tilde{\varphi }_{12}(= u_{1}c_{23}+ u_{4}c_{16}+ u_{5}c_{8}).\\
&\;\;90 \;\; \psi _{6} (= u_{1}c_{5}c_{17}+ u_{2}(c_{21}+ c_{4}c_{17}+ c_{5}c_{16})+ u_{3}c_{2}c_{17}+ u_{5}c_{2}c_{5}).\\
&\qquad \psi _{5} (= u_{1}c_{9}c_{13}+ u_{2}c_{8}(c_{13}+ c_{5}c_{8}+ c_{4}c_{9})+ u_{3}c_{2}c_{8}c_{9}+\\
&\qquad\quad\; + u_{4}c_{2}(c_{13}+ c_{2}c_{11}+ c_{4}c_{9})).\\
&\;\;86 \;\; \tilde{\varphi }_{11}(= u_{1}c_{21}+ u_{2}c_{4}c_{16}+ u_{3}c_{2}c_{16}+ u_{5}c_{2}c_{4}).\\
&\;\;82 \;\; \omega _{2} (= u_{2}c_{19}+ u_{3}c_{17}+ u_{5}c_{5}),\\
&\qquad \psi _{4} (= u_{1}c_{9}c_{11}+ u_{2}c_{8}c_{11}+ u_{3}c_{8}c_{9}+ u_{4}(c_{13}+ c_{2}c_{11}+ c_{4}c_{9}).\\
&\;\;78 \;\; \tilde{\varphi }_{10}(= u_{1}c_{19}+ u_{3}c_{16}+ u_{5}c_{4}).\\
&\;\;74 \;\; \psi _{3}(=u_{1}c_{5}c_{13}+ u_{2}c_{4}(c_{13}+ c_{4}c_{9}+ c_{5}c_{8})+u_{3}c_{2}(c_{13}+ c_{5}c_{8}+ c_{2}c_{11})+\\
&\qquad\quad\; +u_{4}c_{2}c_{4}c_{5}\\
&\;\;70 \;\; \tilde{\varphi }_{9} (= u_{1}c_{17}+ u_{2}c_{16}+ u_{5}c_{2}).\\
&\;\;66 \;\; \psi _{2} (= u_{1}c_{5}c_{11}+ u_{2}c_{4}c_{11}+ u_{3}(c_{13}+ c_{2}c_{11}+ c_{5}c_{8})+ u_{4}c_{4}c_{5}).\\
&\;\;62 \;\; u_{5}.\\
&\;\;58 \;\; \psi _{1} (= u_{1}c_{5}c_{9}+ u_{2}(c_{13}+ c_{4}c_{9}+ c_{5}c_{8} )+ u_{3}c_{2}c_{9}+ u_{4}c_{2}c_{5}).\\
&\;\;54 \;\; \tilde{\varphi }_{7} (= u_{1}c_{13}+ u_{2}c_{4}c_{8}+ u_{3}c_{2}c_{8}+ u_{4}c_{2}c_{4}).\\
&\;\;50 \;\; \omega _{1} (= u_{2}c_{11}+ u_{3}c_{9}+ u_{4}c_{5}).\\
&\;\;46 \;\; \tilde{\varphi }_{6} (= u_{1}c_{11}+ u_{3}c_{8}+ u_{4}c_{4}).\\
&\;\;38 \;\; \tilde{\varphi }_{5} (= u_{1}c_{9}+ u_{2}c_{8}+ u_{4}c_{2}).\\
&\;\;30 \;\; u_{4}.\\
&\;\;22 \;\; \varphi _{3} (= u_{1}c_{5}+ u_{2}c_{4}+ u_{3}c_{2}).\\
&\;\;14 \;\; u_{3}.\\
&\;\;\;\;6  \;\; u_{2}.\\
&\;\;\;\;2  \;\; u_{1}.\\
&\;\;\;\uparrow \\
&\;\;\; \; t
\end{align*}
\clearpage

Table 12. The cell $E^{0,0,t}_{2}$ of the MASS for $t<108$ (generators).
\begin{align*}
&104\;\;c_{26},\;\;e_{26}(= c^{2}_{13}).\hbox{\qquad\qquad\qquad\qquad\qquad\qquad\qquad\qquad\qquad\qquad\qquad\qquad\quad}\\
&\;\;96\;\;c_{24}.\\
&\;\;88\;\;c_{22},\;\;e_{22}(= c^{2}_{11}).\\
&\;\;80\;\;c_{20}.\\
&\;\;72\;\;c_{18},\;\;e_{18}(= c^{2}_{9}).\\
&\;\;64\;\;e_{16}( = c^{2}_{8}).\\
&\;\;56\;\;c_{14}.\\
&\;\;48\;\;c_{12}.\\
&\;\;40\;\;c_{10},\;\;e_{10}(= c^{2}_{5}).\\
&\;\;32\;\;e_{8}(= c^{2}_{4}).\\
&\;\;24\;\;c_{6}.\\
&\;\;16\;\;e_{4}(= c^{2}_{2}).\\
&\;\;\;\;0\;\;1.\\
&\;\;\;\uparrow \;t
\end{align*}

Table 13. The generators of the cell $E^{2,0,t}_{\infty }$ of the MASS
for $t < 54$.
\begin{align*}
&52\;\; a_{13}(=h_{0}c_{13}+ (h_{0}c_{2}+ h_{1}h_{2})c_{11}+ (h_{0}c_{4}+ h_{1}h_{3})c_{9}+ (h_{0}c_{8}+ h_{1}h_{4})c_{5})\hbox{\qquad}\\
&\;\;\;\;\;\;b_{13}(=h_{0}c_{13}+ (h_{0}c_{8}+ h_{1}h_{4})c_{5}+ h_{2}h_{4}c_{4} + h_{3}h_{4}c_{2})\\
&\;\;\;\;\;\;f_{13}(=h_{0}c_{13}+ (h_{0}c_{2}+ h_{1}h_{2})c_{11}+ h_{2}h_{3}c_{8} + h_{2}h_{4}c_{4})\\
&48\;\; b_{12}(=h_{0}c_{4}c_{8} + h_{1}h_{3}c_{8} + h_{1}h_{4}c_{4} + h^{2}_{1}c_{11})\\
&44\;\; a_{11}(=h_{0}c_{11}+ h_{3}h_{4})\\
&\;\;\;\;\;\;b_{11}(=h_{0}c_{2}c_{9} + h_{1}h_{2}c_{9} + h_{2}h_{4}c_{2} + h^{2}_{2}c_{8})\\
&40\;\; b_{10}(=h_{0}c_{2}c_{8} + h_{1}h_{2}c_{8} + h_{1}h_{4}c_{2} + h^{2}_{1}c_{9})\\
&36\;\; a_{9}(=h_{0}c_{9} + h_{2}h_{4})\\
&\;\;\;\;\;\;b_{9}(=h_{0}c_{4}c_{5} + h_{1}h_{3}c_{5} + h_{2}h_{3}c_{4} + h^{2}_{3}c_{2})\\
&32\;\; a_{8}(=h_{0}c_{8} + h_{1}h_{4})\\
&28\;\; a_{7}(=h^{2}_{3})\\
&\;\;\;\;\;\;b_{7}(=h_{0}c_{2}c_{5} + h_{1}h_{2}c_{5} + h_{2}h_{3}c_{2} + h^{2}_{2}c_{4})\\
&24\;\; b_{6}(=h_{0}c_{2}c_{4} + h_{1}h_{2}c_{4} + h_{1}h_{3}c_{2} + h^{2}_{1}c_{5})\\
&20\;\; a_{5}(=h_{0}c_{5} + h_{2}h_{3})\\
&16\;\; a_{4}(=h_{0}c_{4}+ h_{1}h_{3})\\
&12\;\; a_{3}(=h^{2}_{2})\\
&\;\;8\;\; a_{2}(=h_{0}c_{2} + h_{1}h_{2})\\
&\;\;4\;\; a_{1}(=h^{2}_{1})\\
&\;\;0\;\; h_{0}\\
&\;\uparrow \;t
\end{align*}
\clearpage

Table 14. Relations in the term $E^{*,*,t}_{\infty }$ for $t < 54$.
\begin{align*}
&h^{2}_{i}u^{2}_{j} = h^{2}_{j}u^{2}_{i}, (i,j = 1,2,3,4).\\
&h_{0}x = 0, \text{where } x \text{ is an arbitrary element with the second grading} > 0.\\
&u_{i}(h_{0}c_{[i,k]} + h_{i}h_{k}) = u_{k}h^{2}_{i},\text{ where }[i,k] = 2^{i-1}+ 2^{k-1}- 1; i,k \in  \{1,2,3,4\};\\
& \  i\neq k. \\
&u_{i}(h_{0}c_{[j,k]} + h_{j}h_{k}) = u_{j}(h_{0}c_{[i,k]} + h_{i}h_{k}) = u_{k}(h_{0}c_{[i,j]} + h_{i}h_{j}),\text{where}\\
&i\neq j\neq k\neq i\in \{1,2,3,4\};[i,k]= 2^{i-1}+ 2^{k-1}- 1,[i,j]= 2^{i-1}+ 2^{j-1}- 1;\\
&[j,k] = 2^{j-1}+ 2^{k-1}- 1.\\
&h^{2}_{0}e_{2k} = a^{2}_{k} + h^{2}_{i}h^{2}_{j},\text{where }k = 2^{i-1}+ 2^{j-1}- 1, i\neq j\in \{1,2,3\}.\\
&h_{0}(h_{0}c_{k}c_{j} + h_{i}h_{j}c_{l} + h_{s}h_{j}c_{k} + h^{2}_{j}c_{r})\!=\! (h_{0}c_{k} + h_{i}h_{j})(h_{0}c_{l} + h_{s}h_{j})(h_{0}c_{r} + h_{s}h_{i})\\
&\text{where } k = 2^{i-1} + 2^{j-1} - 1, l = 2^{s-1} + 2^{j-1} - 1, r = 2^{s-1} + 2^{i-1}- 1; \\
&i\neq j\neq k\neq i \in  \{1,2,3\}.\\
&h_{0}(a_{13} + b_{13}) = a_{2}a_{11} + a_{4}a_{9};\\
&u_{1}b_{6} = \varphi _{3}a_{1};\\
&h_{0}(a_{13} + f_{13}) = a_{5}a_{8} + a_{4}a_{9};\\
&u_{1}b_{7} = \varphi _{3}a_{2} = u_{2}b_{6};\\
&b^{2}_{6} = a^{2}_{2}e_{8} + a_{1}a_{7}e_{4} + a^{2}_{1}e_{10} = a^{2}_{4}e_{4} + a_{1}a_{3}e_{8} + a^{2}_{1}e_{10};\\
&u_{2}b_{7} = \varphi _{3}a_{3};\\
&u_{1}b_{9} = \varphi _{3}a_{4} = u_{3}b_{6};\\
&h_{0}(e_{4}a_{7}+ e_{8}a_{3}) = a_{2}b_{9}+ a_{4}b_{7};\\
&u_{1}b_{10}= \varphi _{5}a_{1} + u_{3}c_{6}a_{1};\\
&h_{0}(e_{8}a_{3}+ a_{1}e_{10})= a_{4}b_{7}+ a_{5}b_{6};\\
&u_{2}b_{9} = \varphi _{3}a_{5} = u_{3}b_{7};\\
&u_{1}\omega _{1} = u_{2}(\varphi _{6}+ u_{2}c_{10}+ u_{3}c^{4}_{2}) + u_{3}(\varphi _{5} + u_{3}c_{6}) + u_{4}\varphi _{3};\\
&u_{1}b_{11}= u_{2}b_{10}= \varphi _{5}a_{2} + u_{3}c_{6}a_{2};\\
&u_{2}b_{11}= \varphi _{5}a_{3} + u_{3}c_{6}a_{3};\\
&u_{1}(a_{1}e_{10}+ a_{3}e_{8}+ a_{7}e_{4})= b_{6}\varphi _{3};\\
&u_{3}b_{9} = \varphi _{3}a_{7};\\
&u_{2}(a_{1}e_{10}+ a_{3}e_{8}+ a_{7}e_{4})= b_{7}\varphi _{3};\\
&u_{1}b_{12}= \varphi _{6}a_{1} + u_{2}c_{10}a_{1} + u_{3}c^{4}_{2}a_{1};\\
&\omega _{1}a_{1} = u_{2}b_{12} + u_{3}b_{10} + u_{4}b_{6}.
\end{align*}

\clearpage

{ Table 15. The term $E_2=Ext_{A}(BP^*(MSp),BP^*)$ }
\bigskip



\clearpage

Table 16. The relations in ${\rm Ext}_{A}(BP^{*}(MSp),BP^{*})$.
\begin{align*}
&1.\; 4(y_{8} + z_{8}+ z_{2}y_{6} + y_{4}z_{4}) = z_{1}z_{7}+ z^{2}_{4}.\\
&2.\; U_{1}z_{8} = U_{4}z_{1}.\\
&3.\; U_{2}y_{7} = \Phi _{3}z_{3}.\\
&4.\; U_{2}z_{7} = U_{3}z_{5}.\\
&5.\; 2y_{9} = z_{2}z_{7} + 3z_{4}z_{5}\\
&6.\; U_{1}y_{9} = U_{3}z_{6} = \Phi _{3}z_{4}.\\
&7.\; U_{1}z_{9} = U_{2}z_{8} = U_{4}z_{2}.\\
&8.\; 4(y^{*}_{10}+ y_{4}y_{6}) = z_{3}z_{7} + z^{2}_{5} + X^{1}_{40}(\hat{y^{*}_{10}},\hat{y_{4}y_{6}},\hat{z_{3}z_{7}},\hat{z^{2}_{5}}).\\
&9.\; 2(z_{10} + z_{4}y_{6}) = z_{2}z_{8} + 3z_{1}z_{9} + X^{2}_{40}(\hat{z_{10}},\hat{z_{4}y_{6}},\hat{z_{2}z_{8}},\hat{z_{1}z_{9}}).\\
&10.\; U_{1}z_{10} = \Phi _{5}z_{1}\\
&11.\; U_{2}z_{9} = U_{4}z_{3}.\\
&12.\; U_{2}y_{9} = U_{3}y_{7} = \Phi _{3}z_{5}.\\
&13.\; \Phi ^{2}_{3} = U_{1}[U_{1}y^{*}_{10} + U_{2}(y_{9} + z_{9}) + U_{3}z_{7}] + U^{2}_{2}y_{8} + U^{2}_{3}y_{4} + U_{1}[U_{1}y_{4}y_{6} + \\
&\qquad \;\; + U_{2}(y_{4}z_{5} + y_{6}z_{3})].\\
&14.\; 2y_{4}z_{7} + 2y_{8}z_{3} = z_{2}y_{9} + z_{4}y_{7} + X^{1}_{44}(\hat{y_{4}z_{7}},\hat{y_{8}z_{3}},\hat{z_{2}y_{9}},\hat{z_{4}y_{7}}).\\
&15.\; 2y_{8}z_{3} + 2(y^{*}_{10} + y_{6}y_{4})z_{1} = z_{4}y_{7} + z_{5}z_{6} + X^{2}_{44}(\hat{y_{6}z_{5}},\hat{y_{8}z_{3}},\hat{z_{1}(y_{4}y_{6} + y^{*}_{10})},\\
&\qquad \;\; \hat{z_{4}y_{7}}).\\
&16.\; 2y_{11} = z_{2}z_{9} + 3z_{3}z_{8} + X^{3}_{44}(\hat{y_{11}},\hat{z_{2}z_{9}},\hat{z_{3}z_{8}}).\\
&17.\; U_{1}(z_{1}y^{*}_{10} + z_{1}y_{4}y_{6} + z_{3}y_{8} + z_{7}y_{4}) + U_{1}z_{4}z_{7} + U_{1}z_{2}z_{9} + U_{1}z_{2}y_{9} +\\
&\qquad \;\; + U_{1}z_{3}(z_{2}y_{6} + z_{4}y_{4}) = \Phi _{3}y_{6}.\\
&18.\; U_{1}y_{11} = U_{2}z_{10} = \Phi _{5}z_{2}.\\
&19.\; U_{1}z_{11} = U_{3}z_{8} = U_{4}z_{4}.\\
&20.\; z^{2}_{6} = (1 + 2\beta _{1})z^{2}_{2}y_{8} + (1 + 2\beta _{2})z_{1}z_{7}y_{4} + (1 + 2\beta _{3})z^{2}_{1}(y^{*}_{10} + y_{4}y_{6}) + X^{1}_{48}.\\
&21.\; 2z_{12} = z_{1}z_{11} + 3z_{4}z_{8} + X^{2}_{48}(\hat{y_{12}},\hat{z_{1}z_{11}},\hat{z_{4}z_{8}}),\\
&22.\; U_{1}z_{12} = \Phi _{6}z_{1}.\\
&23.\; U_{2}y_{11} = \Phi _{5}z_{3}.\\
&24.\; U_{2}z_{11} = U_{3}z_{9} = U_{4}z_{5}.\\
&25.\; U_{3}y_{9} = \Phi _{3}z_{7}.\\
&26.\; \Phi _{3}y_{7} = U_{2}(z_{1}(y^{*}_{10} + z_{10} + y_{4}y_{6}) + z_{2}(y_{9} + z_{9}) + z_{3}(y_{8} + y_{6}z_{2} + y_{4}z_{4}) +\\
&\qquad \;\; z_{7}y_{4})\\
&27.\; U_{1}\Omega _{1} = U_{2}\Phi _{6} + U_{3}\Phi _{5} + U_{4}\Phi _{3} + U^{2}_{2}y_{10} + U^{2}_{3}y_{6} + U_{2}U_{3}y^{2}_{4}.
\end{align*}

{\bf Remark.} The summand of the form $X_{n}(\ldots ,\hat{x} ,\ldots )$
has the following properties:
(i) $X_{n} \in  F^{6}(E^{0,n}_{2})$ , (ii) $d_{3}(X_{n}) = 0$,
(iii) the summand $x$ do not appear in the expression for $X_{n}$.

\clearpage

Table 17. The action of the differential $d_{3}$ of the Adams-Novikov
spectral sequence (continuation of Table~4 of the work \cite{V2}).
\begin{align*}
7) \ &d_{3}(U^{n}_{1}U^{4}_{2}z_{3}) = U^{n+1}_{1}U^{6}_{2}, d_{3}(U^{n}_{1}U^{4}_{2}U_{3}) = 0, d_{3}(U^{n}_{1}U^{7}_{2}) = 0.\\
8) \ &d_{3}(U^{n}_{1}U_{2}z_{7}) = d_{3}(U^{n}_{1}U_{3}z_{5}) = U^{n+1}_{1}U_{2}U^{2}_{3}, d_{3}(U^{n}_{1}\Phi _{3}z_{3}) = U^{n+1}_{1}U^{2}_{2}\Phi _{3},\\
&d_{3}(U^{n}_{1}U_{2}z_{3}y_{4}) = U^{n}_{1}U_{2}(U^{3}_{2}z_{3}+ U_{1}U^{2}_{2}y_{4}),d_{3}(U^{n}_{1}U^{2}_{2}U_{3}z_{3}) = U^{n+1}_{1}U^{4}_{2}U_{3},\\
&d_{3}(U^{n}_{1}U_{2}U_{3}y_{4}) = d_{3}(U^{n}_{1}U^{2}_{2}y_{6}) = U^{n}_{1}U^{4}_{2}U_{3}, d_{3}(U^{n}_{1}U^{4}_{2}y_{4}) = U^{n}_{1}U^{7}_{2},d_{3}(U^{n}_{1}\times \\
&\times U^{8}_{2}) = 0, d_{3}(U^{n}_{1}U^{5}_{2}z_{3}) = U^{n+1}_{1}U^{7}_{2}, d_{3}(U^{n}_{1}U_{3}\Phi _{3}) = 0, d_{3}(U^{n}_{1}U_{2}U_{4}) = 0,\\
&d_{3}(U^{n}_{1}U^{2}_{2}z^{2}_{3}) = 0, d_{3}(U^{n}_{1}U^{2}_{2}U^{2}_{3}) = 0, d_{3}(U^{n}_{1}U^{3}_{2}\Phi _{3}) = 0,d_{3}(U^{n}_{1}U^{5}_{2}U_{3}) = 0.\\
9) \ &d_{3}(U^{n}_{1}z_{9}) = U^{n+1}_{1}U_{2}U_{4}, d_{3}(U^{n}_{1}y_{9}) = U^{n+1}_{1}U_{3}\Phi _{3}, d_{3}(U^{n}_{1}z^{5}_{1}y_{4}) = U^{n+3}_{1}z^{2}_{1}\times\\
&\times (z^{2}_{1}y_{4} + z^{3}_{2}), d_{3}(U^{n}_{1}z^{3}_{3}) = d_{3}(U^{n}_{1}z_{3}z_{3}y_{4}) = U^{n+1}_{1}U^{2}_{2}z^{2}_{3}, d_{3}(U^{n}_{1}z^{2}_{1}z_{3}y_{4}) =\\
&= d_{3}(U^{n}_{1}z_{1}z^{2}_{2}) = U^{n+3}_{1}z_{2}(z_{2}y_{4}+ z^{2}_{3}),d_{3}(U^{n}_{1}z^{2}_{1}y_{7}) = d_{3}(U^{n}_{1}z_{1}z_{2}z_{6}) = \\
&= U^{n+3}_{1}z_{1}y_{7}, d_{3}(U^{n}_{1}z^{2}_{1}z_{7}) = d_{3}(U^{n}_{1}z_{1}z^{2}_{4}) = U^{n+3}_{1}z_{1}z_{7}, d_{3}(U^{n}_{1}z_{1}z_{3}z_{5}) = \\
&= d_{3}(U^{n}_{1}z_{1}z_{2}y_{6}) = d_{3}(U^{n}_{1}z_{2}z_{3}z_{4}) = d_{3}(U^{n}_{1}z_{1}z_{4}y_{4}) = d_{3}(U^{n}_{1}z^{2}_{2}z_{5}) = U^{n+3}_{1}\times\\
&\times z_{3}z_{5}, d_{3}(U^{n}_{1}z_{4}z_{2}z^{3}_{1}) = d_{3}(U^{n}_{1}z_{5}z^{4}_{1}) = U^{n+3}_{1}z^{3}_{1}z_{5}, d_{3}(U^{n}_{1}z_{1}y^{2}_{4}) = U^{n+3}_{1}y^{2}_{4},\\
& d_{3}(U^{n}_{1}z_{1}z^{4}_{2})= d_{3}(U^{n}_{1}z^{3}_{1}z^{2}_{3}) = d_{3}(U^{n}_{1}z^{3}_{1}z_{2}y_{4}) = d_{3}(U^{n}_{1}z^{2}_{1}z^{2}_{2}z_{3}) = U^{n+3}_{1}z^{4}_{2},\\
& d_{3}(U^{n}_{1}z_{1}y_{8}) = U^{n+2}_{1}(U_{1}y_{8} + U_{2}z_{7}),d_{3}(U^{n}_{1}z^{3}_{1}y_{6}) = U^{n+3}_{1}z_{1}(z_{1}y_{6}+z_{3}z_{4}),\\
&d_{3}(U^{n}_{1}z^{9}_{1}) = U^{n+3}_{1}z^{8}_{1}, d_{3}(U^{n}_{1}z_{5}y_{4}) = U^{n}_{1}U_{2}U_{3}(U_{1}y_{4}+ U_{2}z_{3}), d_{3}(U^{n}_{1}z_{3}y_{6}) =\\
& = U^{n}_{1}U^{2}_{2}(U_{6}y_{6} + U_{3}z_{3}), d_{3}(U^{n}_{1}z^{6}_{1}z_{3})= d_{3}(U^{n}_{1}z^{5}_{1}z^{2}_{2}) = U^{n+3}_{1}z^{5}_{1}z_{3}, d_{3}(U^{n}_{1}\times\\
&\times\!U_{2}y_{8}) = d_{3}(U^{n}_{1}U_{3}y_{6}) =U^{n}_{1}U^{2}_{2}U^{2}_{3},d_{3}(U^{n}_{1}\Phi _{3}y_{4}) = U^{n}_{1}\Phi _{3}U^{3}_{2}, d_{3}(U^{n}_{1}U^{2}_{2}z_{7})\! = \\
&= U^{n+1}_{1}U^{2}_{2}U^{2}_{3}, d_{3}(U^{n}_{1}U^{2}_{2}y_{7}) = U^{n+2}_{1}U^{3}_{2}\Phi _{3}, d_{3}(U^{n}_{1}U^{2}_{2}z_{3}y_{4}) = U^{n}_{1}U^{2}_{2}(U_{1}y_{4} + \\
&+ U_{2}z_{3}), d_{3}(U^{n}_{1}U^{2}_{2}U_{3}y_{4}) = d_{3}(U^{n}_{1}U^{3}_{2}y_{6}) = U^{n}_{1}U^{5}_{2}U_{3}, d_{3}(U^{n}_{1}U^{4}_{2}z_{5}) = U^{n+1}_{1}\times\\
&\times U^{5}_{2}U_{3}, d_{3}(U^{n}_{1}U^{5}_{2}y_{4})= U^{n}_{1}U^{8}_{2}, d_{3}(U^{n}_{1}U^{6}_{2}z_{3}) = U^{n+2}_{1}U^{7}_{2}, d_{3}(U^{n}_{1}z_{2}y_{7}) = 0,\\
&d_{3}(U^{n}_{1}z_{2}z_{7}) = 0, d_{3}(U^{n}_{1}z_{4}z_{5}) = 0, d_{3}(U^{n}_{1}z^{3}_{2}z_{3}) = 0, d_{3}(U^{n}_{1}z_{1}z_{2}z^{2}_{3}) = 0,\\
&d_{3}(U^{n}_{1}z^{2}_{1}z_{2}z_{5}) = 0, d_{3}(U^{n}_{1}z^{2}_{1}z_{3}z_{4}) = 0, d_{3}(U^{n}_{1}z_{1}z^{2}_{2}z_{4}) =0, d_{3}(U^{n}_{1}z_{1}z_{8}) = 0,\\
&d_{3}(U^{n}_{1}z^{3}_{1}z_{6})=0, d_{3}(U^{n}_{1}z^{5}_{1}z_{4}) = 0, d_{3}(U^{n}_{1}z^{3}_{1}z^{3}_{2}) = 0, d_{3}(U^{n}_{1}z^{4}_{1}z_{2}z_{3}) = 0,\\
&d_{3}(U^{n}_{1}z^{7}_{1}z_{2}) = 0, d_{3}(U^{n}_{1}U_{2}y^{2}_{4}) = 0, d_{3}(U^{n}_{1}z_{3}z_{6}) = 0, d_{3}(U^{n}_{1}U_{2}z_{3}z_{5}) = 0,\\
&d_{3}(U^{n}_{1}U^{3}_{3}) = 0, d_{3}(U^{n}_{1}U^{3}_{2}z^{2}_{3}) = 0, d_{3}(U^{n}_{1}U_{2}U_{3}\Phi _{3}) = 0, d_{3}(U^{n}_{1}U^{2}_{2}U_{4}) = 0,\\
&d_{3}(U^{n}_{1}U^{3}_{2}U^{2}_{3}) =0, d_{3}(U^{n}_{1}U^{4}_{2}\Phi _{3}) =0, d_{3}(U^{n}_{1}U^{6}_{2}U_{3}) =0, d_{3}(U^{n}_{1}U^{9}_{2}) =0,\\
&d_{3}(U^{n}_{1}\Phi _{5}) = 0.\\
10)& \ d_{3}(U^{n}_{1}z_{1}z_{2}z_{7}) = d_{3}(U^{n}_{1}z_{1}z_{4}z_{5}) = d_{3}(U^{n}_{1}z_{2}z^{2}_{4}) = d_{3}(U^{n}_{1}y_{8}z^{2}_{1}) = U^{n+3}_{1}z_{2}z_{7},\\
&d_{3}(U^{n}_{1}y_{6}z^{2}_{2}) = d_{3}(U^{n}_{1}y_{6}z_{3}z_{1}) = d_{3}(U^{n}_{1}z_{5}z_{3}z_{2}) = d_{3}(U^{n}_{1}z_{4}z^{2}_{2}) = d_{3}(U^{n}_{1}y_{4}\times\\
&\times z_{5}z_{1}) = d_{3}(U^{n}_{1}y_{4}z_{4}z_{2}) = U^{n+2}_{1}U_{3}z^{2}_{3}, d_{3}(U^{n}_{1}z_{10}) = U^{n+3}_{1}\Phi _{5}, d_{3}(U^{n}_{1}y^{*}_{10}) =\\
&= U^{n}_{1}(U^{3}_{3} + U^{2}_{2}U_{4} + U_{2}U_{3}\Phi _{3}), d_{3}(U^{n}_{1}y_{10}) = U^{n}_{1}U^{3}_{3}, d_{3}(U^{n}_{1}z^{2}_{1}z_{2}y_{6}) = U^{n+1}_{1}\!\times\\
&\times U_{2}z^{2}_{1}(U_{1}y_{6} + U_{2}z_{5}), d_{3}(U^{n}_{1}y_{6}z^{4}_{1}) = d_{3}(U^{n}_{1}z^{2}_{1}z^{2}_{2}z_{4}) = d_{3}(U^{n}_{1}z^{3}_{1}z_{3}z_{4}) =\\
&= d_{3}(U^{n}_{1}z^{3}_{1}z_{2}z_{5}) = U^{n+3}_{1}z_{1}z^{2}_{2}z_{4}, d_{3}(U^{n}_{1}z_{4}z^{6}_{1}) = U^{n+3}_{1}z_{4}z^{5}_{1}, d_{3}(U^{n}_{1}z^{2}_{1}z_{2}z^{2}_{3})=\\
&=d_{3}(U^{n}_{1}z^{5}_{2})=d_{3}(U^{n}_{1}z_{1}z^{3}_{2}z_{3})=d_{3}(U^{n}_{1}z^{3}_{1}z_{3}y_{4})=\!d_{3}(U^{n}_{1}z^{2}_{1}z^{2}_{2}y_{4})=\! U^{n+2}_{1}U_{2}z^{4}_{2},
\end{align*}
Table 17. (continuation).
\begin{align*}
&d_{3}(U^{n}_{1}z^{5}_{1}z_{2}z_{3}) = d_{3}(U^{n}_{1}z^{4}_{1}z^{3}_{2}) = d_{3}(U^{n}_{1}z^{6}_{1}y_{4}) = U^{n+2}_{1}U_{2}z_{3}z^{5}_{1},d_{3}(U^{n}_{1}z^{8}_{1}z_{2}) =\\
&= U^{n}_{1}U_{2}z^{8}_{1},d_{3}(U^{n}_{1}z^{4}_{1}z_{2}y_{4}) = U^{n+1}_{1}U_{2}z^{4}_{1}(U_{1}y_{4} + U_{2}z_{3}), d_{3}(U^{n}_{1}z_{1}z_{2}y_{7}) =\\
&= d_{3}(U^{n}_{1}z_{1}z_{3}z_{6}) = d_{3}(U^{n}_{1}z^{2}_{2}z_{6}) = U^{n+3}_{1}z_{2}y_{7}, d_{3}(U^{n}_{1}z_{2}y^{2}_{4}) = U^{n+2}_{1}U_{2}y^{2}_{4},\\
&d_{3}(U^{n}_{1}z_{2}y_{8}) = U^{n+1}_{1}U_{2}(U_{1}y_{8} + U_{2}z_{7}), d_{3}(U^{n}_{1}y_{4}y_{6}) = U^{n}_{1}U^{2}_{2}(U_{2}y_{6} + U_{3}y_{4}),\\
& d_{3}(U^{n}_{1}z^{4}_{1}z_{6}) = U^{n+2}_{1}\Phi _{3}z^{4}_{1}, d_{3}(U^{n}_{1}z^{2}_{1}z_{4}y_{4})= U^{n+1}_{1}U_{3}z^{2}_{1}(U_{1}y_{4}+U_{2}z_{3}), \\
&d_{3}(U^{n}_{1}z^{2}_{3}y_{4}) = U^{n}_{1}U^{3}_{2}z^{2}_{3}, d_{3}(U^{n}_{1}z_{4}y_{6})=U^{n+1}_{1}U_{3}(U_{1}y_{6}\!+\!U_{2}z_{5}),  d_{3}(U^{n}_{1}y_{4}z_{6}) =\\
&= U^{n+1}_{1}\Phi _{3}(U_{1}y_{3} + U_{2}z_{3}),d_{3}(U^{n}_{1}z^{3}_{2}y_{4})=d_{3}(U^{n}_{1}z_{1}z_{2}z_{3}y_{4}) = U^{n+1}_{1}U_{2}z^{2}_{2}\times\\
&\times (U_{1}y_{4} + U_{2}z_{3}), d_{3}(U^{n}_{1}z_{1}z_{9})=0, d_{3}(U^{n}_{1}z^{2}_{5})=0, d_{3}(U^{n}_{1}z_{1}z_{2}z_{3}z_{4}) = 0,\\
&d_{3}(U^{n}_{1}z_{2}z_{8}) =0, d_{3}(U^{n}_{1}z_{3}z_{7}) =0, d_{3}(U^{n}_{1}z_{7}z^{3}_{1}) =0, d_{3}(U^{n}_{1}z^{3}_{1}z^{2}_{2}z_{3}) = 0,\\
&d_{3}(U^{n}_{1}z^{2}_{4}z^{2}_{1}) = 0, d_{3}(U^{n}_{1}z^{2}_{1}z_{3}z_{5}) = 0, d_{3}(U^{n}_{1}z_{1}z^{2}_{2}z_{5}) = 0, d_{3}(U^{n}_{1}z^{4}_{1}z_{2}z_{4}) = 0,\\
&d_{3}(U^{n}_{1}y_{7}z_{3}) = 0, d_{3}(U^{n}_{1}z^{5}_{1}z_{5}) =0, d_{3}(U^{n}_{1}z^{3}_{2}z_{4}) =0,d_{3}(U^{n}_{1}z_{1}z^{3}_{3}) =0,\\
&d_{3}(U^{n}_{1}z^{2}_{2}z^{2}_{3}) =0, d_{3}(U^{n}_{1}z^{4}_{1}z^{2}_{3})=0, d_{3}(U^{n}_{1}z^{2}_{1}z^{4}_{2})=0, d_{3}(U^{n}_{1}z^{7}_{1}z_{3}) =0,\\
&d_{3}(U^{n}_{1}z^{6}_{1}z^{2}_{2})=0, d_{3}(U^{n}_{1}y_{9}z_{1})=0, d_{3}(U^{n}_{1}z_{6}z_{4}) =0, d_{3}(U^{n}_{1}y^{2}_{4}z^{2}_{1}) =0,\\
&d_{3}(U^{n}_{1}z^{10}_{1}) =0, d_{3}(U^{n}_{1}y_{7}z^{3}_{1}) = 0, d_{3}(U^{n}_{1}z_{6}z_{2}z^{2}_{1}) = 0.\\
11) \ &d_{3}(U^{n}_{1}z^{3}_{1}z^{2}_{2}y_{4})=d_{3}(U^{n}_{1}z^{4}_{1}z_{3}y_{4})=U^{n+2}_{1}z^{2}_{1}z^{2}_{2}(U_{1}y_{4}+U_{2}z_{3}),d_{3}(U^{n}_{1}z^{5}_{1}y_{6})=\\
&= U^{n+1}_{1}z^{4}_{1}(U_{1}y_{6} + U_{2}z_{5}), d_{3}(U^{n}_{1}z^{7}_{1}y_{4}) = U^{n+1}_{1}z^{6}_{1}(U_{1}y_{4} + U_{2}z_{3}),d_{3}(U^{n}_{1}z_{1}\times\\
&\times z^{2}_{3}y_{4})=d_{3}(U^{n}_{1}z^{2}_{2}z_{3}y_{4})=U^{n+1}_{1}z^{2}_{3}(U_{1}y_{4}+U_{2}z_{3}), d_{3}(U^{n}_{1}y_{10}z_{1})=U^{n+2}_{1}\times\\
&\times (U_{1}y_{10}+U_{3}z_{7}), d_{3}(U^{n}_{1}z^{2}_{1}z_{3}y_{6}) =d_{3}(U^{n}_{1}z_{1}z^{2}_{2}y_{6}) = U^{n+2}_{1}z_{1}z_{3}(U_{1}y_{6} +\\
&+ U_{2}z_{5}), d_{3}(U^{n}_{1}z^{2}_{1}z_{9}) = d_{3}(U^{n}_{1}z_{1}z_{2}z_{8}) = d_{3}(U^{n}_{1}z^{2}_{1}y_{9}) = d_{3}(U^{n}_{1}z_{1}z_{4}z_{6}) =\\
&= U^{n+3}_{1}z_{1}z_{9}, d_{3}(U^{n}_{1}z^{2}_{1}y_{4}z_{5}) = U^{n+2}_{1}z_{1}z_{5}(U_{1}y_{4} + U_{2}z_{3}), d_{3}(U^{n}_{1}z_{7}z^{4}_{1}) =\\
&= d_{3}(U^{n}_{1}z^{2}_{4}z^{3}_{1}) = U^{n+3}_{1}z^{3}_{1}z_{7}, d_{3}(U^{n}_{1}z^{11}_{1}) = U^{n+2}_{1}z^{10}_{1}, d_{3}(U^{n}_{1}z^{2}_{1}z_{2}z_{3}y_{4}) =\\
&= d_{3}(U^{n}_{1}z_{1}z^{3}_{2}y_{4}) = d_{3}(U^{n}_{1}z_{1}z^{2}_{2}z^{2}_{3}) = d_{3}(U^{n}_{1}z^{2}_{1}z^{3}_{3})= d_{3}(U^{n}_{1}z^{4}_{2}z_{3})= U^{n+3}_{1}\times\\
&\times z^{2}_{2}z^{2}_{3}, d_{3}(U^{n}_{1}y^{2}_{4}z^{3}_{1})= U^{n+3}_{1}z^{2}_{1}y^{2}_{4}, d_{3}(U^{n}_{1}y_{8}z_{2}z_{1}) = d_{3}(U^{n}_{1}y_{6}z_{4}z_{1}) = d_{3}(U^{n}_{1}\!\times\\
&\times z_{7}z_{3}z_{1}) = d_{3}(U^{n}_{1}z^{2}_{5}z_{1}) = d_{3}(U^{n}_{1}z_{5}z_{4}z_{2})= d_{3}(U^{n}_{1}z_{7}z^{2}_{2})= d_{3}(U^{n}_{1}z^{2}_{4}z_{3}) = \\
&= U^{n+3}_{1}z_{3}z_{7},d_{3}(U^{n}_{1}z_{1}z^{3}_{2}z_{4})= d_{3}(U^{n}_{1}z^{3}_{1}z_{4}y_{4})= d_{3}(U^{n}_{1}z^{2}_{1}z_{2}z_{3}z_{4}) = d_{3}(U^{n}_{1}\times\\
&\times z^{2}_{1}z^{2}_{2}z_{5})= d_{3}(U^{n}_{1}z^{3}_{1}z_{2}y_{6}) = d_{3}(U^{n}_{1}z^{3}_{1}z_{3}z_{5}) = U^{n+3}_{1}z^{3}_{2}z_{4}, d_{3}(U^{n}_{1}z^{5}_{1}z_{2}z_{4}) =\\
&= d_{3}(U^{n}_{1}z^{6}_{1}z_{5}) = U^{n+3}_{1}z^{5}_{1}z_{5}, d_{3}(U^{n}_{1}y^{*}_{10}z_{1}) = U^{n+2}_{1}(U_{1}y^{*}_{10} + U_{2}(z_{9} + y_{9}) +\\
&+ U_{3}z_{7}), d_{3}(U^{n}_{1}z^{5}_{1}z^{2}_{3}) = d_{3}(U^{n}_{1}z^{3}_{1}z^{4}_{2}) = d_{3}(U^{n}_{1}z^{4}_{1}z^{2}_{2}z_{3}) = d_{3}(U^{n}_{1}z^{5}_{1}z_{2}y_{4}) =\\
&= U^{n+3}_{1}z^{4}_{1}z^{2}_{3}, d_{3}(U^{n}_{1}z^{8}_{1}z_{3}) = d_{3}(U^{n}_{1}z^{7}_{1}z^{2}_{2}) = U^{n+1}_{1}U^{2}_{2}z^{6}_{1}z^{2}_{2}, d_{3}(U^{n}_{1}z_{6}y_{4}z_{1}) =\\
&= d_{3}(U^{n}_{1}z_{6}z_{3}z_{2}) = d_{3}(U^{n}_{1}y_{7}z^{2}_{2}) = d_{3}(U^{n}_{1}y_{7}z_{3}z_{1})= U^{n+3}_{1}z_{3}y_{7}, d_{3}(U^{n}_{1}y_{7}z^{4}_{1})\! =\\
&= d_{3}(U^{n}_{1}z_{6}z_{2}z^{3}_{1}) = U^{n+3}_{1}z_{6}z_{2}z^{2}_{1}, d_{3}(U^{n}_{1}y_{4}y_{6}z_{1}) = U^{n+2}_{1}[U_{1}y_{4}y_{6}+U_{2}(z_{3}y_{6}+\\
&+ z_{5}y_{4})], d_{3}(U^{n}_{1}z^{2}_{3}z_{5}) = d_{3}(U^{n}_{1}z_{2}z_{3}y_{6}) = d_{3}(U^{n}_{1}z_{2}z_{5}y_{4}) = d_{3}(U^{n}_{1}z_{3}z_{4}y_{4}) =\\
&= U^{n+1}_{1}U^{2}_{2}z_{3}z_{5}, d_{3}(U^{n}_{1}y^{2}_{4}z_{3}) = U^{n+1}_{1}U^{2}_{2}y^{2}_{4}, d_{3}(U^{n}_{1}y_{11})= U^{n+1}_{1}U_{2}\Phi _{5}, \\
&d_{3}(U^{n}_{1}z_{7}y_{4})=U^{n}_{1}U^{2}_{3}(U_{1}y_{4}+U_{2}z_{3}), d_{3}(U^{n}_{1}y_{7}y_{4})=U^{n}_{1}U_{2}\Phi _{3}(U_{1}y_{4}+U_{2}z_{3}),
\end{align*}
Table 17. (continuation).
\begin{align*}
&d_{3}(U^{n}_{1}z_{3}y_{8})=U^{n}_{1}U^{2}_{2}(U_{1}y_{8}+U_{2}z_{7}),d_{3}(U^{n}_{1}y_{6}z_{5})=U^{n}_{1}U_{2}U_{3}(U_{1}y_{6}+U_{2}z_{5}),\\
&d_{3}(U^{n}_{1}z_{11})=U^{n+1}_{1}U_{3}U_{4}, d_{3}(U^{n}_{1}\Phi _{3}y_{6}) = U^{n}_{1}U^{2}_{2}U_{3}\Phi _{3},d_{3}(U^{n}_{1}U_{2}y_{6}y_{4})=U^{n}_{1}\times\\
&=\times U^{3}_{2}(U_{2}y_{6}+U_{3}y_{4}),d_{3}(U^{n}_{1}U_{3}y_{8})=d_{3}(U^{n}_{1}U_{2}y_{10})=U^{n}_{1}U_{2}U^{3}_{3},d_{3}(U^{n}_{1}U_{2}\times\\
&\times\! y^{*}_{10})= U^{n}_{1}U_{2}(U^{2}_{2}U_{4}+U_{2}U_{3}\Phi _{3}+U^{3}_{3}), d_{3}(U^{n}_{1}U_{4}y_{4}) = U^{n}_{1}U^{3}_{2}U_{4},\! d_{3}(U^{n}_{1}\Phi _{3}\!\times\\
&\times z_{6}) = U^{n+3}_{1}\Phi ^{2}_{3},d_{3}(U^{n}_{1}z^{3}_{1}y_{8})=U^{n}_{1}z^{2}_{1}(U_{1}y_{8}+U_{2}z_{7}),\!d_{3}(U^{n}_{1}U_{2}z^{2}_{3}y_{4})= U^{n}_{1}\!\times\\
&\times U^{4}_{2}z^{2}_{3},d_{3}(U^{n}_{1}U^{2}_{2}z_{3}y_{6})=U^{n}_{1}U^{4}_{2}(U_{1}y_{6}+U_{2}z_{5}), d_{3}(U^{n}_{1}U^{2}_{2}z_{9})= U^{n+1}_{1}U^{3}_{2}U_{4},\\
& d_{3}(U^{n}_{1}U^{2}_{2}z^{3}_{3})= U^{n+1}_{1}U^{4}_{2}z^{2}_{3}, d_{3}(U^{n}_{1}U_{2}U_{3}z_{7})= U^{n+1}_{1}U_{2}U^{3}_{3},d_{3}(U^{n}_{1}U^{2}_{2}z_{5}y_{4}) =\\
&=U^{n}_{1}U^{3}_{2}U_{3}(U_{1}y_{4}+U_{2}z_{3}), d_{3}(U^{n}_{1}U^{2}_{2}y_{9})= U^{n+1}_{1}U^{2}_{2}U_{3}\Phi _{3},d_{3}(U^{n}_{1}U^{2}_{2}U_{3}y_{6})=\\
&=d_{3}(U^{n}_{1}U^{3}_{2}y_{8})=d_{3}(U^{n}_{1}U_{2}U^{2}_{3}y_{4})=U^{n}_{1}U^{4}_{2}U^{2}_{3},d_{3}(U^{n}_{1}U^{2}_{2}\Phi _{3}y_{4})=U^{n}_{1}U^{5}_{2}\Phi_{3},\\
&d_{3}(U^{n}_{1}z^{3}_{1}z^{2}_{2}z_{4}) = 0,d_{3}(U^{n}_{1}U^{4}_{2}z_{3}y_{4})=U^{n}_{1}U^{6}_{2}(U_{1}y_{4}+U_{2}z_{3}),d_{3}(U^{n}_{1}U^{4}_{2}z_{7})=\\
&= U^{n+1}_{1}U^{4}_{2}U^{2}_{3}, d_{3}(U^{n}_{1}z_{1}z^{2}_{2}z_{6}) = 0, d_{3}(U^{n}_{1}z^{3}_{1}z_{2}z^{2}_{3}) = 0,\! d_{3}(U^{n}_{1}U^{4}_{2}y_{7})=U^{n+1}_{1}\!\times\\
&\times U^{5}_{2}\Phi _{3},d_{3}(U^{n}_{1}U^{5}_{2}y_{6}) = d_{3}(U^{n}_{1}U^{4}_{2}U_{3}y_{4})= U^{n}_{1}U^{7}_{2}U_{3}, d_{3}(U^{n}_{1}U^{6}_{2}z_{5}) = U^{n+1}_{1}\times\\
&\times U^{7}_{2}U_{3}, d_{3}(U^{n}_{1}z^{7}_{1}z_{4})=0, d_{3}(U^{n}_{1}z^{3}_{1}z_{8})=0,d_{3}(U^{n}_{1}z_{6}z^{5}_{1})= 0, d_{3}(U^{n}_{1}z^{5}_{1}z^{3}_{2}) = \\
&= d_{3}(U^{n}_{1}z^{2}_{1}z^{3}_{2}z_{3})= 0, d_{3}(U^{n}_{1}z^{4}_{1}z_{3}z_{4})= 0, d_{3}(U^{n}_{1}z^{4}_{1}z_{2}z_{5})= 0, d_{3}(U^{n}_{1}z_{1}z^{5}_{2})= \\
&= d_{3}(U^{n}_{1}z^{2}_{1}z_{2}y_{7})= d_{3}(U^{n}_{1}z^{2}_{1}z_{3}z_{6})=0, d_{3}(U^{n}_{1}z^{9}_{1}z_{2}) = 0, d_{3}(U^{n}_{1}z^{6}_{1}z_{2}z_{3}) = 0,\\
&d_{3}(U^{n}_{1}z_{1}z_{2}z_{3}z_{5}) = 0, d_{3}(U^{n}_{1}z_{1}z^{2}_{3}z_{4}) = 0, d_{3}(U^{n}_{1}z^{3}_{2}z_{5})= 0, d_{3}(U^{n}_{1}z^{2}_{2}z_{3}z_{4})= 0,\\
&d_{3}(U^{n}_{1}z_{1}z_{2}y^{2}_{4})= 0, d_{3}(U^{n}_{1}z_{1}z_{2}z^{2}_{4})= 0, d_{3}(U^{n}_{1}z_{4}z_{7})= 0, d_{3}(U^{n}_{1}z_{2}y_{9}) = 0,\\
&d_{3}(U^{n}_{1}z_{2}z^{3}_{3})=d_{3}(U^{n}_{1}z_{2}z_{9})=d_{3}(U^{n}_{1}z_{3}z_{8})=d_{3}(U^{n}_{1}z_{4}y_{7})=d_{3}(U^{n}_{1}z_{5}z_{6})=0,\\
&d_{3}(U^{n}_{1}z^{2}_{1}z_{2}z_{7})= 0, d_{3}(U^{n}_{1}z^{2}_{1}z_{4}z_{5})= 0, d_{3}(U^{n}_{1}z_{10}z_{1}) = 0, d_{3}(U^{n}_{1}U_{3}y^{2}_{2}) = 0,\\
&d_{3}(U^{n}_{1}U^{3}_{2}y^{2}_{4})=0, d_{3}(U^{n}_{1}U_{2}z^{2}_{5}) = 0, d_{3}(U^{n}_{1}\Phi _{3}z^{2}_{3}) = 0,d_{3}(U^{n}_{1}U^{3}_{2}z_{3}z_{5})=0,\\
&d_{3}(U^{n}_{1}\Phi _{6}) = 0, d_{3}(U^{n}_{1}U^{2}_{2}\Phi _{5})=0, d_{3}(U^{n}_{1}U_{2}\Phi ^{2}_{3})=0,d_{3}(U^{n}_{1}U_{2}U_{3}U_{4})= 0,\\
&d_{3}(U^{n}_{1}U^{2}_{3}\Phi _{3}) = 0, d_{3}(U^{n}_{1}U^{5}_{2}z^{2}_{3}) = 0, d_{3}(U^{n}_{1}U^{4}_{2}U_{4}) = 0, d_{3}(U^{n}_{1}U^{3}_{2}U_{3}\Phi _{3}) = 0,\\
&d_{3}(U^{n}_{1}U^{2}_{2}U^{3}_{3}) = 0.\\
12) \ &d_{3}(U^{n}_{1}z^{4}_{1}y_{8})=d_{3}(U^{n}_{1}z^{3}_{1}z_{2}z_{7})=d_{3}(U^{n}_{1}z^{3}_{1}z_{4}z_{5})=d_{3}(U^{n}_{1}z^{2}_{1}z_{2}z^{2}_{4})=U^{n+3}_{1}z_{1}\times\\
&\times\! z_{2}z^{2}_{4},d_{3}(U^{n}_{1}z_{10}z^{2}_{1}) = U^{n+3}_{1}z_{10}z_{1}, d_{3}(U^{n}_{1}z^{2}_{1}z_{2}y_{8}) = U^{n+2}_{1}z_{1}z_{2}(U_{1}y_{8}+U_{2}z_{7})\\
&d_{3}(U^{n}_{1}z^{2}_{1}z_{4}y_{6})=U^{n+2}_{1}z_{1}z_{4}(U_{1}y_{6}+U_{2}z_{5}),d_{3}(U^{n}_{1}z^{4}_{1}z_{4}y_{4})\!=\!U^{n+2}_{1}z^{3}_{1}z_{4}(U_{1}y_{4}+\\
&+ U_{2}z_{3}), d_{3}(U^{n}_{1}z^{4}_{1}z_{2}y_{6}) = U^{n+2}_{1}z^{3}_{1}z_{2}(U_{1}y_{6}+ U_{2}z_{5}), d_{3}(U^{n}_{1}z^{4}_{1}z_{8}) = U^{n+3}_{1}\times\\
&\times z^{4}_{1}z_{8},\, d_{3}(U^{n}_{1}z^{3}_{1}z_{2}z_{3}y_{4})\,=\, d_{3}(U^{n}_{1}z^{2}_{1}z^{3}_{2}y_{4})\,=\, U^{n+2}_{1}z_{1}z^{3}_{2}(U_{1}y_{4}\,+\,U_{2}z_{3})\,,\\
&d_{3}(U^{n}_{1}z^{6}_{1}z_{2}y_{4}) = U^{n+2}_{1}z^{5}_{1}z_{2}(U_{1}y_{4} + U_{2}z_{3}),d_{3}(U^{n}_{1}z^{6}_{1}z_{6}) = U^{n+3}_{1}z^{5}_{1}z_{6}, d_{3}(U^{n}_{1}\!\times\\
&\times\! z^{2}_{1}y_{4}z_{6})\! = \!U^{n+2}_{1}z^{2}_{1}z_{6}(U_{1}y_{4}\! +\! U_{2}z_{3}),d_{3}(U^{n}_{1}z^{2}_{1}y_{4}y_{6})\! =\! U^{n+2}_{1}z^{2}_{2}(U_{2}y_{6} + U_{3}y_{4}),\\
& d_{3}(U^{n}_{1}z_{1}z_{2}z_{3}y_{6})\, =\, d_{3}(U^{n}_{1}z^{3}_{2}y_{6})\, =\, U^{n+2}_{1}z_{2}z_{3}(U_{1}y_{6}\,+\,U_{2}z_{5}),\, d_{3}(U^{n}_{1}z_{12})\, =\\
&= U^{n+2}_{1}\Phi _{6}, d_{3}(U^{n}_{1}z_{1}z_{2}z_{5}y_{4}) = d_{3}(U^{n}_{1}z_{1}z_{3}z_{4}y_{4}) = d_{3}(U^{n}_{1}z^{2}_{2}z_{4}y_{4}) =U^{n+2}_{1}\times\\
&\times z_{2}z_{5}(U_{1}y_{4}\, +\, U_{2}z_{3}),\, d_{3}(U^{n}_{1}z^{3}_{1}z_{2}y_{7})\, =\, d_{3}(U^{n}_{1}z^{3}_{1}z_{3}z_{6}) = d_{3}(U^{n}_{1}z^{2}_{1}z^{2}_{2}z_{6}) =\\
&= U^{n+3}_{1}z^{2}_{1}z_{2}y_{7}, d_{3}(U^{n}_{1}z^{4}_{1}z^{2}_{2}y_{4})= d_{3}(U^{n}_{1}z^{5}_{1}z_{3}y_{4}) = U^{n+3}_{1}z_{1}z^{5}_{2}, d_{3}(U^{n}_{1}z^{2}_{1}z^{5}_{2})=
\end{align*}
Table 17. (continuation).
\begin{align*}
& = d_{3}(U^{n}_{1}z^{3}_{1}z^{3}_{2}z_{3}) = d_{3}(U^{n}_{1}z^{4}_{1}z_{2}z^{2}_{3}) = U^{n+3}_{1}z^{3}_{1}z_{2}z^{2}_{3}, d_{3}(U^{n}_{1}z^{4}_{1}z^{2}_{2}z_{4}) = d_{3}(U^{n}_{1}z^{5}_{1}z_{3}\times\\
&\times z_{4}) = d_{3}(U^{n}_{1}z^{5}_{1}z_{2}z_{5}) = d_{3}(U^{n}_{1}z^{6}_{1}y_{6}) = U^{n+3}_{1}z^{3}_{1}z^{2}_{2}z_{4},\, d_{3}(U^{n}_{1}z^{8}_{1}z_{4})\, = U^{n+3}_{1}z^{7}_{1}z_{4},\\
&d_{3}(U^{n}_{1}z_{1}z_{4}z_{7}) = d_{3}(U^{n}_{1}z^{2}_{1}y_{10}) = d_{3}(U^{n}_{1}z^{3}_{4}) = U^{n+3}_{1}z_{4}z_{7},\, d_{3}(U^{n}_{1}z^{10}_{1}z_{2})\, = U^{n+3}_{1}\times\\
&\times\! z^{9}_{1}z_{2}, d_{3}(U^{n}_{1}z^{7}_{1}z_{2}z_{3}) = d_{3}(U^{n}_{1}z^{6}_{1}z^{3}_{2}) = U^{n+3}_{1}z^{5}_{1}z^{3}_{2},d_{3}(U^{n}_{1}z^{4}_{2}z_{4}) =d_{3}(U^{n}_{1}z^{2}_{1}z_{2}z_{4}\!\times\\
&\times y_{4})\,= d_{3}(U^{n}_{1}z^{2}_{1}z_{2}z_{3}z_{5})\, = d_{3}(U^{n}_{1}z^{3}_{1}z_{5}y_{4})\, = d_{3}(U^{n}_{1}z^{3}_{1}z_{3}y_{6})\,= d_{3}(U^{n}_{1}z^{2}_{1}z^{2}_{2}y_{6})=\\
&=  d_{3}(U^{n}_{1}z^{2}_{1}z^{3}_{2}z_{4}) = d_{3}(U^{n}_{1}z_{1}z^{3}_{2}z_{5})= d_{3}(U^{n}_{1}z_{1}z^{2}_{2}z_{3}z_{4}) = U^{n+3}_{1}z_{5}z^{3}_{2},d_{3}(U^{n}_{1}z^{2}_{1}z_{2}\times\\
&\times y^{2}_{4}) = U^{n+3}_{1}z_{1}z_{2}y^{2}_{4}, d_{3}(U^{n}_{1}z_{1}z_{2}z^{3}_{3}) = d_{3}(U^{n}_{1}z^{4}_{2}y_{4}) = d_{3}(U^{n}_{1}z^{2}_{1}z^{2}_{3}y_{4}) = d_{3}(U^{n}_{1}\times\\
&\times \, z^{3}_{2}z^{2}_{3})\, =\,\, d_{3}(U^{n}_{1}z_{1}z^{2}_{2}z_{3}y_{4})\, =\,\, U^{n+3}_{1}z_{2}z^{3}_{3},\,\, d_{3}(U^{n}_{1}z_{1}z_{3}z_{8})\, = \,\,d_{3}(U^{n}_{1}z_{1}z_{2}z_{9})\, =\\
&= d_{3}(U^{n}_{1}z^{2}_{2}z_{8}) = U^{n+3}_{1}z_{3}z_{8},\,\,d_{3}(U^{n}_{1}z_{2}z_{4}z_{6})= d_{3}(U^{n}_{1}z_{1}z_{5}z_{6}) = d_{3}(U^{n}_{1}z_{1}z_{4}y_{7})=\\
&= d_{3}(U^{n}_{1}z_{1}z_{2}y_{9})\! = U^{n+3}_{1}z_{5}z_{6}, d_{3}(U^{n}_{1}z^{2}_{1}y^{*}_{10}) = U^{n+3}_{1}(z_{4}z_{7}\! +\! z_{4}y_{7}\! +\! z_{1}z_{9}),d_{3}(U^{n}_{1}\!\times\\
&\times y_{4}z_{8})=U^{n+1}_{1}U_{4}(U_{1}y_{4}+U_{2}z_{3}),d_{3}(U^{n}_{1}z_{2}y_{4}y_{6})=U^{n+1}_{1}U_{3}(U_{1}y_{4}y_{6}+U_{2}z_{3}y_{6}+\\
&+U_{2}z_{5}y_{4}),d_{3}(U^{n}_{1}z_{2}y_{10})=U^{n+1}_{1}U_{2}(U_{1}y_{10}+U_{3}z_{7}),d_{3}(U^{n}_{1}z_{4}y_{8})=U^{n+1}_{1}U_{3}(U_{1}\!\times\\
&\times y_{8}+U_{2}z_{7}),d_{3}(U^{n}_{1}z_{2}z^{2}_{3}y_{4})=U^{n+1}_{1}U_{2}z^{2}_{3}(U_{1}y_{4}+U_{2}z_{3}),d_{3}(U^{n}_{1}z_{6}y_{6})=U^{n+1}_{1}\times\\
&\times \Phi _{3}(U_{1}y_{6}\,+\,U_{2}z_{5}),\,d_{3}(U^{n}_{1}z_{2}y^{*}_{10})\,=\, U^{n+1}_{1}U_{2}(U_{1}y^{*}_{10}\,+\,U_{2}z_{9}\,+\,U_{2}y_{9}\,+\,U_{3}z_{7}),\\
&d_{3}(U^{n}_{1}z_{2}z_{3}y_{7})\,=d_{3}(U^{n}_{1}z^{2}_{3}z_{6})\,=d_{3}(U^{n}_{1}z_{1}y_{4}y_{7})\,=d_{3}(U^{n}_{1}z_{2}y_{4}z_{6})\,=U^{n+2}_{1}U_{2}z_{3}y_{7},\\
&d_{3}(U^{n}_{1}z_{4}y^{2}_{4})\,=\, U^{n+2}_{1}U_{4}y^{2}_{4},\, d_{3}(U^{n}_{1}z_{3}z_{4}z_{5})\, =\, d_{3}(U^{n}_{1}z_{1}z_{7}y_{4})\, =\, d_{3}(U^{n}_{1}z_{2}z_{3}z_{7})\, =\\
&= d_{3}(U^{n}_{1}z^{2}_{4}y_{4}) = d_{3}(U^{n}_{1}z_{2}z^{2}_{5})= d_{3}(U^{n}_{1}z_{1}z_{5}y_{6})= d_{3}(U^{n}_{1}z_{2}z_{4}y_{6})\! = d_{3}(U^{n}_{1}z^{2}_{2}y_{8}) =\\
&=\! d_{3}(U^{n}_{1}z_{1}z_{3}y_{8})\! = U^{n+2}_{1}U_{2}z^{2}_{5}, d_{3}(U^{n}_{1}y_{4}y_{8})\!= U^{n}_{1}U_{2}(U^{2}_{2}y_{8}\! + U^{2}_{3}y_{4}), d_{3}(U^{n}_{1}z^{2}_{3}y_{6})\! =\\
&= d_{3}(U^{n}_{1}z_{3}z_{5}y_{4})\, = U^{n}_{1}U^{3}_{2}z_{3}z_{5},\,d_{3}(U^{n}_{1}y^{3}_{4})\,= U^{n}_{1}U^{3}_{2}y^{2}_{4}, d_{3}(U^{n}_{1}y_{12})= U^{n}_{1}(U^{2}_{3}\Phi _{3}+\\
&+U_{2}U_{3}U_{4}), d_{3}(U^{n}_{1}U_{3}y_{9})\!=U^{n+1}_{1}U^{2}_{3}\Phi _{3}, d_{3}(U^{n}_{1}U_{2}z_{5}y_{6})\!= U^{n}_{1}U^{2}_{2}U_{3}(U_{1}y_{6}\!+U_{2}z_{5}),\\
& d_{3}(U^{n}_{1}U_{2}y_{4}z_{7})= U^{n}_{1}U_{2}U^{2}_{3}(U_{1}y_{4} + U_{2}z_{3}), d_{3}(U^{n}_{1}U_{2}z_{3}y_{8}) = U^{n}_{1}U^{3}_{2}(U_{1}y_{8} + U_{2}z_{7}),\\
&d_{3}(U^{n}_{1}U_{2}y_{4}y_{7})= U^{n}_{1}U_{2}\Phi _{3}(U_{1}y_{4}+U_{2}z_{3}), d_{3}(U^{n}_{1}U_{2}z_{11}) = U^{n+1}_{1}U_{2}U_{3}U_{4},d_{3}(U^{n}_{1}\!\times\\
&\times U_{2}y_{11})=U^{n+1}_{1}U^{2}_{2}\Phi _{5}, d_{3}(U^{n}_{1}\Phi _{3}y_{7})\,= U^{n+1}_{1}U_{2}\Phi ^{2}_{3},\, d_{3}(U^{n}_{1}U^{2}_{2}y_{4}z^{2}_{3})\,= U^{n}_{1}U^{5}_{2}z^{2}_{3},\\
& d_{3}(U^{n}_{1}U^{2}_{2}y_{4}y_{6}) = U^{n}_{1}U^{4}_{2}(U_{2}y_{6} + U_{3}y_{4}), d_{3}(U^{n}_{1}U^{2}_{2}y^{*}_{10})\! = U^{n}_{1}U^{2}_{2}(U^{2}_{2}U_{4}\! + U_{2}U_{3}\Phi _{3}\! +\\
& + U^{3}_{3}),\, d_{3}(U^{n}_{1}U_{2}U_{3}y_{8}) = d_{3}(U^{n}_{1}U^{2}_{2}y_{10})\, = d_{3}(U^{n}_{1}U^{2}_{3}y_{6})\, = U^{n}_{1}U^{2}_{2}U^{3}_{3},\, d_{3}(U^{n}_{1}U_{2}\times\\
&\times\! \Phi _{3}y_{6})\! = d_{3}(U^{n}_{1}U_{3}\Phi _{3}y_{4}) = U^{n}_{1}U^{3}_{2}U_{3}\Phi _{3}, d_{3}(U^{n}_{1}U_{2}U_{4}y_{4}) = U^{n}_{1}U^{4}_{2}U_{4}, d_{3}(U^{n}_{1}U^{3}_{2}\!\times\\
&\times z^{3}_{3}) = U^{n+1}_{1}U^{5}_{2}z^{2}_{3},d_{3}(U^{n}_{1}U^{2}_{2}U_{4}z_{3}) = U^{n+1}_{1}U^{4}_{2}U_{4}, d_{3}(U^{n}_{1}U_{2}U_{3}\Phi _{3}z_{3}) = d_{3}(U^{n}_{1}\times\\
&\times\! U^{3}_{2}y_{9})\! = U^{n+1}_{1}U^{3}_{2}U_{3}\Phi _{3}, d_{3}(U^{n}_{1}U^{3}_{3}z_{3}) = d_{3}(U^{n}_{1}U^{2}_{2}U_{3}z_{7}) = U^{n+1}_{1}U^{2}_{2}U^{3}_{3},d_{3}(U^{n}_{1}\!\times\\
&\times z^{2}_{1}z_{2}z_{8})\, =\, 0,\, d_{3}(U^{n}_{1}z^{2}_{1}z_{3}z_{7})\, =\, 0,\, d_{3}(U^{n}_{1}z_{1}z_{2}z_{4}z_{5})\, =\, 0,\,\,d_{3}(U^{n}_{1}z_{1}z^{2}_{2}z_{7})\,=\, 0,\\
&d_{3}(U^{n}_{1}z^{4}_{1}z_{3}z_{5}) = 0,\, d_{3}(U^{n}_{1}z^{3}_{1}z^{2}_{2}z_{5}) = 0,\, d_{3}(U^{n}_{1}z^{3}_{1}z_{2}z_{3}z_{4}) = 0,\, d_{3}(U^{n}_{1}z_{1}z_{3}z^{2}_{4}) = 0,\\
& d_{3}(U^{n}_{1}z^{3}_{1}z_{9})= 0, d_{3}(U^{n}_{1}z^{2}_{1}z^{2}_{5})=0,d_{3}(U^{n}_{1}z^{5}_{1}z_{7}) = 0, d_{3}(U^{n}_{1}z^{4}_{1}z^{2}_{4})= 0,  d_{3}(U^{n}_{1}\!\times\\
&\times z^{7}_{1}z_{5})= 0, d_{3}(U^{n}_{1}z^{6}_{1}z^{2}_{3})=0, d_{3}(U^{n}_{1}z^{2}_{2}z^{2}_{4})= 0, d_{3}(U^{n}_{1}z^{2}_{1}z^{3}_{2}z_{4}) = 0,\, d_{3}(U^{n}_{1}\times\\
&\times z^{6}_{1}z_{2}z_{4})\, = 0,\, d_{3}(U^{n}_{1}z^{5}_{1}z^{2}_{2}z_{3})\, = 0,\, d_{3}(U^{n}_{1}z^{2}_{1}z^{2}_{2}z^{2}_{3})\, =\, 0,\, d_{3}(U^{n}_{1}z_{1}z^{4}_{2}z_{3})\, =\, 0,\\
&d_{3}(U^{n}_{1}z^{4}_{1}z^{4}_{2}) = 0, d_{3}(U^{n}_{1}z^{6}_{2}) = 0, d_{3}(U^{n}_{1}z^{3}_{1}z^{3}_{3}) = 0,d_{3}(U^{n}_{1}z^{9}_{1}z_{3}) = 0, d_{3}(U^{n}_{1}z^{2}_{1}\!\times
\end{align*}
Table 17. (continuation).
\begin{align*}
&\times z_{4}z_{6})\, = 0,\, d_{3}(U^{n}_{1}z^{4}_{1}z_{2}z_{6})\, =\, 0,\, d_{3}(U^{n}_{1}z_{1}z_{2}z_{3}z_{6})\, =\, 0,\, d_{3}(U^{n}_{1}z_{1}z^{2}_{2}y_{7})\, =\, 0,\\
& d_{3}(U^{n}_{1}z^{2}_{1}z_{3}y_{7}) = 0, d_{3}(U^{n}_{1}z_{1}z^{2}_{3}z_{5}) = 0,d_{3}(U^{n}_{1}z^{2}_{2}z_{3}z_{5}) = 0, d_{3}(U^{n}_{1}z_{2}z^{2}_{3}z_{4}) = 0,\\
&d_{3}(U^{n}_{1}z^{8}_{1}z^{2}_{2})= 0, d_{3}(U^{n}_{1}z^{12}_{1})= 0, d_{3}(U^{n}_{1}z^{3}_{1}y_{9})= 0, d_{3}(U^{n}_{1}z^{4}_{1}y^{2}_{4})= 0, d_{3}(U^{n}_{1}\times\\
&\times z_{1}z_{3}y^{2}_{4}) = 0,d_{3}(U^{n}_{1}z^{5}_{1}y_{7}) = 0, d_{3}(U^{n}_{1}z^{3}_{2}z_{6}) = 0, d_{3}(U^{n}_{1}z^{2}_{2}y^{2}_{4}) = 0, d_{3}(U^{n}_{1}\times\\
&\times z^{4}_{3})= 0,\, d_{3}(U^{n}_{1}z_{5}y_{7})= 0,\, d_{3}(U^{n}_{1}z^{2}_{6})= 0,\, d_{3}(U^{n}_{1}y^{2}_{6})= 0,\, d_{3}(U^{n}_{1}z_{5}z_{7})=0,\\
& d_{3}(U^{n}_{1}U^{2}_{2}z_{3}y_{7}) = 0, d_{3}(U^{n}_{1}z_{1}y_{11}) = 0, d_{3}(U^{n}_{1}z_{2}z_{10}) = 0,d_{3}(U^{n}_{1}U_{2}U_{3}y^{2}_{4})=0, \\
&d_{3}(U^{n}_{1}z_{4}z_{8}) = 0, d_{3}(U^{n}_{1}z_{3}z_{9})= 0, d_{3}(U^{n}_{1}\Omega _{1})=0,d_{3}(U^{n}_{1}U_{2}\Phi _{6})= 0, d_{3}(U^{n}_{1}\times\\
&\times U_{3}\Phi _{5})= 0, d_{3}(U^{n}_{1}U_{4}\Phi _{3}) = 0, d_{3}(U^{n}_{1}U^{2}_{2}z^{2}_{5})= 0, d_{3}(U^{n}_{1}z_{3}y_{9}) = 0, d_{3}(U^{n}_{1}\times\\
&\times z_{1}z_{11}) = 0.\\
13) \ &d_{3}(U^{n}_{1}z_{13})\, =\, U^{n+2}_{1}\Omega _{1} ,\, d_{3}(U^{n}_{1}y_{13})\, =\, U^{n+1}_{1}\Phi _{3}U_{4} ,\, d_{3}(U^{n}_{1}y^{*}_{13})\, =\, U^{n+1}_{1}U_{2}\Phi _{6}, \\
&d_{3}(U^{n}_{1}z_{1}y_{4}z_{8}) = U^{n+1}_{1}z_{3}z_{9}, d_{3}(U^{n}_{1}z^{5}_{1}y_{8}) = U^{n+2}_{1}z^{4}_{1}(U_{1}y_{8}+ U_{2}z_{7}), d_{3}(U^{n}_{1}z^{3}_{1}\!\times\\
&\times z_{2}y_{8})\, =\, d_{3}(U^{n}_{1}z^{3}_{1}z_{4}y_{6})\, =\, U^{n+3}_{1}z^{2}_{2}z^{2}_{4},\,d_{3}(U^{n}_{1}z^{5}_{1}z_{4}y_{4})\, =\, d_{3}(U^{n}_{1}z^{5}_{1}z_{2}y_{6})\,=\\
& = U^{n+3}_{1}z^{2}_{1}z^{3}_{2}z_{4}, d_{3}(U^{n}_{1}z_{1}z^{6}_{2}) = d_{3}(U^{n}_{1}z^{4}_{1}z^{3}_{3}) = d_{3}(U^{n}_{1}z^{3}_{1}z^{3}_{2}y_{4}) = d_{3}(U^{n}_{1}z^{3}_{1}z^{2}_{2}\times\\
&\times z^{2}_{3}) = d_{3}(U^{n}_{1}z^{4}_{1}z_{2}z_{3}y_{4}) = d_{3}(U^{n}_{1}z^{2}_{1}z^{4}_{2}z_{3}) = U^{n+3}_{1}z^{6}_{2}, d_{3}(U^{n}_{1}z^{2}_{1}z_{2}z_{3}y_{6}) =\\
&= d_{3}(U^{n}_{1}z_{1}z^{3}_{2}y_{6})= d_{3}(U^{n}_{1}z^{2}_{1}z_{2}z_{5}y_{4})= d_{3}(U^{n}_{1}z^{2}_{1}z_{3}z_{4}y_{4})= U^{n+3}_{1}z^{2}_{2}z_{3}z_{5},\\
& d_{3}(U^{n}_{1}z^{7}_{1}y_{6})= U^{n+2}_{1}z^{6}_{1}(U_{1}y_{6}+U_{2}z_{5}), d_{3}(U^{n}_{1}z^{3}_{1}y_{4}z_{6})= U^{n+3}_{1}z^{3}_{2}z_{6},d_{3}(U^{n}_{1}z^{7}_{1}\!\times\\
&\times z_{2}y_{4})= U^{n+3}_{1}z^{4}_{1}z^{4}_{2}, d_{3}(U^{n}_{1}z^{3}_{1}y_{4}y_{6}) = U^{n+2}_{1}z^{2}_{1}(U_{1}y_{6}y_{4}+ U_{2}z_{3}y_{6}+ U_{2}z_{5}y_{4}),\\
&d_{3}(U^{n}_{1}z_{1}z^{2}_{2}z_{4}y_{4}) = U^{n+3}_{1}z_{2}z^{2}_{3}z_{4}, d_{3}(U^{n}_{1}z^{5}_{1}z^{2}_{2}y_{4}) = d_{3}(U^{n}_{1}z^{6}_{1}z_{3}y_{4})= U^{n+2}_{1}z^{4}_{1}\times\\
&\times z^{2}_{2}(U_{1}y_{4}+U_{2}z_{3}),\! d_{3}(U^{n}_{1}z^{4}_{1}z_{3}y_{6}) =\! d_{3}(U^{n}_{1}z^{3}_{1}z^{2}_{2}y_{6}) =\! U^{n+2}_{1}z^{3}_{1}z_{3}(U_{1}y_{6} + U_{2}\!\times\\
&\times z_{5}),\! d_{3}(U^{n}_{1}z^{4}_{1}z_{5}y_{4}) = d_{3}(U^{n}_{1}z^{3}_{1}z_{2}z_{4}y_{4}) = U^{n+2}_{1}z^{3}_{1}z_{5}(U_{1}y_{4}+ U_{2}z_{3}), d_{3}(U^{n}_{1}\!\times\\
&\times z^{3}_{1}y_{10}) = U^{n+2}_{1}z^{2}_{1}(U_{1}y_{10}+ U_{3}z_{7}),\, d_{3}(U^{n}_{1}z^{3}_{1}z^{3}_{2}z_{4})= U^{n+3}_{1}z^{2}_{1}z^{3}_{2}z_{4},\, d_{3}(U^{n}_{1}\times\\
&\times z_{1}z^{4}_{2}y_{4})\,=\, d_{3}(U^{n}_{1}z^{2}_{1}z^{2}_{2}z_{3}y_{4})\,=\, d_{3}(U^{n}_{1}z^{3}_{1}z^{2}_{3}y_{4})\,=\, U^{n+2}_{1}z^{4}_{2}(U_{1}y_{4}\,+\, U_{2}z_{3}),\\
& d_{3}(U^{n}_{1}z^{3}_{1}y^{*}_{10})\, = \,U^{n+2}_{1}z^{2}_{1}(U_{1}y^{*}_{10}+ U_{2}z_{9} + U_{2}y_{9} + U_{3}z_{7}),\, d_{3}(U^{n}_{1}z_{1}z_{2}y_{4}y_{6})\, =\\
&= U^{n+2}_{1}z_{2}z_{3}(U_{2}y_{6}+ U_{3}y_{4}),d_{3}(U^{n}_{1}z_{1}z_{2}z^{2}_{3}y_{4}) = d_{3}(U^{n}_{1}z^{3}_{2}z_{3}y_{4}) = d_{3}(U^{n}_{1}z^{2}_{2}\times\\
&\times z^{3}_{3}) = U^{n+3}_{1}z^{4}_{3},d_{3}(U^{n}_{1}z_{1}z_{6}y_{6}) = U^{n+3}_{1}z_{5}y_{7}, d_{3}(U^{n}_{1}z^{2}_{1}y_{4}y_{7}) = d_{3}(U^{n}_{1}z_{1}z_{2}\times\\
&\times y_{4}z_{6})\,\, =\,\, U^{n+2}_{1}z_{1}y_{7}(U_{1}y_{4}\,+\, U_{2}z_{3}),\,\, d_{3}(U^{n}_{1}z_{1}z^{2}_{4}y_{4})\,\, =\,\, d_{3}(U^{n}_{1}z^{2}_{1}z_{7}y_{4})\, =\\
&= U^{n+2}_{1}z^{2}_{4}(U_{1}y_{4}+ U_{2}z_{3}),\, d_{3}(U^{n}_{1}z_{1}y^{3}_{4}) = U^{n+2}_{1}y^{2}_{4}(U_{1}y_{4} + U_{2}z_{3}),\,d_{3}(U^{n}_{1}z_{1}\times\\
&\times z_{2}y^{*}_{10})\: =\: U^{n+3}_{1}(z_{3}z_{9} + z_{5}z_{7} + z_{5}y_{7}),\: d_{3}(U^{n}_{1}z_{1}z_{2}y_{10})\: =\:d_{3}(U^{n}_{1}z_{1}z_{4}y_{8})=\\
&=U^{n+3}_{1}z_{5}z_{7}, d_{3}(U^{n}_{1}z^{4}_{1}y_{9})=U^{n+3}_{1}z^{2}_{1}z_{4}z_{6},d_{3}(U^{n}_{1}z^{5}_{1}y^{2}_{4})= U^{n+3}_{1}z^{4}_{1}y^{2}_{4}, d_{3}(U^{n}_{1}\times\\
&\times z^{6}_{1}y_{7}) = d_{3}(U^{n}_{1}z^{5}_{1}z_{2}z_{6}) = U^{n+3}_{1}z^{4}_{1}z_{2}z_{6},d_{3}(U^{n}_{1}z^{2}_{1}z_{5}y_{6})\! = d_{3}(U^{n}_{1}z_{1}z_{2}z_{4}y_{6})=\\
& = U^{n+2}_{1}z_{1}z_{5}(U_{1}y_{6} + U_{2}z_{5}),\!d_{3}(U^{n}_{1}z^{2}_{1}z_{3}y_{8})\! = d_{3}(U^{n}_{1}z_{1}z^{2}_{2}y_{8})\! = U^{n+2}_{1}z_{1}z_{3}(U_{1}\!\times\\
&\times y_{8} + U_{2}z_{7}),\:d_{3}(U^{n}_{1}z_{1}y_{4}y_{8})= U^{n+2}_{1}(U_{1}y_{8}y_{4}+U_{2}z_{3}y_{8}+U_{2}z_{7}y_{4}),\: d_{3}(U^{n}_{1}\times\\
&\times z_{1}z_{3}z_{5}y_{4})\:=\: d_{3}(U^{n}_{1}z_{1}z^{2}_{3}y_{6})\: =\: U^{n+2}_{1}z^{2}_{3}(U_{1}y_{6} + U_{2}z_{5}),\: d_{3}(U^{n}_{1}z^{3}_{1}z_{2}z_{8})\: =\\
&= d_{3}(U^{n}_{1}z^{4}_{1}z_{9}) = U^{n+3}_{1}z^{3}_{1}z_{9}, d_{3}(U^{n}_{1}z^{3}_{1}z_{3}z_{7}) = d_{3}(U^{n}_{1}z^{2}_{1}z_{2}z_{4}z_{5}) = d_{3}(U^{n}_{1}z^{2}_{1}\times
\end{align*}
Table 17. (continuation).
\begin{align*}
&\times z^{2}_{2}z_{7})\,\; =\,\; d_{3}(U^{n}_{1}z^{3}_{1}z^{2}_{5})\,\;=\,\; d_{3}(U^{n}_{1}z^{2}_{1}z_{3}z^{2}_{4})\: =\: d_{3}(U^{n}_{1}z_{1}z^{2}_{2}z^{2}_{4})\;=\;U^{n+3}_{1}z^{2}_{1}z_{3}z_{7},\\
& d_{3}(U^{n}_{1}z^{6}_{1}z_{7})=d_{3}(U^{n}_{1}z^{5}_{1}z^{2}_{4})=U^{n+3}_{1}z^{5}_{1}z_{7}, d_{3}(U^{n}_{1}z_{1}y_{12})= U^{n+2}_{1}(U_{1}y_{12}+ U_{2}z_{11}+\\
&+ U_{3}y_{9}),\; d_{3}(U^{n}_{1}z^{5}_{1}z_{3}z_{5}) = d_{3}(U^{n}_{1}z^{4}_{1}z^{2}_{2}z_{5}) = d_{3}(U^{n}_{1}z^{4}_{1}z_{2}z_{3}z_{4}) = d_{3}(U^{n}_{1}z^{3}_{1}z^{3}_{2}z_{4}) =\\
&= U^{n+3}_{1}z^{4}_{1}z_{3}z_{5},\! d_{3}(U^{n}_{1}z^{8}_{1}z_{5})\!= d_{3}(U^{n}_{1}z^{7}_{1}z_{2}z_{4})\! = U^{n+3}_{1}z^{7}_{1}z_{5},\! d_{3}(U^{n}_{1}z^{7}_{1}z^{2}_{3})\! = d_{3}(U^{n}_{1}\!\times\\
&\times z^{6}_{1}z^{2}_{2}z_{3}) = d_{3}(U^{n}_{1}z^{5}_{1}z^{4}_{2})= U^{n+3}_{1}z^{6}_{1}z^{2}_{3}, d_{3}(U^{n}_{1}z^{10}_{1}z_{3})\!= d_{3}(U^{n}_{1}z^{9}_{1}z^{2}_{2})\!= U^{n+3}_{1}z^{9}_{1}z_{3},\\
&d_{3}(U^{n}_{1}z^{13}_{1})= U^{n+3}_{1}z^{12}_{1}, d_{3}(U^{n}_{1}z^{3}_{1}z_{4}z_{6})= U^{n+3}_{1}z^{2}_{1}z_{4}z_{6}, d_{3}(U^{n}_{1}z^{4}_{2}z_{5})\!= d_{3}(U^{n}_{1}z^{3}_{2}z_{3}\!\times\\
&\times z_{4})\, = U^{n+3}_{1}z^{2}_{2}z_{3}z_{5},\, d_{3}(U^{n}_{1}z^{2}_{2}z_{3}y_{6})\, = U^{n+2}_{1}z^{2}_{3}(U_{1}y_{6} +U_{2}z_{5}),\, d_{3}(U^{n}_{1}z_{2}z_{4}z_{7})=\\
&= U^{n+3}_{1}z_{5}z_{7},\; d_{3}(U^{n}_{1}z^{2}_{2}y_{9})\;\,=\; d_{3}(U^{n}_{1}z_{4}y_{7}z_{2})\;\,=\; U^{n+3}_{1}z_{3}y_{9},\;\, d_{3}(U^{n}_{1}z_{2}z_{3}z_{4}y_{4}) =\\
&= d_{3}(U^{n}_{1}z^{2}_{2}z_{5}y_{4})\,=\, U^{n+2}_{1}z_{3}z_{5}(U_{1}y_{6}+ U_{2}z_{5}),\, d_{3}(U^{n}_{1}z_{2}z_{5}y_{6})\, =\, d_{3}(U^{n}_{1}z_{2}z_{7}y_{4}) =\\
&= d_{3}(U^{n}_{1}z_{2}y_{8}z_{3})\, =\, d_{3}(U^{n}_{1}z_{3}z_{4}y_{6})\, =\, U^{n+1}_{1}U^{2}_{2}z^{2}_{5}, d_{3}(U^{n}_{1}z_{2}y_{7}y_{4}) = U^{n+1}_{1}U^{2}_{2}z_{3}y_{7},\\
&d_{3}(U^{n}_{1}z_{2}z_{5}z_{6})= U^{n+3}_{1}z_{5}y_{7}, d_{3}(U^{n}_{1}z^{2}_{2}z_{9})\!= d_{3}(U^{n}_{1}z_{1}z_{3}z_{9})=U^{n+3}_{1}z_{3}z_{9},d_{3}(U^{n}_{1}z_{3}\!\times\\
&\times y_{10})\, =\, U^{n}_{1}U^{2}_{2}(U_{1}y_{10}\,+\,U_{3}z_{7}), d_{3}(U^{n}_{1}z_{3}y^{*}_{10}) = U^{n}_{1}U^{2}_{2}(U_{1}y^{*}_{10}+ U_{2}z_{9}+ U_{2}y_{9}+ \\
&+U_{3}z_{7}),\;d_{3}(U^{n}_{1}z_{3}y_{4}y_{6})\; =\; U^{n}_{1}U^{2}_{2}(U_{1}y_{6}y_{4}\;+ U_{2}z_{5}y_{4}\; + U_{2}y_{6}z_{3}),\; d_{3}(U^{n}_{1}z^{2}_{3}z_{7}) =\\
&= d_{3}(U^{n}_{1}z^{2}_{5}z_{3})=\! U^{n}_{1}U^{2}_{2}z_{3}z_{7},\! d_{3}(U^{n}_{1}z^{3}_{3}y_{4})\!=\! U^{n}_{1}U^{2}_{2}z^{2}_{3}(U_{1}y_{4}+ U_{2}z_{3}),\! d_{3}(U^{n}_{1}y_{7}z^{2}_{3})=\\
&= U^{n}_{1}U^{2}_{2}y_{7}z_{3}, d_{3}(U^{n}_{1}z_{3}z_{6}z_{4})= U^{n}_{1}U^{2}_{2}z_{4}z_{6}, d_{3}(U^{n}_{1}y_{4}z_{9})\! = U^{n}_{1}U_{2}U_{4}(U_{1}y_{4}+ U_{2}z_{3}),\\
& d_{3}(U^{n}_{1}y_{9}y_{4})\! = U^{n}_{1}U_{3}\Phi _{3}(U_{1}y_{4}+U_{2}z_{3}),\! d_{3}(U^{n}_{1}z^{2}_{4}z_{5})\!= U^{n+2}_{1}U_{3}z_{4}z_{5}, d_{3}(U^{n}_{1}y^{2}_{4}z_{5})=\\
&= U^{n+1}_{1}U_{2}U_{3}y^{2}_{4},\; d_{3}(U^{n}_{1}y_{4}z_{4}z_{5})\;=\; U^{n+1}_{1}U^{2}_{2}z^{2}_{5},\; d_{3}(U^{n}_{1}z_{5}y_{8})\;= U^{n}_{1}U_{2}U_{3}(U_{1}y_{8}+\\
&+U_{2}z_{7}),\;d_{3}(U^{n}_{1}y_{6}y_{7})\; = U^{n}_{1}U_{2}\Phi _{3}(U_{1}y_{6}+U_{2}z_{5}),\; d_{3}(U^{n}_{1}y_{6}z_{7})\;=\; U^{n}_{1}U^{2}_{3}(U_{1}y_{6} + \\
&+U_{2}z_{5}),d_{3}(U^{n}_{1}z_{1}z^{2}_{2}z_{8})\! = 0, d_{3}(U^{n}_{1}z_{1}z^{4}_{2}z_{4})\! = 0,d_{3}(U^{n}_{1}z^{9}_{1}z_{4})\!=0, d_{3}(U^{n}_{1}z_{1}z^{3}_{4})=0,\\
&d_{3}(U^{n}_{1}z^{4}_{1}z_{2}z_{7})\;=\;0,\; d_{3}(U^{n}_{1}z^{4}_{1}z_{4}z_{5})\,=0,\;d_{3}(U^{n}_{1}z^{3}_{1}z_{2}z^{2}_{4})\;=0,\; d_{3}(U^{n}_{1}z^{4}_{1}z_{2}y_{7})=0,\\
&d_{3}(U^{n}_{1}z_{10}z^{3}_{1})=0,\, d_{3}(U^{n}_{1}z^{5}_{1}z_{8})=0,\, d_{3}(U^{n}_{1}z^{7}_{1}z_{6})=0,d_{3}(U^{n}_{1}z^{4}_{1}z_{3}z_{6})=0,d_{3}(U^{n}_{1}\times\\
&\times z^{3}_{1}z^{2}_{2}z_{6})=0,d_{3}(U^{n}_{1}z^{5}_{2}z_{3})=0,d_{3}(U^{n}_{1}z^{2}_{1}z_{4}z_{7}) = 0, d_{3}(U^{n}_{1}z^{2}_{1}z_{2}z_{9}) = 0, d_{3}(U^{n}_{1}\times\\
&\times z^{5}_{1}z_{2}z^{2}_{3}) = 0,d_{3}(U^{n}_{1}z_{1}z_{2}z_{4}z_{6}) = 0,d_{3}(U^{n}_{1}z^{3}_{1}z_{2}z_{3}z_{5}) = 0,d_{3}(U^{n}_{1}z^{2}_{1}z^{2}_{2}z_{3}z_{4}) = 0,\\
&d_{3}(U^{n}_{1}z^{2}_{1}z_{2}y_{9}) = 0,d_{3}(U^{n}_{1}z^{3}_{1}z_{2}y^{2}_{4}) = 0, d_{3}(U^{n}_{1}z^{2}_{1}z_{2}z^{3}_{3}) = 0, d_{3}(U^{n}_{1}z^{2}_{1}z_{3}z_{8}) = 0,\\
&d_{3}(U^{n}_{1}z^{4}_{1}z^{3}_{2}z_{3}) = 0, d_{3}(U^{n}_{1}z^{8}_{1}z_{2}z_{3}) = 0, d_{3}(U^{n}_{1}z_{2}y^{2}_{4}z_{3}) = 0,d_{3}(U^{n}_{1}z^{5}_{1}z^{2}_{2}z_{4}) = 0, \\
&d_{3}(U^{n}_{1}z_{1}z_{2}z_{3}y_{7}) = 0,d_{3}(U^{n}_{1}z^{6}_{1}z_{3}z_{4}) = 0, d_{3}(U^{n}_{1}z^{6}_{1}z_{2}z_{5}) = 0, d_{3}(U^{n}_{1}z^{2}_{1}z^{3}_{2}z_{5}) = 0,\\
&d_{3}(U^{n}_{1}z^{9}_{1}z_{4})=0, d_{3}(U^{n}_{1}z^{11}_{1}z_{2})=0, d_{3}(U^{n}_{1}z^{3}_{1}z^{5}_{2})=0, d_{3}(U^{n}_{1}z^{7}_{1}z^{3}_{2})=0, d_{3}(U^{n}_{1}z_{5}\times\\
&\times z_{8}) = 0, d_{3}(U^{n}_{1}z^{2}_{1}z_{5}z_{6}) = 0, d_{3}(U^{n}_{1}z^{2}_{1}z_{4}y_{7}) = 0, d_{3}(U^{n}_{1}z_{5}z_{4}z^{2}_{2}) = 0, d_{3}(U^{n}_{1}z^{2}_{4}\times\\
&\times z_{3}z_{2}) = 0, d_{3}(U^{n}_{1}z_{1}z^{2}_{3}z_{6}) = 0, d_{3}(U^{n}_{1}z_{1}z_{4}y^{2}_{4}) = 0,d_{3}(U^{n}_{1}z_{6}z_{7}) = 0, d_{3}(U^{n}_{1}z_{6}\times\\
&\times y_{7}) = 0,d_{3}(U^{n}_{1}z_{1}z_{2}z_{3}z_{7}) = 0, d_{3}(U^{n}_{1}z_{1}z_{3}z_{4}z_{5}) = 0,\! d_{3}(U^{n}_{1}z_{1}z_{2}z^{2}_{5}) = 0,\!d_{3}(U^{n}_{1}\!\times\\
&\times z_{3}z_{10})=0,d_{3}(U^{n}_{1}z_{2}z_{11})=0,\! d_{3}(U^{n}_{1}z_{7}z^{3}_{2})=0,\! d_{3}(U^{n}_{1}y_{7}z^{3}_{2})=0,\! d_{3}(U^{n}_{1}z_{2}y_{11})=\\
&=0,d_{3}(U^{n}_{1}z_{6}z_{3}z^{2}_{2}) = 0, d_{3}(U^{n}_{1}z^{2}_{3}z_{5}z_{2}) = 0, d_{3}(U^{n}_{1}z_{1}z^{3}_{2}z^{2}_{3}) = 0,d_{3}(U^{n}_{1}z_{4}z_{9})=0, \\
&d_{3}(U^{n}_{1}y_{9}z_{4})=0, d_{3}(U^{n}_{1}z_{1}z_{12})=0.
\end{align*}

\clearpage

Table 18. The ring $\pi _{*}(MSp)$ in dimensions from 32 to 52.
\begin{align*}
&\qquad\; n\makebox[50mm]{} 32\\
&\;\:\pi _{n}(MSp)\makebox[42mm]{} 22{\Bbb Z}\\
&\mbox{Generators }z^{8}_{1},\, 2z_{2}z^{6}_{1},\, z^{4}_{1}z^{2}_{2},\, 2z^{4}_{1}z_{4},\, z^{4}_{1}y_{4} + z^{2}_{1}z^{3}_{2},\, 2z^{3}_{1}z_{2}z_{3}, z^{3}_{1}z_{5},2z^{2}_{1}z_{2}y_{4}, 2z^{2}_{1}y_{6},\\
&\qquad\; n\makebox[50 mm]{} 32\\
&\;\:\pi _{n}(MSp)\makebox[42 mm]{} 22{\Bbb Z}\\
&\mbox{Generators } z^{2}_{1}z_{2}z_{4},\, z^{2}_{1}y_{6}+ z_{1}z_{3}z_{4},\, 2z_{4}y_{4},\, 2z_{2}y_{6},\,z^{2}_{2}y_{4}+ z_{2}z^{2}_{3},\, z_{1}y_{7}, z_{2}z_{6}, z_{3}z_{5}, \\
&\qquad\; n\makebox[25 mm]{} 32 \makebox[47 mm]{} 33\\
&\;\:\pi _{n}(MSp)\makebox[17 mm]{} 22{\Bbb Z} \makebox[45 mm]{} 12{\Bbb Z_{2}}\\
&\mbox{Generators }z^{2}_{4},\! y^{2}_{4}, z^{2}_{1}z^{2}_{3}, 2z_{8}, 2y_{8}\qquad\; \theta _{1}z^{4}_{1}z^{2}_{2}, \theta _{1}(z^{4}_{1}y_{4}+ z^{2}_{1}z^{3}_{2}), \theta _{1}(z^{2}_{2}y_{4} + z_{2}z^{2}_{3}), \\
&\qquad\; n\makebox[50 mm]{} 33\\
&\;\:\pi _{n}(MSp)\makebox[42 mm]{} 12{\Bbb Z}_{2}\\
&\mbox{Generators }\theta _{1}(z^{2}_{1}y_{6} + z_{1}z_{3}z_{4}),\tau_{4},\theta _{1}z_{1}y_{7} = \theta _{1}z_{2}z_{6} = \Phi _{3}z_{1}z_{2}, \theta _{1}z^{2}_{4} = \theta _{1}z_{1}z_{7}=\\
&\qquad\; n\makebox[50 mm]{} 33\\
&\;\:\pi _{n}(MSp)\makebox[42 mm]{} 12{\Bbb Z}_{2}\\
&\mbox{Generators } = \Phi _{2}z_{1}z_{4},\,\theta _{1}z_{3}z_{5}\, = \Phi _{2}z_{2}z_{3}\, = \Phi _{1}z^{2}_{3},\, \theta _{1}z^{3}_{1}z_{5},\, \theta _{1}z^{2}_{1}z^{2}_{3},\, \theta _{1}y^{2}_{4},\, \theta _{1}z^{8}_{1}\\
&\qquad\; n\makebox[50 mm]{} 34\\
&\;\:\pi _{n}(MSp)\makebox[42 mm]{} 16{\Bbb Z}_{2}\\
&\mbox{Generators } \theta ^{2}_{1}z^{4}_{1}z^{2}_{2}, \theta _{1}\tau_{4}, \Phi ^{2}_{1}z^{2}_{3}, \Phi _{1}\Phi _{4}, \Phi _{2}\Phi _{3}, \Phi _{1}\tau_{3},\theta ^{2}_{1}z_{1}y_{7}, \theta ^{2}_{1}z^{2}_{4}, \theta ^{2}_{1}z_{3}z_{5}, \theta ^{2}_{1}z^{3}_{1}z_{5}, \\
&\qquad\; n\makebox[50 mm]{} 34\\
&\;\:\pi _{n}(MSp)\makebox[42 mm]{} 16{\Bbb Z}_{2}\\
&\mbox{Generators }\theta ^{2}_{1}(z^{4}_{1}y_{4}\,+\, z^{2}_{1}z^{3}_{2}),\, \theta ^{2}_{1}(z^{2}_{2}y_{4} + z_{2}z^{2}_{3}),\, \theta ^{2}_{1}(z^{2}_{1}y_{6} + z_{1}z_{3}z_{4}),\, \theta ^{2}_{1}y^{2}_{4},\, \theta ^{2}_{1}z^{8}_{1},\\
&\qquad\; n\makebox[14 mm]{} 34\makebox[9 mm]{} 35\makebox[39 mm]{} 36\\
&\;\:\pi _{n}(MSp)\makebox[6 mm]{} 16{\Bbb Z}_{2}\makebox[8 mm]{} 0\makebox[38 mm]{} 30{\Bbb Z}\\
&\mbox{Generators }\theta ^{2}_{1}z^{2}_{1}z^{2}_{3}\qquad \qquad 2z^{9}_{1}, z^{7}_{1}z_{2}, 2z^{5}_{1}z^{2}_{2}, z^{5}_{1}z_{4}, 2z^{5}_{1}y_{4}, 2z_{1}z^{2}_{2}y_{4}, 2z^{3}_{3},z^{2}_{1}z_{3}z_{4}, \\
&\qquad\; n\makebox[50 mm]{} 36\\
&\;\:\pi _{n}(MSp)\makebox[42 mm]{} 30{\Bbb Z}\\
&\mbox{Generators }z^{3}_{3} + z_{2}z_{3}y_{4}, 2z_{9}, z_{2}z_{7}, 2z^{2}_{1}y_{7}, 2z_{1}z^{2}_{4}, z_{3}z_{6}, 2y^{2}_{4}z_{1},2y_{6}z_{3}, z_{5}z_{4}, z^{3}_{2}z_{3},\\
&\qquad\; n\makebox[50 mm]{} 36\\
&\;\:\pi _{n}(MSp)\makebox[42 mm]{} 30{\Bbb Z}\\
&\mbox{Generators }z_{2}z_{3}z_{4} + z_{1}z_{2}y_{6},\! z_{2}z_{3}z_{4} + z_{1}z_{4}y_{4},\! 2z_{5}z^{4}_{1},\! 2y_{8}z_{1}, y_{7}z_{2},\!z^{3}_{1}z^{2}_{3}+ z^{3}_{1}z_{2}y_{4}, \\
&\qquad\; n\makebox[33 mm]{} 36\makebox[46 mm]{} 37\\
&\;\:\pi _{n}(MSp)\makebox[26 mm]{} 30{\Bbb Z}\makebox[43 mm]{} 17{\Bbb Z}_{2}\\
&\mbox{Generators }2z^{3}_{1}y_{6}, 2z_{5}y_{4}, 2z^{3}_{1}z^{2}_{3}, z_{1}z_{8}, z^{3}_{1}z_{6}, z^{3}_{1}z^{3}_{2}\quad \theta _{1}z^{7}_{1}z_{2}, \theta _{1}z^{5}_{1}z_{4}, \theta _{1}z^{3}_{2}z_{3}, \tau_{5}, \Phi _{5}\\
&\qquad\; n\makebox[50 mm]{} 37\\
&\;\:\pi _{n}(MSp)\makebox[42 mm]{} 17{\Bbb Z}_{2}\\
&\mbox{Generators }\theta _{1}(z^{3}_{3}\, + \,z_{2}z_{3}y_{4}),\, \theta _{1}z_{2}z_{7}\, =\, \theta _{1}z_{5}z_{4}\, = \Phi _{2}z_{2}z_{4}\, = \Phi _{2}z_{5}z_{1},\, \theta _{1}z^{2}_{1}z_{3}z_{4},
\end{align*}
Table 18. (continuation).
\begin{align*}
&\qquad\; n\makebox[50 mm]{} 37\\
&\;\:\pi _{n}(MSp)\makebox[42 mm]{} 17{\Bbb Z}_{2}\\
&\mbox{Generators }\theta _{1}(z_{2}z_{3}z_{4} + z_{1}z_{2}y_{6}), \theta _{1}(z_{2}z_{3}z_{4} + z_{1}z_{4}y_{4}),\theta _{1}(z^{3}_{1}z^{2}_{3}+ z^{3}_{1}z_{2}y_{4}),\!\Phi _{1}y^{2}_{4},\\
&\qquad\; n\makebox[50 mm]{} 37\\
&\;\:\pi _{n}(MSp)\makebox[42 mm]{} 17{\Bbb Z}_{2}\\
&\mbox{Generators }\theta _{1}z_{1}z_{8} = \Phi _{4}z^{2}_{1}, \theta _{1}z^{3}_{1}z_{6}, \theta _{1}z^{3}_{1}z^{3}_{2},\theta _{1}y_{7}z_{2}= \theta _{1}z_{6}z_{3}= \Phi _{3}z_{1}z_{3}= \Phi _{3}z^{2}_{2}, \\
&\qquad\; n\makebox[20 mm]{} 37\makebox[46 mm]{} 38\\
&\;\:\pi _{n}(MSp)\makebox[12 mm]{} 17{\Bbb Z}_{2}\makebox[42 mm]{} 19{\Bbb Z}_{2}\\
&\mbox{Generators }\Phi _{1}z_{3}z_{5}= \Phi _{2}z^{2}_{3}\qquad\; \theta ^{2}_{1}z^{7}_{1}z_{2}, \theta ^{2}_{1}z^{5}_{1}z_{4}, \theta ^{2}_{1}z^{3}_{2}z_{3}, \theta ^{2}_{1}(z^{3}_{3}+ z_{2}z_{3}y_{4}),\Phi _{3}\tau_{1},\\
&\qquad\; n\makebox[50 mm]{} 38\\
&\;\:\pi _{n}(MSp)\makebox[42 mm]{} 19{\Bbb Z}_{2}\\
&\mbox{Generators }\theta _{1}\Phi _{5}, \theta ^{2}_{1}z_{5}z_{4},\theta ^{2}_{1}(z^{3}_{1}z^{2}_{3}+ z^{3}_{1}z_{2}y_{4}), \theta ^{2}_{1}z_{1}z_{8},\, \theta ^{2}_{1}z^{3}_{1}z_{6}, \theta ^{2}_{1}z^{3}_{1}z^{3}_{2}, \theta _{1}\Phi _{1}z_{3}z_{5}, \\
&\qquad\; n\makebox[50 mm]{} 38\\
&\;\:\pi _{n}(MSp)\makebox[42 mm]{} 19{\Bbb Z}_{2}\\
&\mbox{Generators }\Phi _{1}\tau_{4},\, \theta ^{2}_{1}y_{7}z_{2},\, \theta ^{2}_{1}(z_{2}z_{3}z_{4} + z_{1}z_{2}y_{6}),\, \theta ^{2}_{1}(z_{2}z_{3}z_{4} + z_{1}z_{4}y_{4}), \theta ^{2}_{1}z^{2}_{1}z_{3}z_{4},\\
&\qquad\; n\makebox[20 mm]{} 38\makebox[27 mm]{} 39\makebox[34 mm]{} 40\\
&\;\:\pi _{n}(MSp)\makebox[12 mm]{} 19{\Bbb Z}_{2}\makebox[25 mm]{} {\Bbb Z}_{2}\makebox[32 mm]{} 42{\Bbb Z}\\
&\mbox{Generators }\theta _{1}\Phi _{1}y^{2}_{4}, \Phi _{2}\tau_{2}\qquad\; \Phi ^{2}_{1}\Phi _{4} = \Phi _{1}\Phi _{2}\Phi _{3}\qquad\;\; 2z^{2}_{1}z_{2}y_{6}, 2z^{2}_{1}z_{4}y_{4},2z^{3}_{2}y_{4},  \\
&\qquad\; n\makebox[50 mm]{} 40\\
&\;\:\pi _{n}(MSp)\makebox[42 mm]{} 42{\Bbb Z}\\
&\mbox{Generators }z^{2}_{1}z_{8} + z_{1}z_{4}z_{5}, 2z^{5}_{2}, z_{1}z^{3}_{2}z_{3} + z^{2}_{1}z^{2}_{2}y_{4}, 2z^{2}_{1}z_{8}, z_{6}z_{4},2z^{4}_{1}z_{2}y_{4},2z^{2}_{1}z^{2}_{2}z_{4}, \\
&\qquad\; n\makebox[50 mm]{} 40\\
&\;\:\pi _{n}(MSp)\makebox[42 mm]{} 42{\Bbb Z}\\
&\mbox{Generators }2y^{*}_{10}, 2z_{2}y_{8}, 2z^{2}_{2}y_{6}, z^{3}_{1}z_{3}z_{4} + z^{4}_{1}y_{6}, 2z^{6}_{1}z_{4}, 2z^{4}_{1}z_{6}, 2z^{4}_{1}z^{3}_{2}, z^{4}_{1}z^{3}_{2} + z^{6}_{1}y_{4},\\
&\qquad\; n\makebox[50 mm]{} 40\\
&\;\:\pi _{n}(MSp)\makebox[42 mm]{} 42{\Bbb Z}\\
&\mbox{Generators }z^{2}_{5}, 2z^{8}_{1}z_{2}, 2z_{4}y_{6}, 2y_{10},\, z_{1}z_{3}y_{6} + z^{2}_{3}z_{4},\, z_{1}y_{4}z_{5} + z^{2}_{3}z_{4}, 2z^{2}_{2}y_{6}, 2z_{2}y^{2}_{4},\\
&\qquad\; n\makebox[50 mm]{} 40\\
&\;\:\pi _{n}(MSp)\makebox[42 mm]{} 42{\Bbb Z}\\
&\mbox{Generators }z_{1}y_{9},\, 2y_{4}y_{6},\, 2y_{4}z_{6},\, 2z^{2}_{3}y_{4},\, z_{1}z_{9},\, z^{2}_{1}z^{2}_{4},\, z^{3}_{2}z_{4}, z^{5}_{1}z_{5}, z^{2}_{2}z^{2}_{3}, z^{2}_{1}z^{4}_{2}, z^{6}_{1}z^{2}_{2},\\
&\qquad\; n\makebox[20 mm]{} 40\makebox[54 mm]{} 41\\
&\;\:\pi _{n}(MSp)\makebox[12 mm]{} 42{\Bbb Z}\makebox[52 mm]{} 23{\Bbb Z}_{2}\\
&\mbox{Generators }  z^{10}_{1},z_{3}y_{7}, z^{3}_{1}y_{7}, z^{2}_{1}y^{2}_{4}, z_{2}z_{8}\qquad\;\;\;\; \theta _{1}(z^{2}_{1}z_{8} + z_{1}z_{4}z_{5}),\theta _{1}z^{2}_{1}z^{2}_{4}, \theta _{1}z^{10}_{1},\\
&\qquad\; n\makebox[50 mm]{} 41\\
&\;\:\pi _{n}(MSp)\makebox[42 mm]{} 23{\Bbb Z}_{2}\\
&\mbox{Generators }\theta _{1}(z_{1}z^{3}_{2}z_{3}+ z^{2}_{1}z^{2}_{2}y_{4}),\, \theta _{1}(z^{3}_{1}z_{3}z_{4}+ z^{4}_{1}y_{6}),\, \theta _{1}(z^{4}_{1}z^{3}_{2} + z^{6}_{1}y_{4}),\,  \theta _{1}z^{6}_{1}z^{2}_{2},
\end{align*}
Table 18. (continuation).
\begin{align*}
&\qquad\; n\makebox[50 mm]{} 41\\
&\;\:\pi _{n}(MSp)\makebox[42 mm]{} 23{\Bbb Z}_{2}\\
&\mbox{Generators }\theta _{1}(z_{1}y_{4}z_{5} + z^{2}_{3}z_{4}), \theta _{1}(z_{1}z_{3}y_{6} + z^{2}_{3}z_{4}), \theta _{1}z_{3}y_{7} = \Phi _{3}z_{2}z_{3},\Phi _{2}z_{3}z_{4} = \\
&\qquad\; n\makebox[50 mm]{} 41\\
&\;\:\pi _{n}(MSp)\makebox[42 mm]{} 23{\Bbb Z}_{2}\\
&\mbox{Generators }=\theta _{1}z^{2}_{5}\, =  \Phi _{2}z_{2}z_{5},\,\theta _{1}z^{2}_{2}z^{2}_{3},\, \theta _{1}z^{2}_{1}z^{4}_{2}, \theta _{1}z_{1}z_{9} = \theta _{1}z_{2}z_{8} = \Phi _{4}z_{1}z_{2},\tau^{*}_{6}, \\
&\qquad\; n\makebox[50 mm]{} 41\\
&\;\:\pi _{n}(MSp)\makebox[42 mm]{} 23{\Bbb Z}_{2}\\
&\mbox{Generators }\theta _{1}z^{5}_{1}z_{5},\,\theta _{1}z^{3}_{2}z_{4},\, \theta _{1}z_{1}y_{9},\, \theta _{1}z^{3}_{1}y_{7},\, \theta _{1}z^{2}_{1}y^{2}_{4},\, \tau_{6},\, \Phi _{1}(z^{3}_{3}\, + \, z_{2}z_{3}y_{4}),\,\kappa _{1} \\
&\qquad\; n\makebox[50 mm]{} 42\\
&\;\:\pi _{n}(MSp)\makebox[42 mm]{} 29{\Bbb Z}_{2}\\
&\mbox{Generators }\theta ^{2}_{1}(z^{3}_{1}z_{3}z_{4}+ z^{4}_{1}y_{6}),\! \theta ^{2}_{1}z^{2}_{1}z^{2}_{4},\theta ^{2}_{1}(z^{2}_{1}z_{8} + z_{1}z_{4}z_{5}), \theta ^{2}_{1}(z_{1}z^{3}_{2}z_{3} + z^{2}_{1}z^{2}_{2}y_{4}),\\
&\qquad\; n\makebox[50 mm]{} 42\\
&\;\:\pi _{n}(MSp)\makebox[42 mm]{} 29{\Bbb Z}_{2}\\
&\mbox{Generators }\theta ^{2}_{1}(z_{1}y_{4}z_{5} + z^{2}_{3}z_{4}),\theta ^{2}_{1}(z_{1}z_{3}y_{6} + z^{2}_{3}z_{4}),\theta ^{2}_{1}z^{10}_{1},\theta ^{2}_{1}z^{2}_{5}, \theta ^{2}_{1}(z^{4}_{1}z^{3}_{2} + z^{6}_{1}y_{4}),\\
&\qquad\; n\makebox[50 mm]{} 42\\
&\;\:\pi _{n}(MSp)\makebox[42 mm]{} 29{\Bbb Z}_{2}\\
&\mbox{Generators }\theta ^{2}_{1}z^{3}_{2}z_{4},\! \theta ^{2}_{1}z_{3}y_{7},\theta ^{2}_{1}z^{5}_{1}z_{5}, \theta ^{2}_{1}z^{2}_{2}z^{2}_{3}, \theta ^{2}_{1}z^{2}_{1}z^{4}_{2}, \theta ^{2}_{1}z_{1}z_{9}, \theta ^{2}_{1}z^{6}_{1}z^{2}_{2}, \Phi _{2}\Phi _{4}, \Phi _{1}\Phi _{5},\\
&\qquad\; n\makebox[50 mm]{} 42\\
&\;\:\pi _{n}(MSp)\makebox[42 mm]{} 29{\Bbb Z}_{2}\\
&\mbox{Generators }\Phi ^{2}_{1}y^{2}_{4}, \,\theta _{1}\tau^{*}_{6},\, \theta _{1}\kappa _{1},\, \theta _{1}\tau_{6}, \theta ^{2}_{1}z_{1}y_{9}, \theta ^{2}_{1}z^{3}_{1}y_{7}, \theta ^{2}_{1}z^{2}_{1}y^{2}_{4}, \theta _{1}\Phi _{1}(z^{3}_{3} + z_{2}z_{3}y_{4}),\\
&\qquad\; n\makebox[22 mm]{} 42\makebox[19 mm]{} 43\makebox[32 mm]{} 44\\
&\;\:\pi _{n}(MSp)\makebox[14 mm]{} 29{\Bbb Z}_{2}\makebox[18 mm]{} 0\makebox[32 mm]{} 56{\Bbb Z}\\
&\mbox{Generators }\Phi ^{2}_{1}z_{3}z_{5}, \Phi _{1}\tau_{5}, \Phi _{2}\tau_{3}\qquad \qquad \;\; 2z^{3}_{1}y_{8}, 2z^{3}_{1}z^{2}_{2}y_{4}, 2z^{5}_{1}y_{6}, 2z^{7}_{1}y_{4}, 2z_{1}y_{10}, \\
&\qquad\; n\makebox[50 mm]{} 44\\
&\;\:\pi _{n}(MSp)\makebox[42 mm]{} 56{\Bbb Z}\\
&\mbox{Generators }2z^{2}_{2}z_{3}y_{4},\, z_{1}z_{10},\, 2z_{1}z^{2}_{2}y_{6},\, 2z^{2}_{1}z_{5}y_{4}, 2z^{2}_{1}z_{9}, z_{1}z_{2}y_{8} + z_{1}z_{4}y_{6}, 2z^{3}_{1}z_{3}z_{5},\\
&\qquad\; n\makebox[50 mm]{} 44\\
&\;\:\pi _{n}(MSp)\makebox[42 mm]{} 56{\Bbb Z}\\
&\mbox{Generators }z_{1}z_{4}y_{6} + z_{1}z^{2}_{5}, z_{5}z_{6}, 2z^{3}_{1}z^{2}_{4},\, z^{3}_{1}z_{4}y_{4} + z^{3}_{1}z_{2}y_{6},\, 2z^{6}_{1}z_{5},\, 2z^{3}_{1}z^{4}_{2},\, 2z^{7}_{1}z^{2}_{2},\\
&\qquad\; n\makebox[50 mm]{} 44\\
&\;\:\pi _{n}(MSp)\makebox[42 mm]{} 56{\Bbb Z}\\
&\mbox{Generators }2z^{2}_{1}y_{9},\, z^{4}_{1}z^{2}_{2}z_{3} + z^{5}_{1}z_{2}y_{4},\, z^{3}_{1}z_{2}y_{6}+ z^{3}_{1}z_{3}z_{5},\, z_{1}z^{3}_{2}y_{4}+ z_{1}z^{2}_{2}z^{2}_{3}, 2z^{4}_{2}z_{3}, \\
&\qquad\; n\makebox[50 mm]{} 44\\
&\;\:\pi _{n}(MSp)\makebox[42 mm]{} 56{\Bbb Z}\\
&\mbox{Generators }2z^{11}_{1}, z_{1}y_{4}z_{6}+ z_{2}z_{3}z_{6}, z_{4}y_{7},z_{3}z_{4}y_{4} + z_{2}z_{3}y_{6}, z_{2}z_{3}y_{6} + z^{2}_{3}z_{5}, 2z^{2}_{3}z_{5},
\end{align*}
Table 18. (continuation).
\begin{align*}
&\qquad\; n\makebox[50 mm]{} 44\\
&\;\:\pi _{n}(MSp)\makebox[42 mm]{} 56{\Bbb Z}\\
&\mbox{Generators }2z_{11}, 2z^{4}_{1}y_{7}, 2z^{3}_{1}y^{2}_{4}, 2z_{3}y^{2}_{4}, 2z_{5}y_{6}, 2y_{4}z_{7}, 2z_{3}y_{8}, 2y_{4}y_{7}, z_{3}z_{8}, 2z_{1}y_{6}y_{4},\\
&\qquad\; n\makebox[50 mm]{} 44\\
&\;\:\pi _{n}(MSp)\makebox[42 mm]{} 56{\Bbb Z}\\
&\mbox{Generators }2z_{1}y^{*}_{10}, z^{2}_{1}z_{2}y_{7}, z_{1}z^{5}_{2}, z^{3}_{1}z_{8}, z^{3}_{1}z^{2}_{2}z_{4}, z^{7}_{1}z_{4}, z^{5}_{1}z_{6}, z^{5}_{1}z^{3}_{2}, z^{2}_{2}z_{3}z_{4}, z_{1}z_{2}y^{2}_{4}, \\
&\qquad\; n\makebox[32 mm]{} 44\makebox[50 mm]{} 45\\
&\;\:\pi _{n}(MSp)\makebox[24 mm]{} 56{\Bbb Z}\makebox[48 mm]{} 31{\Bbb Z}_{2}\\
&\mbox{Generators }z^{9}_{1}z_{2},\! z_{4}z_{7},\! z_{2}y_{9},\! z_{2}z_{9}, z^{2}_{1}z_{4}z_{5},z_{2}z^{3}_{3}\qquad \theta _{1}(z_{1}z_{2}y_{8} + z_{1}z_{4}y_{6}),\theta _{1}z_{1}z_{10}, \\
&\qquad\; n\makebox[50 mm]{} 45\\
&\;\:\pi _{n}(MSp)\makebox[42 mm]{} 31{\Bbb Z}_{2}\\
&\mbox{Generators }\theta _{1}(z_{1}z_{4}y_{6} + z_{1}z^{2}_{5}),\theta _{1}(z^{3}_{1}z_{4}y_{4} + z^{3}_{1}z_{2}y_{6}), \theta _{1}(z^{3}_{1}z_{2}y_{6} + z^{3}_{1}z_{3}z_{5}),\! \tau _{7},\!\tau^{*}_{7}\\
&\qquad\; n\makebox[50 mm]{} 45\\
&\;\:\pi _{n}(MSp)\makebox[42 mm]{} 31{\Bbb Z}_{2}\\
&\mbox{Generators }\theta _{1}(z_{1}z^{3}_{2}y_{4}+ z_{1}z^{2}_{2}z^{2}_{3}), \theta _{1}z_{3}z_{8} = \theta _{1}z_{2}z_{9} = \Phi _{4}z_{1}z_{3} = \Phi _{4}z^{2}_{2},\theta _{1}z^{2}_{1}z_{4}z_{5},\\
&\qquad\; n\makebox[50 mm]{} 45\\
&\;\:\pi _{n}(MSp)\makebox[42 mm]{} 31{\Bbb Z}_{2}\\
&\mbox{Generators }\theta _{1}(z_{3}z_{4}y_{4}\,+\, z_{2}z_{3}y_{6}),\; \theta _{1}(z^{4}_{1}z^{2}_{2}z_{3}\, +\, z^{5}_{1}z_{2}y_{4}),\; \theta _{1}(z_{1}y_{4}z_{6}\, +\, z_{2}z_{3}z_{6}), \\
&\qquad\; n\makebox[50 mm]{} 45\\
&\;\:\pi _{n}(MSp)\makebox[42 mm]{} 31{\Bbb Z}_{2}\\
&\mbox{Generators }\theta _{1}(z_{2}z_{3}y_{6}+ z^{2}_{3}z_{5}), \theta _{1}z^{7}_{1}z_{4},\theta _{1}z^{5}_{1}z_{6},\theta _{1}z^{3}_{1}z^{2}_{2}z_{4}, \theta _{1}z^{5}_{1}z^{3}_{2}, \theta _{1}z^{9}_{1}z_{2},\!\theta _{1}z_{1}z^{5}_{2},\\
&\qquad\; n\makebox[50 mm]{} 45\\
&\;\:\pi _{n}(MSp)\makebox[42 mm]{} 31{\Bbb Z}_{2}\\
&\mbox{Generators }\theta _{1}z^{2}_{1}z_{2}y_{7}, \theta _{1}z^{2}_{2}z_{3}z_{4}, \theta _{1}z_{1}z_{2}y^{2}_{4}, \theta _{1}z_{4}z_{7} = \Phi _{2}z^{2}_{4} = \Phi _{2}z_{1}z_{7}, \theta _{1}z_{2}z^{3}_{3},\Phi_{6},\\
&\qquad\; n\makebox[50 mm]{} 45\\
&\;\:\pi _{n}(MSp)\makebox[42 mm]{} 31{\Bbb Z}_{2}\\
&\mbox{Generators }\theta _{1}z_{2}y_{9} = \theta _{1}z_{4}y_{7} = \theta _{1}z_{5}z_{6} = \Phi _{3}z_{2}z_{4} =\! \Phi _{1}z_{1}y_{9} =\! \Phi _{2}z_{1}y_{7}=\!\Phi _{2}z_{2}z_{6}, \\
&\qquad\; n\makebox[46 mm]{} 45\makebox[47 mm]{} 46\\
&\;\:\pi _{n}(MSp)\makebox[38 mm]{} 31{\Bbb Z}_{2}\makebox[43 mm]{} 36{\Bbb Z}_{2}\\
&\mbox{Generators }\theta _{1}z^{3}_{1}z_{8},\Phi _{2}y^{2}_{4}, \Phi _{1}z_{3}z_{7} = \Phi _{2}z_{3}z_{5} = \Phi _{1}z^{2}_{5}, \Phi _{2}z_{3}y_{7} \qquad \theta ^{2}_{1}z_{1}z_{10},\Phi _{1}\tau_{6},\\
&\qquad\; n\makebox[50 mm]{} 46\\
&\;\:\pi _{n}(MSp)\makebox[42 mm]{} 36{\Bbb Z}_{2}\\
&\mbox{Generators }\theta ^{2}_{1}(z^{3}_{1}z_{4}y_{4}\; +\; z^{3}_{1}z_{2}y_{6}),\; \theta ^{2}_{1}(z_{1}z_{2}y_{8}\; +\; z_{1}z_{4}y_{6}),\; \theta ^{2}_{1}(z_{1}z_{4}y_{6}\; +\; z_{1}z^{2}_{5}),\\
&\qquad\; n\makebox[50 mm]{} 46\\
&\;\:\pi _{n}(MSp)\makebox[42 mm]{} 36{\Bbb Z}_{2}\\
&\mbox{Generators }\theta ^{2}_{1}(z^{3}_{1}z_{2}y_{6}\; +\; z^{3}_{1}z_{3}z_{5}),\; \theta ^{2}_{1}(z_{1}z^{3}_{2}y_{4}\;+\; z_{1}z^{2}_{2}z^{2}_{3}),\; \theta ^{2}_{1}z^{2}_{2}z_{3}z_{4},\; \theta ^{2}_{1}z_{1}z_{2}y^{2}_{4},
\end{align*}
Table 18. (continuation).
\begin{align*}
&\qquad\; n\makebox[50 mm]{} 46\\
&\;\:\pi _{n}(MSp)\makebox[42 mm]{} 36{\Bbb Z}_{2}\\
&\mbox{Generators }\theta ^{2}_{1}(z^{4}_{1}z^{2}_{2}z_{3} + z^{5}_{1}z_{2}y_{4}),\, \theta ^{2}_{1}(z_{1}y_{4}z_{6} + z_{2}z_{3}z_{6}),\, \theta ^{2}_{1}z_{3}z_{8},\, \theta ^{2}_{1}z^{7}_{1}z_{4},\, \theta ^{2}_{1}z^{5}_{1}z_{6},\\
&\qquad\; n\makebox[50 mm]{} 46\\
&\;\:\pi _{n}(MSp)\makebox[42 mm]{} 36{\Bbb Z}_{2}\\
&\mbox{Generators }\theta ^{2}_{1}z^{2}_{1}z_{2}y_{7},\! \theta ^{2}_{1}z_{1}z^{5}_{2}, \theta ^{2}_{1}(z_{3}z_{4}y_{4}+ z_{2}z_{3}y_{6}), \theta ^{2}_{1}(z_{2}z_{3}y_{6}+ z^{2}_{3}z_{5}), \theta ^{2}_{1}z^{2}_{1}z_{4}z_{5}\\
&\qquad\; n\makebox[50 mm]{} 46\\
&\;\:\pi _{n}(MSp)\makebox[42 mm]{} 36{\Bbb Z}_{2}\\
&\mbox{Generators }\theta ^{2}_{1}z_{4}z_{7},\, \theta _{1}\Phi _{2}z_{3}y_{7},\, \theta _{1}\Phi _{2}y^{2}_{4},\, \theta _{1}\Phi _{6}, \theta ^{2}_{1}z^{3}_{1}z_{8}, \theta ^{2}_{1}z^{3}_{1}z^{2}_{2}z_{4}, \theta ^{2}_{1}z^{5}_{1}z^{3}_{2}, \theta ^{2}_{1}z^{9}_{1}z_{2},\\
&\qquad\; n\makebox[50 mm]{} 46\\
&\;\:\pi _{n}(MSp)\makebox[42 mm]{} 36{\Bbb Z}_{2}\\
&\mbox{Generators }\theta _{1}\Phi _{1}z_{3}z_{7},\theta ^{2}_{1}z_{2}z^{3}_{3},\theta ^{2}_{1}z_{2}y_{9}, \Phi _{2}\tau_{4}, \Phi _{1}\tau^{*}_{6}, \Phi _{3}\tau_{2}, \Phi _{4}\tau_{1}, \Phi _{1}\kappa _{1}, \Phi _{1}\tau_{1}z^{2}_{3}\\
&\qquad\; n\makebox[36 mm]{} 47\makebox[52 mm]{} 48\\
&\;\:\pi _{n}(MSp)\makebox[29 mm]{} 3{\Bbb Z}_{2}\makebox[49 mm]{} 77{\Bbb Z}\\
&\mbox{Generators }\Phi ^{2}_{1}\Phi _{5}, \Phi _{1}\Phi _{2}\Phi _{4} = \Phi ^{2}_{2}\Phi _{3}, \Phi ^{2}_{1}\tau_{5} = \Phi _{1}\Phi _{2}\tau_{3} \qquad z^{3}_{1}y_{8} + z^{2}_{1}z_{2}y_{7},2z^{2}_{1}z^{5}_{2}, \\
&\qquad\; n\makebox[50 mm]{} 48\\
&\;\:\pi _{n}(MSp)\makebox[42 mm]{} 77{\Bbb Z}\\
&\mbox{Generators }2z^{2}_{1}z_{2}z^{2}_{4}, 2z^{2}_{1}z_{10}, 2z^{2}_{1}z_{2}y_{8}, 2z^{2}_{1}z_{4}y_{6}, 2z^{4}_{1}z_{4}y_{4}, 2z^{4}_{1}z_{2}y_{6},\!2z^{3}_{2}y_{6},\! 2z^{2}_{2}z_{4}y_{4},\\
&\qquad\; n\makebox[50 mm]{} 48\\
&\;\:\pi _{n}(MSp)\makebox[42 mm]{} 77{\Bbb Z}\\
&\mbox{Generators }2z^{6}_{1}z_{2}y_{4}, 2z^{2}_{1}y_{4}y_{6}, 2z^{3}_{1}z_{2}y_{7}, z^{4}_{1}z_{2}z^{2}_{3}+ z^{5}_{1}z_{3}y_{4}, 2z^{4}_{1}z_{8}, z^{6}_{1}z_{6}+ z^{4}_{1}z^{2}_{2}z_{4},\\
&\qquad\; n\makebox[50 mm]{} 48\\
&\;\:\pi _{n}(MSp)\makebox[42 mm]{} 77{\Bbb Z}\\
&\mbox{Generators }2z^{4}_{1}z^{2}_{2}z_{4}, 2z^{8}_{1}z_{4}, 2z^{6}_{1}z_{6}, z^{8}_{1}y_{4} + z^{7}_{1}z_{2}z_{3}, 2z^{6}_{1}z^{3}_{2}, 2z^{10}_{1}z_{2},2z^{2}_{1}y_{4}z_{6},\! 2z^{2}_{1}z^{5}_{2}, \\
&\qquad\; n\makebox[50 mm]{} 48\\
&\;\:\pi _{n}(MSp)\makebox[42 mm]{} 77{\Bbb Z}\\
&\mbox{Generators }z_{1}z_{4}z_{7} + z^{2}_{1}y_{10}, z^{3}_{1}z_{3}y_{6} + z^{4}_{2}z_{4}, 2z^{4}_{2}z_{4}, z^{4}_{2}z_{4} + z^{2}_{1}z_{2}z_{4}y_{4}, 2z^{3}_{4},2z^{3}_{2}z^{2}_{3},\\
&\qquad\; n\makebox[50 mm]{} 48\\
&\;\:\pi _{n}(MSp)\makebox[42 mm]{} 77{\Bbb Z}\\
&\mbox{Generators }z^{2}_{1}y_{10} + z^{2}_{1}y^{*}_{10} + z_{1}z_{2}z_{9} + z_{1}z_{2}y_{9}, 2z_{2}y_{4}y_{6}, 2z_{2}z^{2}_{3}y_{4},2z_{1}z_{2}y_{9}, z_{2}z_{10}, \\
&\qquad\; n\makebox[50 mm]{} 48\\
&\;\:\pi _{n}(MSp)\makebox[42 mm]{} 77{\Bbb Z}\\
&\mbox{Generators }2z_{2}y^{*}_{10}, 2z_{2}y_{10}, 2z_{4}y_{8}, 2z_{6}y_{6}, 2z^{2}_{4}y_{4}, 2z_{4}y^{2}_{4},z^{3}_{2}z^{2}_{3} + z^{4}_{2}y_{4}, 2z^{2}_{1}z_{2}y^{2}_{4}, z^{6}_{2}, \\
&\qquad\; n\makebox[50 mm]{} 48\\
&\;\:\pi _{n}(MSp)\makebox[42 mm]{} 77{\Bbb Z}\\
&\mbox{Generators }2y_{4}z_{8}, z_{1}y_{4}z_{7}+ z_{3}z_{4}z_{5}, z_{3}z_{4}z_{5}+ z_{1}z_{5}y_{6}, z_{2}z_{4}y_{6}+ z^{2}_{2}y_{8}, 2z^{2}_{3}z_{6},2y^{3}_{4},
\end{align*}
Table 18. (continuation).
\begin{align*}
&\qquad\; n\makebox[50 mm]{} 48\\
&\;\:\pi _{n}(MSp)\makebox[42 mm]{} 77{\Bbb Z}\\
&\mbox{Generators }2y_{4}y_{8}, z_{3}z_{5}y_{4} + z^{2}_{3}y_{6}, 2z^{2}_{3}y_{6}, z_{2}z_{3}y_{7} + z_{1}y_{4}y_{7},2z_{12},\! z^{3}_{1}z_{9},\! z^{2}_{2}z^{2}_{4},\! z^{4}_{1}z^{2}_{4}, \\
&\qquad\; n\makebox[50 mm]{} 48\\
&\;\:\pi _{n}(MSp)\makebox[42 mm]{} 77{\Bbb Z}\\
&\mbox{Generators } z^{8}_{1}z^{2}_{2},z^{7}_{1}z_{5}, z^{4}_{1}z^{4}_{2}, z^{3}_{1}y_{9}, z^{3}_{2}z_{6}, z^{5}_{1}y_{7}, z^{12}_{1}, z^{4}_{1}y^{2}_{4}, z_{2}z^{2}_{3}z_{4}, z^{2}_{2}y^{2}_{4}, z_{1}y_{11}, z^{4}_{3},\\
&\qquad\; n\makebox[35 mm]{} 48\makebox[50 mm]{} 49\\
&\;\:\pi _{n}(MSp)\makebox[27 mm]{} 77{\Bbb Z}\makebox[48 mm]{} 41{\Bbb Z}_{2}\\
&\mbox{Generators }z^{2}_{1}z^{3}_{2}z_{4}, z_{1}z_{11}, z_{4}z_{8}, z_{3}z_{9}, z_{3}y_{9}, y^{2}_{6}, z_{5}z_{7}\qquad\; \kappa _{2},\theta _{1}(z_{2}z_{4}y_{6} + z^{2}_{2}y_{8}), \\
&\qquad\; n\makebox[50 mm]{} 49\\
&\;\:\pi _{n}(MSp)\makebox[42 mm]{} 41{\Bbb Z}_{2}\\
&\mbox{Generators }\theta _{1}z^{4}_{1}z^{4}_{2},\, \theta _{1}z^{8}_{1}z^{2}_{2},\, y^{2}_{4}\tau_{1},\, \Omega _{1},\, \theta _{1}y^{2}_{6},\, \theta _{1}(z^{6}_{1}z_{6}+ z^{4}_{1}z^{2}_{2}z_{4}), \theta _{1}z^{3}_{1}z_{9},\theta _{1}z^{2}_{2}z^{2}_{4}, \\
&\qquad\; n\makebox[50 mm]{} 49\\
&\;\:\pi _{n}(MSp)\makebox[42 mm]{} 41{\Bbb Z}_{2}\\
&\mbox{Generators }\theta _{1}(z^{4}_{1}z_{2}z^{2}_{3}+ z^{5}_{1}z_{3}y_{4}),\theta _{1}(z^{8}_{1}y_{4} + z^{7}_{1}z_{2}z_{3}), \theta _{1}z^{2}_{2}y^{2}_{4}, \theta _{1}z^{4}_{1}z^{2}_{4},\theta _{1}z^{2}_{1}z^{3}_{2}z_{4},\\
&\qquad\; n\makebox[50 mm]{} 49\\
&\;\:\pi _{n}(MSp)\makebox[42 mm]{} 41{\Bbb Z}_{2}\\
&\mbox{Generators }\tau_{8}, \theta _{1}z^{4}_{3}, \theta _{1}(z^{3}_{1}z_{3}y_{6} + z^{4}_{2}z_{4}), \theta _{1}(z^{4}_{2}z_{4} + z^{2}_{1}z_{2}z_{4}y_{4}), \theta _{1}z_{2}z^{2}_{3}z_{4},\theta _{1}z^{5}_{1}y_{7},\\
&\qquad\; n\makebox[50 mm]{} 49\\
&\;\:\pi _{n}(MSp)\makebox[42 mm]{} 41{\Bbb Z}_{2}\\
&\mbox{Generators }\theta _{1}(z_{1}z_{4}z_{7} + z^{2}_{1}y_{10}), \theta _{1}(z^{3}_{2}z^{2}_{3} + z^{4}_{2}y_{4}), \theta _{1}z^{7}_{1}z_{5}, \theta _{1}z^{6}_{2}, \theta _{1}z^{4}_{1}y^{2}_{4}, \theta _{1}z^{3}_{1}y_{9}, \\
&\qquad\; n\makebox[50 mm]{} 49\\
&\;\:\pi _{n}(MSp)\makebox[42 mm]{} 41{\Bbb Z}_{2}\\
&\mbox{Generators }\theta _{1}(z^{2}_{1}y_{10} + z^{2}_{1}y^{*}_{10} + z_{1}z_{2}z_{9} + z_{1}z_{2}y_{9}),\theta _{1}(z_{1}y_{4}z_{7} + z_{3}z_{4}z_{5}), \theta _{1}z^{3}_{2}z_{6}, \\
&\qquad\; n\makebox[50 mm]{} 49\\
&\;\:\pi _{n}(MSp)\makebox[42 mm]{} 41{\Bbb Z}_{2}\\
&\mbox{Generators }\theta _{1}(z_{3}z_{4}z_{5} + z_{1}z_{5}y_{6}),\theta _{1}(z_{3}z_{5}y_{4}+ z^{2}_{3}y_{6}), \theta _{1}(z_{2}z_{3}y_{7}+ z_{1}y_{4}y_{7}),\theta _{1}z^{12}_{1}\\
&\qquad\; n\makebox[50 mm]{} 49\\
&\;\:\pi _{n}(MSp)\makebox[42 mm]{} 41{\Bbb Z}_{2}\\
&\mbox{Generators } \theta _{1}z_{4}z_{8} = \theta _{1}z_{1}z_{11} = \Phi _{2}z^{2}_{4} = \Phi _{4}z_{1}z_{4}, \Phi _{1}(z_{2}z_{3}y_{6} + z^{2}_{3}z_{5}),\theta _{1}z_{1}y_{11}=\\
&\qquad\; n\makebox[50 mm]{} 49\\
&\;\:\pi _{n}(MSp)\makebox[42 mm]{} 41{\Bbb Z}_{2}\\
&\mbox{Generators }= \theta _{1}z_{2}z_{10}\! =\! \Phi _{5}z_{1}z_{2},\! \theta _{1}z_{3}z_{9}\! =\! \Phi _{1}z_{3}z_{8}\! =\! \Phi _{4}z_{2}z_{3},\!\theta _{1}z_{3}y_{9}\! =\! \Phi _{2}z_{3}z_{6}\! =\\
&\qquad\; n\makebox[50 mm]{} 49\\
&\;\:\pi _{n}(MSp)\makebox[42 mm]{} 41{\Bbb Z}_{2}\\
&\mbox{Generators }= \Phi _{1}z_{4}y_{7} = \Phi _{3}z_{3}z_{4},\quad \Phi _{1}(z_{3}z_{4}y_{4} + z^{2}_{3}z_{5}),\quad\theta _{1}z_{5}z_{7} = \Phi _{1}z_{4}z_{7} =
\end{align*}
Table 18. (continuation).
\begin{align*}
&\qquad\; n\makebox[24 mm]{} 49\makebox[50 mm]{} 50\\
&\;\:\pi _{n}(MSp)\makebox[16 mm]{} 41{\Bbb Z}_{2}\makebox[46 mm]{} 48{\Bbb Z}_{2}\\
&\mbox{Generators }= \Phi _{2}z_{2}z_{7} = \Phi _{2}z_{4}z_{5}\qquad\; \theta _{1}\kappa _{2},\theta ^{2}_{1}(z_{2}z_{4}y_{6}+z^{2}_{2}y_{8}), \theta ^{2}_{1}z^{4}_{1}z^{4}_{2},\theta ^{2}_{1}z^{8}_{1}z^{2}_{2},  \\
&\qquad\; n\makebox[50 mm]{} 50\\
&\;\:\pi _{n}(MSp)\makebox[42 mm]{} 48{\Bbb Z}_{2}\\
&\mbox{Generators }\theta _{1}\tau_{8}, \theta _{1}y^{2}_{4}\tau_{1}, \theta _{1}\Omega _{1}, \theta ^{2}_{1}y^{2}_{6},\theta ^{2}_{1}(z^{4}_{1}z_{2}z^{2}_{3}+ z^{5}_{1}z_{3}y_{4}), \theta ^{2}_{1}(z^{6}_{1}z_{6}+ z^{4}_{1}z^{2}_{2}z_{4}),\\
&\qquad\; n\makebox[50 mm]{} 50\\
&\;\:\pi _{n}(MSp)\makebox[42 mm]{} 48{\Bbb Z}_{2}\\
&\mbox{Generators }\theta ^{2}_{1}z^{3}_{1}z_{9}, \theta ^{2}_{1}z^{2}_{2}z^{2}_{4}\theta ^{2}_{1}(z^{8}_{1}y_{4} + z^{7}_{1}z_{2}z_{3}), \theta ^{2}_{1}z^{2}_{2}y^{2}_{4}, \theta ^{2}_{1}z^{4}_{3}, \theta ^{2}_{1}z^{4}_{1}z^{2}_{4}, \theta ^{2}_{1}z^{2}_{1}z^{3}_{2}z_{4},\\
&\qquad\; n\makebox[50 mm]{} 50\\
&\;\:\pi _{n}(MSp)\makebox[42 mm]{} 48{\Bbb Z}_{2}\\
&\mbox{Generators }\theta ^{2}_{1}(z^{3}_{1}z_{3}y_{6} + z^{4}_{2}z_{4}), \theta ^{2}_{1}(z^{4}_{2}z_{4} + z^{2}_{1}z_{2}z_{4}y_{4}), \theta ^{2}_{1}z_{2}z^{2}_{3}z_{4}, \theta ^{2}_{1}z^{7}_{1}z_{5}, \theta ^{2}_{1}z^{5}_{1}y_{7},\\
&\qquad\; n\makebox[50 mm]{} 50\\
&\;\:\pi _{n}(MSp)\makebox[42 mm]{} 48{\Bbb Z}_{2}\\
&\mbox{Generators }\theta ^{2}_{1}(z_{1}z_{4}z_{7} + z^{2}_{1}y_{10}), \theta ^{2}_{1}(z^{3}_{2}z^{2}_{3} + z^{4}_{2}y_{4}),\theta ^{2}_{1}(z_{1}y_{4}z_{7} + z_{3}z_{4}z_{5}),\theta ^{2}_{1}z^{4}_{1}y^{2}_{4}, \\
&\qquad\; n\makebox[50 mm]{} 50\\
&\;\:\pi _{n}(MSp)\makebox[42 mm]{} 48{\Bbb Z}_{2}\\
&\mbox{Generators }\theta ^{2}_{1}(z^{2}_{1}y_{10} + z^{2}_{1}y^{*}_{10} + z_{1}z_{2}z_{9} + z_{1}z_{2}y_{9}),\theta ^{2}_{1}(z_{3}z_{4}z_{5} + z_{1}z_{5}y_{6}),\theta ^{2}_{1}z^{3}_{1}y_{9}, \\
&\qquad\; n\makebox[50 mm]{} 50\\
&\;\:\pi _{n}(MSp)\makebox[42 mm]{} 48{\Bbb Z}_{2}\\
&\mbox{Generators }\Phi _{2}\tau_{5},\theta ^{2}_{1}z^{6}_{2},\theta ^{2}_{1}(z_{3}z_{5}y_{4}+ z^{2}_{3}y_{6}), \theta ^{2}_{1}(z_{2}z_{3}y_{7}+ z_{1}y_{4}y_{7}), \theta ^{2}_{1}z^{3}_{2}z_{6}, \theta ^{2}_{1}z^{12}_{1},\\
&\qquad\; n\makebox[50 mm]{} 50\\
&\;\:\pi _{n}(MSp)\makebox[42 mm]{} 48{\Bbb Z}_{2}\\
&\mbox{Generators }\theta ^{2}_{1}z_{1}z_{11}, \theta _{1}\Phi _{1}(z_{2}z_{3}y_{6} + z^{2}_{3}z_{5}), \theta _{1}\Phi _{1}(z_{3}z_{4}y_{4} + z^{2}_{3}z_{5}),\theta ^{2}_{1}z_{3}y_{9}, \Phi _{1}\tau_{7}, \\
&\qquad\; n\makebox[50 mm]{} 50\makebox[50 mm]{} 51\\
&\;\:\pi _{n}(MSp)\makebox[42 mm]{} 48{\Bbb Z}_{2}\makebox[49 mm]{} 0\\
&\mbox{Generators }\theta ^{2}_{1}z_{1}y_{11}, \theta ^{2}_{1}z_{3}z_{9}, \theta ^{2}_{1}z_{5}z_{7}, \Phi _{1}\tau^{*}_{7}, \Phi _{3}\tau_{3}, \Phi _{1}\Phi _{6}, \Phi _{2}\Phi _{5}, \Phi _{1}\Phi _{2}y^{2}_{4}
\end{align*}
\medskip
Remark. In Table 18 we have used the following notations:
\begin{align*}
&\tau_{1}\ = U_{1}y_{4} + U_{2}z_{3}, \tau_{2} = U_{1}y_{6} + U_{3}z_{3}, \tau_{3} = U_{2}y_{6} + U_{3}y_{4}, \tau_{4} = U_{1}y_{8}+ U_{2}z_{7},\tau_{5}\! =\\
& = U_{2}y_{8} + U_{3}y_{6}, \tau_{6} = U_{1}y_{10}+ U_{3}z_{7}, \tau^{*}_{6} = U_{1}(y^{*}_{10} + y_{10}) + U_{2}(z_{9} + y_{9}),\chi _{1} = \\
&=U_{1}y_{6}y_{4} + U_{2}(y_{6}z_{3} + y_{4}z_{5}), \tau_{7} = U_{2}y_{10}+ U_{3}y_{8},\tau_{8} = U_{1}y_{12}+ U_{3}(z_{9} + y_{9}),\\
&\tau^{*}_{7} = U_{2}(y^{*}_{10} + y_{10}) + \Phi _{3}y_{6} + U_{4}y_{4}, \chi _{2} = U_{1}y_{8}y_{4} + U_{2}(y_{8}z_{3} + y_{4}z_{7}).
\end{align*}

\addcontentsline{toc}{chapter}{References}
\end{document}